
\documentclass[a4paper]{article}

\usepackage{amsmath,amsfonts,amssymb,amsthm}
\usepackage{thmtools,thm-restate}
\usepackage{pstricks,caption}


\newtheorem{lemma}{Lemma}[section]
\newtheorem{corollary}[lemma]{Corollary}
\newtheorem{theorem}[lemma]{Theorem}
\newtheorem{proposition}[lemma]{Proposition}

\theoremstyle{definition}
\newtheorem{definition}[lemma]{Definition}
\newtheorem{remark}[lemma]{Remark}


\newcommand{\Sphere}{\mathbb{S}^3}  
\newcommand{\sphere}{\mathbb{S}^2}  
\newcommand{\crcle}{\mathbb{S}^1}  
\DeclareMathOperator*{\nhd}{\mathcal{N}} 
\DeclareMathOperator{\Int}{int} 
\DeclareMathOperator*{\homeo}{Homeo} 
\DeclareMathOperator{\Mod}{Mod} 
\DeclareMathOperator{\im}{im} 
\DeclareMathOperator{\SL}{SL} 
\DeclareMathOperator{\tr}{tr} 


\begin{document}

\title{Point-pushing in $3$--manifolds}
\author{Jessica E. Banks}
\date{}
\maketitle
\begin{abstract}
We study the Birman exact sequence for compact $3$--manifolds.
\end{abstract}


\section{Introduction}

The mapping class group $\Mod(M)$ of a manifold $M$ is the group of equivalence classes of self-homeomorphisms of $M$. A number of related definitions can be made by varying the type of self-homeomorphism considered and the equivalence relation used (see \cite{MR2850125} Section 2.1, for example).

In \cite{MR0243519}, Birman gave a relationship (now known as the `Birman exact sequence') between the mapping class group of a manifold $M$ and that of the same manifold with a finite number of points removed, focusing particularly on the case that $M$ is a surface. When only one puncture is added, the result is as follows (definitions are given below).

\begin{theorem}[\cite{MR2850125} Theorem 4.6; see also \cite{MR0243519} Theorem 1 and Corollary 1.3, and \cite{MR0375281} Theorem 4.2]\label{birmanthm}
Let $M$ be a compact, orientable surface with $\chi(M)<0$. Then there is an exact sequence
\[
1\to \pi_1(M,p)\to \Mod(M_p)\to\Mod(M)\to 1.
\]
\end{theorem}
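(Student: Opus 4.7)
The plan is to realize the sequence as a segment of the long exact homotopy sequence of a fibration of homeomorphism groups. Equip $\homeo(M)$ with the compact-open topology, and consider the evaluation map
\[
\mathrm{ev}_p\colon\homeo(M)\to M,\qquad \phi\mapsto\phi(p),
\]
whose fiber over $p$ is the subgroup $\homeo(M,p)$ of homeomorphisms fixing $p$. The first technical step is to verify that $\mathrm{ev}_p$ is a (Serre, in fact locally trivial) fibration; this reduces, via a parametrized isotopy-extension argument on a small disc around $p$, to producing continuous families of ambient isotopies of $M$ that translate $p$ to nearby points.

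Taking the long exact sequence of homotopy groups and using $\pi_0(M)=1$ (since $M$ is connected) yields
\[
\pi_1(\homeo(M),\mathrm{id})\to\pi_1(M,p)\to\pi_0(\homeo(M,p))\to\pi_0(\homeo(M))\to 1.
\]
I would then identify $\pi_0(\homeo(M))=\Mod(M)$ and $\pi_0(\homeo(M,p))=\Mod(M_p)$, using that a self-homeomorphism of the punctured surface $M_p$ extends uniquely to one of $M$ fixing $p$, and similarly for isotopies between them. The rightmost map is the obvious ``forget the marked point'' homomorphism, and its surjectivity is immediate from the exact sequence above. The connecting homomorphism $\pi_1(M,p)\to\Mod(M_p)$ unwinds to the familiar \emph{point-pushing} map: given a based loop $\gamma$ in $M$, lift it by path-lifting to a path in $\homeo(M)$ starting at the identity, and read off the isotopy class rel $p$ of the endpoint.

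The main obstacle, and the only place the hypothesis $\chi(M)<0$ enters, is showing that $\pi_1(\homeo(M),\mathrm{id})=1$, from which injectivity of the point-pushing map follows. For a compact orientable surface with $\chi(M)<0$, a theorem of Hamstrom asserts that the identity component $\homeo_0(M)$ is contractible, which in particular kills $\pi_1$. The hypothesis is essential: when $M$ is the sphere or the torus, $\homeo_0(M)$ is homotopy equivalent to $\mathrm{SO}(3)$ and to $M$ respectively, and the Birman sequence is correspondingly not left-exact. With Hamstrom's input, the five-term sequence collapses to the claimed short exact sequence.
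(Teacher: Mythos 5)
The paper does not prove Theorem \ref{birmanthm}: it quotes it as a known result from the cited reference, and the proof given there is exactly the argument you outline (the evaluation fibration $\homeo(M)\to\Int(M)$, its long exact homotopy sequence with the connecting map identified as point-pushing, and Hamstrom's theorem that $\homeo_0(M)$ is contractible when $\chi(M)<0$, which kills $\pi_1(\homeo(M),\mathrm{id})$ and gives injectivity). Your outline is correct; the only cosmetic point is that when $\partial M\neq\emptyset$ the evaluation map takes values in $\Int(M)$ rather than $M$, which changes nothing since $\pi_1(\Int(M),p)\cong\pi_1(M,p)$.
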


The aim of this paper is to study this exact sequence for compact $3$--manifolds. Our main results are as follows.

\begin{restatable*}{theorem}{kernelthm}\label{kernelthm}
If $M$ is a compact, connected $3$--manifold then there is a 
finitely generated group $K$, a finite group $J$ and a group $L\leq\mathbb{Z}^3$ such that
there are exact sequences
\[1\to L\to K\to J\to 1\]
and
\[
1\to K\to\pi_1(M,p)\to\Mod(M_p)\to\Mod(M)\to 1.
\]
\end{restatable*}

\begin{restatable*}{corollary}{hypcor}\label{hyperboliccor}
Suppose $M$ is a compact, orientable $3$--manifold.
Then either there is an exact sequence
\[
1\to\pi_1(M,p)\to \Mod(M_p)\to \Mod(M)\to 1
\]
or $M$ is prime and Seifert fibred.
In particular, if $M$ is hyperbolic with finite volume 
then the exact sequence holds.
\end{restatable*}

Throughout we work in the piecewise linear category. Although Birman worked with homeomorphisms up to homotopy, we will consider them up to isotopy.
For surfaces, the following result of Epstein (extending work of Baer) shows that these two equivalence relations are the same for our purposes. That this is not true in general for $3$--manifolds is shown in \cite{MR836722}.

\begin{theorem}[\cite{MR0214087} Theorem 6.3]\label{surfacehomotoisothm}
Let $M$ be a compact, connected surface, and let $p\in\Int(M)$. Let $\psi\colon (M,\partial M)\to (M,\partial M)$ be a homeomorphism such that there exists a homotopy $H\colon(M\times[0,1],\partial M\times[0,1],p\times[0,1])\to(M,\partial M,p)$ from $\psi$ to the identity.
Then there is an isotopy
$H'\colon(M\times[0,1],\partial M\times[0,1],p\times[0,1])\to(M,\partial M,p)$ from $\psi$ to the identity.
\end{theorem}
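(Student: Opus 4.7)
The plan is to invoke the unpointed form of Epstein's theorem first and then to correct the resulting isotopy to fix $p$ by a point-pushing argument. Applying Epstein's theorem in the version that does not track the basepoint to $\psi$, using $H$ to witness that $\psi$ is homotopic to the identity as a map of pairs $(M,\partial M)\to(M,\partial M)$, produces an ambient isotopy $\Phi\colon M\times[0,1]\to M$ with $\Phi_0=\psi$, $\Phi_1=\mathrm{id}_M$, and $\Phi_t(\partial M)\subseteq\partial M$ for all $t$. Set $\alpha(t)=\Phi_t(p)$; this is a loop in $M$ based at $p$, since $\psi(p)=p$.

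The heart of the argument is to show that $[\alpha]=1$ in $\pi_1(M,p)$. Consider the family of group homomorphisms
\[
c_t=\bigl(\text{change of basepoint along }\alpha|_{[0,t]}\bigr)\circ\Phi_{t\ast}\colon\pi_1(M,p)\to\pi_1(M,p).
\]
Because each $c_t$ is determined by its values on a finite generating set and the target is discrete, the family $c_t$ is constant in $t$. At $t=0$ it equals $\psi_\ast$, while at $t=1$ it equals conjugation by $[\alpha]^{-1}$; hence $\psi_\ast$ coincides with the inner automorphism of $\pi_1(M,p)$ determined by $[\alpha]^{-1}$. On the other hand, when $M$ is a $K(\pi,1)$ the pointed homotopy $H$ forces $\psi_\ast$ to agree with the identity on $\pi_1(M,p)$ as a based map, so $[\alpha]$ must be central in $\pi_1(M,p)$. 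For $M$ with $\chi(M)<0$ the surface group has trivial centre, which gives $[\alpha]=1$; the remaining surfaces (sphere, disc, annulus, M\"obius band, torus, Klein bottle) are treated by direct inspection.

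Given that $\alpha$ is null-homotopic, a standard embedded-disc argument produces an ambient isotopy $\Theta$ of $(M,\partial M)$, supported near a null-homotopy disc for $\alpha$, with $\Theta_0=\mathrm{id}_M$, $\Theta_t(p)=\alpha(t)$, and $\Theta_t(\partial M)=\partial M$ for all $t$. Then $H'_t=\Theta_t^{-1}\circ\Phi_t$ is an isotopy from $\psi$ to $\mathrm{id}_M$ with $H'_t(\partial M)\subseteq\partial M$ and $H'_t(p)=p$ for every $t$, which is the desired conclusion.

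The main obstacle is the step forcing $[\alpha]=1$ in the presence of a non-trivial centre. The exceptional low-complexity cases therefore require a separate treatment: in each of them either $\pi_1(M,p)$ is so small that $\alpha$ is automatically null-homotopic, or else the point-pushing subgroup in the mapping class group of the once-punctured surface is already trivial, so that the correction step $\Theta$ is unnecessary.
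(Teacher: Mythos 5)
The paper does not actually prove this statement: it is imported as \cite{MR0214087} Theorem 6.3 and used as a black box, so there is no internal proof to compare against. Your proposal is a reduction of the pointed statement to the unpointed homotopy-implies-isotopy theorem, and the central computation is sound: the track $\alpha(t)=\Phi_t(p)$ induces the inner automorphism realising $\psi_*$, the based homotopy $H$ forces $\psi_*=\mathrm{id}$, hence $[\alpha]$ is central and therefore trivial when $\chi(M)<0$. However, two steps have genuine gaps.

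First, the correction step. With $\Theta_0=\mathrm{id}$ and $\Theta_t(p)=\alpha(t)$, the composite $H'_t=\Theta_t^{-1}\circ\Phi_t$ ends at $H'_1=\Theta_1^{-1}$, which is the inverse of the point-push $\phi_\alpha$, \emph{not} the identity. You still need $\Theta_1$ to be isotopic to the identity rel $p$ and $\partial M$. Deducing this from $[\alpha]=1$ is precisely the statement that homotopic loops give point-pushes that are isotopic rel $p$; in this paper that is the unnamed lemma opening Section \ref{fibredsection}, whose proof invokes Theorem \ref{surfacehomotoisothm} itself, so quoting it here would be circular. Nor does ``supported near a null-homotopy disc'' resolve this: the null-homotopy of $\alpha$ is a possibly non-embedded map of a disc whose image can be all of $M$, and homotoping $\alpha$ into an embedded disc before building $\Theta$ changes $\Theta_1$ only up to the very equivalence you are trying to establish. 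A non-circular route exists (for instance the evaluation fibration $\homeo(M)\to M$, which makes the point-pushing homomorphism well-defined on $\pi_1$ by general nonsense), but it must be stated.

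Second, the exceptional cases. The claimed dichotomy fails for the M\"obius band, the Klein bottle and $\mathbb{R}P^2$ (the last is missing from your list entirely): each has non-trivial centre, so centrality of $[\alpha]$ does not force $[\alpha]=1$, and each has non-trivial point-pushing image in $\Mod(M_p)$ — by Lemma \ref{reverseorientationlemma} and Corollary \ref{chi0cor}, pushing around an orientation-reversing loop is never isotopic to the identity rel $p$. So for these surfaces it is neither true that $\alpha$ is automatically null-homotopic nor that the correction is unnecessary; you must additionally argue that $[\alpha]$ lies in $\ker(\Phi_M)$ (for instance, that it is orientation-preserving), and that requires a separate argument which the proposal does not supply.
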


Since the mapping class group has been well-studied, much of this material is well-known.
The intention is that this paper should be, on the whole, fairly self-contained.
In Sections \ref{pointpushingsection} and \ref{isotopysection} we give a number of basic results that will be needed for later proofs. In Section \ref{sequencessection} we draw these together to give the Birman exact sequence in the case of homeomorphisms that keep $\partial M$ fixed. We then seek to gain control of isotopies that may move $\partial M$, for fibred $3$--manifolds in Section \ref{fibredsection} and for non-fibred ones in Section \ref{nonfibredsection}.
Finally, in Section \ref{proofsection} we prove Theorem \ref{kernelthm} and Corollary \ref{hyperboliccor}.
Along the way we give more precise results for the exceptional cases in Corollary \ref{hyperboliccor}, as well as for non-orientable $3$--manifolds.

\smallskip

After the first release of this paper, Allen Hatcher pointed out an alternative approach to the proof of Theorem \ref{kernelthm}. Details of this can be found in \cite{2014arXiv1404.3689B}.

I wish to thank Steven Boyer for helpful conversations, and also the anonymous referee who suggested this project.

\section{Point-pushing in manifolds}\label{pointpushingsection}

Let $M$ be a connected $n$--manifold. For the moment we will assume that $n\geq 3$; we will consider the case $n=2$ in Section \ref{fibredsection}. 
For a point $p\in M$, write $M_p=M\setminus\{p\}$. 
We will use this notation throughout this paper.

\begin{definition}\label{homeodefn}
Given a directed path $\rho\colon[0,1]\to \Int(M)$ with $\rho(0)=p$, choose an isotopy $H_{\rho}\colon (M\times[0,1],\partial M\times[0,1])\to(M,\partial M)$ such that, for $x\in M$ and $t\in[0,1]$,
\begin{align*}
H_{\rho}(x,0)=& x,\quad H_{\rho}(p,t)=\rho(t),\\
x\in\partial M &\Rightarrow H_{\rho}(x,t)=x.
\end{align*}
That is, $H_{\rho}$ is an ambient isotopy that takes the point $p$ once along the path $\rho$, keeping the boundary of $M$ fixed.
Define a homeomorphism $\phi_{\rho}\colon(M,\partial M)\to (M,\partial M)$ by $\phi_{\rho}(x)=H_{\rho}(x,1)$ for $x\in M$.
\end{definition}

This definition is dependent on the choice of isotopy $H_{\rho}$. The existence of at least one such isotopy 
is given by the following result.

\begin{lemma}[\cite{MR0163318} Corollary 2.3]
Let $H\colon M_1\times[0,1]\to M_2$ be an isotopy, where $M_1$ is a point and $M_2$ is any manifold. 
Suppose that $H$ is supported on an open set $\mathcal{N}\subseteq M_2$.
Then $H$ can be extended to an ambient isotopy supported on $\mathcal{N}$.
\end{lemma}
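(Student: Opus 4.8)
The plan is to unwind the statement: an isotopy of a point $M_1=\{p\}$ in $M_2$ is simply a PL path $\rho\colon[0,1]\to M_2$, $\rho(t)=H(p,t)$, and ``supported on $\mathcal{N}$'' means $\rho([0,1])\subseteq\mathcal{N}$. What must be produced is a PL ambient isotopy $G\colon M_2\times[0,1]\to M_2$ with $G(\cdot,0)=\mathrm{id}$, with $G(x,t)=x$ for every $x\notin\mathcal{N}$, and with $G(\rho(0),t)=\rho(t)$ for all $t$. I would follow the classical ``push a point along a path'' recipe: break $\rho$ into short pieces each sitting inside a coordinate ball contained in $\mathcal{N}$, realise each short piece by an explicit ambient isotopy supported in that ball, and concatenate.

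For the reduction, for each $s\in[0,1]$ choose a PL ball $B_s\subseteq\mathcal{N}$ with $\rho(s)\in\Int(B_s)$ and a PL homeomorphism $\psi_s$ of $B_s$ onto a ball in $\mathbb{R}^n$; continuity of $\rho$ gives an interval about $s$ whose image lies in $\Int(B_s)$. Compactness of $[0,1]$ and a Lebesgue-number argument then produce a partition $0=t_0<t_1<\dots<t_m=1$ together with, for each $i$, one of these balls $B_i$ (with chart $\psi_i$) such that $\rho([t_{i-1},t_i])\subseteq\Int(B_i)$.

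For the local push, fix $i$, choose a PL ball $B_i'$ with $\rho([t_{i-1},t_i])\subseteq\Int(B_i')\subseteq B_i'\subseteq\Int(B_i)$, and choose a PL bump function $\lambda_i\colon\mathbb{R}^n\to[0,1]$ equal to $1$ on a neighbourhood of $\psi_i(\rho([t_{i-1},t_i]))$ and vanishing off $\psi_i(\Int(B_i'))$; let $C_i$ be its Lipschitz constant. After refining the partition (there are finitely many $C_i$ and $\rho$ is uniformly continuous), I may assume that $v(t):=\psi_i(\rho(t))-\psi_i(\rho(t_{i-1}))$ satisfies $|v(t)|<C_i^{-1}$ throughout $[t_{i-1},t_i]$. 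Now let $G_i\colon M_2\times[t_{i-1},t_i]\to M_2$ be the identity off $B_i'$ and, in the chart $\psi_i$, the shear $z\mapsto z+\lambda_i(z)\,v(t)$. Since $|v(t)|C_i<1$ this shear is injective (if $z+\lambda_i(z)v(t)=w+\lambda_i(w)v(t)$ then $|z-w|\le C_i|v(t)|\,|z-w|$ forces $z=w$), is the identity off a compact set, and is PL, hence a PL homeomorphism; so $G_i$ is a PL ambient isotopy of $M_2$, supported in $B_i'\subseteq\mathcal{N}$, with $G_i(\cdot,t_{i-1})=\mathrm{id}$ and $G_i(\rho(t_{i-1}),t)=\rho(t)$ (using $\lambda_i\equiv1$ near $\psi_i(\rho(t_{i-1}))$).

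Finally I would concatenate: with $g_i:=G_i(\cdot,t_i)$, a PL homeomorphism of $M_2$ that is the identity off $B_i'$ and sends $\rho(t_{i-1})$ to $\rho(t_i)$, set $G(x,t)=G_i\big((g_{i-1}\circ\dots\circ g_1)(x),t\big)$ for $t\in[t_{i-1},t_i]$. The pieces match at the $t_i$ (both equal $g_{i-1}\circ\dots\circ g_1$ at $t=t_{i-1}$), so $G$ is a PL ambient isotopy with $G(\cdot,0)=\mathrm{id}$; it is supported in $\bigcup_i B_i'\subseteq\mathcal{N}$ because every $g_j$ and every $G_i(\cdot,t)$ fixes points outside $\mathcal{N}$; and since $(g_{i-1}\circ\dots\circ g_1)(\rho(0))=\rho(t_{i-1})$ we get $G(\rho(0),t)=G_i(\rho(t_{i-1}),t)=\rho(t)$. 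The only step carrying genuine content is the local push, and the point needing care there is that the shear really is a homeomorphism rather than just a continuous surjection --- this is exactly where smallness of $|v(t)|$ is used, and the reason $\rho$ has to be cut into short pieces first. Everything else (the covering, the Lebesgue number, the concatenation bookkeeping, keeping all maps PL) is routine; alternatively one could invoke a general PL isotopy extension theorem, but the direct construction keeps the account self-contained.
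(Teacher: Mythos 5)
The paper offers no proof of this lemma; it is quoted verbatim from \cite{MR0163318} (Corollary 2.3) as an external ingredient, so there is no internal argument to compare against and your proposal is being judged on its own merits.

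Your architecture --- interpret the isotopy of a point as a path $\rho$, cover $\rho([0,1])$ by charts inside $\mathcal{N}$, refine by a Lebesgue-number argument, realise each short piece by a local push supported in a small ball, and concatenate --- is exactly the standard argument, and the bookkeeping (injectivity of the shear, support estimates, matching at the break points) is sound. The flaw lies in the one step you single out as carrying the content: the shear $G_i(z,t)=z+\lambda_i(z)\,v(t)$ is not piecewise linear in $(z,t)$. On a product cell where $\lambda_i$ is affine in $z$ and $v$ is affine in $t$, the term $\lambda_i(z)\,v(t)$ is bilinear --- of the form $z_1 t$, say --- which is quadratic, not PL. You have therefore produced a continuous family of PL homeomorphisms, but a PL ambient isotopy in the sense used by the cited reference (and required by the paper's stated PL conventions) is a level-preserving PL homeomorphism of $M_2\times[0,1]$, i.e.\ PL jointly in the space and time variables, and these notions do not coincide. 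The obstruction is not an artefact of your particular bump: any nonconstant PL function of $z$ multiplied by any nonconstant PL function of $t$ has the same quadratic defect, so no cleverer choice of $\lambda_i$ repairs it. This is exactly the subtlety that makes PL isotopy extension a genuine theorem; the cited source deals with the local piece by cone/join and regular-neighbourhood arguments on $B\times I$ rather than by bump-function shears. Your covering, subdivision and concatenation steps carry over unchanged once a genuinely PL local push is substituted; alternatively --- and more in the spirit of the paper --- one simply invokes the PL isotopy extension theorem, as you note at the end, which is in fact all the paper itself does.
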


Our primary interest is the isotopy class of the map $\phi_{\rho}$, but at different points we will set different conditions on the isotopies in question. We will view $\phi_{\rho}$ both as a map from $M$ to $M$ and as a map from $M_{\rho(0)}$ to $M_{\rho(1)}$ without explicitly distinguishing between these.

\begin{lemma}\label{welldefinedlemma}
Let $p,p'\in\Int(M)$, and let $\rho\colon[0,1]\to\Int(M)$ be a path from $p$ to $p'$.
Let $H,H'\colon(M\times[0,1],\partial M\times[0,1])\to(M,\partial M)$ be isotopies beginning at the identity with $H(p,t)=H'(p,t)=\rho(t)$ for $t\in[0,1]$.
Then $x\mapsto H(x,1)$ is isotopic on $M_{p'}$ to $x\mapsto H'(x,1)$.
Furthermore, if $H$ and $H'$ both fix $\partial M$ throughout then the isotopy between the maps can also be made to do so.
\end{lemma}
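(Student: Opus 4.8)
The plan is to produce the required isotopy essentially by hand, as the family $t\mapsto\phi\circ G_t$ where $G_t=H(\cdot,t)^{-1}\circ H'(\cdot,t)$. Write $\phi$ for the self-homeomorphism $x\mapsto H(x,1)$ of $M$ and $\phi'$ for $x\mapsto H'(x,1)$, and recall that for each $t$ the maps $H(\cdot,t)$ and $H'(\cdot,t)$ are self-homeomorphisms of $M$ preserving $\partial M$. Define $G\colon M\times[0,1]\to M$ by $G(x,t)=H(\cdot,t)^{-1}\bigl(H'(x,t)\bigr)$. The three computations that make everything work are: $G_0=\mathrm{id}$, since $H$ and $H'$ begin at the identity; $G_1=\phi^{-1}\circ\phi'$; and, crucially, $G_t(p)=H(\cdot,t)^{-1}(\rho(t))=p$ for every $t$, because $H(p,t)=H'(p,t)=\rho(t)$. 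Assuming for the moment that $G$ is a jointly continuous isotopy, the family $t\mapsto\phi\circ G_t$ is then a path of self-homeomorphisms of $M$ from $\phi$ to $\phi'$ each of which carries $p$ to $\phi(p)=p'$.

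Restricting each $\phi\circ G_t$ to $M_p$ therefore gives an isotopy of homeomorphisms $M_p\to M_{p'}$ from $\phi$ to $\phi'$; equivalently, $t\mapsto(\phi\circ G_t)\circ\phi^{-1}$ is an isotopy of $M_{p'}$ from the identity to $\phi'\circ\phi^{-1}$ that fixes $p'$ at every time (using $\phi^{-1}(p')=p$ together with $G_t(p)=p$). In either formulation this is precisely the assertion that $x\mapsto H(x,1)$ and $x\mapsto H'(x,1)$ are isotopic on $M_{p'}$. For the final sentence of the lemma, suppose in addition that $H$ and $H'$ fix $\partial M$ pointwise for all $t$; then $H(\cdot,t)^{-1}$ also fixes $\partial M$ pointwise, so $G_t$ restricts to the identity on $\partial M$, and since $\phi$ fixes $\partial M$ as well, so does every $\phi\circ G_t$. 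Hence the isotopy constructed above fixes $\partial M$ throughout.

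The one genuinely nontrivial point---and the step I expect to be the main obstacle---is the parenthetical claim that $G$ is continuous, that is, that $t\mapsto H(\cdot,t)^{-1}$ varies continuously in $t$. I would dispatch this by noting that $\widehat H\colon M\times[0,1]\to M\times[0,1]$, $(x,t)\mapsto(H(x,t),t)$, is a level-preserving continuous bijection and hence a homeomorphism---automatic when $M$ is compact, and in any case standard for ambient isotopies---so that $\widehat H^{-1}\circ\widehat{H'}$ is a level-preserving homeomorphism of $M\times[0,1]$ whose fibrewise part is exactly $G$; continuity of composition then completes the proof. There is an alternative route that never inverts $H$, obtained by concatenating ``$H$ played backwards'' with ``$H'$ played forwards'', but in that version the marked point travels once around the loop $\rho\cdot\overline{\rho}$ rather than staying at $p$, and removing that extra motion takes more work than the direct construction, so I would stay with $G_t=H(\cdot,t)^{-1}\circ H'(\cdot,t)$.
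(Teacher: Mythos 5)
Your construction is exactly the paper's: the paper defines $\hat H(x,t)=h_1\bigl(h_t^{-1}(H'(x,t))\bigr)$ with $h_t=H(\cdot,t)$, which is precisely your family $\phi\circ G_t$, and verifies the same three identities (start at $h_1=\phi$, end at $\phi'$, fix $p$ for all $t$) before noting that the boundary condition is inherited. Your additional remark that the level-preserving map $(x,t)\mapsto(H(x,t),t)$ is a homeomorphism, hence $G$ is jointly continuous, is a correct filling-in of a point the paper leaves implicit, but the approach is the same.
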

\begin{proof}
For $t\in[0,1]$ let $h_t\colon M\to M$ be given by $h_t(x)=H(x,t)$ for $x\in M$.
Define $\hat{H}\colon (M\times[0,1],\partial M\times[0,1])\to(M,\partial M)$ by
$\hat{H}(x,t)=h_1(h_t^{-1}(H'(x,t)))$.
Then, for $x\in M$ and $t\in[0,1]$, 
\begin{align*}
\hat{H}(x,0)&=h_1(h_0^{-1}(H'(x,0)))=h_1(x),\\
\hat{H}(x,1)&=h_1(h_1^{-1}(H'(x,1)))=H'(x,1),\\
\hat{H}(p,t)&=h_1(h_t^{-1}(H'(p,t)))=h_1(h_t^{-1}(\rho(t)))=h_1(p)=p'.
\end{align*}
In addition, for $t\in[0,1]$, the map $x\mapsto\hat{H}(x,t)$ is a composition of homeomorphisms of $M$, and so is itself a homeomorphism of $M$.
Therefore $\hat{H}$ is an isotopy between the maps $x\mapsto H(x,1)$ and $x\mapsto H'(x,1)$ in $M_{p'}$.
Note that if $H$ and $H'$ fix the boundary throughout then so does $\hat{H}$.
\end{proof}

Thus the isotopy class of $\phi_{\rho}$ as a homeomorphism from $M_p$ to $M_{p'}$ is well-defined given the path $\rho$.

\begin{corollary}\label{inkernelcor}
Let $p,p'\in\Int(M)$, and let $\rho\colon[0,1]\to\Int(M)$ be a path from $p$ to $p'$.
Then $\phi_{\rho}$ is isotopic to the identity map on $M$ by an isotopy keeping $\partial M$ fixed.
\end{corollary}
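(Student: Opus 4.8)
The plan is to observe that the ambient isotopy used to \emph{define} $\phi_{\rho}$ already witnesses the conclusion. Fix a choice of isotopy $H_{\rho}$ as in Definition \ref{homeodefn}, so that $\phi_{\rho}(x)=H_{\rho}(x,1)$ for all $x\in M$. First I would record the two relevant features built into that definition: $H_{\rho}(x,0)=x$ for every $x\in M$, i.e.\ $H_{\rho}$ starts at $\mathrm{id}_{M}$; and $H_{\rho}(x,t)=x$ whenever $x\in\partial M$, for every $t\in[0,1]$. Next I would note that for each fixed $t$ the slice $x\mapsto H_{\rho}(x,t)$ is a homeomorphism of the pair $(M,\partial M)$, this being part of what it means for $H_{\rho}$ to be an (ambient) isotopy of $(M,\partial M)$. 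Putting these together, $H_{\rho}$ is exactly an isotopy from $\mathrm{id}_{M}$ to $\phi_{\rho}$ through homeomorphisms of $M$ fixing $\partial M$ pointwise, which is the assertion.

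The one point deserving a word of care is that $\phi_{\rho}$ is not canonically attached to $\rho$: it depends on the choice of $H_{\rho}$. So I would phrase the statement as holding for the homeomorphism $\phi_{\rho}$ arising from any admissible $H_{\rho}$; since every such $H_{\rho}$ begins at the identity and fixes $\partial M$ throughout, the argument above applies verbatim to each. If one instead wishes to speak of ``the'' isotopy class of $\phi_{\rho}$, one may additionally invoke Lemma \ref{welldefinedlemma} to know that different choices give maps isotopic on $M_{p'}$, hence a fortiori isotopic on $M$ rel $\partial M$ — but this is not needed for the conclusion as stated.

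I do not expect a real obstacle here: the content lies entirely in the set-up of Definition \ref{homeodefn}, and the corollary is essentially the remark that a point-pushing map, once the distinguished point is forgotten, is isotopically trivial rel boundary. The subtlety worth emphasising — that $\phi_{\rho}$ can be highly nontrivial as a self-homeomorphism of $M_{p'}$ yet trivial as a self-homeomorphism of $M$ — is precisely the mechanism underlying the Birman sequence, so although its proof is a one-liner it is natural to isolate it as a corollary.
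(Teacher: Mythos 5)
Your proposal is correct and is exactly the one-line observation the paper intends: the corollary is stated without proof, and the intended justification is precisely that the defining isotopy $H_{\rho}$ of Definition \ref{homeodefn} already runs from the identity to $\phi_{\rho}$ while fixing $\partial M$ pointwise. Your remark about well-definedness is accurate but not needed, as you note, and your closing emphasis — that $\phi_{\rho}$ is trivial on $M$ but potentially nontrivial on $M_{p'}$ — is indeed the conceptual point the corollary is isolating.
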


\begin{corollary}\label{fixpointcor}
Let $\psi\colon(M,\partial M)\to(M,\partial M)$ be a homeomorphism.
Let $p\in\Int(M)$ and set $p'=\psi(p)$. Suppose $p'\neq p$.
Choose a path $\rho\colon[0,1]\to\Int(M)$ with $\rho(0)=p'$ and $\rho(1)=p$.
Then $\phi_{\rho}\circ\psi$ fixes $p$ and is isotopic to $\psi$ through maps of $M$ that keep $\partial M$ fixed.
\end{corollary}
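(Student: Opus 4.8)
The plan is to unwind Definition \ref{homeodefn} and then precompose the underlying ambient isotopy with $\psi$. Applying that definition to the path $\rho$, which starts at $p'=\rho(0)\in\Int(M)$, fix an isotopy $H_{\rho}\colon(M\times[0,1],\partial M\times[0,1])\to(M,\partial M)$ with $H_{\rho}(x,0)=x$ for all $x\in M$, with $H_{\rho}(p',t)=\rho(t)$ for all $t$, with $H_{\rho}(x,t)=x$ for all $x\in\partial M$ and $t\in[0,1]$, and with $\phi_{\rho}(x)=H_{\rho}(x,1)$. Since $\rho(1)=p$, this gives $\phi_{\rho}(p')=H_{\rho}(p',1)=p$, and hence $(\phi_{\rho}\circ\psi)(p)=\phi_{\rho}(\psi(p))=\phi_{\rho}(p')=p$; so $\phi_{\rho}\circ\psi$ fixes $p$, which is the first assertion.

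For the isotopy I would set $G(x,t)=H_{\rho}(\psi(x),t)$ for $x\in M$ and $t\in[0,1]$. For each fixed $t$ the map $x\mapsto G(x,t)$ is the composite of $\psi$ with a homeomorphism of $M$, hence is itself a homeomorphism of $M$; moreover $G(x,0)=\psi(x)$ and $G(x,1)=\phi_{\rho}(\psi(x))$, so $G$ is an isotopy of $M$ from $\psi$ to $\phi_{\rho}\circ\psi$. It remains only to examine the boundary: if $x\in\partial M$ then $\psi(x)\in\partial M$ since $\psi$ preserves $\partial M$, so $G(x,t)=H_{\rho}(\psi(x),t)=\psi(x)$, which is independent of $t$. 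Thus $G$ moves no point of $\partial M$ as $t$ varies, which is the sense in which it keeps $\partial M$ fixed. (Equivalently, one may take the isotopy from the identity to $\phi_{\rho}$ furnished by Corollary \ref{inkernelcor} and precompose it with $\psi$.)

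I do not expect a serious obstacle here; the statement is essentially a bookkeeping consequence of Definition \ref{homeodefn}, and the hypothesis $p'\neq p$ is needed only to make the situation nontrivial, not in the argument. The single point requiring care is the meaning of "keeps $\partial M$ fixed": because $\psi$ need not restrict to the identity on $\partial M$, the correct reading is that the restriction of $G(\,\cdot\,,t)$ to $\partial M$ is independent of $t$ (it equals $\psi|_{\partial M}$ throughout), rather than that each $G(\,\cdot\,,t)$ fixes $\partial M$ pointwise; the computation above establishes exactly this.
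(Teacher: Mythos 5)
Your proof is correct and is essentially the paper's intended argument: the paper leaves this corollary without proof because it follows immediately from Corollary \ref{inkernelcor} by precomposing the ambient isotopy defining $\phi_{\rho}$ with $\psi$, which is exactly your map $G(x,t)=H_{\rho}(\psi(x),t)$. Your closing remark on the reading of ``keeps $\partial M$ fixed'' is a worthwhile clarification: since $\psi$ need not be the identity on $\partial M$, the phrase can only mean that the isotopy is stationary on $\partial M$ (its restriction to $\partial M$ is independent of $t$), and your computation $G(x,t)=\psi(x)$ for $x\in\partial M$ verifies precisely that.
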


\begin{remark}\label{reparamremark}
Let $\rho\colon[0,1]\to \Int(M)$ be a path, and let $H_{\rho}\colon(M\times[0,1],\partial M\times[0,1])\to(M,\partial M)$ be an isotopy that can be used to define $\phi_{\rho}$. Let $f\colon[0,1]\to[0,1]$ be an increasing bijection, and define $\rho'\colon[0,1]\to\Int(M)$ by $\rho'(t)=\rho(f(t))$.
Then $(x,t)\mapsto H_{\rho}(x,f(t))$ is an ambient isotopy that can be used to define $\phi_{\rho'}$.
Note that $\phi_{\rho}(x)=H_{\rho}(x,1)=H_{\rho}(x,f(1))=H_{\rho'}(x,1)=\phi_{\rho'}(x)$ for $x\in M$.
That is, re-parametrising the path $\rho$ does not change the isotopy class of $\phi_{\rho}$ on $M_{p'}$.
\end{remark}

The following theorem is a specific case of the central result of \cite{MR0310898} (see also \cite{MR0232394}).

\begin{theorem}\label{homotoisothm}
Let $M'$ be a compact, connected $n'$--manifold for $n'\geq 3$.
Let $\rho_1,\rho_2\colon [0,1]\to\Int(M')$ be 
embeddings with $\rho_1(0)=\rho_2(0)$ and $\rho_1(1)=\rho_2(1)$.
Suppose that $\rho_1$ and $\rho_2$ are homotopic relative to their endpoints. 
Then there is an ambient isotopy between them that keeps $\rho_1(0)$, $\rho_1(1)$ and $\partial M'$ fixed.
\end{theorem}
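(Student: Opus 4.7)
The strategy has two steps: first upgrade the given homotopy to a level-preserving isotopy of embedded arcs, then extend that to an ambient isotopy by the PL isotopy extension theorem. The hypothesis $n' \geq 3$ gives the arcs codimension at least $2$, which is what is needed in both steps.

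For the first step, view the homotopy between $\rho_1$ and $\rho_2$ as a PL map $F \colon [0,1]^2 \to \Int(M')$ whose restrictions to the two vertical sides are $\rho_1$ and $\rho_2$ and which is constant along the two horizontal sides (at $\rho_1(0)$ and $\rho_1(1)$). I would try to replace $F$ by a level-preserving map $\tilde{F}$ with the same boundary values such that every slice $\tilde{F}(t,-) \colon [0,1] \to \Int(M')$ is a PL embedding. Crucially, the global image of $\tilde{F}$ need not be embedded; only the individual slices must be. The standard PL general position arguments, combined with piping and unknotting techniques in the spirit of Hudson, produce such an $\tilde{F}$.

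For the second step, the PL ambient isotopy extension theorem promotes the family of arcs $\tilde{F}(t,-)$ to an ambient isotopy of $M'$. Because the arcs lie in $\Int(M')$, the support of the ambient isotopy can be arranged inside a compact polyhedron disjoint from $\partial M'$; and because the endpoints are already fixed by the arc-level isotopy, one can take the ambient isotopy to be the identity on small neighbourhoods of $\rho_1(0)$ and $\rho_1(1)$ as well. This delivers the required ambient isotopy fixing $\partial M'$ and the two endpoints.

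The main obstacle is the first step in the borderline case $n' = 3$: a generic $2$-dimensional disc in a $3$-manifold carries transverse $1$-dimensional self-intersections, so one cannot simply straighten $F$ to an embedded spanning disc and read off an isotopy of slices. Instead one must work slice by slice, absorbing the self-intersections of $F$ into motions of the arc past itself. This is precisely the substance of the results cited as \cite{MR0310898} and \cite{MR0232394}.
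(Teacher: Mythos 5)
The paper does not actually prove Theorem \ref{homotoisothm}: it is imported from the literature, with the remark that it is a specific case of the central result of \cite{MR0310898} (see also \cite{MR0232394}). Your two-step outline (deform the homotopy to a level-preserving isotopy through embeddings, then apply the PL isotopy extension theorem, noting that only the slices and not the track need be embedded) is the correct shape of the argument, and since you explicitly defer the essential first step --- in particular the codimension-two case $n'=3$ --- to the same references, your treatment is essentially the same as the paper's.
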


\begin{proposition}\label{homotopearcprop}
Let $\rho$ and $\rho'$ be paths in $\Int(M)$ with $\rho(0)=\rho'(0)=p$ and $\rho(1)=\rho'(1)=p'$.
Suppose that $\rho$ and $\rho'$ are both embedded, and that they are homotopic in $M$ relative to their endpoints.
Then $\phi_{\rho}$ and $\phi_{\rho'}$ are isotopic as maps from $M_p$ to $M_{p'}$ by an isotopy that keeps $\partial M$ fixed.
\end{proposition}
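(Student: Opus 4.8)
\emph{Proof proposal.} The plan is to promote the given homotopy to an ambient isotopy of $M$ via Theorem~\ref{homotoisothm}, then conjugate a point-pushing isotopy for $\rho$ by the time-one map of that ambient isotopy to obtain one for a reparametrisation of $\rho'$, and finally to remove the conjugation using the fact that the ambient isotopy fixes both basepoints at all times. Throughout, Lemma~\ref{welldefinedlemma} and Remark~\ref{reparamremark} let us pass freely between choices of defining isotopy and between reparametrisations of a path while staying in the correct isotopy class rel $\partial M$.

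First I would apply Theorem~\ref{homotoisothm} to the embedded, endpoint-homotopic arcs $\rho,\rho'$ in $\Int(M)$ to obtain an ambient isotopy $G\colon(M\times[0,1],\partial M\times[0,1])\to(M,\partial M)$ with $G(\cdot,0)=\mathrm{id}_M$, with $G(\cdot,t)$ fixing $p$, $p'$ and $\partial M$ for every $t$, and with $g:=G(\cdot,1)$ carrying the image of $\rho$ onto the image of $\rho'$. (If $M$ itself is not compact, one first restricts attention to a compact connected submanifold of $\Int(M)$ carrying a homotopy from $\rho$ to $\rho'$ and extends by the identity.) Since $g$ is a homeomorphism fixing $p=\rho(0)=\rho'(0)$ and $p'=\rho(1)=\rho'(1)$ and mapping the arc $\rho([0,1])$ onto the arc $\rho'([0,1])$, the composite $g\circ\rho$ is an embedded parametrisation of the latter arc from $p$ to $p'$; hence $g\circ\rho=\rho'\circ f$ for some increasing bijection $f\colon[0,1]\to[0,1]$.

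Now fix a defining isotopy $H_\rho$ for $\phi_\rho$ keeping $\partial M$ fixed, and set $\Psi(x,t)=g\bigl(H_\rho(g^{-1}(x),t)\bigr)$. Each $\Psi(\cdot,t)$ is a homeomorphism of $M$, $\Psi(\cdot,0)=\mathrm{id}_M$, $\Psi$ fixes $\partial M$ throughout (since $g$, $g^{-1}$ and every $H_\rho(\cdot,t)$ do), and $\Psi(p,t)=g(H_\rho(p,t))=g(\rho(t))=\rho'(f(t))$ using $g(p)=p$. So $\Psi$ is a defining isotopy for the path $\rho'\circ f$, whence $\phi_{\rho'\circ f}$ may be represented by $\Psi(\cdot,1)=g\circ\phi_\rho\circ g^{-1}$. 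Combining Remark~\ref{reparamremark} (reparametrising $\rho'$ by $f$) with Lemma~\ref{welldefinedlemma} (independence of the defining isotopy, and its boundary-fixing refinement), $\phi_{\rho'}$ is isotopic to $g\circ\phi_\rho\circ g^{-1}$ as maps $M_p\to M_{p'}$ by an isotopy fixing $\partial M$. It remains to cancel the conjugation: since $G(\cdot,t)$ fixes $p$, $p'$ and $\partial M$ for all $t$, the family $t\mapsto G(\cdot,t)\circ\phi_\rho\circ G(\cdot,t)^{-1}$ is an isotopy through homeomorphisms $M_p\to M_{p'}$ fixing $\partial M$, running from $\phi_\rho$ to $g\circ\phi_\rho\circ g^{-1}$. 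Concatenating the two isotopies gives the claim.

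The genuine content all sits in Theorem~\ref{homotoisothm}; the remaining difficulty is purely organisational. The points to watch are that the basepoints $p$ and $p'$ (and $\partial M$) are preserved at every stage, so that all maps in sight really are homeomorphisms $M_p\to M_{p'}$ fixing $\partial M$; that the reparametrisation $f$ is tracked correctly when quoting Remark~\ref{reparamremark} and Lemma~\ref{welldefinedlemma}; and the minor compactness caveat when $M$ is non-compact.
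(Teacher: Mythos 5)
Your proposal is correct and follows essentially the same approach as the paper: apply Theorem~\ref{homotoisothm} to promote the homotopy to an ambient isotopy $G$ fixing $p$, $p'$ and $\partial M$, observe that conjugating the defining isotopy $H_\rho$ by $g = G(\cdot,1)$ produces a defining isotopy for $\rho'$ (up to reparametrisation), and then use the family $t\mapsto G(\cdot,t)\circ\phi_\rho\circ G(\cdot,t)^{-1}$ to interpolate from $\phi_\rho$ to $g\circ\phi_\rho\circ g^{-1}$. The paper's proof is a touch more streamlined — it fixes the representative $\phi_{\rho'}$ to be exactly $g\circ\phi_\rho\circ g^{-1}$ so that the conjugation family $\hat H$ ends precisely at $\phi_{\rho'}$ and no concatenation with a Lemma~\ref{welldefinedlemma} isotopy is needed, and it absorbs your $f$ by simply reparametrising $\rho'$ at the outset — but these are presentational differences, not mathematical ones.
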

\begin{proof}
Theorem \ref{homotoisothm} gives an isotopy $H\colon (M\times[0,1],\partial M\times[0,1])\to(M,\partial M)$  that fixes $\partial M$, such that for $x\in M$ and $t\in[0,1]$,
\begin{align*}
H(x,0)&=x,\\
H(p,t)=p,\ \  & H(p',t)=p',\\
H(x,1)\in\rho' &\Leftrightarrow x\in\rho.
\end{align*}
By re-parametrising $\rho'$ if needed, we may assume $\rho'(t)=H(\rho(t),1)$ for $t\in[0,1]$.
Given $t\in[0,1]$, denote by $h_t$ the homeomorphism $x\mapsto H(x,t)$.

Let $H_{\rho}\colon(M\times[0,1],\partial M\times[0,1])\to(M,\partial M)$ be the isotopy used to define $\phi_{\rho}$.
Define a third isotopy $H_{\rho'}\colon(M\times[0,1],\partial M\times[0,1])\to(M,\partial M)$ by $H_{\rho'}(x,t)=h_1(H_{\rho}(h_1^{-1}(x),t))$. 
Then for $x\in M$ and $t\in[0,1]$,
\begin{align*}
H_{\rho'}(x,0)&=h_1(H_{\rho}(h_1^{-1}(x),0))=h_1(h_1^{-1}(x))=x,\\
H_{\rho'}(p,t)&=h_1(H_{\rho}(h_1^{-1}(p),t))=h_1(H_{\rho}(p,t))=H(\rho(t),1)=\rho'(t),\\
x\in\partial M &\Rightarrow H_{\rho'}(x,t)=h_1(H_{\rho}(h_1^{-1}(x),t))=x.
\end{align*}
Hence $H_{\rho'}$ can be used to define $\phi_{\rho'}$.

Define a fourth isotopy $\hat{H}\colon(M\times[0,1],\partial M\times[0,1])\to(M,\partial M)$ by $\hat{H}(x,t)=h_t(\phi_{\rho}(h_t^{-1}(x)))$.
Then for $x\in M$ and $t\in[0,1]$,
\begin{align*}
\hat{H}(x,0)&=h_0(\phi_{\rho}(h_0^{-1}(x)))
=\phi_{\rho}(x),\\
\hat{H}(x,1)&=h_1(H_{\rho}(h_1^{-1}(x),1))=H_{\rho'}(x,1)=\phi_{\rho'}(x),\\
\hat{H}(p,t)&=h_t(\phi_{\rho}(h_t^{-1}(p)))=h_t(\phi_{\rho}(p))=h_t(p')=p'.
\end{align*}
Therefore $\hat{H}$ is an isotopy between $\phi_{\rho}$ and $\phi_{\rho'}$ that sends $p$ to $p'$ throughout. 
Moreover, $\hat{H}$ keeps $\partial M$ fixed.
\end{proof}

We now extend Proposition \ref{homotopearcprop} to all paths rather than just embedded ones. 
In doing this, it is important that we are working in the piecewise linear category, as this implies that all paths (except the constant paths) are piecewise embedded up to re-parametrisation.

\begin{corollary}\label{pwembeddedcor}
Let $\rho$ and $\rho'$ be paths in $\Int(M)$ with $\rho(0)=\rho'(0)=p$ and $\rho(1)=\rho'(1)=p'$.
Suppose that $\rho$ and $\rho'$ 
are homotopic in $M$ keeping their endpoints fixed.
Then $\phi_{\rho}$ and $\phi_{\rho'}$ are isotopic as maps from $M_p$ to $M_{p'}$ by an isotopy that keeps $\partial M$ fixed.
\end{corollary}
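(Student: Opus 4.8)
The plan is to reduce the general case to Proposition \ref{homotopearcprop} by factoring each path as a concatenation of embedded arcs and showing that $\phi_\rho$ behaves multiplicatively under concatenation. First I would handle the degenerate case: if $\rho$ is a constant path then $p=p'$, the homotopy forces $\rho'$ to be null-homotopic rel endpoints, and we want $\phi_{\rho'}$ isotopic to the identity on $M_p$; since we are in the piecewise linear category, a non-constant $\rho'$ is, after re-parametrisation (Remark \ref{reparamremark}), a finite concatenation $\rho'_1 * \cdots * \rho'_k$ of embedded arcs, so this case also falls under the concatenation argument below once we know a null-homotopic loop pushes trivially.

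Next I would record a concatenation lemma: if $\sigma$ is a path from $p$ to $q$ and $\tau$ a path from $q$ to $r$, then $\phi_{\tau * \sigma}$ is isotopic to $\phi_\tau \circ \phi_\sigma$ as maps $M_p \to M_r$, by an isotopy fixing $\partial M$. This is essentially immediate from the definitions: concatenate an isotopy $H_\sigma$ realising $\phi_\sigma$ with $H_\tau \circ (\phi_\sigma \times \mathrm{id})$, i.e. run the point along $\sigma$ and then along $\tau$; the resulting ambient isotopy fixes $\partial M$ and can be used to define $\phi_{\tau * \sigma}$, while Lemma \ref{welldefinedlemma} shows the choice of realising isotopies does not matter up to isotopy in $M_r$. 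Then, given the general $\rho$ and $\rho'$ of the statement, re-parametrise (Remark \ref{reparamremark}) so that each is a finite concatenation of embedded arcs with vertices in $\mathrm{int}(M)$; perturbing slightly if necessary (again PL) we may take all the intermediate vertices to be distinct interior points.

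The main step is then the standard homotopy-to-a-common-refinement argument. Since $\rho \simeq \rho'$ rel endpoints, after inserting back-and-forth ``stutter'' arcs and subdividing, I would arrange that $\rho$ and $\rho'$ become concatenations $\rho_1 * \cdots * \rho_m$ and $\rho'_1 * \cdots * \rho'_m$ of embedded arcs sharing the same sequence of endpoints $p = q_0, q_1, \dots, q_m = p'$, with $\rho_i$ homotopic to $\rho'_i$ rel endpoints for each $i$ — note that adding a subarc $\alpha$ immediately followed by $\bar\alpha$ changes $\phi_\rho$ only by $\phi_{\bar\alpha}\circ\phi_\alpha$, which by the concatenation lemma and Proposition \ref{homotopearcprop} (comparing $\bar\alpha * \alpha$ to the constant path, both embedded after a push-off, or directly: $\phi_{\bar\alpha}$ is an inverse up to isotopy of $\phi_\alpha$) is isotopic to the identity rel $\partial M$. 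Applying Proposition \ref{homotopearcprop} to each matched pair $\rho_i, \rho'_i$ gives $\phi_{\rho_i} \simeq \phi_{\rho'_i}$ rel $\partial M$ on $M_{q_{i-1}} \to M_{q_i}$, and composing via the concatenation lemma yields $\phi_\rho \simeq \phi_{\rho'}$ rel $\partial M$, as required.

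I expect the main obstacle to be the bookkeeping in the refinement step: producing, from an abstract homotopy $\rho \simeq \rho'$, an explicit common subdivision into embedded arcs with matching endpoints and segment-wise homotopies, while controlling the inserted cancelling pairs. This is routine in spirit — it is the usual ``homotopy of edge-paths is generated by elementary moves'' fact, combined with general position to keep arcs embedded and vertices interior and distinct — but it requires care to state cleanly. Everything else (the concatenation lemma, the inverse-arc observation, the final composition) is formal once that combinatorial reduction is in place.
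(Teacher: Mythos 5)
Your overall plan — reduce to Proposition \ref{homotopearcprop} via a concatenation lemma $[\phi_{\tau*\sigma}]=[\phi_\tau\circ\phi_\sigma]$ — is sound, and the concatenation lemma itself is correct and implicitly used in the paper. The gap is in the ``common refinement'' step, which is where all the work lives, and your description of it does not in fact go through as stated.

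The assertion that inserting stutters and subdividing yields decompositions $\rho=\rho_1*\cdots*\rho_m$, $\rho'=\rho'_1*\cdots*\rho'_m$ with matching intermediate points \emph{and} with $\rho_i\simeq\rho'_i$ rel endpoints is not a consequence of $\rho\simeq\rho'$ plus arbitrary stutter insertion. Concretely, take $\pi_1(M)\ne 1$, $p=p'$, $\rho$ the constant loop, $\rho'=\ell\cdot\rho_1\cdot\bar\rho_1\cdot\bar\ell$ where $\ell$ is a nontrivial loop at $p$ and $\rho_1$ is an arc from $p$ to $q$. Both are null-homotopic, but if you cut $\rho'$ at its natural return visits to $p$ and $q$ and try to force $\rho$ (via stutters) to hit the same points, the first segment of $\rho$ is whatever arc you inserted, while the first segment of $\rho'$ is $\ell_1$ (half of $\ell$) — and there is no way to choose your stutters to make \emph{every} matched pair homotopic rel endpoints unless you already track the phase carefully. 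The correct reduction is not ``homotopy of edge-paths is generated by elementary moves'' (that is a statement about CW complexes with a fixed cell structure), but rather a grid subdivision of the homotopy $H:[0,1]^2\to M$: cut at $s_0<\cdots<s_N$, set $v_i(t)=H(s_i,t)$, insert the \emph{specific} stutters $v_i\cdot\bar v_i$ at $\rho(s_i)$, and regroup so that segment $i$ is $\bar v_{i-1}\cdot\rho|_{[s_{i-1},s_i]}\cdot v_i$; this is then homotopic rel endpoints to $\rho'|_{[s_{i-1},s_i]}$ by the restriction of $H$ to the $i$-th vertical strip. So the stutters must come from the homotopy itself, not be chosen freely. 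You also never address how to make these regrouped pieces (which are triple concatenations) embedded with pairwise-distinct interior endpoints, which is needed before Proposition \ref{homotopearcprop} applies; this is a further round of general-position work.

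The paper avoids all of this by a different and cleaner route: it does not attempt to match up pieces of $\rho$ and $\rho'$ at all. Instead it shows, for a single piecewise-embedded path $\rho$, that $\phi_\rho$ is isotopic (fixing $p'$ and $\partial M$) to $\phi_{\rho''}$ for some \emph{globally embedded} $\rho''$ in the same homotopy class. The construction is in two passes: first push each embedded piece $\rho_i$ off to a new arc $\rho_{i,a}$ disjoint from $\rho$ and from the other $\rho_{j,a}$ away from endpoints, then re-cut at the midpoints $t_{i,a}$ — crucially the midpoints $\rho_a(t_{i,a})$ are now pairwise distinct and distinct from $p,p'$, which is exactly what makes the second pass produce an embedded concatenation. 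Having done this independently for $\rho$ and $\rho'$, one applies Proposition \ref{homotopearcprop} once, to the two embedded representatives. This dodges both problems in your approach (segment-wise homotopy matching, and endpoint repetition in the refined decomposition) in one stroke. Your approach is likely salvageable via the grid argument above, but as written the key combinatorial step is asserted rather than achieved, and the route it gestures at (edge-path elementary moves) is not the one that works.
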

\begin{proof}
First assume that $p\neq p'$.
We will show that there exists an embedded path $\rho''$ homotopic to $\rho$ relative to its endpoints such that $\phi_{\rho}$ is isotopic to $\phi_{\rho''}$ through maps fixing $p'$ and $\partial M$. It will then follow that an analogous such path exists for $\rho'$, and applying Proposition \ref{homotopearcprop} to these two paths will complete the proof.

Choose $t_0=0<t_1<\cdots<t_{n-1}<t_n=1$ that divide $\rho$ into a finite sequence $\rho_1,\ldots,\rho_n$ of embedded paths.
Isotope each path $\rho_i$ keeping its endpoints fixed to a path $\rho_{i,a}$ such that $\rho_{i,a}$ is embedded and, away from its endpoints, is disjoint from $\rho$ and from $\rho_{j,a}$ for $j\neq i$.
Figure \ref{pic1} shows a schematic picture of this in the case $n=3$, with the path $\rho$ taking the form $\sigma\cdot\sigma^{-1}\cdot\sigma$ for an embedded arc $\sigma$.
Let $\rho_a$ be the path $\rho_{1,a}\cdot\rho_{2,a}\cdots\rho_{n,a}$.
By Proposition \ref{homotopearcprop}, 
$\phi_{\rho_{1}}$ is isotopic to $\phi_{\rho_{1,a}}$ in $M_{\rho(t_1)}$,
and $\phi_{\rho_a}
=\phi_{\rho_{n,a}}\circ\cdots \circ\phi_{\rho_{1,a}}$ is isotopic to $\phi_{\rho_{n}}\circ\cdots\circ \phi_{\rho_{1}}=\phi_{\rho}$ in $M_{\rho(t_n)}=M_{p'}$.
All these isotopies also fix $\partial M$.

Now, for $1\leq i\leq n$, let $t_{i,a}=\frac{1}{2}(t_{i-1}+t_i)$.
Then $\rho_a(t_{1,a}),\ldots,\rho_a(t_{n,a})$ are distinct from each other and from $p$ and $p'$. Together with $t_0$ and $t_n$, these values of $t$ divide $\rho_a$ into paths $\rho_{0,b},\rho_{1,b},\ldots,\rho_{n,b}$, each of which is embedded.
Isotope each path $\rho_{i,b}$ keeping its endpoints fixed to a path $\rho_{i,c}$ so that the path $\rho_c=\rho_{0,c}\cdots\rho_{n,c}$ is  embedded (again, see Figure \ref{pic1}).
Then, again by Proposition \ref{homotopearcprop}, $\phi_{\rho_c}$ is isotopic to $\phi_{\rho_a}$ in $M_{p'}$, and so is also isotopic to $\phi_{\rho}$.
Thus $\rho_c$ is the path $\rho''$ required.
\begin{figure}[htbp]
\centering
\psset{xunit=.5pt,yunit=.5pt,runit=.5pt}
\begin{pspicture}(540,320)
{
\pscustom[linewidth=3,linecolor=black]
{
\newpath
\moveto(120,40)
\lineto(120,280)
}
}
{
\pscustom[linewidth=1,linecolor=black]
{
\newpath
\moveto(120,40)
\curveto(20,100)(20.000692,140)(20,180)
\curveto(19.999227,224.72093)(60,260)(120,280)
\curveto(180,260)(200,200)(200,160)
\curveto(200,120)(139.97631,40.97312)(120,40)
\curveto(98.97583,38.97583)(69.87797,145.15661)(80,200)
\curveto(83.62994,219.66783)(120,280)(120,280)
\moveto(330,170)
\curveto(310,270)(350,310)(430,310)
\curveto(510,310)(530,264.72136)(530,220)
\curveto(530,220)(530,170)(510,170)
\curveto(460,150)(470,150)(390,150)
\moveto(430,40)
\curveto(330,100)(330.00069,140)(330,180)
\curveto(329.99923,224.72093)(370,260)(430,280)
\curveto(490,260)(510,200)(510,160)
\curveto(510,120)(449.97631,40.97312)(430,40)
\curveto(408.97583,38.97583)(379.87797,145.15661)(390,200)
\curveto(393.62994,219.66783)(430,280)(430,280)
}
}
{
\pscustom[linestyle=none,fillstyle=solid,fillcolor=black]
{
\newpath
\moveto(130,39.99999738)
\curveto(130,34.47714988)(125.5228475,29.99999738)(120,29.99999738)
\curveto(114.4771525,29.99999738)(110,34.47714988)(110,39.99999738)
\curveto(110,45.52284488)(114.4771525,49.99999738)(120,49.99999738)
\curveto(125.5228475,49.99999738)(130,45.52284488)(130,39.99999738)
\closepath
\moveto(130,279.99999738)
\curveto(130,274.47714988)(125.5228475,269.99999738)(120,269.99999738)
\curveto(114.4771525,269.99999738)(110,274.47714988)(110,279.99999738)
\curveto(110,285.52284488)(114.4771525,289.99999738)(120,289.99999738)
\curveto(125.5228475,289.99999738)(130,285.52284488)(130,279.99999738)
\closepath
\moveto(440,39.99999738)
\curveto(440,34.47714988)(435.5228475,29.99999738)(430,29.99999738)
\curveto(424.4771525,29.99999738)(420,34.47714988)(420,39.99999738)
\curveto(420,45.52284488)(424.4771525,49.99999738)(430,49.99999738)
\curveto(435.5228475,49.99999738)(440,45.52284488)(440,39.99999738)
\closepath
\moveto(440,279.99999738)
\curveto(440,274.47714988)(435.5228475,269.99999738)(430,269.99999738)
\curveto(424.4771525,269.99999738)(420,274.47714988)(420,279.99999738)
\curveto(420,285.52284488)(424.4771525,289.99999738)(430,289.99999738)
\curveto(435.5228475,289.99999738)(440,285.52284488)(440,279.99999738)
\closepath
\moveto(395,149.99999738)
\curveto(395,147.23857363)(392.76142375,144.99999738)(390,144.99999738)
\curveto(387.23857625,144.99999738)(385,147.23857363)(385,149.99999738)
\curveto(385,152.76142113)(387.23857625,154.99999738)(390,154.99999738)
\curveto(392.76142375,154.99999738)(395,152.76142113)(395,149.99999738)
\closepath
\moveto(335,169.99999738)
\curveto(335,167.23857363)(332.76142375,164.99999738)(330,164.99999738)
\curveto(327.23857625,164.99999738)(325,167.23857363)(325,169.99999738)
\curveto(325,172.76142113)(327.23857625,174.99999738)(330,174.99999738)
\curveto(332.76142375,174.99999738)(335,172.76142113)(335,169.99999738)
\closepath
\moveto(515,169.99999738)
\curveto(515,167.23857363)(512.76142375,164.99999738)(510,164.99999738)
\curveto(507.23857625,164.99999738)(505,167.23857363)(505,169.99999738)
\curveto(505,172.76142113)(507.23857625,174.99999738)(510,174.99999738)
\curveto(512.76142375,174.99999738)(515,172.76142113)(515,169.99999738)
\closepath
}
}
{
\pscustom[linestyle=none,fillstyle=solid,fillcolor=black]
{
\newpath
\moveto(200,170)
\lineto(196,174)
\lineto(200,160)
\lineto(204,174)
\lineto(200,170)
\closepath
\moveto(20,170)
\lineto(24,166)
\lineto(20,180)
\lineto(16,166)
\lineto(20,170)
\closepath
\moveto(78,170)
\lineto(82,166)
\lineto(78,180)
\lineto(74,166)
\lineto(78,170)
\closepath
\moveto(120,150)
\lineto(124,146)
\lineto(120,160)
\lineto(116,146)
\lineto(120,150)
\closepath
\moveto(120,210)
\lineto(124,206)
\lineto(120,220)
\lineto(116,206)
\lineto(120,210)
\closepath
\moveto(120,96)
\lineto(116,100)
\lineto(120,86)
\lineto(124,100)
\lineto(120,96)
\closepath
\moveto(353.29998877,100.52001797)
\lineto(358.81197709,99.24802067)
\lineto(348,109)
\lineto(352.02799146,95.00802965)
\lineto(353.29998877,100.52001797)
\closepath
\moveto(425,310)
\lineto(421,306)
\lineto(435,310)
\lineto(421,314)
\lineto(425,310)
\closepath
\moveto(420,150)
\lineto(424,154)
\lineto(410,150)
\lineto(424,146)
\lineto(420,150)
\closepath
\moveto(388,172)
\lineto(392,168)
\lineto(388,182)
\lineto(384,168)
\lineto(388,172)
\closepath
}
}
{
\put(420,320){$\rho_{1,c}$}
\put(105,300){$p'=\rho(t_1)=\rho(t_3)$}
\put(105,10){$p=\rho(t_0)=\rho(t_2)$}
\put(450,140){$\rho_{2,c}$}
\put(320,80){$\rho_{0,b}$}
\put(400,200){$\rho_{3,b}$}
\put(5,90){$\rho_{1,a}$}
\put(50,210){$\rho_{3,a}$}
\put(130,200){$\rho_1$}
\put(130,90){$\rho_2$}
\put(130,150){$\rho_3$}
\put(160,140){$\rho_{2,a}$}
\put(260,165){$\rho_{a}(t_{1,a})$}
\put(520,165){$\rho_{a}(t_{2,a})$}
\put(395,160){$\rho_{a}(t_{3,a})$}
}
\end{pspicture}
\caption{\label{pic1}}
\end{figure}

Now suppose that $p=p'$.
Choose an embedded path $\sigma$ from $p$ to a distinct point $p''$.
Then the paths $\rho\cdot\sigma$ and $\rho'\cdot\sigma$ are piecewise embedded and are homotopic keeping their endpoints fixed. Thus $\phi_{\rho\cdot\sigma}$ is isotopic to $\phi_{\rho'\cdot\sigma}$ in $M_{p''}$.
Accordingly, $\phi_{\sigma}^{-1}\circ\phi_{\rho\cdot\sigma}$ and $\phi_{\sigma}^{-1}\circ\phi_{\rho'\cdot\sigma}$ are isotopic in $M_{p'}$.
This completes the proof, since $[\phi_{\sigma^{-1}}(\phi_{\rho\cdot\sigma})]=[\phi_{\sigma}^{-1}(\phi_{\sigma}(\phi_{\rho}))]=[\phi_{\rho}]$ and similarly $[\phi_{\sigma^{-1}}(\phi_{\rho'\cdot\sigma})]=[\phi_{\rho'}]$.
\end{proof}

\begin{remark}
Although the constant path $t\mapsto p$ is not piecewise embedded, the proof of Corollary \ref{pwembeddedcor} also holds in this case.
For convenience, we will treat the constant path as a simple closed curve. The reader may, if they prefer, replace this with an embedded loop within a neighbourhood of $p$.
\end{remark}

\begin{corollary}\label{makesimplecor}
Let $p\in\Int(M)$ and let $\rho$ be a 
loop in $\Int(M)$ based at $p$.
Then there exists a simple closed curve $\rho'$ in $\Int(M)$ based at $p$ such that $\phi_{\rho}$ and $\phi_{\rho'}$ are isotopic maps of $M_p$.
\end{corollary}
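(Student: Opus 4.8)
The plan is to find a simple closed curve $\rho'$ in $\Int(M)$, based at $p$, that is homotopic to $\rho$ keeping $p$ fixed, and then to appeal to Corollary \ref{pwembeddedcor}. Indeed, by that corollary (applied with $p'=p$), loops $\rho$ and $\rho'$ that are homotopic rel their common endpoint $p$ yield maps $\phi_\rho$ and $\phi_{\rho'}$ that are isotopic as maps of $M_p$, which is exactly the conclusion we want. So the whole content is the purely topological claim that every loop in $\Int(M)$ based at $p$ is homotopic rel $p$ to a simple closed curve through $p$; this is where the standing hypothesis $n=\dim M\geq 3$ is used.

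To prove that claim I would use PL general position. First, if $\rho$ is the constant loop, or in any other case where it is convenient, replace it by a small embedded circle through $p$ lying in a ball about $p$: such a circle bounds a disc, hence is null-homotopic, hence homotopic rel $p$ to the constant loop, and it is already a simple closed curve. Otherwise, after a preliminary homotopy rel $p$ assume $\rho$ is PL and that for some small $\varepsilon>0$ the arcs $\rho|_{[0,\varepsilon]}$ and $\rho|_{[1-\varepsilon,1]}$ are embedded, lie in a fixed ball $B\subseteq\Int(M)$ about $p$, and meet only at $p$. Now keep $\rho$ fixed on $[0,\varepsilon]\cup[1-\varepsilon,1]$ and put the middle arc $\rho|_{[\varepsilon,1-\varepsilon]}$ into general position: since a point has codimension $n\geq 3$, and two arcs generically meet in dimension $1+1-n<0$, a homotopy rel endpoints (and rel a neighbourhood of $p$, hence rel $p$) makes this arc embedded, disjoint from $p$, and disjoint from $\rho([0,\varepsilon])\cup\rho([1-\varepsilon,1])$ except at the two points $\rho(\varepsilon),\rho(1-\varepsilon)$. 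The resulting loop $\rho'$ is embedded away from $p$ and avoids $p$ on $(0,1)$, so it is a simple closed curve in $\Int(M)$ based at $p$, homotopic to $\rho$ rel $p$.

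Applying Corollary \ref{pwembeddedcor} to $\rho$ and this $\rho'$ then finishes the argument. I expect the only genuine work to be the general-position step, and the main point to watch is the behaviour near the basepoint: one must first normalise $\rho$ in a ball about $p$ so that the subsequent perturbation is supported away from $p$ and hence keeps $p$ fixed, and the perturbation that removes self-intersections and pushes the curve off $p$ is available precisely because $n\geq 3$.
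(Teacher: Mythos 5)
Your proposal is correct and takes essentially the same route the paper intends: the paper leaves \ref{makesimplecor} without a written proof precisely because the general-position construction you carry out (normalising near $p$ and perturbing the remainder off itself and off $p$, available since $\dim M\geq 3$ and the paper works PL) is the same one that already appears inside the proof of Corollary \ref{pwembeddedcor}, after which \ref{pwembeddedcor} supplies the isotopy between $\phi_{\rho}$ and $\phi_{\rho'}$. Your treatment of the constant loop also matches the paper's remark immediately preceding \ref{makesimplecor}.
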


\begin{lemma}[\cite{MR0163318} Addendum 2.2]\label{bdyisotopylemma}
Let $M'$ be a manifold with compact boundary, and let $\nhd(\partial M')$ be a neighbourhood of $\partial M'$.
Then any isotopy of $\partial M'$ can be extended to an ambient isotopy supported on $\nhd(\partial M')$.
\end{lemma}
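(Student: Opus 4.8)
The plan is to do everything inside a collar of $\partial M'$, where the extension can be written down by hand. Let $g\colon\partial M'\times[0,1]\to\partial M'$ denote the given isotopy; as usual we take $g_0=\mathrm{id}_{\partial M'}$, and each $g_t$ is a PL homeomorphism of $\partial M'$. By the PL collar neighbourhood theorem there is a PL embedding $c\colon\partial M'\times[0,1]\to M'$ with $c(x,0)=x$ for all $x\in\partial M'$, onto a closed neighbourhood of $\partial M'$, and with $c(\partial M'\times[0,1))$ open in $M'$. Because $\partial M'$ is compact we may shrink the collar — replace $c$ by $(x,s)\mapsto c(x,\varepsilon s)$ for a sufficiently small $\varepsilon>0$ — so that $c(\partial M'\times[0,1])\subseteq\nhd(\partial M')$; this shrinking is the only place compactness of $\partial M'$ is used.

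Next I would fix a PL function $\mu\colon[0,1]\times[0,1]\to[0,1]$ with $\mu(0,t)=t$ for all $t$ and $\mu(s,t)=0$ whenever $s\geq\tfrac12$; for instance $\mu(s,t)=\max\{0,\,t-2s\}$. Define $F\colon M'\times[0,1]\to M'$ to be the identity on $\bigl(M'\setminus c(\partial M'\times[0,1))\bigr)\times[0,1]$ and, on the collar, by $F\bigl(c(x,s),t\bigr)=c\bigl(g_{\mu(s,t)}(x),s\bigr)$. Since $\mu(1,t)=0$ and $g_0=\mathrm{id}$, the collar formula restricts to the identity on the frontier $c(\partial M'\times\{1\})$, so the two descriptions agree where the closed sets $c(\partial M'\times[0,1])\times[0,1]$ and $\bigl(M'\setminus c(\partial M'\times[0,1))\bigr)\times[0,1]$ overlap. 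On the first set $F$ is the composite of $c^{-1}\times\mathrm{id}$, the PL map $(x,s,t)\mapsto(x,\mu(s,t),s)$, $g\times\mathrm{id}$, and $c$, hence PL; on the second it is the projection; so $F$ is a well-defined PL map.

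It then remains to check the asserted properties, which is routine. Writing $F_t=F(\cdot,t)$: on the collar $F_t$ preserves each level $c(\partial M'\times\{s\})$ and acts on it by the homeomorphism $g_{\mu(s,t)}$, so $F_t$ is a bijection of $M'$, and a PL bijection of PL manifolds is a PL homeomorphism; thus $F$ is a PL ambient isotopy. From $\mu(s,0)=0$ and $g_0=\mathrm{id}$ we get $F_0=\mathrm{id}_{M'}$; from $\mu(0,t)=t$ we get $F_t(x)=g_t(x)$ for $x\in\partial M'$; and since $\mu(s,t)=0$ for $s\geq\tfrac12$, each $F_t$ is the identity outside $c(\partial M'\times[0,\tfrac12))\subseteq\nhd(\partial M')$, so $F$ is supported on $\nhd(\partial M')$.

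In short, there is no genuine difficulty here beyond invoking the PL collar neighbourhood theorem. The one point where I would take care in the write-up is the PL bookkeeping: that the patched map $F$ is PL (a map PL on each member of a finite closed cover and agreeing on overlaps), and that each level $F_t$, being a PL bijection of $M'$, is a PL homeomorphism. That verification, rather than any conceptual obstacle, is the main thing to get right.
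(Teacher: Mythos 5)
The paper does not prove this lemma; it cites it directly from Hudson \cite{MR0163318}, Addendum~2.2, and treats it as a black box. Your collar-plus-damping argument is the standard direct construction for this result and it is correct. The formula $F(c(x,s),t)=c(g_{\mu(s,t)}(x),s)$, with $\mu(s,t)=\max\{0,\,t-2s\}$, achieves exactly the three requirements: $F_t$ agrees with $g_t$ on $\partial M'$ because $\mu(0,t)=t$; $F_0=\mathrm{id}$ because $\mu(s,0)=0$; and $F_t$ is the identity where $s\ge\tfrac12$, hence outside the shrunk collar, which you have arranged to lie inside $\nhd(\partial M')$. The patching of the two PL formulas over the closed cover of $M'\times[0,1]$ is handled carefully, and the only place compactness of $\partial M'$ enters is to shrink the collar into the prescribed neighbourhood --- which you flag explicitly.

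The one step I would not leave to a bare slogan is ``a PL bijection of PL manifolds is a PL homeomorphism.'' It is true (invariance of domain gives that $F_t$ is open, hence a homeomorphism, and the inverse of a PL homeomorphism is PL), but it is cheaper and more transparent here simply to exhibit the inverse: since $(x,u)\mapsto(g_u(x),u)$ is a PL homeomorphism of $\partial M'\times[0,1]$, so is its inverse $(x,u)\mapsto(g_u^{-1}(x),u)$, and then $F_t^{-1}(c(x,s))=c\bigl(g_{\mu(s,t)}^{-1}(x),s\bigr)$ on the collar and the identity elsewhere is PL by exactly the same patching argument as for $F_t$. The same device shows the level-preserving map $(y,t)\mapsto(F_t(y),t)$ is a PL homeomorphism of $M'\times[0,1]$, which is what ``ambient isotopy'' really demands. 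With that small adjustment the write-up is complete.
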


\begin{lemma}\label{replaceboundarylemma}
Let $\psi\colon (M,\partial M)\to (M,\partial M)$ be a homeomorphism that is isotopic to the identity on $M$.
Then there exists a homeomorphism $\psi'\colon (M,\partial M)\to (M,\partial M)$ that is isotopic to the identity on $M$ by an isotopy that keeps $\partial M$ fixed, such that $\psi'$ is isotopic to $\psi$ by an isotopy that keeps $p$ fixed.
\end{lemma}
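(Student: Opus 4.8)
The plan is to alter $\psi$ only inside a collar of $\partial M$: there we can straighten out its behaviour on the boundary, while keeping the whole correction away from $p$. Thus, first I would fix an isotopy $F\colon M\times[0,1]\to M$ with $F(\cdot,0)=\mathrm{id}_M$ and $F(\cdot,1)=\psi$, which exists by hypothesis. Every self-homeomorphism of $M$ maps $\partial M$ to $\partial M$, so each level $F(\cdot,t)$ restricts to a self-homeomorphism $f_t$ of $\partial M$, and $(x,t)\mapsto f_t(x)$ is an isotopy of $\partial M$ running from $\mathrm{id}_{\partial M}$ to $\psi|_{\partial M}$. Since $p$ and $\psi(p)$ both lie in $\Int(M)$, I can choose a collar neighbourhood $\nhd(\partial M)$ of $\partial M$ disjoint from $\{p,\psi(p)\}$, and then by Lemma \ref{bdyisotopylemma} extend the isotopy $(f_t)$ to an ambient isotopy $G\colon M\times[0,1]\to M$ supported on $\nhd(\partial M)$. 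In particular $G(\cdot,0)=\mathrm{id}_M$, $G(\cdot,t)|_{\partial M}=f_t$ for all $t$, and $G(\cdot,t)$ is the identity outside $\nhd(\partial M)$, so it fixes both $p$ and $\psi(p)$.

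I would then set $\psi'=G(\cdot,1)^{-1}\circ\psi$. Since $G(\cdot,1)|_{\partial M}=f_1=\psi|_{\partial M}$, the map $\psi'$ is the identity on $\partial M$. The isotopy of $\psi'$ to the identity rel $\partial M$ comes from the observation that the \emph{same} boundary isotopy $(f_t)$ appears in $F$ and in $G$, so it cancels on $\partial M$: the formula $K(x,t)=G(\cdot,t)^{-1}(F(x,t))$ defines a level-preserving self-homeomorphism of $M$ with $K(\cdot,0)=\mathrm{id}_M$, $K(\cdot,1)=G(\cdot,1)^{-1}\circ\psi=\psi'$, and $K(\cdot,t)|_{\partial M}=f_t^{-1}\circ f_t=\mathrm{id}_{\partial M}$ for all $t$; hence $\psi'$ is isotopic to the identity on $M$ by an isotopy keeping $\partial M$ fixed. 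For the isotopy of $\psi$ to $\psi'$ rel $p$, I would use $\Psi(x,t)=G(\cdot,t)^{-1}(\psi(x))$: this runs from $\psi$ at $t=0$ to $\psi'$ at $t=1$, and $\Psi(p,t)=G(\cdot,t)^{-1}(\psi(p))=\psi(p)$ for every $t$ because $G(\cdot,t)$ fixes $\psi(p)$, so the isotopy keeps $p$ fixed. (If $\psi$ itself fixes $p$, this literally fixes the point $p$.)

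I do not expect a genuine obstacle here. The only points that require a little care are the choice of the collar $\nhd(\partial M)$ small enough to leave $p$ and $\psi(p)$ untouched by $G$, and the recognition that reusing the boundary isotopy of $F$ when building $G$ is exactly what makes $K$ fix $\partial M$; the remaining verifications — that $K$, $\Psi$ and $\psi'$ are homeomorphisms varying continuously in $t$ — are routine, in the spirit of the computations in Lemma \ref{welldefinedlemma} and Proposition \ref{homotopearcprop}.
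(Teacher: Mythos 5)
Your proof is correct and takes essentially the same route as the paper: restrict the isotopy to $\partial M$, use Lemma \ref{bdyisotopylemma} to extend it to an ambient isotopy $G$ supported on a collar $\nhd(\partial M)$, set $\psi'=G(\cdot,1)^{-1}\circ\psi$, then cancel the boundary behaviour against the original isotopy to get the rel-$\partial M$ isotopy to the identity, and use $t\mapsto G(\cdot,t)^{-1}\circ\psi$ for the isotopy to $\psi$. One small refinement: you take the collar disjoint from $\emph{both}$ $p$ and $\psi(p)$, whereas the paper only requires $p\notin\nhd(\partial M)$; your extra care is in fact what is needed for the conclusion ``keeps $p$ fixed'' when $\psi(p)\neq p$, a case the lemma statement formally allows (in the paper's actual application, $\psi(p)=p$, so the point is moot there).
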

\begin{proof}
Let $H\colon (M\times[0,1],\partial M\times[0,1])\to(M,\partial M)$ be the isotopy from the identity to $\psi$.
Choose a neighbourhood $\nhd(\partial M)$ of $\partial M$ that does not contain $p$.
By Lemma \ref{bdyisotopylemma}, the restriction of $H$ to $\partial M$ extends to an ambient isotopy 
$H'\colon(M\times[0,1],\partial M\times[0,1])\to(M,\partial M)$ that is supported on $\nhd(\partial M)$.
For $t\in[0,1]$, denote by $h'_t$ the map $x\mapsto H'(x,t)$.

Let $\psi'=(h'_1)^{-1}\circ\psi$. Then $\psi$ is isotopic to $\psi'$ by the isotopy $(x,t)\mapsto (h'_t)^{-1}(\psi(x))$. Since $H'$ is supported on $\nhd(\partial M)$, this isotopy fixes $p$.
Now define a further isotopy $\hat{H}\colon (M\times[0,1],\partial M\times[0,1])\to(M,\partial M)$ by
$\hat{H}(x,t)=(h'_t)^{-1}(H(x,t))$.
Then for $x\in M$ and $t\in[0,1]$,
\begin{align*}
\hat{H}(x,0)&=(h'_0)^{-1}(H(x,0))=x,\\
\hat{H}(x,1)&=(h'_1)^{-1}(H(x,1))=(h'_1)^{-1}(\psi(x))=\psi'(x),\\
x\in\partial M&\Rightarrow \hat{H}(x,t)=(h'_t)^{-1}(H'(x,t))=x.
\end{align*}
Hence $\psi'$ is isotopic to the identity by an isotopy that fixes $\partial M$.
\end{proof}

\begin{lemma}\label{inimagelemma}
Let $p\in\Int(M)$, and let $\psi\colon(M,\partial M,p)\to(M,\partial M,p)$ be a homeomorphism that is isotopic to the identity on $M$.
Then there exists a simple closed curve $\rho$ in $\Int(M)$ such that $\psi$ is isotopic to $\phi_{\rho}$ keeping $p$ fixed.

Moreover, if the isotopy between $\psi$ and the identity fixes $\partial M$ then the isotopy between $\psi$ and $\phi_{\rho}$ can also be made to do so.
\end{lemma}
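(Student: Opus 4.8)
The plan is to recover the required loop as the trace of $p$ under an isotopy that trivialises $\psi$. Let $H\colon M\times[0,1]\to M$ be an isotopy with $H(\cdot,0)=\mathrm{id}_M$ and $H(\cdot,1)=\psi$. Each time-$t$ map of $H$ is a self-homeomorphism of $M$, hence preserves both $\partial M$ and $\Int(M)$; so $H$ is in fact an ambient isotopy of the pair $(M,\partial M)$, and the trace $\gamma(t)=H(p,t)$ defines a loop $\gamma\colon[0,1]\to\Int(M)$ based at $p$ (using $\gamma(1)=\psi(p)=p$). Thus $H$ is an ambient isotopy, starting at the identity, that carries $p$ once along $\gamma$; this is the kind of data used in Definition \ref{homeodefn} to build $\phi_\gamma$, the only difference being that $H$ is not required to fix $\partial M$.

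First I would compare $\psi$ with $\phi_\gamma$. Fix an isotopy $H_\gamma$ defining $\phi_\gamma$ as in Definition \ref{homeodefn}: it begins at the identity, keeps $\partial M$ fixed, and has $H_\gamma(p,t)=\gamma(t)$. Since $H$ and $H_\gamma$ both begin at the identity and push $p$ along $\gamma$, Lemma \ref{welldefinedlemma} gives that $\psi=H(\cdot,1)$ is isotopic on $M_p$ to $\phi_\gamma=H_\gamma(\cdot,1)$, i.e.\ by an isotopy of $M$ keeping $p$ fixed. The loop $\gamma$ is generally not simple, so I would then apply Corollary \ref{makesimplecor} to produce a simple closed curve $\rho$ in $\Int(M)$ based at $p$ with $\phi_\rho$ isotopic to $\phi_\gamma$ on $M_p$. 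Concatenating the two isotopies yields $\psi$ isotopic to $\phi_\rho$ keeping $p$ fixed, which is the first assertion.

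For the \emph{moreover} clause, suppose $H$ fixes $\partial M$ throughout. Choosing $H_\gamma$ to fix $\partial M$ as well, the \emph{furthermore} part of Lemma \ref{welldefinedlemma} shows the first isotopy can be taken to fix $\partial M$. For the second isotopy I would note that the simple closed curve $\rho$ obtained from Corollary \ref{makesimplecor} may be taken homotopic to $\gamma$ relative to the basepoint — this is how that corollary is proved, via Corollary \ref{pwembeddedcor} together with the fact that in dimension at least $3$ a loop is homotopic rel endpoints to an embedded one by general position — so that Corollary \ref{pwembeddedcor} applies directly to $\gamma$ and $\rho$ and gives an isotopy between $\phi_\gamma$ and $\phi_\rho$ that also fixes $\partial M$. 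Composing the two $\partial M$-fixing isotopies completes the argument. I do not anticipate a genuine obstacle here: the only point requiring care is arranging a $\partial M$-fixing version of Corollary \ref{makesimplecor}, that is, simplifying $\gamma$ through a based homotopy so that the stronger Corollary \ref{pwembeddedcor} is available rather than only Corollary \ref{makesimplecor} itself.
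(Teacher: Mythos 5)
Your proof is correct, and it takes a mildly but genuinely different route from the paper's. The paper argues by cases. When $H$ fixes $\partial M$, it observes that $H$ itself is a valid defining isotopy for $\phi_{\rho'}$ in the sense of Definition \ref{homeodefn} (so $\phi_{\rho'}$ and $\psi$ represent the same class on $M_p$) and then applies Corollary \ref{makesimplecor}. When $H$ does not fix $\partial M$, it first invokes Lemma \ref{replaceboundarylemma} to trade $\psi$ for a homeomorphism $\psi'$ that is isotopic to $\psi$ rel $p$ and isotopic to the identity by a $\partial M$-fixing isotopy, and then reduces to the first case. You avoid the dichotomy entirely: you apply Lemma \ref{welldefinedlemma} to $H$ and any $\partial M$-fixing isotopy $H_\gamma$ defining $\phi_\gamma$, noting that the lemma only asks its two input isotopies to start at the identity and push $p$ along the same path, with no hypothesis about $\partial M$. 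This makes the comparison $\psi\sim\phi_\gamma$ rel $p$ work in one stroke, rendering Lemma \ref{replaceboundarylemma} unnecessary. The rest coincides with the paper: pass to a simple closed curve via Corollary \ref{makesimplecor}/Corollary \ref{pwembeddedcor}. Your handling of the \emph{moreover} clause is also sound: the \emph{furthermore} part of Lemma \ref{welldefinedlemma} gives a $\partial M$-fixing isotopy from $\psi$ to $\phi_\gamma$, and Corollary \ref{pwembeddedcor} (which is what underlies Corollary \ref{makesimplecor}) already produces its isotopy fixing $\partial M$, so the simplification step preserves the boundary condition. The net effect is a cleaner, case-free proof of the same statement using the same basic lemmas.
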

\begin{proof}
Let $H\colon (M\times[0,1],\partial M\times[0,1])\to(M,\partial M)$ be the isotopy from the identity to $\psi$.

If $H$ fixes $\partial M$, define $\rho'\colon[0,1]\to\Int(M)$ by $\rho(t)=H(p,t)$.
Then $H$ can be used to define $\phi_{\rho'}$, so $\phi_{\rho'}=\psi$. 
Using Corollary \ref{makesimplecor} we may replace $\rho'$ by a simple closed curve $\rho$.

If $H$ does not fix $\partial M$, by Lemma \ref{replaceboundarylemma} we may first isotope $\psi$, keeping $p$ fixed, to a homeomorphism $\psi'$ of $M_p$ that is isotopic to the identity by an isotopy that fixes $\partial M$.
Then there exists a simple closed curve $\rho$ based at $p$ such that $\psi'$ is isotopic to $\phi_{\rho}$ keeping $p$ fixed.
\end{proof}

\section{Altering isotopies}\label{isotopysection}

The main purpose of this section is to prove Lemma \ref{fixboundarylemma}, which under certain conditions gives control over the boundary of a surface being isotoped within a $3$--manifold.
We will not make use of this lemma until Section \ref{nonfibredsection}, but we include the proof here as it is closer in style to those of Section \ref{pointpushingsection}.

\begin{theorem}[\cite{MR0214087} Theorem 5.2 (The Alexander Trick)]\label{alextrickthm}
Let $M'$ be an $m$--ball for some $m\in\mathbb{Z}_{\geq 0}$, and let $p'\in \Int(M')$.
Let $\psi\colon(M',p')\to(M',p')$ be a homeomorphism that restricts to the identity on $\partial M'$.
Then $\psi$ is isotopic to the identity on $M'$ by an isotopy that keeps $\partial M'$ and $p'$ fixed.
\end{theorem}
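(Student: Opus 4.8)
The plan is to reduce to a standard model ball and then apply the classical cone construction of Alexander. \emph{Reduction.} Since $M'$ is an $m$--ball, I would first fix a homeomorphism $g\colon M'\to B^m$ onto the standard closed unit ball in $\mathbb{R}^m$, chosen so that $g(p')=0$: start from any homeomorphism $M'\to B^m$ and post-compose with a homeomorphism of $B^m$ carrying the image of $p'$ to the origin, which exists whenever $m\geq 1$ (the case $m=0$ is trivial, $\psi$ and the desired isotopy both being constant). Put $\tilde\psi=g\circ\psi\circ g^{-1}$. As $g$ restricts to a homeomorphism $\partial M'\to\partial B^m$ and $\psi$ is the identity on $\partial M'$, the conjugate $\tilde\psi$ is the identity on $\partial B^m$; and $\tilde\psi(0)=g(\psi(p'))=g(p')=0$. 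If $\tilde H\colon B^m\times[0,1]\to B^m$ is an isotopy from the identity to $\tilde\psi$ fixing $\partial B^m$ and $0$ throughout, then $H(x,t)=g^{-1}\bigl(\tilde H(g(x),t)\bigr)$ is an isotopy of the required kind on $M'$. So it suffices to treat $M'=B^m$, $p'=0$, writing $\psi$ for $\tilde\psi$.

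\emph{The cone isotopy.} For $t\in(0,1]$ I would define $\psi_t\colon B^m\to B^m$ by $\psi_t(x)=t\,\psi(x/t)$ when $|x|\leq t$ and $\psi_t(x)=x$ when $t\leq|x|\leq 1$, and set $\psi_0=\mathrm{id}$. Because $\psi$ restricts to the identity on the unit sphere, the two formulas agree on $|x|=t$, so each $\psi_t$ is a continuous bijection of the compact Hausdorff space $B^m$, hence a homeomorphism; moreover $\psi_1=\psi$, each $\psi_t$ fixes $\partial B^m$ pointwise (there $|x|=1\geq t$), and each $\psi_t$ fixes $0$ (using $\psi(0)=0$ when $t>0$). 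Setting $H(x,t)=\psi_t(x)$, the one point that needs care is joint continuity as $t\to 0$: for $|x|\leq t$ one has $|\psi_t(x)|\leq t$, hence $|\psi_t(x)-x|\leq 2t$, while $\psi_t(x)=x$ for $|x|\geq t$; thus $\sup_{x\in B^m}|H(x,t)-H(x,0)|\to 0$ as $t\to 0$, which gives continuity at every point $(x,0)$, and continuity away from $t=0$ is immediate. This $H$ is the desired isotopy, which transfers back to $M'$ via $g$ as above.

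The argument is elementary and I do not expect a genuine obstacle. The two places that want a little attention are the reduction step — producing $g$ with $g(p')=0$ and checking that conjugation by $g$ preserves both hypotheses (identity on the boundary, fixing the marked point) — and the continuity of the cone isotopy at time zero, which is handled by the uniform bound above.
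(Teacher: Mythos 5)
Your argument is correct and is the classical Alexander cone construction, which is exactly the proof given in the cited reference (Epstein); the paper itself states the result without proof. The reduction to $B^m$ with $p'\mapsto 0$ is handled cleanly, the cone isotopy automatically fixes the origin because $\psi(0)=0$, and your uniform estimate $\sup_x|\psi_t(x)-x|\le 2t$ correctly settles joint continuity at $t=0$. The one point worth flagging, since the paper works in the piecewise linear category, is that the cone should be taken PL rather than Euclidean (replace $x\mapsto t\,\psi(x/t)$ on $\{|x|\le t\}$ by the corresponding construction on a PL cone neighbourhood of $0$); this is a routine adjustment and does not affect the structure of the proof.
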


\begin{theorem}[\cite{MR0214087} Theorem 5.3]\label{spheremapthm}
Let $M'$ be an $m$--sphere for some $m\in\mathbb{Z}_{\geq 0}$, and let $p'\in M'$.
Let $\psi\colon(M',p)\to(M',p)$ be an orientation-preserving homeomorphism. Then $\psi$ is isotopic to the identity on $M'$ by an isotopy that keeps $p'$ fixed.
\end{theorem}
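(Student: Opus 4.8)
The plan is to reduce the statement to the Alexander trick (Theorem~\ref{alextrickthm}) by first isotoping $\psi$, keeping $p'$ fixed, to a homeomorphism that is the identity on a ball disjoint from $p'$. Write $M'=\mathbb{S}^m$ and fix a decomposition $M'=B\cup B'$ into two PL $m$--balls meeting along their common boundary sphere $\partial B=\partial B'$, with $p'\in\Int(B')$; for instance one may take $B'$ to be the closed star of $p'$ in a triangulation of $M'$ having $p'$ as a vertex and $B=\overline{M'\setminus B'}$, both of which are PL $m$--balls. Then $p'\notin B$, so $B\cup\psi(B)$ is a compact subset of $M'\setminus\{p'\}$, which is PL homeomorphic to $\mathbb{R}^m$; hence it lies in the interior of some PL $m$--ball $C$ with $p'\notin C$.

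Next I would apply the PL disc theorem inside $C$. The maps $\psi|_B\colon B\to\psi(B)$ and the inclusion $B\hookrightarrow C$ are orientation-preserving PL embeddings of an $m$--ball into $\Int(C)$, so they are ambient isotopic: there is an isotopy $g_t$ of $C$, fixing $\partial C$, with $g_0=\mathrm{id}$ and $g_1\circ(\psi|_B)$ equal to the inclusion. Extending $g_t$ by the identity outside $C$ gives an ambient isotopy of $M'$ fixing $p'$ throughout, so $\psi':=g_1\circ\psi$ is isotopic to $\psi$ rel $p'$ (via $t\mapsto g_t\circ\psi$) and is the identity on $B$. Being the identity on $B$, the homeomorphism $\psi'$ maps $B'$ to itself, fixing $\partial B'$ pointwise and fixing $p'\in\Int(B')$. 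The Alexander trick, applied to the ball $B'$ with marked point $p'$, then gives an isotopy of $B'$ from $\psi'|_{B'}$ to the identity keeping $\partial B'$ and $p'$ fixed; extending it by the identity on $B$ (the two pieces agree on $\partial B=\partial B'$) yields an isotopy of $M'$ from $\psi'$ to the identity that fixes $p'$. Concatenating this with the isotopy from $\psi$ to $\psi'$ completes the proof.

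The only substantive point is the first reduction, and in particular invoking the disc theorem in a way that keeps $p'$ fixed; this is arranged by carrying out the whole construction inside the ball $C\subseteq M'\setminus\{p'\}$. It is also at this step that the orientation hypothesis enters: the two embeddings of $B$ are ambient isotopic exactly because $\psi$ is orientation-preserving (for $m=1$, an orientation-reversing homeomorphism of $\mathbb{S}^1$ is genuinely not isotopic to the identity, so the hypothesis is necessary). The degenerate cases $m\le 1$ are covered by the same argument read appropriately.
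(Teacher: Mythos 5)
The paper does not prove this statement; it is imported verbatim as Theorem~5.3 of the cited reference \cite{MR0214087}, in the same breath as the Alexander trick (Theorem~5.2 there), so there is no in-paper argument for you to match against. Judged on its own, your proof is correct and is the standard way to derive this from the PL disc theorem together with the Alexander trick, which is what the cited reference does as well. The one ingredient you should name explicitly is the codimension-zero PL disc theorem in its compact-support form: two orientation-compatible PL embeddings of an $m$--ball into the interior of a connected PL $m$--manifold are ambient isotopic by an isotopy with compact support, which is what lets you take the isotopy $g_t$ of $C$ to be fixed on $\partial C$ and hence extendable by the identity over $M'\setminus C$ without disturbing $p'$. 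You have also correctly checked the small but necessary facts that $\psi^{-1}(p')=p'$ so $\psi(B)$ misses $p'$, that $M'\setminus\{p'\}$ is PL $\mathbb{R}^m$ so $B\cup\psi(B)$ fits inside some PL ball $C$ avoiding $p'$, and that once $\psi'$ is the identity on $B$ it automatically carries $B'$ to itself fixing $\partial B'$, putting you in position to apply Theorem~\ref{alextrickthm}. The remark about where orientation-preservation enters (making the two embeddings of $B$ into $\Int C$ ambient isotopic) is exactly right.
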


\begin{theorem}[\cite{MR0214087} Theorem 3.1]\label{homotopicarcsthm}
Let $M'$ be a compact surface, and let $\rho_1$ and $\rho_2$ be arcs properly embedded in $M'$ that are homotopic in $M'$ keeping their endpoints fixed. Then $\rho_1$ and $\rho_2$ are isotopic by an ambient isotopy that keeps $\partial M'$ fixed.
\end{theorem}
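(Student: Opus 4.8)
The plan is to carry out the classical \emph{bigon argument}, in three stages: put the two arcs in general position, remove bigons one at a time until the arcs are disjoint, and then push one arc onto the other across a disk. Every isotopy produced below will fix $\partial M'$.

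First I would isotope $\rho_2$, keeping its endpoints and $\partial M'$ fixed, so that $\rho_1$ and $\rho_2$ meet transversally, and (by a further small isotopy near the common endpoints $a=\rho_1(0)=\rho_2(0)$ and $b=\rho_1(1)=\rho_2(1)$) so that near $a$ and near $b$ the two arcs are disjoint except at those points. Then $\rho_1\cap\rho_2$ is the set $\{a,b\}$ together with finitely many transverse interior points; write $k$ for the number of interior points, and induct on $k$.

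The crux is the bigon criterion: if $k\geq 1$, then $\rho_1$ and $\rho_2$ bound a \emph{bigon} --- an embedded disk $D\subseteq M'$ whose boundary is a subarc of $\rho_1$ together with a subarc of $\rho_2$ that meet in exactly two points (each either an interior intersection point or one of $a,b$), with $\Int(D)$ disjoint from $\rho_1\cup\rho_2$. To find $D$ I would pass to the universal cover $\pi\colon\widetilde{M'}\to M'$, which is a simply connected, hence planar, surface; lift $\rho_1$ to an arc $\widetilde{\rho_1}$ and lift $\rho_2$ to the arc $\widetilde{\rho_2}$ starting at $\widetilde{\rho_1}(0)$. Because $\rho_1$ and $\rho_2$ are homotopic rel endpoints, $\widetilde{\rho_2}$ and $\widetilde{\rho_1}$ also share their other endpoint, so these are two arcs in a planar surface with the same pair of endpoints; if they are not equal they cut off a bigon upstairs (take an innermost complementary region), and since $\pi$ is injective on each $\widetilde{\rho_i}$ and on the closure of that region, the bigon projects to an embedded bigon $D$ in $M'$. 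The isotopy of $\rho_2$ that sweeps the $\rho_2$-side of $D$ onto the $\rho_1$-side extends, by the isotopy extension theorem, to an ambient isotopy of $M'$ supported near $D$ and fixing $\partial M'$; it strictly decreases $k$, so induction brings us to the case $k=0$.

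When $\rho_1$ and $\rho_2$ meet only at $a$ and $b$, the loop $\rho_1\cdot\overline{\rho_2}$ is null-homotopic; rounding the corners of $\rho_1\cup\rho_2$ at $a$ and $b$ to push it slightly into $\Int(M')$ produces a null-homotopic simple closed curve, which therefore bounds an embedded disk in $M'$ meeting $\partial M'$ trivially, and reattaching the two corner pieces yields an embedded disk $D\subseteq M'$ with $\partial D=\rho_1\cup\rho_2$ and $D\cap\partial M'=\{a,b\}$. A regular neighbourhood of $D$ is a rectangle in which $\rho_1$ and $\rho_2$ are the two sides running from $a$ to $b$; sweeping across $D$ (an Alexander-trick computation) and extending by the identity gives an ambient isotopy of $M'$, fixing $\partial M'$, taking $\rho_2$ to $\rho_1$. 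Composing all of the above isotopies proves the theorem. The step I expect to be hardest is the bigon criterion --- in particular, checking that an innermost complementary region upstairs is genuinely a two-cornered bigon and that it descends to an \emph{embedded} bigon, and treating ``half-bigons'' having a corner at $a$ or $b$ on an equal footing with the interior ones; the general-position and collaring steps are routine.
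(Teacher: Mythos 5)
This result is cited from Epstein \cite{MR0214087} and not reproved in the present paper, so I am assessing your argument on its own merits. Your outline --- general position, inductive bigon removal, a final disk sweep --- is the right one and is in the spirit of the standard proof. However, the step you yourself flag as hardest contains a genuine gap, not merely an omitted routine detail. You lift only the two based arcs $\widetilde{\rho_1},\widetilde{\rho_2}$ to the universal cover and assert that $\pi$ is injective on the closure of an innermost complementary region of $\widetilde{\rho_1}\cup\widetilde{\rho_2}$, hence that it projects to an embedded bigon. Nothing in your argument justifies this, and it is false in general: failure of injectivity on $\overline{\widetilde{D}}$ means $g\overline{\widetilde{D}}\cap\overline{\widetilde{D}}\neq\emptyset$ for some nontrivial deck transformation $g$, which (using freeness of the action and a Brouwer fixed-point argument) forces a translate $g\widetilde{\rho_1}$ or $g\widetilde{\rho_2}$ to pass through the region --- and your ``innermost'' choice, made only with respect to $\widetilde{\rho_1}\cup\widetilde{\rho_2}$ and not with respect to the other lifts, does nothing to exclude this. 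A second, related problem is that the induction may never start: an interior intersection $x\in\rho_1\cap\rho_2$ only witnesses a crossing of $\widetilde{\rho_1}$ with \emph{some} translate $g\widetilde{\rho_2}$, and $g$ need not be $1$, so $\widetilde{\rho_1}$ and $\widetilde{\rho_2}$ can be disjoint away from their shared endpoints even when $k\geq 1$. In that case your two chosen lifts bound a single lune, there is no reason it embeds downstairs, and there is no smaller bigon between them to take.

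The correct version of this step works with the full preimages $\pi^{-1}(\rho_1)\cup\pi^{-1}(\rho_2)$, a locally finite union of disjoint embedded arcs, and takes a complementary disk innermost with respect to \emph{all} of them. Innermost-ness against the whole preimage is exactly what makes $\pi$ injective on the closure (via the deck-group argument above), and such a region exists by a finite descent inside any compact region cut off by $\widetilde{\rho_1}$ and $\widetilde{\rho_2}$, using that translates $g\widetilde{\rho_i}$ never cross $\widetilde{\rho_i}$ since each $\rho_i$ is embedded; one also checks that the innermost region has only two corners, again using embeddedness and general position. The rest of your plan --- half-bigons with a corner at $a$ or $b$, the final disk once $\rho_1\cap\rho_2=\{a,b\}$ (with a small check that the disk bounded by the rounded curve lies on the correct side), and extending each sweep to an ambient isotopy fixing $\partial M'$ by the isotopy extension theorem --- is standard and fine.
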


\begin{lemma}\label{isotopeannuluslemma}
Let $M'=\crcle\times[0,1]$ be an annulus, and $\psi\colon (M',\partial M')\to (M',\partial M')$ a homeomorphism with $\psi(x,0)=(x,0)$ for $x\in\crcle$. Then there is an isotopy $H\colon (M'\times[0,1],\partial M'\times[0,1])\to (M',\partial M')$ from $\psi$ to the identity on $M'$ with $H(x,0,t)=(x,0)$ for $x\in\crcle$ and $t\in[0,1]$.
\end{lemma}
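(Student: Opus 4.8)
The plan is to whittle $\psi$ down, through isotopies fixing $C_0:=\crcle\times\{0\}$ pointwise, in three stages: first to a homeomorphism that is the identity on \emph{all} of $\partial M'$; then, after unwinding a Dehn-twist ambiguity using the freedom to move $C_1:=\crcle\times\{1\}$, to one that also fixes a chosen spanning arc $\alpha$; and finally, by cutting along $\alpha$, to the identity via the Alexander trick.

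First note that $\psi$ preserves the orientation of $M'$: it fixes $C_0$ pointwise, hence preserves the orientation of $C_0$, whereas a homeomorphism of a connected orientable manifold that reverses orientation reverses it on every boundary component it carries to itself. Consequently $\psi(C_1)=C_1$ and $\psi|_{C_1}$ is an orientation-preserving homeomorphism of a circle, so there is an isotopy $\theta_t$ of $\crcle$ with $\theta_0=\mathrm{id}$ and $\theta_1=\psi|_{C_1}$. Writing $\crcle=\mathbb{R}/\mathbb{Z}$ and fixing a continuous $\mu\colon[0,1]\to[0,1]$ with $\mu\equiv 0$ near $0$ and $\mu(1)=1$, the maps $\Theta_t(x,r)=(\theta_{\mu(r)t}(x),r)$ form an isotopy of $M'$ starting at the identity, fixing a collar of $C_0$ (in particular $C_0$) throughout, with $\Theta_1|_{C_1}=\psi|_{C_1}$; each $\Theta_t$ preserves every level $\crcle\times\{r\}$ and hence $\partial M'$. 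Then $t\mapsto\Theta_t^{-1}\circ\psi$ is an isotopy fixing $C_0$ and keeping $\partial M'$ invariant, from $\psi$ to $\psi':=\Theta_1^{-1}\circ\psi$, which is the identity on all of $\partial M'$.

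Now take the spanning arc $\alpha=\{\bar 0\}\times[0,1]$, properly embedded with one endpoint on each of $C_0,C_1$. Since $\psi'$ fixes $\partial M'$, the arc $\psi'(\alpha)$ has the same endpoints as $\alpha$, and as $M'$ is a $K(\mathbb{Z},1)$ these arcs differ, up to homotopy rel endpoints, by $n\in\mathbb{Z}$ full turns; concretely $\psi'(\alpha)$ is homotopic rel endpoints to $\Phi_n(\alpha)$ where $\Phi_n(x,r)=(x+nr,r)$, an $n$-fold Dehn twist, which equals the identity on $\partial M'$. The key point is that although $\Phi_n$ is \emph{not} isotopic to the identity rel $\partial M'$, it \emph{is} isotopic to the identity rel $C_0$ alone, via $(x,r,s)\mapsto(x+(1-s)nr,r)$. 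Hence $\psi'':=\Phi_n^{-1}\circ\psi'$ is isotopic to $\psi'$ rel $C_0$, is still the identity on $\partial M'$, and has $\psi''(\alpha)$ homotopic to $\alpha$ rel endpoints. By Theorem~\ref{homotopicarcsthm} there is an ambient isotopy of $M'$ fixing $\partial M'$ that carries $\psi''(\alpha)$ to $\alpha$; composing $\psi''$ with its time-$1$ map, and then with one further isotopy supported near $\alpha$ that fixes $\partial M'$, we reach $\psi'''$, isotopic to $\psi''$ rel $\partial M'$, equal to the identity on $\partial M'$ and pointwise on $\alpha$.

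Finally, cut $M'$ along $\alpha$ to obtain a $2$--ball $D$, whose boundary consists of $C_0$, $C_1$ and two copies of $\alpha$. Since $\psi'''$ is the identity on $\partial M'$ and on $\alpha$, it descends to a homeomorphism of $D$ restricting to the identity on $\partial D$, which by Theorem~\ref{alextrickthm} (the Alexander trick, $m=2$) is isotopic to $\mathrm{id}_D$ rel $\partial D$; being constant on the two copies of $\alpha$, this isotopy descends to an isotopy of $M'$ from $\psi'''$ to $\mathrm{id}_{M'}$ fixing $\alpha$ and $\partial M'$. Concatenating the isotopies $\psi\simeq\psi'\simeq\psi''\simeq\psi'''\simeq\mathrm{id}_{M'}$ — each fixing $C_0$ pointwise and keeping $\partial M'$ invariant — yields the required $H$ (reparametrising so $H(\cdot,0)=\psi$, $H(\cdot,1)=\mathrm{id}$). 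I expect the main obstacle to be the third paragraph: one must first reduce to a map that is the identity on \emph{both} boundary circles and then recognise and kill the residual Dehn-twist power using only the mobility of $C_1$, since over $\partial M'$ that power is a genuine obstruction.
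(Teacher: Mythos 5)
Your proof is correct and takes essentially the same approach as the paper's: exploit the freedom to move $C_1 = \crcle\times\{1\}$ to cancel the twisting, straighten the spanning arc via Theorem~\ref{homotopicarcsthm}, and finish with the Alexander trick (Theorem~\ref{alextrickthm}) on the complementary disc. The only cosmetic difference is one of ordering --- the paper first untwists $C_1$ by a rotation (found by lifting $\psi(\rho_0)$ to the universal cover to read off the twist amount $y$) and then straightens $\psi|_{C_1}$ to the identity, whereas you first reduce $\psi$ to the identity on all of $\partial M'$ and then explicitly identify and cancel the residual Dehn-twist power $\Phi_n$ by an isotopy rel $C_0$ only.
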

\begin{proof}
Let $X=\mathbb{R}/\sim$, where $x\sim x+m$ for $x\in\mathbb{R}$ and $m\in\mathbb{Z}$, so that $X\cong\crcle$.
Then $M'\cong X\times[0,1]$.
Let $\widetilde{M}'=\mathbb{R}\times[0,1]$, so that $\widetilde{M}'$ is the universal cover of $M'$, and let $\pi_{\mathbb{R}}\colon\widetilde{M}'\to\mathbb{R}$ be projection onto the first factor.

Let $\rho_0\colon[0,1]\to M'$ be the arc in $M'$ given by $\rho_0(t)=(0,t)$ for $t\in[0,1]$.
The homeomorphism $\psi$ maps $\rho_0$ to an arc $\rho_1\colon[0,1]\to M'$ with $\rho_1(0)=\rho_0(0)$.
Let $\widetilde{\rho}_1\colon[0,1]\to\widetilde{M}'$ be the lift of $\rho_1$ beginning at $(0,0)$.
Let $y\in\mathbb{R}$ such that $\widetilde{\rho}_1(1)=(y,1)$.
Define $H_{\mathbb{R}}\colon\mathbb{R}\times\{1\}\times[0,1]\to\mathbb{R}\times\{1\}$ by
$H_{\mathbb{R}}(x,1,t)=(x-yt,1)$ for $x\in\mathbb{R}$ and $t\in[0,1]$.
This descends to an isotopy of $X\times\{1\}\subseteq M'$. 
By Lemma \ref{bdyisotopylemma}, this extends to an isotopy $H_1\colon(M'\times[0,1],\partial M'\times[0,1])\to(M',\partial M')$ that keeps $X\times\{0\}$ fixed.

Let $\rho_2\colon[0,1]\to M'$ be the arc $\rho_2(t)=H_1(\rho_1(t),1)$, which is the image of $\rho_1$ under the isotopy $H_1$, and let $\widetilde{\rho}_2\colon[0,1]\to\widetilde{M}'$ be the lift of $\rho_2$ beginning at $\rho_2$.
As $\pi_{\mathbb{R}}(\widetilde{\rho}_2(1))=0$, the loop $\rho_2\cdot\rho_0^{-1}$ is null-homotopic in $M'$, meaning the arcs $\rho_0$ and $\rho_2$ are homotopic keeping their endpoints fixed.

Now consider the circle $X\times\{1\}$.
The map $(x,1)\mapsto H_1(\psi(x,1),1)$ is an orientation-preserving homeomorphism that takes $(0,1)$ to itself.
By Theorem \ref{spheremapthm}, this map is isotopic to the identity map on $X\times\{1\}$ by an isotopy keeping $(0,1)$ fixed.
Lemma \ref{bdyisotopylemma} shows that this isotopy can be achieved by an ambient isotopy of $M'$ that also keeps $X\times\{0\}$ fixed.

Although the arc $\rho_2$ may move under this isotopy, $\rho_0$ and $\rho_2$ are still isotopic relative to their endpoints,
so by Theorem \ref{homotopicarcsthm} there is an ambient isotopy of $M'$, fixed on $\partial M'$, after which $\rho_2$ coincides with $\rho_0$. 
We now have that $X\times\{0,1\}\cup \{0\}\times[0,1]$ is mapped by the identity. The complement of this set in $M'$ is an open disc. By Theorem \ref{alextrickthm}, there is therefore a final isotopy, fixed on $X\times\{0,1\}\cup \{0\}\times[0,1]$, to the identity.
\end{proof}

\begin{lemma}\label{fixboundarylemma}
Suppose that $M$ is a $3$--manifold and 
let $S$ be a toral boundary component of $M$.
Let $R\subseteq M$ be a properly embedded surface.
Suppose that every loop in $R\cap S$ is essential in $S$.
Then $R\cap S$ consists of finitely many parallel simple closed curves.

Choose an essential simple closed curve $\sigma$ on $S$, based at a point $p'$ on $S$, that is parallel to the curves of $R\cap S$ if any exist.
Define $\theta\colon\pi_1(S,p')\to \mathbb{Z}$ to be the algebraic intersection with $\sigma$, and let $G=\ker(\theta)$.
Let $(\widetilde{S},\widetilde{p}')$ be the infinite cyclic cover of $(S,p')$ with $\pi_1(\widetilde{S},\widetilde{p}')\cong G$.

Let $H\colon (M\times [0,1],\partial M\times[0,1])\to (M,\partial M)$ be an isotopy from the identity to a homeomorphism $\psi\colon (M,\partial M)\to (M,\partial M)$ where $\psi(R\cap S)=R\cap S$.
Let $H_S\colon S\times[0,1]\to S$ and $\psi_S\colon S\to S$ be the restrictions of $H$ and $\psi$ respectively to $S$.
Then $H_S$ lifts to an isotopy $\widetilde{H}_S\colon\widetilde{S}\times[0,1]\to\widetilde{S}$ from the identity on $\widetilde{S}$ to a lift $\widetilde{\psi}_S\colon\widetilde{S}\to\widetilde{S}$ of $\psi$.
Let $\widetilde{R}_S$ be a lift of $R\cap S$ to $\widetilde{S}$.

Suppose that $\widetilde{\psi}_S(\widetilde{R}_S)=\widetilde{R}_S$.
Then there is an isotopy $H'\colon (M\times [0,1],\partial M\times[0,1])\to (M,\partial M)$ from the identity on $M$ to $\psi$ such that $H'(R\cap S,t)=R\cap S$ for $t\in[0,1]$.
\end{lemma}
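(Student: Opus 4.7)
The first conclusion is immediate: since $R$ is properly embedded in $M$, $R\cap S$ is a compact $1$-submanifold of the torus $S$, hence a finite disjoint union of circles; by hypothesis each is essential on $S$, and pairwise-disjoint essential simple closed curves on $\mathbb{S}^1\times\mathbb{S}^1$ are all parallel. For the main claim, my plan is to (i) build an isotopy $K\colon S\times[0,1]\to S$ from the identity to $\psi_S$ preserving $C:=R\cap S$ setwise throughout, and (ii) absorb the discrepancy between $H_S$ and $K$ into an ambient isotopy of $M$ supported near $S$ with both endpoints equal to $\mathrm{id}_M$.

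For (i), I would first show that $\psi_S$ fixes each component $c_i$ of $C$ setwise. The lift $\widetilde{\psi}_S$ is isotopic to the identity on the cylinder $\widetilde{S}\cong\mathbb{S}^1\times\mathbb{R}$ via $\widetilde{H}_S$, so it is orientation-preserving and preserves the two ends of $\widetilde{S}$; combined with $\widetilde{\psi}_S(\widetilde{R}_S)=\widetilde{R}_S$, the linear order of the circles of $\widetilde{R}_S$ along the $\mathbb{R}$-factor is preserved, forcing each to be fixed setwise. The same end-preservation makes $\psi_S|_{c_i}$ orientation-preserving, so it is isotopic to the identity on $c_i$; extending this rotation to an ambient isotopy of $S$ supported in a thin product neighbourhood of $c_i$ isotopes $\psi_S$, through $C$-preserving homeomorphisms of $S$, to a map $\psi_S'$ that is the identity on $C$. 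On each annulus component of $S\setminus C$, $\psi_S'$ fixes the boundary pointwise and so is isotopic to the identity by Lemma \ref{isotopeannuluslemma}; concatenating yields the desired $K$.

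The crux is to arrange that $L_t:=H_S(\cdot,t)^{-1}\circ K(\cdot,t)$, which is a loop in $\homeo_0(S)$ at the identity, is null-homotopic. Fix $p\in C$ and lift to $\widetilde{p}\in\widetilde{R}_S$. Since $\widetilde{K}$ begins at the identity and $K$ preserves $C$ setwise, $\widetilde{K}_t(\widetilde{R}_S)$ varies continuously within the discrete collection of deck-translates of $\widetilde{R}_S$, hence equals $\widetilde{R}_S$ throughout; in particular $\widetilde{K}_1$ is a lift of $\psi_S$ preserving $\widetilde{R}_S$, and since the only deck translation preserving $\widetilde{R}_S$ is the identity, $\widetilde{K}_1=\widetilde{\psi}_S$. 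Therefore the lift of $\gamma(t):=L_t(p)$ starting at $\widetilde{p}$ ends at $\widetilde{\psi}_S^{-1}(\widetilde{\psi}_S(\widetilde{p}))=\widetilde{p}$, so $\gamma$ lifts to a loop in $\widetilde{S}$ and $[\gamma]\in\pi_1(\widetilde{S})\cong G=\mathbb{Z}\cdot[\sigma]\subseteq\pi_1(S,p)$. Replacing $K(\cdot,t)$ by $K(\cdot,t)\circ\rho_{kt}$, where $\rho_s$ is rotation of $S$ by $s\cdot[\sigma]$ (which preserves $C$ setwise), alters $[\gamma]$ by $k[\sigma]$, so a suitable $k$ makes $[\gamma]=0\in\pi_1(S,p)$.

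Using the classical fact that evaluation at $p$ induces an isomorphism $\pi_1(\homeo_0(\mathbb{S}^1\times\mathbb{S}^1))\cong\pi_1(\mathbb{S}^1\times\mathbb{S}^1)$, the vanishing of $[\gamma]$ yields a null-homotopy $F\colon[0,1]^2\to\homeo_0(S)$ of $L_t$ with $F(t,0)=L_t$ and $F\equiv\mathrm{id}$ on the other three sides of the square. Using a collar $S\times[0,1]\subseteq M$, define $A_t(y,s)=F(t,s)(y)$ on the collar and $A_t=\mathrm{id}$ elsewhere; then $A\colon M\times[0,1]\to M$ is an isotopy of pairs with $A_0=A_1=\mathrm{id}_M$ and $A_t|_S=L_t$. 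Setting $H'_t:=H_t\circ A_t$ gives $H'_0=\mathrm{id}$, $H'_1=\psi$, and for $x\in S$, $H'_t(x)=H_S(L_t(x),t)=K(x,t)$, which preserves $C$. The principal difficulty is the obstruction analysis: showing that the lift hypothesis kills precisely the $\pi_1$-component of $L_t$ transverse to $[\sigma]$, leaving a residue correctable by a $C$-preserving rotation.
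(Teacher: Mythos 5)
Your argument is correct in outline but takes a genuinely different route from the paper's. The paper constructs $H'$ entirely by hand: it first pushes the collar of $S$ inwards (the maps $f_t$ and the isotopy $H_1$) so that all the action happens on $S\times[0,1]$, then builds the required isotopy of $S\times[0,1]$ from the identity to $\psi_S\times\mathrm{id}$ explicitly, by positioning the two annuli $\sigma\times[0,1]$ and $\sigma'\times[0,1]$ using irreducibility, incompressibility and the Alexander trick, and finally pulls the collar back out. You instead (i) produce a path $K$ from $\mathrm{id}_S$ to $\psi_S$ inside the group of $C$--preserving homeomorphisms of $S$, and (ii) observe that the discrepancy loop $L_t=H_{S,t}^{-1}\circ K_t$ in $\homeo_0(S)$ can be killed by a based null-homotopy once its image under the evaluation isomorphism $\pi_1(\homeo_0(\mathbb{S}^1\times\mathbb{S}^1))\cong\pi_1(\mathbb{S}^1\times\mathbb{S}^1)$ vanishes; the lifting hypothesis is used, exactly as in the paper, to show that this image lies in $\mathbb{Z}[\sigma]$, which is then cancelled by composing $K$ with $\sigma$--direction rotations (which preserve $C$). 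What your approach buys is conceptual clarity and brevity: the whole second half of the paper's proof is replaced by one appeal to the homotopy type of $\homeo_0(T^2)$. What it costs is that injectivity of the evaluation map on $\pi_1$ (Hamstrom's theorem, or Earle--Eells in the smooth setting) is a nontrivial external input that the paper deliberately avoids --- its manipulation of $T_\sigma$ and $T_{\sigma'}$ is in effect a bare-hands proof of the instance of that injectivity it needs --- so you should cite it explicitly, and in the PL category in which the paper works.

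Two details deserve more care. First, in constructing $K$ you cannot simply apply Lemma \ref{isotopeannuluslemma} to the complementary annuli independently and ``concatenate'': the annuli form a cycle, the isotopy on each annulus rotates its far boundary circle, and these rotations must be propagated into the neighbouring annulus by Lemma \ref{bdyisotopylemma}, leaving at the end a residual product of Dehn twists about curves parallel to $\sigma$ supported in collars. That residue is isotopic to $T_\sigma^{k}$ in $\Mod(S)$, and $k=0$ because $\psi_S\in\homeo_0(S)$; a product of twists about parallel curves with total twisting zero is isotopic to the identity through $C$--preserving shears, so the construction closes up --- but this bookkeeping (the analogue of the paper's use of the arc $\psi_S(\sigma')$) must be done. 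Second, the collection of lifts of $C$ to $\widetilde{S}$ is larger than the set of deck-translates of $\widetilde{R}_S$ when $C$ has several components; your continuity-plus-discreteness argument for $\widetilde{K}_t(\widetilde{R}_S)=\widetilde{R}_S$ still works, since distinct lifts are uniformly separated, but the statement should be phrased accordingly.
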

\begin{proof}
If $R\cap S=\emptyset$ then the result is trivial. We will therefore assume otherwise. 

Choose a product neighbourhood $\nhd(S)=S\times[0,2]$ of $S$ in $M$, with $S=S\times\{0\}$.
We may choose the product structure such that $R\cap\nhd(S)=(R\cap S)\times[0,1]$.
We will define $H'$ as a concatenation of three isotopies $H_1$, $H_2$ and $H_3$, where $H_2$ and $H_3$ are supported in $\nhd(S)$.

For $t\in[0,1]$, define a bijection $f_t\colon M\to M\setminus(S\times[0,t))$ by, for $x\in M$,
\[
f_t(x)=
\begin{cases}
x & x\in M\setminus\nhd(S),\\
(y,s) &x=(y,s)\in S\times[0,2],2t\leq s,\\
(y,\frac{s+2t}{2}) &x=(y,s)\in S\times[0,2], s<2t.

\end{cases}
\]
That is, $f_t$ takes $S\times[0,2]$ to $S\times[t,2]$.
For $y\in S$, $f_t(y,2)=(y,2)$, so $f_t$ is continuous.
Next define $H_1\colon (M\times[0,1],\partial M\times[0,1])\to (M,\partial M)$ by, for $x\in M$ and $t\in[0,1]$,
\[
H_1(x,t)=
\begin{cases}
f_1(H(f_1^{-1}(x),t)) & x\in M\setminus(S\times[0,1)),\\
(H_S(y,s),s)& x=(y,s)\in S\times[0,1), s\leq t,\\
(H_S(y,t),s)& x=(y,s)\in S\times[0,1), t<s.
\end{cases}
\]
For $y\in S$ and $t\in[0,1]$, 
\begin{align*}
H_1((y,1),t))&=f_1(H(f_1^{-1}(y,1),t))=f_1(H((y,0),t))\\
&=f_1(H_S(y,t),0)=(H_S(y,t),1),
\end{align*}
 so $H_1$ is continuous. 
Now $x\mapsto H_1(x,1)$ coincides with $\psi$ on $M\setminus\nhd(S)$ and
acts as a homeomorphism on $S\times\{1\}$.
Note also that, for $t\in[0,1]$, the map $x\mapsto H_1(x,t)$ restricts to the identity on $S$.

Define $\psi_{\partial}\colon S\times[0,1]\to S\times[0,1]$ by
$\psi_{\partial}(y,s)=(\psi_S(y),s)$ for $y\in S$ and $s\in[0,1]$.
We will define $H_2$ so that it is supported on $S\times[0,1]$ (and therefore keeps $S\times\{1\}$ fixed) and such that $H_2(x,1)=\psi_{\partial}(x)$ for $x\in S\times[0,1]$.

Finally, define $H_3\colon (M\times[0,1],\partial M\times[0,1])\to (M,\partial M)$ by,  for $x\in M$ and $t\in[0,1]$,
\[H_3(x,t)=
f_t^{-1}(H_2(f_t(x),1)).
\]
For $t\in[0,1]$, the condition that $H_2(x,1)=\psi_{\partial}(x)$ for $x\in S\times[0,1]$ ensures that $x\mapsto H_2(x,1)$ is a homeomorphism of $M\setminus (S\times[0,t))$, and so the map $x\mapsto H_3(x,t)$ is a homeomorphism of $M$. Therefore $H_3$ is an isotopy.
In addition, for each $t\in[0,1]$, for $x\in M$,
\begin{align*}
H_3(x,t)\in R\cap S &\Leftrightarrow f_t^{-1}(H_2(f_t(x),1))\in R\cap S\\
&\Leftrightarrow H_2(f_t(x),1)\in f_t(R\cap S)\subseteq S\times[0,1]\\
&\Leftrightarrow \psi_{\partial}(f_t(x))\in (R\cap S)\times\{t\}\\
&\Leftrightarrow x=(y,s)\in S\times[0,1], \psi_S(y)\in R\cap S,\\
& \left(\Big(2t\leq s \textrm{ and } s=t\Big)\textrm{ or }\Big(t<s\textrm{ and }\frac{s+2t}{2}=t\Big)\right)\\
&\Leftrightarrow x=(y,s)\in S\times[0,1], y\in R\cap S, s=0\\
&\Leftrightarrow x\in R\cap S.
\end{align*}
Lastly, for $x\in M$, $f_1(x)\in M\setminus(S\times [0,1))$ so
$H_3(x,1)=f_1^{-1}(H_2(f_1(x),1))=f_1^{-1}(H_1(f_1(x),1))=H(x,1)=\psi(x)$.

\medskip

We now return to defining the isotopy $H_2$. Since it will be supported on $S\times[0,1]$, we will only give the definition there. 

We may assume that $p'\in R$ and $\sigma$ 
runs once around the component of $R\cap S$ containing $p'$.
Choose an essential simple closed curve $\sigma'$ on $S$, based at $p'$, that crosses each curve of $R\cap S$ exactly once.
Express $S$ as $([0,1]\times[0,1])/\!\sim_S$, where $(0,t)\sim_S(1,t)$ and $(t,0)\sim_S(t,1)$ for $t\in[0,1]$, such that $\sigma(t)=(t,0)$ and $\sigma'(t)=(0,t)$ for $t\in[0,1]$, and moreover $R\cap S=[0,1]\times(R\cap\sigma')$.
Lifting this product structure, express $\widetilde{S}$ as $([0,1]\times\mathbb{R})/\!\sim_{\widetilde{S}}$, where $(0,t)\sim_{\widetilde{S}}(1,t)$ for $t \in\mathbb{R}$.

Let $\widetilde{\sigma}$ be the lift of $\sigma$ in $\widetilde{R}_S$ beginning at $\widetilde{p}'$.
We may assume that $\widetilde{p}'=\widetilde{\sigma}(0)=(0,0)$, and hence also that $\widetilde{\sigma}(t)=(t,0)$ for $t\in[0,1]$.
Each component of $\widetilde{R}_S$ is separating in $\widetilde{S}$. Therefore, the isotopy $\widetilde{H}_S$ cannot change the order in which these components lie in $\widetilde{S}\cong\crcle\times\mathbb{R}$.
Since $\widetilde{\psi}_S(\widetilde{R}_S)=\widetilde{R}_S$, 
this implies that, considered as a set,
$\widetilde{\psi}_S(\widetilde{\sigma})=\widetilde{\sigma}$.

Let $T_{\sigma}$ be the annulus $\sigma\times[0,1]$ in $S\times[0,1]$, and $T_{\sigma'}$ the annulus $\sigma'\times[0,1]$.
Let $\rho$ be the arc of $T_{\sigma}\cap T_{\sigma'}$, which is given by $\rho(t)=(p',t)$. 
We will use $T_{\sigma}$ and $T_{\sigma'}$ to guide the definition of the isotopy $H_2$, using standard $3$--manifold techniques.
However, care is needed in this case as the purpose of Lemma \ref{fixboundarylemma} is to control the isotopy on $S\times[0,1]$.
First isotope $T_{\sigma}$, $T_{\sigma'}$ and $\rho$ using the isotopy $H_1$.

Let $\widetilde{\rho}$ be the lift of $\rho$ to $\widetilde{S}\times[0,1]$ beginning at $(0,0,0)$, which is given by $\widetilde{\rho}(t)=(\widetilde{\psi}_S(p'),t)$.
Let $\pi_{\mathbb{R}}\colon\widetilde{S}\times[0,1]\to\mathbb{R}$ be projection onto the second factor of $\widetilde{S}=([0,1]\times\mathbb{R})/\!\sim_{\widetilde{S}}$.

Define a map $g\colon[0,1]\times[0,1]\to\mathbb{R}$ by
considering $[0,1]\times[0,1]$ as a cone over the point $(\frac{1}{2},\frac{1}{2})$ and specifying that, for $s,t\in[0,1]$,
\begin{align*}
g(s,0)=0, &\quad g(s,1)=\pi_{\mathbb{R}}(\widetilde{\rho}(s)),\\
g(0,t)=0,&\quad g(1,t)=0,\\
g\Big(\frac{1}{2},\frac{1}{2}\Big)&=0.
\end{align*}
Define an isotopy $\widetilde{H}_{\rho}\colon\widetilde{S}\times[0,1]\times[0,1]\to \widetilde{S}\times[0,1]$ by $\widetilde{H}_{\rho}(x,y,s,t)=(x,y-g(s,t),s)$.
Then $\widetilde{H}_{\rho}$ descends to an isotopy $H_{\rho}\colon S\times[0,1]\times[0,1]\to S\times[0,1]$.
As $g(0,t)=g(1,t)=0$, the map $(x,y,s)\mapsto H_{\rho}(x,y,s,t)$ acts as the identity on $S\times\{0,1\}$ for $t\in[0,1]$.

For $s\in[0,1]$, $\widetilde{H}_{\rho}(\widetilde{\rho}(s),1)$ lies on $\{(x,0,t):x,t\in[0,1]\}=\widetilde{\sigma}\times[0,1]\subseteq\widetilde{S}\times[0,1]$.
This means that, by the end of the isotopy $H_{\rho}$, the arc $\rho\subseteq T_{\sigma}$ has been carried to a simple arc within $\sigma\times[0,1]\subseteq S\times[0,1]$ connecting the two components of $\partial (\sigma\times[0,1])$.
In addition, $\partial T_{\sigma}=\partial (\sigma\times[0,1])$.
By \cite{MR0248844} Chapter IV Lemma 4.6, there is an ambient isotopy fixed on $\partial(S\times[0,1])\cup \rho$ putting $T_{\sigma}$ into general position with respect to $\sigma\times[0,1]$.
Since $S\times[0,1]$ is irreducible and $T_{\sigma}$ is incompressible in $S\times[0,1]$, there is an ambient isotopy, fixed on $\partial(S\times[0,1])\cup \rho$, that makes $T_{\sigma}$ coincide with $\sigma\times[0,1]$.

By Lemma \ref{isotopeannuluslemma}, there is an isotopy of $T_{\sigma}$, fixed on $\sigma\times\{1\}$, after which $\sigma\times[0,1]$ is mapped by $\psi_{\partial}$.
Using Lemma \ref{bdyisotopylemma}, the restriction of this isotopy to $\sigma\times\{0\}$ can be extended to one on $S\times\{0\}$ that is fixed on $(R\cap S)\setminus\sigma$, and then the resulting isotopy of $T_{\sigma}\cup (S\times\{0,1\})$ (acting by the identity on $S\times\{1\}$) can be extended to an ambient isotopy in $S\times[0,1]$.

We now turn our attention to the second annulus $T_{\sigma'}$.
This intersects $T_{\sigma}$ along the arc $\psi_S(p')\times[0,1]=\psi_{\partial}(\{p'\}\times[0,1])$.
The curve $T_{\sigma'}\cap (S\times\{1\})$ is $\psi_S(\sigma')\times\{1\}=\psi_{\partial}(\sigma'\times\{1\})$.
The other boundary curve $T_{\sigma'}\cap(S\times\{1\})$ is therefore homotopic to $\psi_S(\sigma')\times\{0\}$ by a homotopy keeping $(\psi_S(p'),0)$ fixed.
Moreover, it meets each circle in $R\cap S$ exactly once, and these points break it into simple arcs, each of which connects the two boundary components of the annulus of $S\setminus R$ in which it lies.

The next step is to see that there is an isotopy of $S$, keeping $\sigma$ fixed and keeping $R\cap S$ invariant, that ends at $\psi_S$.
Using Lemma \ref{isotopeannuluslemma}, combined with Lemma \ref{bdyisotopylemma}, on all but one of these annuli in order around $\psi_S(\sigma')\times\{0\}$, we can build an isotopy of $S$ that keeps $\sigma$ fixed, keeps $R\cap S$ invariant and ends at the map $\psi_S$ on all but one annulus of $S\setminus R$.
The arc across this final annulus is homotopic, relative to its endpoints, to the remaining arc of $\psi_S(\sigma')$. Applying Theorem \ref{homotopicarcsthm} and Theorem \ref{alextrickthm} shows that a suitable isotopy of $S$ exists.
After extending this by the identity to $T_{\sigma}\cup (S\times\{0,1\})$, we may again apply Lemma \ref{bdyisotopylemma} to extend this to an ambient isotopy of $S\times[0,1]$.

We have now correctly positioned $\partial T_{\sigma'}$ as well as the arc $\rho=T_{\sigma}\cap T_{\sigma'}$.
By \cite{MR0248844} Chapter IV Lemma 4.6, we may isotope $T_{\sigma'}$, keeping $T_{\sigma}\cup(S\times\{0,1\})$ fixed, to put it into general position with respect to $\psi_S(\sigma')\times[0,1]=\psi_{\partial}(\sigma'\times[0,1])$.
Since $S\times[0,1]\setminus T_{\sigma}$ is irreducible and $T_{\sigma'}\setminus\rho$ is incompressible in $S\times[0,1]\setminus T_{\sigma}$, there is an ambient isotopy, fixed on $T_{\sigma}\cup(S\times\{0,1\})$, that makes $T_{\sigma'}$ coincide with $\psi_{\partial}(\sigma'\times[0,1])$.
Because $T_{\sigma'}\setminus\rho$ is a disc, Theorem \ref{alextrickthm} again gives that there is an isotopy of $T_{\sigma'}$, keeping $\partial T_{\sigma'}\cup\rho$ fixed, that ends at $\psi_{\partial}$. After extending by the identity to $T_{\sigma}\cup(S\times\{0,1\})$, this isotopy can be extended to an ambient isotopy of $S\times[0,1]$, again by Lemma \ref{bdyisotopylemma}.
Applying Theorem \ref{alextrickthm} to the $3$--ball $(S\times[0,1])\setminus(T_{\sigma}\cup T_{\sigma'})$ gives a final isotopy to $\psi_{\partial}$.

Hence we have constructed a suitable isotopy $H_2$.
\end{proof}

\begin{remark}
Since the isotopies $H_2$ and $H_3$ in the proof of Lemma \ref{fixboundarylemma} are supported on $\nhd(S)$, the result can in fact be applied simultaneously to any combination of the toral boundary components of $M$.
\end{remark}

\begin{corollary}
Let $M$, $S$, $R$, $H$ and $\psi$ be as in Lemma \ref{fixboundarylemma}.
Then there is an isotopy $H''\colon (M\times[0,1],\partial M)\to (M,\partial M)$ from the identity on $M$ to a homeomorphism $\psi'\colon (M,\partial M)\to (M,\partial M)$ such that $\psi'(R)=\psi(R)$ and $x\mapsto H''(x,t)$ restricts to the identity on $S$ for all $t\in[0,1]$.
\end{corollary}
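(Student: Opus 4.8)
The plan is to take the isotopy supplied by Lemma \ref{fixboundarylemma} and follow it by an ambient isotopy, supported in a collar of $S$, that cancels the motion of $S$ while leaving $\psi(R)$ invariant. Lemma \ref{fixboundarylemma} gives an isotopy $H'\colon(M\times[0,1],\partial M\times[0,1])\to(M,\partial M)$ from the identity to $\psi$ with $H'(R\cap S,t)=R\cap S$ for all $t\in[0,1]$. Writing $g_t$ for the restriction of $x\mapsto H'(x,t)$ to $S$, we have $g_0=\mathrm{id}_S$, $g_1=\psi_S$ and $g_t(R\cap S)=R\cap S$. Once I have an isotopy $G$ of $M$, supported near $S$, with $x\mapsto G(x,t)$ restricting to $g_t$ on $S$, I set $H''(x,t)=G(\cdot,t)^{-1}(H'(x,t))$ and $\psi'=G(\cdot,1)^{-1}\circ\psi$. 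On $S$ this reads $H''(x,t)=g_t^{-1}(g_t(x))=x$, so $x\mapsto H''(x,t)$ restricts to the identity on $S$ for every $t$, and $H''$ is an isotopy of $(M,\partial M)$ from the identity to $\psi'$.

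The step that needs thought is arranging $\psi'(R)=\psi(R)$, that is, making $G(\cdot,1)$ preserve $\psi(R)$. Here I would adapt the collar to $\psi(R)$ rather than to $R$: since $\psi(R)$ is properly embedded in $M$ and $\psi(R)\cap S=\psi(R\cap S)=R\cap S$, I can choose a product neighbourhood $\nhd(S)=S\times[0,2]$ of $S$ in $M$, with $S=S\times\{0\}$, for which $\psi(R)\cap\nhd(S)=(R\cap S)\times[0,1]$. Fixing a continuous $\lambda\colon[0,2]\to[0,1]$ with $\lambda(0)=1$ and $\lambda\equiv 0$ on $[1,2]$, I define $G$ to be the identity off $\nhd(S)$ and $G((y,s),t)=(g_{\lambda(s)t}(y),s)$ on $\nhd(S)$. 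For each $t$ this is a homeomorphism of $M$, equal to the identity off $\nhd(S)$ and to $g_t$ on $S$, and since $g_u(R\cap S)=R\cap S$ for every $u\in[0,1]$ it carries $(R\cap S)\times\{s\}$ to itself for $s\in[0,1]$; hence it preserves $(R\cap S)\times[0,1]=\psi(R)\cap\nhd(S)$ and therefore preserves $\psi(R)$. Consequently $\psi'(R)=G(\cdot,1)^{-1}(\psi(R))=\psi(R)$, as wanted.

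The main obstacle is exactly this compatibility: the correcting isotopy has to undo the boundary motion $g_t$ and at the same time fix $\psi(R)$, and these two requirements can only be met together because $H'$ keeps $R\cap S$ invariant --- which is precisely what Lemma \ref{fixboundarylemma} provides --- and because the collar of $S$ may be taken to be a product adapted to $\psi(R)$; the shear $G((y,s),t)=(g_{\lambda(s)t}(y),s)$ then does both automatically. With $G$ in hand, the remaining points (that $G$ and $H''$ are genuine isotopies of $(M,\partial M)$ depending continuously on $t$, with the asserted endpoints) are routine.
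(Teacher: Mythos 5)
Your proof is correct and follows essentially the same strategy as the paper: after invoking Lemma \ref{fixboundarylemma} to make $R\cap S$ invariant, you post-compose the given isotopy with the inverse of an ambient extension of the boundary motion $g_t$, supported in a collar of $S$ whose product structure is adapted to $\psi(R)$; the invariance of $R\cap S$ under $g_t$ is precisely what lets this correcting shear fix $\psi(R)$. The only differences are cosmetic (the paper's corrector $H_{\partial}$ dies off at depth $s=t$ whereas yours uses a fixed profile $\lambda(s)t$, and your parenthetical ``$\psi(R)\cap\nhd(S)=(R\cap S)\times[0,1]$'' for a collar of width $2$ should really read either $(R\cap S)\times[0,2]$ or be stated for the subcollar $S\times[0,1]$, since a properly embedded surface cannot stop in the interior of the collar --- but as $G$ is the identity on $S\times[1,2]$ anyway, this slip is harmless).
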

\begin{proof}
By Lemma \ref{fixboundarylemma}, we may assume that $H(R\cap S,t)=R\cap S$ for $t\in[0,1]$.
Choose a product neighbourhood $\nhd(S)=S\times[0,1]$ of $S$ with $S=S\times\{0\}$ and such that $\psi(R)\cap\nhd(S)=(R\cap S)\times[0,1]$. 
For $t\in[0,1]$, let $h_t\colon S\to S$ be the homeomorphism $y\mapsto \pi_S(H((y,0),t))$, where $\pi_S\colon S\times[0,1]\to S$ is projection onto the first factor.

Define $H_{\partial}\colon M\times[0,1]\to M$ by, for $x\in M$ and $t\in[0,1]$,
\[
H_{\partial}(x,t)=
\begin{cases}
x & x\in M\setminus\nhd(S),\\
(h_{t-s}^{-1}(y),s)&x=(y,s)\in S\times[0,1], s\leq t,\\
(h_0^{-1}(y),s) & x=(y,s)\in S\times[0,1],t<s.
\end{cases}
\]
Since $h_0$ is the identity map on $S$, this is continuous.

Now define $H''$ by $H''(x,t)=H_{\partial}(H(x,t),t)$ for $x\in M$ and $t\in[0,1]$.
For $y\in S$ and $t\in[0,1]$,
\[
H''((y,0),t)=H_{\partial}(H((y,0),t),t)=H_{\partial}((h_t(y),0),t)=(h_t^{-1}(h_t(y)),0)=(y,0),
\]
since $H(S,t)=S$.
Thus $H''$ keeps $S$ fixed throughout.

Let $x\in\psi(R)$.
If $x\in M\setminus\nhd(S)$ then there exists $x'\in R$ with $H(x',1)=x$.
Then $\psi'(x')=H''(x',1)=H_{\partial}(x,1)=x$, and so $x\in\psi'(R)$.
On the other hand, if $x=(y,s)\in S\times[0,1]$ then
$x\in(R\cap S)\times[0,1]$, so $y\in R\cap S$.
This implies that $h_{1-s}(y)\in R\cap S$, and so $(h_{1-s}(y),s)\in\psi(R)$.
Therefore there exists $x''\in R$ with $H(x'',1)=(h_{1-s}(y),s)$.
Then $\psi'(x'')=H''(x'',1)=H_{\partial}(h_{1-s}(y),s)=(h_{1-s}^{-1}(h_{1-s}(y)),s)=(y,s)=x$.
Again, therefore, $x\in\psi'(R)$.

Conversely, let $x\in\psi'(R)$. Then there exists $x'\in R$ with $H_{\partial}(H(x',1),1)=x$.
If $H(x',1)\in M\setminus\nhd(S)$ then $x=H_{\partial}(H(x',1),1)=H(x',1)=\psi(x')$, showing that $x\in\psi(R)$.
If instead $\psi'(x')=H(x',1)=(y',s')\in S\times[0,1]$ then $(y',s')\in \psi(R)=(R\cap S)\times[0,1]$, so $y'\in R\cap S$.
Thus $h_{1-s'}(y')\in R\cap S$, and $(h_{1-s'}(y'),s')\in \psi(R)$.
Accordingly, there exists $x''\in R$ with $\psi(x'')=(h_{1-s'}(y'),s')$.
Hence $x=H_{\partial}(H(x',1),1)=H_{\partial}((y',s'),1)=(h_{1-s'}(y'),s')=\psi(x'')$, and so $x\in\psi(R)$.

Therefore $\psi'(R)=\psi(R)$.
\end{proof}

\section{Exact sequences}\label{sequencessection}

Fix a point $p\in\Int(M)$.

\begin{definition}
For a manifold $M'$, we define the following notation.
Let $\homeo(M')$ denote the group of homeomorphisms from $M'$ to $M'$, and let $\homeo _0(M')$ be the subgroup of maps isotopic to the identity on $M'$.
Denote by $\homeo_{\partial}(M')$ the subgroup consisting of those maps that are the identity on $\partial M'$, and by $\homeo_{0,\partial}(M')$ those maps that are isotopic to the identity by an isotopy that is fixed on $\partial M$.
Set $\Mod(M')=\homeo(M')/\homeo_0(M')$ and $\Mod_{\partial}(M')=\homeo_{\partial}(M')/\homeo_{0,\partial}(M')$. That is, $\Mod(M')$ is the group of isotopy classes of homeomorphisms from $M'$ to itself, whereas $\Mod_{\partial}(M')$ is made up of homeomorphisms fixing the boundary up to isotopy fixing the boundary.

If $M'$ is orientable, define $\homeo^+(M')$, $\homeo^+_0(M')$, $\homeo^+_{\partial}(M')$ and $\homeo^+_{0,\partial}(M')$ to be the analogous groups where all maps are required to be orientation-preserving, as well as $\Mod^+(M')$ and $\Mod^+_{\partial}(M')$.
\end{definition}

\begin{remark}
Other similar groups could be defined, for example by considering maps that keep only some of the boundary components of $M'$ fixed. Several of the results below would also hold in such cases, but we will not discuss them explicitly.
\end{remark}

\begin{proposition}\label{sequenceprop}
There is an exact sequence of groups
\[
\pi_1(M,p)\to \Mod(M_p)\to\Mod(M)\to 1.
\]
The same is true if $\Mod$ is replaced by $\Mod_{\partial}$, or by either $\Mod^+$ or $\Mod^+_{\partial}$ if $M$ is orientable.
\end{proposition}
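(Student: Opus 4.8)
The plan is to build the three maps out of the point-pushing machinery of Section \ref{pointpushingsection} and then read off exactness from Corollaries \ref{inkernelcor}, \ref{fixpointcor}, \ref{pwembeddedcor} and Lemma \ref{inimagelemma}. First I would identify $\Mod(M_p)$ with the group of isotopy classes of self-homeomorphisms of $M$ fixing $p$, with isotopies required to fix $p$ — every self-homeomorphism of $M_p$ extends uniquely to one of $M$ fixing $p$, and likewise for isotopies — this being the viewpoint taken implicitly throughout Section \ref{pointpushingsection}. The map $\Mod(M_p)\to\Mod(M)$ is then ``forget that $p$ is fixed'', which is clearly a homomorphism; and the map $\pi_1(M,p)\to\Mod(M_p)$ sends a loop $\rho$ based at $p$ to $[\phi_\rho]$, which is well-defined by Corollary \ref{pwembeddedcor} and the remark that immediately follows it (covering the constant loop), and is a homomorphism once composition conventions are fixed — the proof of Corollary \ref{pwembeddedcor} shows that concatenation of loops corresponds to composition of point-pushing maps — so in particular its image is a subgroup.

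Next I would check surjectivity of $\Mod(M_p)\to\Mod(M)$: given $\psi\in\homeo(M)$ with $p'=\psi(p)$, if $p'=p$ there is nothing to do, and otherwise choose a path $\rho$ in $\Int(M)$ from $p'$ to $p$ (possible since the interior of a connected manifold is connected), so that Corollary \ref{fixpointcor} gives that $\phi_\rho\circ\psi$ fixes $p$ and is isotopic to $\psi$ in $M$. Then I would verify exactness at $\Mod(M_p)$. The inclusion of the image of $\pi_1(M,p)$ in the kernel is immediate from Corollary \ref{inkernelcor}, since each $\phi_\rho$ is isotopic to the identity on $M$. For the reverse inclusion, if $[\psi]\in\Mod(M_p)$ maps to the identity in $\Mod(M)$, then $\psi$, viewed as a self-homeomorphism of $M$ fixing $p$, is isotopic to the identity on $M$, so Lemma \ref{inimagelemma} produces a simple loop $\rho$ based at $p$ with $\psi$ isotopic to $\phi_\rho$ by an isotopy fixing $p$, placing $[\psi]$ in the image. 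That settles the $\Mod$ case.

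For $\Mod_\partial$, $\Mod^+$ and $\Mod^+_\partial$ I would run the identical argument with two adjustments tracked throughout: in the $\Mod_\partial$ case, use that each $\phi_\rho$ may be taken in $\homeo_\partial(M)$ by Definition \ref{homeodefn}, that the isotopy furnished by Corollary \ref{fixpointcor} is constant on $\partial M$, that Corollary \ref{inkernelcor} already provides an isotopy fixing $\partial M$, and that the final clause of Lemma \ref{inimagelemma} keeps $\partial M$ fixed whenever the input isotopy does; in the orientable cases, use that every $\phi_\rho$ is orientation-preserving and that each construction above sends orientation-preserving maps to orientation-preserving maps, so the arguments restrict unchanged.

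The genuinely delicate part will be the bookkeeping in the ``$\ker\subseteq\im$'' step: making the identification of $\Mod(M_p)$ with isotopy classes of self-homeomorphisms of $M$ fixing $p$ precise enough to invoke Lemma \ref{inimagelemma}, and keeping uniform track across the four variants of which isotopies are required to be the identity on $\partial M$. Everything of substance is already contained in the cited results.
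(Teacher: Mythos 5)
Your proof is correct and follows essentially the same route as the paper's: define $\Phi$ via $\rho\mapsto[\phi_\rho]$ (well-defined by Corollary \ref{pwembeddedcor}), define $\Psi$ by forgetting the marked point, get surjectivity from Corollary \ref{fixpointcor}, get $\im\Phi\subseteq\ker\Psi$ from Corollary \ref{inkernelcor}, and get $\ker\Psi\subseteq\im\Phi$ from Lemma \ref{inimagelemma}. The only divergence is that you make the homomorphism property of $\Phi$ and the four-variant bookkeeping explicit, which the paper leaves implicit; this is a small improvement in exposition, not a different argument.
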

\begin{proof}
The first map $\Phi\colon\pi_1(M,p)\to\Mod(M_p)$ is given by $\Phi(g)= [\phi_{\rho}]$, where $\rho$ is any simple closed curve in $\Int(M)$ based at $p$ with $[\rho]=g$.
By Lemma \ref{welldefinedlemma}, $[\phi_{\rho}]$ is a well-defined element of $\Mod(M_p)$ given $\rho$. 
Corollary \ref{pwembeddedcor} shows that the choice of simple closed curve $\rho$ representing $g$ does not affect $[\phi_{\rho}]$. Thus the map $\Phi$ is well-defined. 

The second map $\Psi\colon\Mod(M_p)\to\Mod(M)$ is given by `filling in' the point $p$, taking $[\phi_{\rho}]$ viewed as maps of $M_p$ to $[\phi_{\rho}]$ viewed as maps of $M$. Note that this is not necessarily an isomorphism. The isotopies used to define $\Mod(M)$ may move $p$, whereas those used to define $\Mod(M_p)$ may not.

Corollary \ref{fixpointcor} shows that $\Psi$ is surjective, and hence the sequence is exact at $\Mod(M)$.
By Corollary \ref{inkernelcor}, $\im(\Phi)\subseteq\ker(\Psi)$. 
On the other hand, Lemma \ref{inimagelemma} gives that $\ker(\Psi)\subseteq\im(\Phi)$.
Thus the sequence is also exact at $\Mod(M_p)$.
\end{proof}

\begin{corollary}
Suppose that $M$ is orientable and $K\leq\pi_1(M,p)$.
There is an exact sequence
\[1\to K\to \pi_1(M,p)\to\Mod(M_p)\to\Mod(M)\to 1\]
if and only if there is an exact sequence
\[1\to K\to \pi_1(M,p)\to\Mod^+(M_p)\to\Mod^+(M)\to 1.\]
Similarly, there is an exact sequence
\[1\to K\to \pi_1(M,p)\to\Mod_{\partial}(M_p)\to\Mod_{\partial}(M)\to 1\]
if and only if there is an exact sequence
\[1\to K\to \pi_1(M,p)\to\Mod_{\partial}^+(M_p)\to\Mod_{\partial}^+(M)\to 1.\]
\end{corollary}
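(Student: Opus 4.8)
The plan is to reduce both equivalences to a single identity of subgroups of $\pi_1(M,p)$, exploiting the fact that the point-pushing map always has orientation-preserving image. I would first record two elementary observations. (1) For any connected orientable manifold $M'$ the natural maps $\Mod^+(M')\to\Mod(M')$ and $\Mod^+_\partial(M')\to\Mod_\partial(M')$ are injective: a homeomorphism isotopic to the identity is orientation-preserving, and the degree of a homeomorphism is locally constant along an isotopy, so $\homeo^+_0(M')=\homeo_0(M')$ and $\homeo^+_{0,\partial}(M')=\homeo_{0,\partial}(M')$; hence we may regard $\Mod^+(M')$ as a subgroup of $\Mod(M')$, and similarly for the boundary-fixing versions. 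Applying this to $M'\in\{M,M_p\}$ (both connected and, since $M$ is orientable, orientable) gives the vertical inclusions we need. (2) The point-pushing homomorphism $\Phi\colon\pi_1(M,p)\to\Mod(M_p)$ of Proposition \ref{sequenceprop} has image inside $\Mod^+(M_p)$: for a simple closed curve $\rho$ based at $p$, Corollary \ref{inkernelcor} says $\phi_\rho$ is isotopic to the identity on $M$ by an isotopy fixing $\partial M$, so $\phi_\rho$ is orientation-preserving on $M$ and hence, restricting the orientation, also on $M_p$. Writing $\Phi^+\colon\pi_1(M,p)\to\Mod^+(M_p)$ for the resulting corestriction of $\Phi$, and $\Phi_\partial,\Phi^+_\partial$ for the analogous maps into $\Mod_\partial(M_p),\Mod^+_\partial(M_p)$, injectivity of the inclusions from (1) gives $\ker\Phi^+=\ker\Phi$ and $\ker\Phi^+_\partial=\ker\Phi_\partial$.

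Next I would finish formally. By Proposition \ref{sequenceprop}, each of the sequences
\[\pi_1(M,p)\xrightarrow{\Phi}\Mod(M_p)\to\Mod(M)\to1,\qquad\pi_1(M,p)\xrightarrow{\Phi^+}\Mod^+(M_p)\to\Mod^+(M)\to1\]
and their two boundary-fixing analogues is already exact. Taking the map $K\to\pi_1(M,p)$ to be the inclusion afforded by the hypothesis $K\leq\pi_1(M,p)$, the sequence $1\to K\to\pi_1(M,p)\to\Mod(M_p)\to\Mod(M)\to1$ is exact if and only if $K=\ker\Phi$, and $1\to K\to\pi_1(M,p)\to\Mod^+(M_p)\to\Mod^+(M)\to1$ is exact if and only if $K=\ker\Phi^+$. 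Since $\ker\Phi=\ker\Phi^+$ by observation (2), these two conditions coincide, giving the first equivalence. The second equivalence, for $\Mod_\partial$ versus $\Mod^+_\partial$, follows by the identical argument with $\Phi,\Phi^+$ replaced by $\Phi_\partial,\Phi^+_\partial$ throughout, using the $\Mod_\partial$ and $\Mod^+_\partial$ cases of Proposition \ref{sequenceprop} and the boundary-fixing clause of Corollary \ref{inkernelcor}.

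The only step that calls for any care is observation (1): checking that passing to orientation-preserving homeomorphisms shrinks the mapping class group only to a subgroup, i.e.\ the locally-constant-degree point, together with the bookkeeping needed to see that $\Phi$ and $\Phi^+$ are genuinely the same homomorphism composed with an inclusion, so that their kernels agree. Once those are in place the corollary is a purely formal consequence of Proposition \ref{sequenceprop}, and I expect no further obstacle.
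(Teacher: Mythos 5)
Your proof is correct and takes essentially the same approach as the paper. The paper's proof is a terse assertion that the exactness of each sequence is equivalent to $K=\ker(\Phi)$ and that the kernel ``is the same'' for $\Mod$ versus $\Mod^+$ (and for $\Mod_\partial$ versus $\Mod^+_\partial$); your observations (1) and (2) -- that $\homeo^+_0(M')=\homeo_0(M')$ by local constancy of degree along isotopies, so $\Mod^+$ injects into $\Mod$, and that $\Phi$ factors through $\Mod^+(M_p)$ because $\phi_\rho$ is isotopic to the identity on $M$ and hence orientation-preserving -- are exactly the unstated justifications for that assertion, so this is a faithful expansion of the paper's argument rather than a different route.
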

\begin{proof}
For each of these four sequences, the sequence is exact if and only if $K\cong\ker(\Phi)$, where $\Phi$ is as given in the proof of Proposition \ref{sequenceprop}.
The definition of $\Phi$ is dependent on which sequence is being considered. 
However, the definition of $\ker(\Phi)$ is the same when considering $\Mod$ as when considering $\Mod^+$. Similarly, the definition of $\ker(\Phi)$ is the same when considering $\Mod_{\partial}$ as when considering $\Mod_{\partial}^+$.
\end{proof}

\begin{remark}
We will denote by $\Phi_M$ the map $\Phi$ given in the proof of Proposition \ref{sequenceprop}, unless the  choice of manifold $M$ is clear, in which case we will denote the map by $\Phi$. 

Our primary concern for most of the rest of this paper is to understand $\ker(\Phi_M)$ for different manifolds $M$, particularly $3$--manifolds. 
If $\ker(\Phi_M)\cong 1$ then the Birman exact sequence holds for $M$. 
\end{remark}

\begin{lemma}\label{fixbdypointlemma}
Assume $\partial M\neq\emptyset$, and fix $p'\in\partial M$. 
Let $\rho$ be a simple closed curve in $\Int(M)$ based at $p$. Suppose that $\phi_{\rho}$ is isotopic to the identity on $M$ by an isotopy that keeps $p$ fixed and takes $p'$ around a loop that is null-homotopic in $\partial M$. Then $\rho$ is null-homotopic in $M$. In particular, $\rho$ is null-homotopic if the isotopy fixes $\partial M$.
\end{lemma}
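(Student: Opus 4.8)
The plan is to glue together the ambient isotopy that defines $\phi_\rho$ with the hypothesised isotopy from $\phi_\rho$ back to the identity, producing a loop of self-homeomorphisms of $M$, and then to compare the two loops that $p$ and $p'$ trace out under this loop of homeomorphisms.

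Concretely, I would first fix the ambient isotopy $H_\rho\colon(M\times[0,1],\partial M\times[0,1])\to(M,\partial M)$ used to define $\phi_\rho$, so that $H_\rho(\cdot,0)$ is the identity, $H_\rho(\cdot,1)=\phi_\rho$, $H_\rho(p,t)=\rho(t)$, and $H_\rho$ fixes $\partial M$ throughout (in particular $H_\rho(p',t)=p'$ for all $t$). Let $G\colon M\times[0,1]\to M$ be the given isotopy, with $G(\cdot,0)=\phi_\rho$, $G(\cdot,1)$ the identity, $G(p,t)=p$ for all $t$, and $\gamma(t)=G(p',t)$ a loop based at $p'$ that is null-homotopic in $\partial M$. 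Concatenating $H_\rho$ with $G$ yields an isotopy $F\colon M\times[0,1]\to M$ with $F(\cdot,0)=F(\cdot,1)=\mathrm{id}_M$. Under $F$, the point $p$ traverses $\rho$ and then stays fixed, so $t\mapsto F(p,t)$ is homotopic rel endpoints to $\rho$; and $p'$ stays fixed and then traverses $\gamma$, so $t\mapsto F(p',t)$ is homotopic rel endpoints to $\gamma$.

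Next I would invoke connectedness of $M$ to pick a path $\alpha$ from $p$ to $p'$, and consider the continuous map $\Sigma\colon[0,1]\times[0,1]\to M$ given by $\Sigma(s,t)=F(\alpha(s),t)$. Since $F(\cdot,0)$ and $F(\cdot,1)$ are the identity, $\Sigma(s,0)=\Sigma(s,1)=\alpha(s)$, while $\Sigma(0,t)=F(p,t)$ and $\Sigma(1,t)=F(p',t)$. Thus the boundary loop of the square, namely $(t\mapsto F(p,t))\cdot\alpha\cdot(t\mapsto F(p',t))^{-1}\cdot\alpha^{-1}$, bounds a disc in $M$, so it is null-homotopic. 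Combining this with the previous paragraph gives, in $\pi_1(M,p)$,
\[
[\rho]=[\alpha]\,[\gamma]\,[\alpha]^{-1}.
\]
As $\gamma$ is null-homotopic in $\partial M$ it is null-homotopic in $M$, so $[\gamma]=1$ and hence $[\rho]=1$; that is, $\rho$ is null-homotopic in $M$. For the final assertion, if the isotopy fixes $\partial M$ pointwise then $t\mapsto G(p',t)$ is the constant loop at $p'$, which is trivially null-homotopic in $\partial M$, so the special case follows from what has just been proved.

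The one step that is not pure bookkeeping is the square-homotopy comparison between the loops traced by $p$ and by $p'$ under the loop of homeomorphisms $F$: this is what converts the constraint on the motion of the boundary point $p'$ into a constraint on $[\rho]$. Everything else (the concatenation of $H_\rho$ and $G$, reparametrisation of the traced loops) is routine and in the same style as the arguments in Section~\ref{pointpushingsection}; the (PL) continuity of $F$ at the gluing time is immediate from $H_\rho(\cdot,1)=\phi_\rho=G(\cdot,0)$.
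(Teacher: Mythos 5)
Your proposal is correct and takes essentially the same approach as the paper: the paper also picks a path $\sigma$ between $p$ and $p'$ and compares, via square homotopies coming from the isotopies, the loop $\rho$ traced by $p$ with the loop $\rho'$ traced by $p'$, arriving at $\rho\simeq\sigma^{-1}\cdot\rho'\cdot\sigma$. The only difference is organizational — you concatenate the two isotopies into a single loop $F$ and read off the relation from one square, whereas the paper keeps $H_\rho$ and $H$ separate and composes two rel-endpoint homotopies of $\sigma$; these are the same argument.
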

\begin{proof}
Let $H\colon(M\times[0,1],\partial M\times[0,1],p\times[0,1])\to(M,\partial M,p)$ be the isotopy from $\phi_{\rho}$ to the identity, and define $\rho'\colon[0,1]\to\partial M$ by $\rho'(t)=H(p',t)$.
By hypothesis, $\rho'$ is null-homotopic in $\partial M$.

Let $\sigma$ be an embedded path from $p'$ to $p$.
Then $\phi_{\rho}(\sigma)$ is an embedded path from $p'$ to $p$ that is homotopic relative to its endpoints to $\sigma\cdot\rho$.
Now $H$ shows that, relative to its endpoints, the path $(\rho')^{-1}\cdot\phi_{\rho}(\sigma)$ is homotopic to $\sigma$.
Therefore,
\[
\rho\simeq \sigma^{-1}\cdot\sigma\cdot\rho\simeq\sigma^{-1}\cdot\phi_{\rho}(\sigma)\simeq\sigma^{-1}\cdot\rho'\cdot(\rho')^{-1}\cdot\phi_{\rho}(\sigma)\simeq\sigma^{-1}\cdot\rho'\cdot\sigma\simeq\sigma^{-1}\cdot\sigma.
\]
Therefore $\rho$ is null-homotopic.
\end{proof}

\begin{corollary}
If $\partial M\neq\emptyset$ then there is an exact sequence
\[
1\to\pi_1(M,p)\to \Mod_{\partial}(M_p)\to\Mod_{\partial}(M)\to 1.
\]
\end{corollary}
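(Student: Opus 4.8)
The plan is to combine Proposition \ref{sequenceprop} with Lemma \ref{fixbdypointlemma}. Applying Proposition \ref{sequenceprop} in the version for $\Mod_{\partial}$ already gives exactness of
\[
\pi_1(M,p)\xrightarrow{\ \Phi\ } \Mod_{\partial}(M_p)\xrightarrow{\ \Psi\ }\Mod_{\partial}(M)\to 1,
\]
so the only thing left to prove is that $\Phi$ is injective, i.e.\ that $\ker(\Phi)$ is trivial. Here I would use that $\partial M\neq\emptyset$ to fix once and for all a point $p'\in\partial M$, which is exactly the standing hypothesis needed to invoke Lemma \ref{fixbdypointlemma}.

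Next I would unwind what membership in $\ker(\Phi)$ means. Let $g\in\ker(\Phi)$ and pick a simple closed curve $\rho$ in $\Int(M)$ based at $p$ with $[\rho]=g$; this is legitimate since $\Phi(g)=[\phi_{\rho}]$ for any such $\rho$ by the definition of $\Phi$ in the proof of Proposition \ref{sequenceprop}. The condition $\Phi(g)=1$ in $\Mod_{\partial}(M_p)$ says precisely that $\phi_{\rho}$, viewed as a homeomorphism of $M_p$ fixing $\partial M$, lies in $\homeo_{0,\partial}(M_p)$; that is, there is an isotopy of $M_p$ from $\phi_{\rho}$ to the identity that is fixed on $\partial M$. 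Since $p\notin M_p$, this is the same data as an isotopy of $M$ from $\phi_{\rho}$ to the identity that keeps $p$ fixed and keeps $\partial M$ fixed (in particular it keeps $p'$ fixed, so it carries $p'$ around the constant, hence null-homotopic, loop in $\partial M$).

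Now the hypotheses of Lemma \ref{fixbdypointlemma} are met, and its conclusion (indeed its "in particular" clause, since the isotopy fixes $\partial M$) gives that $\rho$ is null-homotopic in $M$, i.e.\ $g=[\rho]=1$ in $\pi_1(M,p)$. Hence $\ker(\Phi)=1$, and prepending $1\to\pi_1(M,p)$ to the exact sequence above yields the claimed four-term exact sequence. I do not expect a serious obstacle here: the one point requiring care is the translation in the previous paragraph between an isotopy of $M_p$ fixing $\partial M$ and an isotopy of $M$ fixing both $p$ and $\partial M$, so that Lemma \ref{fixbdypointlemma} genuinely applies; everything else is formal.
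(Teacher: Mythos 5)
Your proposal is correct and follows the paper's proof exactly: both invoke Proposition \ref{sequenceprop} (in its $\Mod_{\partial}$ form) to reduce the claim to injectivity of $\Phi$, and then apply the ``in particular'' clause of Lemma \ref{fixbdypointlemma} to see that any $\rho$ with $[\phi_{\rho}]$ trivial in $\Mod_{\partial}(M_p)$ must be null-homotopic. The paper states this in two sentences; your write-up simply spells out the intermediate translation between an isotopy of $M_p$ rel $\partial M$ and an isotopy of $M$ rel $p$ and $\partial M$, which is a worthwhile clarification but not a different argument.
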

\begin{proof}
Given Proposition \ref{sequenceprop}, it remains only to check that $\Phi$ is injective.
Lemma \ref{fixbdypointlemma} shows that $\ker(\Phi)\cong 1$.
\end{proof}

\begin{remark}
In Definition \ref{homeodefn}, we required that the isotopy used to define $\phi_{\rho}$ keeps $\partial M$ fixed. From now only we will only be concerned with studying $\Mod$. In this case, Lemma \ref{replaceboundarylemma} shows that we may drop this condition from Definition \ref{homeodefn}.
\end{remark}

\section{Surfaces and fibred $3$--manifolds}\label{fibredsection}

Many of the ideas used up to this point also apply for surfaces, except that in this case we cannot require the closed curves we consider to be simple. 
We must also remove the dependence on Theorem \ref{homotoisothm}, which is used to show that if $\rho$ and $\rho'$ are homotopic curves then $\phi_{\rho}$ and $\phi_{\rho'}$ lie in the same isotopy class, or equivalently that $\phi_{\rho'}^{-1}\circ\phi_{\rho}$ is isotopic to the identity fixing $p$.
We prove this for surfaces using the following results.

\begin{lemma}
Suppose $M$ is a surface.
Let $\rho,\rho'\colon[0,1]\to\Int(M)$ be 
paths with $\rho(0)=\rho'(0)=\rho(1)=\rho'(1)=p$.
Suppose there exists a homotopy $H\colon[0,1]\times[0,1]\to\Int(M)$ from $\rho$ to $\rho'$ with $H(0,t)=H(1,t)=p$  for $t\in[0,1]$.
Then $\phi_{\rho}$ is isotopic to $\phi_{\rho'}$ by a isotopy that keeps $p$ fixed.
\end{lemma}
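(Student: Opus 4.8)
The plan is to adapt the argument of Lemma~\ref{welldefinedlemma} to build, from the given homotopy $H$, an explicit isotopy between the two point-pushing homeomorphisms, and then to clean up the resulting motion of $p$ using the homotopy extension property together with Epstein's theorem (Theorem~\ref{surfacehomotoisothm}). First I would fix push isotopies $H_\rho$ and $H_{\rho'}$ as in Definition~\ref{homeodefn}, defining $\phi_\rho$ and $\phi_{\rho'}$, and set $\psi=\phi_{\rho'}^{-1}\circ\phi_\rho$, which is a homeomorphism of $M$ preserving $\partial M$ and fixing $p$. It suffices to show that $\psi$ is isotopic to $\mathrm{id}_M$ by an isotopy fixing $p$, since composing such an isotopy with $\phi_{\rho'}$ then yields the desired isotopy from $\phi_\rho$ to $\phi_{\rho'}$.

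Writing $h_t=H_\rho(\cdot,t)$, the expression $(x,t)\mapsto\phi_{\rho'}^{-1}\bigl(h_1(h_t^{-1}(H_{\rho'}(x,t)))\bigr)$ --- the same one used in the proof of Lemma~\ref{welldefinedlemma} --- defines an isotopy $\widehat{H}$ of $M$ from $\psi$ to $\mathrm{id}_M$ that keeps $\partial M$ invariant throughout, and under which $p$ travels around a loop $\gamma(t)=\widehat{H}(p,t)$ based at $p$. The crucial observation is that $\gamma$ is null-homotopic in $\Int(M)$ rel $p$: the map $(t,u)\mapsto h_t^{-1}(H(t,u))$ is a homotopy, rel its endpoints, from the constant loop at $p$ to the loop $t\mapsto h_t^{-1}(\rho'(t))$, and applying the homeomorphisms $h_1$ and $\phi_{\rho'}^{-1}$, each of which fixes $p$, then shows $\gamma$ is null-homotopic. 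This is the one place where the hypothesis that $\rho$ and $\rho'$ are homotopic rel endpoints gets used.

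Next I would remove the motion of $p$. Choosing a based null-homotopy $\mathcal{G}$ of $\gamma$ in $\Int(M)$ and applying the homotopy extension property to the pair $(M\times[0,1], A)$, where $A=(\partial M\times[0,1])\cup(M\times\{0,1\})\cup(\{p\}\times[0,1])$ --- extending the data given by $\widehat{H}$ at time $0$, by $\mathcal{G}$ on $\{p\}\times[0,1]$, and by the constant homotopies on the remaining pieces --- produces a homotopy $G$ from $\psi$ to $\mathrm{id}_M$ that fixes $p$ throughout and still keeps $\partial M$ invariant. Feeding $G$ into Epstein's Theorem~\ref{surfacehomotoisothm} then promotes it to an isotopy from $\psi$ to $\mathrm{id}_M$ fixing $p$, which is what is needed; since that theorem is stated for an arbitrary compact surface, the low-complexity cases require no separate argument.

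I expect the last step to be the main obstacle. After the natural isotopy $\widehat{H}$, the maps $\psi$ and $\mathrm{id}_M$ differ only by the pushing of $p$ once around the null-homotopic loop $\gamma$, yet this residual pushing cannot in general be cancelled through isotopies of $M$ without already invoking something of the strength of Epstein's theorem; the role of the homotopy extension property is to carry out the cancellation at the level of homotopies, after which Epstein's theorem is used exactly once to return to isotopies.
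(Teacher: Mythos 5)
Your proof is correct, and it takes a genuinely different route from the paper.

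Both arguments terminate with Epstein's Theorem~\ref{surfacehomotoisothm}, which is unavoidably the heavy step, but the way you arrive at a homotopy rel $p$ differs. The paper constructs an explicit homotopy $H'$ from $\mathrm{id}_M$ to $\phi_\rho$ under which $p$ travels along $\rho'$: it uses a simplicial coning construction to compress a square supporting the homotopy $H$ into a small neighbourhood of $p$, then runs the argument of Lemma~\ref{welldefinedlemma} (with a homotopy in place of one of the isotopies) to conclude $\phi_\rho$ and $\phi_{\rho'}$ are homotopic rel $p$. You instead form $\psi=\phi_{\rho'}^{-1}\circ\phi_\rho$, observe that the Lemma~\ref{welldefinedlemma}-style comparison isotopy $\widehat{H}$ from $\psi$ to $\mathrm{id}_M$ moves $p$ around a loop $\gamma$, and prove $\gamma$ is null-homotopic via the explicit based null-homotopy $(t,u)\mapsto h_t^{-1}(H(t,u))$ (followed by the $p$-fixing homeomorphisms $h_1$ and $\phi_{\rho'}^{-1}$). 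You then use the homotopy extension property for the PL pair $(M\times[0,1],\,\partial M\times[0,1]\cup M\times\{0,1\}\cup\{p\}\times[0,1])$ to replace $\widehat{H}$ by a homotopy from $\psi$ to $\mathrm{id}_M$ that fixes $p$ throughout and preserves $\partial M$, and finally invoke Epstein. This is cleaner conceptually: your null-homotopy of $\gamma$ isolates exactly where the hypothesis that $\rho\simeq\rho'$ rel endpoints enters, and it trades the paper's ad hoc coning-into-a-neighbourhood argument for the standard cofibration machinery. The paper's version is more self-contained and stays elementary and constructive, avoiding any appeal to HEP. Both are sound.

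One small point worth making explicit in a writeup: the continuity of $(t,u)\mapsto h_t^{-1}(H(t,u))$ uses that $H_\rho$ is an ambient isotopy, so $(x,t)\mapsto h_t^{-1}(x)$ is jointly continuous; this is automatic in the PL category but is the kind of thing one should state. Also note you do not strictly need the prefactor $\phi_{\rho'}^{-1}$ in $\widehat{H}$: the unmodified Lemma~\ref{welldefinedlemma} formula already gives an isotopy from $\phi_\rho$ to $\phi_{\rho'}$ whose track of $p$ is a loop, and $\phi_{\rho'}$ fixes $p$, so the null-homotopy conclusion is the same either way.
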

\begin{proof}
Let $H_{\rho}\colon(M\times[0,1],\partial M\times[0,1])\to(M,\partial M)$ be the isotopy used to define $\phi_{\rho}$.
Using $H_{\rho}$ and $H$, we will build a homotopy $H'\colon(M\times[0,1],\partial M\times[0,1])\to(M,\partial M)$ with $H'(x,0)=x$ for $x\in M$ and $H'(p,t)=\rho'(t)$ for $t\in[0,1]$.

Let $X$ be the disc in $\mathbb{R}^2$ whose boundary is the triangle with corners at $(0,0)$, $(0,1)$ and $(1,\frac{1}{2})$.
Define $g\colon[0,1]\times[0,1]\to X$ by
\[g(s,t)=
\begin{cases}
(0,t)& t\in\{0,1\},\\
(2st,t)&t\leq\frac{1}{2},\\
(2s(1-t),t)& t>\frac{1}{2}.
\end{cases}\]
Choose a triangulation of $[0,1]\times[0,1]$ with respect to which $H$ is simplicial. 
Let $g'\colon[0,1]\times[0,1]\to X$ be the simplicial map where each vertex is mapped by $g$.
This gives a triangulation of $X$.
Define $H_X\colon X\to \Int(M)$ to be the simplicial map that maps each vertex by $H\circ (g')^{-1}$.
Note that $g$ is injective on the vertices except for those in $[0,1]\times\{0,1\}$.
On the other hand, $H$ maps every point of $[0,1]\times\{0,1\}$ to $p$. Therefore $H_X$ is well-defined.

Let $Y\subseteq X$ be the two edges of the triangle bounding $X$ that contain the point $(1,\frac{1}{2})$.
Then the restriction of $g'$ to $\{1\}\times[0,1]$ is a level-preserving bijection to $Y$.
Thus, if $(s,t)\in Y$ then $H_X(s,t)=H(1,t)=\rho'(t)$.

Define an equivalence relation $\sim_Z$ on $[-2,2]\times[-2,2]\times[0,1]\sqcup X$ by
$(0,0,t)\sim_Z(0,t)$ for $t\in[0,1]$.
Let $Z=([-2,2]\times[-2,2]\times[0,1]\sqcup X)/\!\sim_Z$.
Choose an embedding $f\colon[-2,2]\times[-2,2]\to\Int(M)$ with $f(0,0)=p$.
Define $H_Z\colon Z\to M$ by, for $z\in Z$,
\[
H_Z(z)=
\begin{cases}
H_{\rho}(f(a,b),t) & z=(a,b,t)\in [-2,2]\times[-2,2]\times[0,1],\\
H_X(z) & z\in X.
\end{cases}
\]
Note that if $t\in[0,1]$ then $H_{\rho}(f(0,0),t)=H_{\rho}(p,t)=\rho(t)=H(0,t)=H_X(0,t)$, so $H_Z$ is well-defined.

We wish to define a surjection $\theta\colon [-2,2]\times[-2,2]\times[0,1]\to Z$.
We first triangulate $[-2,2]\times[-2,2]\times[0,1]$.
Set $A=\{-2,2\}\times\{-2,2\}\times\{0,\frac{1}{2},1\}$, $B= \{0\}\times\{0\}\times\{0,1\}$, $C=\{0\}\times\{0\}\times\{\frac{1}{2}\}$ and $D=\{-1,1\}\times\{-1,1\}\times\{\frac{1}{2}\}$.
We take the vertices to be $A\cup B\cup C\cup D$.
Add edges connecting both $(0,0,0)$ and $(0,0,1)$ to each point of $\left(\{-2,2\}\times\{-2,2\}\times\{\frac{1}{2}\}\right)\cup\left(\{0\}\times\{0\}\times\{\frac{1}{2}\}\right)$.
Add further edges to give a triangulation of $[-2,2]\times[-2,2]\times[0,1]$ without adding any more vertices.

Now define $\theta$ as the simplicial map with, for a vertex $v$,
\[
\theta(v)=
\begin{cases}
v & v\in A\cup B,\\
(1,\frac{1}{2}) & v\in C,\\
(0,0,\frac{1}{2}) & v\in D.
\end{cases}
\]
Finally, define $H'$ by, for $x\in M$ and $t\in[0,1]$,
\[
H'(x,t)=
\begin{cases}
H_{\rho}(x,t) & x\notin \im(f),\\
H_Z(\theta(f^{-1}(x),t)) & x\in\im(f).
\end{cases}
\]
For $x\in M$ and $(a,b)\in\partial ([-2,2]\times[-2,2])$ such that $f(a,b)=x$, and $t\in[0,1]$, $\theta(a,b,t)=(a,b,t)$ and $H_Z(\theta(a,b,t))=H_Z(a,b,t)=H_{\rho}(f(a,b),t)=H_{\rho}(x,t)$. Therefore $H'$ is continuous.

Let $x\in M$. If $x\notin\im(f)$ then $H'(x,0)=H_{\rho}(x,0)=x$.
On the other hand, if $x\in\im(f)$ then
$H'(x,0)=H_Z(\theta(f^{-1}(x),0))=H_Z(f^{-1}(x),0)=H_{\rho}(f(f^{-1}(x)),0)=x$.
Hence $H'$ begins at the identity map.

Now let $x\in\partial M$. Then $x\notin\im(f)$, so $H'(x,t)=H_{\rho}(x,t)\in\partial M$ for $t\in[0,1]$.

Finally, let $t\in[0,1]$.
Then $H'(p,t)=H_Z(\theta(f^{-1}(p),t))=H_Z(\theta(0,0,t))$.
Because $\theta(0,0,t)\in Y\subseteq X\subseteq Z$, 
we see that $H_Z(\theta(0,0,t))=H_X(\theta(0,0,t))=\rho'(t)$.

Now $H'$ could be used to define $\phi_{\rho'}$ except that it is a homotopy rather than an isotopy. 
However, essentially the same proof as Lemma \ref{welldefinedlemma} shows that $\phi_{\rho'}$ is homotopic to the map $x\mapsto H'(x,1)$ by a homotopy keeping $p$ fixed.
Note that, for $x\in M$,
if $x\notin\im(f)$ then $H'(x,1)=H_{\rho}(x,1)$, while if $x\in\im(f)$ then
$H'(x,1)=H_Z(\theta(f^{-1}(x),1))=H_Z(f^{-1}(x),1)=H_{\rho}(f(f^{-1}(x)),1)=H_{\rho}(x,1)$.
Therefore $\phi_{\rho}$ and $\phi_{\rho'}$ are homotopic keeping $p$ fixed. 
By Theorem \ref{surfacehomotoisothm}, they are then isotopic keeping $p$ fixed.
\end{proof}

Birman and Chillingworth noted in \cite{MR0300288} that mapping class groups of non-orientable manifolds can be studied by looking at the same ideas in the orientation double cover.

\begin{lemma}\label{reverseorientationlemma}
Suppose $M$ is non-orientable, and let $\rho$ be an orientation-reversing loop in $\Int(M)$ based at $p$.
Then $\phi_{\rho}$ is not isotopic to the identity by an isotopy keeping $p$ fixed.
\end{lemma}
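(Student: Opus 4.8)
The plan is to detect the orientation-reversal of $\rho$ via its effect on the local orientation at $p$, using the fact that $\phi_\rho$ arises from an ambient isotopy along $\rho$. The key point is that dragging the point $p$ once around an orientation-reversing loop $\rho$ reverses the local orientation of $M$ near $p$, so $\phi_\rho$ is \emph{orientation-reversing} on $M$. An isotopy of $M_p$ that fixes $p$ extends (by filling in the point) to an isotopy of $M$ fixing $p$, and any map isotopic to the identity on $M$ must be orientation-preserving on $M$. Since $M$ is non-orientable this does not literally make sense globally, so the argument must be run with local orientation (equivalently, in the orientation double cover).

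Concretely, I would proceed as follows. First, pass to the orientation double cover $\widetilde M\to M$, and fix a lift $\widetilde p$ of $p$. The isotopy $H_\rho$ of Definition \ref{homeodefn} lifts to an ambient isotopy $\widetilde H_\rho$ of $\widetilde M$ beginning at the identity; because $\rho$ is orientation-reversing, the endpoint $\widetilde\rho(1)$ of the lifted path is the \emph{other} lift $\widetilde p'$ of $p$, and the time-$1$ map $\widetilde\phi_\rho := \widetilde H_\rho(\cdot,1)$ interchanges the two sheets over a neighbourhood of $p$ — in particular it is the lift of $\phi_\rho$ that sends $\widetilde p$ to $\widetilde p'$, and it swaps the components of the preimage of a small disc around $p$, hence acts on $\widetilde M$ by swapping the two sheets near $\widetilde p$. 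Since $\widetilde M$ is connected (as $M$ is non-orientable) and $\widetilde\phi_\rho$ moves $\widetilde p$ to the distinct point $\widetilde p'$, in particular $\widetilde\phi_\rho$ is not the identity near $\widetilde p$; more usefully, the deck transformation of $\widetilde M$ is orientation-reversing, and $\widetilde\phi_\rho$ composed with it is orientation-preserving and fixes $\widetilde p$, which pins down that $\widetilde\phi_\rho$ itself is orientation-reversing on $\widetilde M$.

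Now suppose for contradiction that $\phi_\rho$ is isotopic to the identity on $M$ by an isotopy $G$ keeping $p$ fixed. Lifting $G$ to $\widetilde M$ starting from $\widetilde\phi_\rho$, we get an isotopy from $\widetilde\phi_\rho$ to some lift of the identity; since $\widetilde\phi_\rho(\widetilde p)=\widetilde p'\neq\widetilde p$ and $G$ fixes $p$, the endpoint lift of the identity also sends $\widetilde p\mapsto\widetilde p'$, so it is the nontrivial deck transformation $\tau$. Thus $\widetilde\phi_\rho$ is isotopic on $\widetilde M$ to $\tau$. But isotopic homeomorphisms have the same effect on orientation, and $\tau$ is orientation-reversing while we must also check the parity of $\widetilde\phi_\rho$: from the previous paragraph $\widetilde\phi_\rho$ is orientation-reversing too, so this is consistent — the contradiction must instead be extracted by a sharper invariant. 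I would therefore pass to the quotient argument at the level of $M_p$ directly: fix a small coordinate ball $B$ around $p$ and track the induced map on $H_n(M, M\setminus B;\mathbb Z/2)$ is no good over $\mathbb Z/2$, so instead I track the \emph{local orientation} $\mu_p \in H_n(M,M_p;\mathbb Z)\cong\mathbb Z$. An ambient isotopy fixing $p$ fixes $\mu_p$ (up to the identity on the local homology), whereas transporting $p$ once around the orientation-reversing loop $\rho$ carries $\mu_p$ to $-\mu_p$; since $H_\rho$ is such an ambient transport, $(\phi_\rho)_*$ acts as $-1$ on $H_n(M,M_p;\mathbb Z)$. An isotopy of $M$ keeping $p$ fixed induces a map on $(M,M_p)$ isotopic to the identity, hence the identity on $H_n(M,M_p)$, giving $-1 = 1$, a contradiction. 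Translated to $M_p$: such an isotopy of $M_p$ fixing $p$ extends across $p$ to an isotopy of the pair $(M,M_p)$, with the same conclusion.

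The main obstacle is making precise the claim that point-pushing around an orientation-reversing loop negates the local orientation class $\mu_p\in H_n(M,M_p;\mathbb Z)\cong\mathbb Z$, i.e.\ that the ambient isotopy $H_\rho$ induces $-1$ on $H_n(M,M\setminus\{p\})$. I would establish this by choosing $H_\rho$ to be supported in a regular neighbourhood $N$ of (the image of) $\rho$, noting $N$ is a small regular neighbourhood of an arc, hence an $n$-ball, and comparing the local orientation at $p$ with that at $\rho(1)=p$ via a path in $N$ following $\rho$: the obstruction to the two agreeing is exactly the orientation character $w_1$ evaluated on $[\rho]$, which is nontrivial by hypothesis. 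Equivalently, this is the content of the standard fact that the orientation double cover is connected over an orientation-reversing loop, which is precisely what \cite{MR0300288} and the double-cover viewpoint encode; once that is in hand, the rest is the elementary homological/isotopy-invariance argument above. Note the degenerate cases ($n=2$ versus $n\geq 3$) are handled uniformly since the argument only uses local homology near $p$.
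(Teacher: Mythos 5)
Your local-homology argument is correct and is, in substance, the same as the paper's proof: the paper tracks the orientation of a small ball $B_\delta$ around $p$ inside an orientable chart $B_1$ (using compactness to find $\delta$ so the whole trajectory of $\phi_\rho(B_\delta)$ under $H$ stays in $B_1$), while you track the local orientation class in $H_n(M,M_p;\mathbb{Z})\cong\mathbb{Z}$; these are the same invariant, and both proofs hinge on the two facts that $\phi_\rho$ negates it (because $w_1([\rho])\neq 0$) while an isotopy fixing $p$ must preserve it. Incidentally, the double-cover route you abandoned does work, but you had the parity of $\widetilde\phi_\rho$ backwards: lifting $H_\rho$ from $\mathrm{id}_{\widetilde M}$ shows $\widetilde\phi_\rho$ is orientation-\emph{preserving} (it is isotopic to the identity on the orientable, connected $\widetilde M$), whereas lifting the putative isotopy $G$ fixing $p$ shows $\widetilde\phi_\rho$ is isotopic to the orientation-reversing deck transformation $\tau$ — that clash is already the contradiction, with no need for the sharper invariant; also, a regular neighbourhood of the loop $\rho$ is a $D^{n-1}$-bundle over $\crcle$, not an $n$-ball, though this slip does not affect your final argument.
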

\begin{proof}
Choose a map from $[-1,1]^n$ to $\Int(M)$ that is a homeomorphism onto its image and sends the origin to $p$.
For $0\leq\delta\leq 1$, denote by $B_{\delta}$ the image under this map of the $n$--ball $[-\delta,\delta]^n$.

Suppose $H\colon (M\times[0,1],\partial M\times[0,1],p\times[0,1])\to(M,\partial M,p)$ is an isotopy from $\phi_{\rho}$ to the identity.
Then there exists $\delta>0$ such that $H(\phi_{\rho}(B_{\delta})\times[0,1])\subseteq B_1$.
Since $\rho$ is orientation reversing, the map $\phi_{\rho}$ reverses the orientation of $B_{\delta}$.
Therefore the isotopy $H$ reverses the orientation of $B_{\delta}$. This is impossible within the orientable manifold $B_1$. Hence $\phi_{\rho}$ cannot be isotopic to the identity in $M_p$.
\end{proof}

\begin{lemma}\label{coverlemma}
Let $(\widehat{M},\widehat{p})$ be an $m$--fold cover of $(M,p)$ 
with a covering map $\pi\colon(\widehat{M},\widehat{p})\to (M,p)$.
Then there is a group $L\leq\pi_*(\ker(\Phi_{\widehat{M}}))$ 
such that
$L\trianglelefteq\ker(\Phi_M)$ and $|\ker(\Phi_M)/L|\leq m$.

If $\widehat{M}$ is a regular cover with covering group $J$ then 
$L\trianglelefteq\pi_*(\ker(\Phi_{\widehat{M}}))$ and $\ker(\Phi_M)/L$ is isomorphic to a normal subgroup of $J$. 

Moreover, if $M$ is non-orientable and $\widehat{M}$ is the orientation double cover of $M$ then $\ker(\Phi_M)\trianglelefteq\pi_*(\ker(\Phi_{\widehat{M}}))$.
\end{lemma}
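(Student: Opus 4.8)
The plan is to relate $\ker(\Phi_M)$ to $\ker(\Phi_{\widehat M})$ via the covering map, using that a simple closed curve $\rho$ based at $p$ which lifts to a loop in $\widehat M$ gives rise to a point-push in $\widehat M$ that projects to $\phi_\rho$. The starting observation is that $\pi_*\colon\pi_1(\widehat M,\widehat p)\to\pi_1(M,p)$ is injective with image $H$ a subgroup of index $m$. I would first show that $\pi_*(\ker(\Phi_{\widehat M}))\subseteq\ker(\Phi_M)$: if $g\in\pi_1(\widehat M,\widehat p)$ with $\phi_{\widehat\rho}$ isotopic to the identity on $\widehat M_{\widehat p}$ (where $\widehat\rho$ represents $g$), then composing that isotopy with $\pi$ — more precisely, choosing the ambient isotopy $H_{\widehat\rho}$ of $\widehat M$ defining $\phi_{\widehat\rho}$, projecting appropriately, or better, lifting an isotopy of $M$ through the covering — realises $\phi_{\pi\circ\widehat\rho}$ as a map isotopic to the identity in $M_p$, keeping $p$ fixed. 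Here one uses that isotopies of $M$ fixing $p$ and of $M_p$ can be lifted to $\widehat M$ (path and homotopy lifting for the covering $\widehat M_{\widehat p}\to$ [preimage of $p$ removed]), so that $\phi_{\widehat\rho}\simeq\mathrm{id}$ upstairs is equivalent to $\phi_\rho\simeq\mathrm{id}$ downstairs. Call this subgroup $L:=\pi_*(\ker(\Phi_{\widehat M}))\cap\ker(\Phi_M)$; in fact we will get $L=\pi_*(\ker(\Phi_{\widehat M}))$ once we know the reverse containment holds at the level needed.

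Next I would bound the index. Given $g\in\ker(\Phi_M)\leq\pi_1(M,p)$, we do not know that $g\in H$, but $g$ determines a coset in $\pi_1(M,p)/H$, a set of size $m$ (as a $J$-set when the cover is regular). The key claim is that the assignment $g\mapsto gH$ descends to a well-defined injection $\ker(\Phi_M)/L\hookrightarrow \pi_1(M,p)/H$ of $L$-cosets into $H$-cosets, i.e.\ that if $g,g'\in\ker(\Phi_M)$ lie in the same $H$-coset then $g^{-1}g'\in L$. To see this: $g^{-1}g'\in H=\pi_*\pi_1(\widehat M,\widehat p)$, and since $g,g'\in\ker(\Phi_M)$ so is $g^{-1}g'$ (kernels are subgroups), and then by the lifting argument of the first step, an element of $H\cap\ker(\Phi_M)$ pulls back to an element of $\ker(\Phi_{\widehat M})$, hence lies in $L$. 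This immediately gives $|\ker(\Phi_M)/L|\leq |\pi_1(M,p)/H|=m$, and normality $L\trianglelefteq\ker(\Phi_M)$ is automatic since $L=H\cap\ker(\Phi_M)$ with $H$ normal (when the cover is regular) or more directly since the fibres of $g\mapsto gH$ are the $L$-cosets, forcing $L$ to be the kernel of a homomorphism $\ker(\Phi_M)\to\mathrm{Sym}(\pi_1(M)/H)$. When $\widehat M$ is regular with group $J$, $\pi_1(M,p)/H\cong J$, and the image of $\ker(\Phi_M)$ under $g\mapsto gH$ is a subgroup (since $\ker(\Phi_M)$ is a group and $H\trianglelefteq\pi_1(M)$, cosets multiply), so $\ker(\Phi_M)/L$ embeds as a subgroup of $J$; it is normal because... here one checks it is the image of the normal subgroup $\ker(\Phi_M)$ under the quotient $\pi_1(M)\to\pi_1(M)/H=J$, and the image of a normal subgroup under a surjection is normal. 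Also $L=H\cap\ker(\Phi_M)\trianglelefteq H\cong\pi_1(\widehat M)$ pulls back to $\ker(\Phi_{\widehat M})\trianglelefteq\pi_1(\widehat M)$, giving $L\trianglelefteq\pi_*(\ker(\Phi_{\widehat M}))$.

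For the final clause, suppose $M$ is non-orientable and $\widehat M$ is the orientation double cover, so $m=2$, $J\cong\mathbb Z/2$, and $\pi_1(M,p)/H$ is generated by the class of any orientation-reversing loop. I would argue that $\ker(\Phi_M)\subseteq H$, whence $\ker(\Phi_M)=L=\pi_*(\ker(\Phi_{\widehat M}))$ and there is nothing left to prove. Indeed, if $g\in\ker(\Phi_M)$ were represented by an orientation-reversing simple closed curve $\rho$, then $\phi_\rho$ would be isotopic to the identity fixing $p$, contradicting Lemma~\ref{reverseorientationlemma}; and if $g\notin H$ then $g$ is represented by an orientation-reversing loop, hence (by Corollary~\ref{makesimplecor}) by an orientation-reversing simple closed curve. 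So $\ker(\Phi_M)\subseteq H$, giving the claim.

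The main obstacle I anticipate is the lifting step in the first paragraph: making precise that ``$\phi_{\widehat\rho}$ isotopic to the identity upstairs'' is equivalent to ``$\phi_\rho$ isotopic to the identity downstairs.'' The subtlety is that the ambient isotopy defining $\phi_\rho$ on $M$ (fixing $p$ at the end, but moving $p$ along $\rho$ in between) need not lift to an ambient isotopy of $\widehat M$ unless one is careful about basepoints — one should lift the \emph{loop} of homeomorphisms rather than track individual points, using that $\rho$ lifts to a loop $\widehat\rho$ precisely when $g\in H$, and then invoking covering-space theory for the map $\widehat M_{\widehat p}\to M_p$ (or its finite-index analogue with the full preimage of $p$ deleted) together with Lemma~\ref{welldefinedlemma} to reconcile the choice of lift. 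Getting this equivalence cleanly — rather than just one containment — is what makes $L$ come out exactly equal to $\pi_*(\ker(\Phi_{\widehat M}))\cap\ker(\Phi_M)$ and drives the index bound.
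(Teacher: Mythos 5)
Your overall architecture matches the paper's: set $H=\pi_*\pi_1(\widehat M,\widehat p)$, take $L$ to be (effectively) $H\cap\ker(\Phi_M)$, show that loops in $\ker(\Phi_M)$ that lift to loops have their lifts in $\ker(\Phi_{\widehat M})$ by lifting the defining isotopy of $M$ to $\widehat M$, and bound the index via the injection $gL\mapsto gH$ of $\ker(\Phi_M)/L$ into $\pi_1(M,p)/H$. The regular-cover and orientation-double-cover paragraphs are essentially correct.

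However, there is a genuine gap in the normality claim $L\trianglelefteq\ker(\Phi_M)$ for an arbitrary, not-necessarily-regular cover. Your proposed argument --- that the fibres of $g\mapsto gH$ are $L$-cosets, ``forcing $L$ to be the kernel of a homomorphism $\ker(\Phi_M)\to\mathrm{Sym}(\pi_1(M)/H)$'' --- conflates the stabiliser of a point with the kernel of an action. The left-translation action of $\ker(\Phi_M)$ on $\pi_1(M,p)/H$ has $L$ as the stabiliser of the trivial coset $H$, but its kernel is the (generally strictly smaller) normal core $\bigcap_{\gamma}\gamma H\gamma^{-1}\cap\ker(\Phi_M)$. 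Fibres being $L$-cosets gives injectivity of the descended map, hence the index bound, but says nothing about normality: a point-stabiliser under a transitive action is almost never normal, and for a non-regular $H$ one cannot expect $K\cap H\trianglelefteq K$ by pure group theory even when $K\trianglelefteq\pi_1(M)$. The paper's proof of normality is topological, and this is precisely where the hypothesis $\sigma\in\ker(\Phi_M)$ is used: lifting the isotopy realising $\Phi_M(\sigma)$ (from the identity on $M$ back to itself) produces an isotopy of $\widehat M$ from the identity to a deck transformation $\psi$ with $\psi(\widehat p)=\widehat\sigma(1)$, and one then checks directly that the lift of $\sigma\cdot\rho\cdot\sigma^{-1}$ starting at $\widehat p$ is the closed loop $\widehat\sigma\cdot(\psi\circ\widehat\rho)\cdot\widehat\sigma^{-1}$, so $\sigma\rho\sigma^{-1}\in H$. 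That $\sigma$ corresponds to a genuine covering transformation of a possibly irregular cover is a consequence of $\sigma\in\ker(\Phi_M)$, not of $\sigma\in\pi_1(M,p)$ alone, and this is the ingredient your argument is missing.

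Two smaller remarks. First, the opening claim that $\pi_*(\ker(\Phi_{\widehat M}))\subseteq\ker(\Phi_M)$ is not needed (the lemma only asserts $L\le\pi_*(\ker(\Phi_{\widehat M}))$, not equality), and the sketch for it --- projecting an ambient isotopy of $\widehat M$ down to $M$ --- does not work, since an isotopy of a cover need not descend unless it is equivariant; drop that step and the rest of your argument does not use it. Second, your observation that $L=\ker(\Phi_M)\cap H\trianglelefteq H$ because $\ker(\Phi_M)\trianglelefteq\pi_1(M,p)$, hence $L\trianglelefteq\pi_*(\ker(\Phi_{\widehat M}))$ without assuming regularity, is correct and slightly stronger than what the paper states; that part is fine.
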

\begin{proof}
Set $G=\pi_*(\pi_1(\widehat{M},\widehat{p}))$. Then $G\leq\pi_1(M,p)$ and $|\pi_1(M,p):G|=m$.

Set $L=G\cap\ker(\Phi_M)$.
Let $\rho$ be a loop in $\Int(M)$ based at $p$ such that $\rho\in L$.
Then there is an isotopy $H_{\rho}\colon(M\times[0,1],\partial M\times[0,1])\to(M,\partial M)$ from the identity on $M$ to itself that can be used to define $\Phi_M(\rho)$.
The loop $\rho$ lifts to a loop $\widehat{\rho}$ in $\Int(\widehat{M})$ based at $\widehat{p}$.
Also, the isotopy $H_{\rho}$ lifts to an isotopy $H_{\widehat{\rho}}\colon(\widehat{M}\times[0,1],\partial\widehat{M}\times[0,1])\to(\widehat{M},\partial\widehat{M})$ beginning at the identity on $\widehat{M}$ that can be used to define $\Phi_{\widehat{M}}(\widehat{\rho})$.
Note that $\Phi_{\widehat{M}}(\widehat{\rho})$ therefore differs from the identity on $\widehat{M}$ by a covering transformation. However, as $\Phi_{\widehat{M}}(\widehat{\rho})$ fixes $\widehat{p}$ this means that it is the identity map on $\widehat{M}$. Hence $\widehat{\rho}\in\ker(\Phi_{\widehat{M}})$.
Therefore $L\leq \pi_*(\ker(\Phi_{\widehat{M}}))$.

Now let $\rho$ and 
$\sigma$ be loops in $\Int(M)$ based at $p$ such that $\rho\in L$ and $\sigma\in\ker(\Phi_M)$.
Then $\sigma\cdot\rho\cdot\sigma^{-1}\in\ker(\Phi_M)$.

Since $\sigma\in\ker(\Phi_M)$,  there is an isotopy $H_{\sigma}\colon (M\times[0,1],\partial M\times[0,1])\to(M,\partial M)$ from the identity on $M$ to itself that can be used to define $\Phi_M(\sigma)$.
This lifts to an isotopy $\widehat{H}_{\sigma}\colon(\widehat{M}\times[0,1],\partial\widehat{M}\times[0,1])\to(M,\partial M)$ from the identity on $\widehat{M}$ to a lift of the identity on $M$.
Define $\psi\colon(\widehat{M},\partial \widehat{M})\to(\widehat{M},\partial\widehat{M})$
by $\psi(x)=\widehat{H}_{\sigma}(x,1)$.
Then $\psi$ is a covering transformation of $\widehat{M}$ corresponding to $\sigma$.

Let $\widehat{\rho}$ and 
$\widehat{\sigma}$ be the lifts of $\rho$ and 
$\sigma$ respectively 
to $\widehat{M}$ beginning at $\widehat{p}$, and
let $\widehat{p}'=\widehat{\sigma}(1)=\psi(\widehat{p})$. Then the lift $\widehat{\rho}'$ of $\rho$ beginning at $\widehat{p}'$ is given by $\widehat{\rho}'(t)=\psi(\widehat{\rho}(t))$. Since $\widehat{\rho}$ is a closed loop in $\widehat{M}$, so is $\widehat{\rho}'$.
Therefore the lift of $\sigma\cdot\rho\cdot\sigma^{-1}$ beginning at $\widehat{p}$ is $\widehat{\sigma}\cdot\widehat{\rho}'\cdot\widehat{\sigma}^{-1}$. This is a closed loop in $\widehat{M}$, so $\sigma\cdot\rho\cdot\sigma^{-1}\in G$.
Hence $L\trianglelefteq\ker(\Phi_M)$.

Define a map of sets $\theta\colon \ker(\Phi_M)/L\to\pi_1(M,p)/G$ by $\theta(\rho L)=\rho G$ for $\rho\in\ker(\Phi_M)$.
Suppose $\rho_1,\rho_2\in\ker(\Phi_M)$ with $\theta(\rho_1)=\theta(\rho_2)$.
Then $\rho_1\cdot\rho_2^{-1}\in G\cap \ker(\Phi_M)$, 
and so $\rho_1 L=\rho_2 L$.
Thus $\theta$ is injective and $|\ker(\Phi_M):L|\leq|\pi_1(M,p):G|=m$.

Now suppose that $\widehat{M}$ is a regular cover of $M$. Then $G\trianglelefteq\pi_1(M,p)$ and $\pi_1(M,p)/ G\cong J$.
Since in addition $\ker(\Phi_M)\trianglelefteq\pi_1(M,p)$, it follows that $L\trianglelefteq\pi_1(M,p)$.
Therefore $L\trianglelefteq\pi_*(\ker(\Phi_{\widehat{M}}))$.

The map $\theta$ is now a group homomorphism.
Let $\rho$ and $\sigma$ be loops in $\Int(M)$ based at $p$ such that $\rho\in\ker(\Phi_M)$ and $\sigma\in\pi_1(M,p)$.
Then $(\sigma G)\cdot\theta(\rho L)\cdot(\sigma G)^{-1}=\sigma\cdot\rho\cdot\sigma^{-1} G$.
As $\ker(\Phi_M)\trianglelefteq\pi_1(M,p)$, we see that $\sigma\cdot\rho\cdot\sigma^{-1}\in\ker(\Phi_M)$ and $(\sigma G)\cdot\theta(\rho L)\cdot(\sigma G)^{-1}\in\im(\theta)$.
Hence $\im(\theta)\trianglelefteq\pi_1(M,p)/G\cong J$.

Finally suppose that $M$ is non-orientable and $\widehat{M}$ is the orientation double cover of $M$.
Then $\pi_1(M,p)/G=\{g_1,g_2\}$, where $g_1$ contains all orientation-preserving elements of $\pi_1(M,p)$ and $g_2$ contains all the orientation-reversing elements.
By Lemma \ref{reverseorientationlemma}, if $\rho$ is a loop in $\Int(M)$ based at $p$ such that $\rho\in\ker(\Phi_M)$ then $\rho$ is orientation-preserving.
Thus $g_2\notin\im(\theta)$, and $\ker(\Phi_M)/L\cong 1$.
Therefore $L=\ker(\Phi_M)$.
\end{proof}

\begin{proposition}\label{surfacesprop}
Let $M$ be a compact, connected surface with $\chi(M)\neq 0$ (that is, $M$ is not a torus, an annulus, a M\"obius band or a Klein bottle).
Then there is an exact sequence
\[
1\to\pi_1(M,p)\to\Mod(M_p)\to\Mod(M)\to 1.
\]
\end{proposition}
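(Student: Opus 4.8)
The plan is to reduce Proposition \ref{surfacesprop} to the single assertion that the point-pushing homomorphism $\Phi_M\colon\pi_1(M,p)\to\Mod(M_p)$ is injective. The results of this section allow the arguments of Sections \ref{pointpushingsection} and \ref{sequencessection} to be run for surfaces with the modifications indicated at the start of the section, namely dropping the word ``simple'' from ``simple closed curve'' throughout and using the first lemma of this section in place of Corollary \ref{pwembeddedcor}. Concretely, Lemma \ref{welldefinedlemma} together with the surface analogues of Corollary \ref{inkernelcor}, Corollary \ref{fixpointcor} and Lemma \ref{inimagelemma} give, for every compact connected surface $M$, an exact sequence
\[
\pi_1(M,p)\to\Mod(M_p)\to\Mod(M)\to 1.
\]
Hence the proposition is equivalent to the statement $\ker(\Phi_M)\cong 1$, and I would prove this by a case analysis on the sign of $\chi(M)$ and on whether $M$ is orientable.

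First suppose $M$ is orientable. Since $\chi(M)\neq 0$ by hypothesis, either $\chi(M)<0$ or $\chi(M)>0$. If $\chi(M)<0$ then Theorem \ref{birmanthm} is exactly the assertion that $\Phi_M$ is injective, so $\ker(\Phi_M)\cong 1$. If $\chi(M)>0$ then, by the classification of compact surfaces, $M$ is a disc or $\sphere$, both of which are simply connected, so $\ker(\Phi_M)\leq\pi_1(M,p)\cong 1$ and again $\ker(\Phi_M)\cong 1$.

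Now suppose $M$ is non-orientable, and let $\pi\colon(\widehat{M},\widehat{p})\to(M,p)$ be the orientation double cover. Then $\widehat{M}$ is a compact connected orientable surface with $\chi(\widehat{M})=2\chi(M)\neq 0$, so by the orientable case already treated $\ker(\Phi_{\widehat{M}})\cong 1$. Applying the final part of Lemma \ref{coverlemma} gives $\ker(\Phi_M)\trianglelefteq\pi_*(\ker(\Phi_{\widehat{M}}))=\pi_*(1)=1$, so $\ker(\Phi_M)\cong 1$, as required. (The only non-orientable surface with $\chi(M)>0$ is $\mathbb{RP}^2$, whose orientation double cover is $\sphere$; for this case one could alternatively appeal directly to Lemma \ref{reverseorientationlemma}, since the nontrivial element of $\pi_1(\mathbb{RP}^2,p)$ is represented by an orientation-reversing loop.)

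The step I expect to need the most care is the first one: verifying that the machinery of Section \ref{sequencessection} — especially the identification $\ker(\Psi)=\im(\Phi)$ coming from Lemma \ref{inimagelemma}, and the surjectivity of $\Mod(M_p)\to\Mod(M)$ coming from Corollary \ref{fixpointcor} — transfers faithfully to surfaces once Corollary \ref{pwembeddedcor} and Corollary \ref{makesimplecor} are replaced by the surface results of this section. Once that is in hand the remainder is bookkeeping: Birman's theorem handles orientable surfaces of negative Euler characteristic, simple connectivity handles the disc and the sphere, and Lemma \ref{coverlemma} pushes triviality of the kernel up through the orientation double cover to cover the non-orientable surfaces.
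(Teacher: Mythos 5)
Your proof is correct and follows essentially the same route as the paper: reduce to showing $\ker(\Phi_M)\cong 1$, dispose of orientable surfaces by Theorem \ref{birmanthm} when $\chi(M)<0$ and by simple-connectivity when $\chi(M)>0$, and handle non-orientable surfaces via the orientation double cover and the last part of Lemma \ref{coverlemma}. The extra care you take in checking that the machinery of Sections \ref{pointpushingsection} and \ref{sequencessection} carries over to surfaces (after dropping ``simple'' and replacing Corollary \ref{pwembeddedcor} with the first lemma of Section \ref{fibredsection}) is exactly what the paper intends but leaves largely implicit when it cites Proposition \ref{sequenceprop}, so this is a faithful and slightly more explicit rendering of the same argument.
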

\begin{proof}
By Proposition \ref{sequenceprop}, we only need to check that $\ker(\Phi_M)\cong 1$.
If $M$ is non-orientable, its orientation double cover $\widehat{M}$ is a compact, connected, orientable surface with $\chi(\widehat{M})\neq 0$.
Lemma \ref{coverlemma} therefore allows us to reduce to the case that $M$ is orientable.

If $\chi(M)> 0$
then $M$ is either a sphere or a disc. Thus $\pi_1(M,p)\cong 1$, and injectivity of $\Phi_M$ is immediate.
On the other hand, if $\chi(M)<0$ then injectivity of $\Phi_M$ is given by Theorem \ref{birmanthm}.
\end{proof}

\begin{lemma}\label{changemonodromylemma}
Let $N$ be a manifold, and let $f,g\colon (N,\partial N)\to (N,\partial N)$ be homeomorphisms.
Set $M_f=(N\times[0,1])/\!\sim_f$ and $M_g=(N\times[0,1])/\!\sim_g$, where $(x,1)\sim_f (f(x),0)$ and $(x,1)\sim_g(g(x),0)$ for $x\in N$.
If $f$ is isotopic to $g$ then $M_f$ is homeomorphic to $M_g$.
\end{lemma}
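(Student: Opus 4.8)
The plan is to convert an isotopy from $f$ to $g$ into a level-preserving homeomorphism of $N\times[0,1]$ that respects the two gluing relations, and so descends to a homeomorphism of the mapping tori.

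First I would fix an isotopy $F\colon N\times[0,1]\to N$ from $f$ to $g$, and for $t\in[0,1]$ write $F_t\colon(N,\partial N)\to(N,\partial N)$ for the homeomorphism $x\mapsto F(x,t)$, so that $F_0=f$ and $F_1=g$. I would note that $(x,t)\mapsto(F_t(x),t)$ is a level-preserving homeomorphism of $N\times[0,1]$, hence so is its inverse $(x,t)\mapsto(F_t^{-1}(x),t)$; in particular each $F_t^{-1}$ is again a homeomorphism of the pair $(N,\partial N)$, depending appropriately on $t$. This is the place where, in the piecewise linear setting, one records that $t\mapsto F_t^{-1}$ is well behaved.

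Next I would define $\Theta\colon N\times[0,1]\to N\times[0,1]$ by $\Theta(x,t)=(F_t^{-1}(f(x)),t)$. This is a homeomorphism preserving $\partial N\times[0,1]$, with inverse $(y,t)\mapsto(f^{-1}(F_t(y)),t)$. The key computation is that $\Theta$ carries the identification $(x,1)\sim_f(f(x),0)$ of $M_f$ to an identification of $M_g$: one has $\Theta(x,1)=(F_1^{-1}(f(x)),1)=(g^{-1}(f(x)),1)$ while $\Theta(f(x),0)=(F_0^{-1}(f(f(x))),0)=(f(x),0)$, and these two points are $\sim_g$-equivalent since $(g^{-1}(f(x)),1)\sim_g(g(g^{-1}(f(x))),0)=(f(x),0)$. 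Consequently $\Theta$ descends to a continuous map $\bar\Theta\colon M_f\to M_g$.

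Finally I would run the same computation for $\Theta^{-1}$ to see that it too respects the gluing relations, hence descends to a continuous map $M_g\to M_f$; since $\bar\Theta$ and this map are induced by mutually inverse homeomorphisms of $N\times[0,1]$, they are mutually inverse, so $\bar\Theta$ is the desired homeomorphism (which moreover sends $\partial M_f$ to $\partial M_g$). I do not anticipate a genuine obstacle: the only substantive point is the short verification that $\Theta$ respects the two gluing relations, and the rest is bookkeeping — tracking the boundary and checking that all maps remain piecewise linear.
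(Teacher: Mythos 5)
Your proof is correct and follows essentially the same strategy as the paper's: build a level-preserving homeomorphism of $N\times[0,1]$ from the isotopy and check it respects the two gluing relations, so that it descends to the mapping tori. The only difference is cosmetic: you use $\Theta(x,t)=(F_t^{-1}(f(x)),t)$, which requires the (PL-automatic, as you note) observation that the inverse isotopy $t\mapsto F_t^{-1}$ is jointly continuous, whereas the paper's formula $\psi(x,t)=(H(f^{-1}(x),1-t),t)$ reverses the time parameter and composes with $f^{-1}$ on the other side, sidestepping any appeal to $F_t^{-1}$.
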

\begin{proof}
Let $H\colon (N\times[0,1],\partial N\times[0,1])\to (N,\partial N)$ be an isotopy from $f$ to $g$.
Define $\psi\colon M_f\to M_g$ by
$\psi(x,t)=(H(f^{-1}(x),1-t),t)$ for $x\in N$ and $t\in[0,1]$.
Note that $\psi(f(x),0)=(H(x,1),0)=(g(x),0)\sim_g(x,1)=(H(f^{-1}(x),0),1)=\psi(x,1)$ for $x\in N$, so $\psi$ is a well-defined map from $M_f$ to $M_g$.
In addition, for $t\in[0,1)$, the map $x\mapsto \psi(x,t)$ is a homeomorphism of $N\times\{t\}$. Hence $\psi$ is a homeomorphism. 
\end{proof}

\begin{lemma}\label{pushroundfibrelemma}
Suppose $M$ is a fibred manifold with base space $N$ and monodromy $\omega\colon (N,\partial N)\to (N,\partial N)$.
Assume $p$ lies on $N$.
Express $M$ as $(N\times[0,1])/\!\sim_{\omega}$, where $(x,1)\sim_{\omega}(\omega(x),0)$ for $x\in N$.
Define $\rho_{\omega}\colon[0,1]\to M$ by $\rho_{\omega}(t)=(p,t)$.
Then $\phi_{\rho_{\omega}}$ is the map $(x,s)\mapsto(\omega(x),s)$.
\end{lemma}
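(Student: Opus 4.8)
The plan is to exhibit one explicit ambient isotopy realising the point-push along $\rho_\omega$ and simply read off its time-$1$ map. By the remark following Proposition~\ref{sequenceprop} we are, from here on, allowed to use isotopies that move $\partial M$ when building $\phi_{\rho_\omega}$, and by Lemma~\ref{welldefinedlemma} the isotopy class of $\phi_{\rho_\omega}$ does not depend on the chosen isotopy. So it suffices to produce a single isotopy $H_{\rho_\omega}\colon(M\times[0,1],\partial M\times[0,1])\to(M,\partial M)$ that starts at the identity, satisfies $H_{\rho_\omega}(p,t)=\rho_\omega(t)$, and ends at the stated map. First I would observe that $(x,s)\mapsto(\omega(x),s)$ really is a homeomorphism of $M=(N\times[0,1])/\!\sim_\omega$, since the identification $(x,1)\sim_\omega(\omega(x),0)$ is carried to $(\omega(x),1)\sim_\omega(\omega^2(x),0)$, with inverse $(x,s)\mapsto(\omega^{-1}(x),s)$.

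The isotopy I would use rotates the ``suspension'' coordinate. For $u\in[0,1]$, $x\in N$, $s\in[0,1]$, set
\[
H_{\rho_\omega}([x,s],u)=
\begin{cases}
[x,s+u] & s+u\leq 1,\\
[\omega(x),s+u-1] & s+u>1.
\end{cases}
\]
Equivalently, writing $M$ as $(N\times\mathbb{R})/\mathbb{Z}$ with $1\cdot(x,s)=(\omega(x),s-1)$, this is the suspension flow $[x,s]\mapsto[x,s+u]$ restricted to $u\in[0,1]$.

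The verification then proceeds through a short sequence of checks. (i) $H_{\rho_\omega}$ is well defined on $M$: the two branches agree when $s+u=1$ because $[x,1]=[\omega(x),0]$, and the value is unchanged under the only identification $(x,1)\sim_\omega(\omega(x),0)$ of representatives. (ii) For each fixed $u$ the map $[x,s]\mapsto H_{\rho_\omega}([x,s],u)$ is a homeomorphism of $M$, with inverse the analogous rotation by $-u$, and it carries $\partial M$ (the mapping torus of $\omega|_{\partial N}$) to itself; hence $H_{\rho_\omega}$ is an ambient isotopy of $(M,\partial M)$. (iii) $H_{\rho_\omega}(\cdot,0)$ is the identity. (iv) Since $p=[p,0]$, we get $H_{\rho_\omega}(p,u)=[p,u]=\rho_\omega(u)$ for all $u$, so $H_{\rho_\omega}$ pushes $p$ once along $\rho_\omega$. (v) At $u=1$ the first branch applies only for $s=0$, giving $[x,0]\mapsto[x,1]=[\omega(x),0]$, while for $s>0$ the second branch gives $[x,s]\mapsto[\omega(x),s]$; in either case $H_{\rho_\omega}([x,s],1)=[\omega(x),s]$. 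Thus $\phi_{\rho_\omega}$ equals the map $(x,s)\mapsto(\omega(x),s)$, as claimed.

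I do not expect any real obstacle here; the only point needing care is the bookkeeping at the gluing locus $s+u=1$ and at the two descriptions $(x,1)$, $(\omega(x),0)$ of a glued fibre, which is exactly what the case split is arranged to absorb. It is worth recording that one cannot in general choose $H_{\rho_\omega}$ to fix $\partial M$, since $(x,s)\mapsto(\omega(x),s)$ restricts to the identity on $\partial M$ only when $\omega|_{\partial N}$ is the identity; this is precisely why the relaxed form of Definition~\ref{homeodefn} is in force throughout this section.
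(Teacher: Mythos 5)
Your proposal is correct and is essentially identical to the paper's proof: it exhibits the same suspension-flow isotopy $H_{\rho_\omega}([x,s],u) = [x,s+u]$ (split into the same two cases at $s+u=1$), verifies well-definedness at the gluing locus, checks that $p$ traverses $\rho_\omega$, and reads off the time-$1$ map. Your extra observations — spelling out the homeomorphism-at-each-time check and noting explicitly that $\partial M$ cannot in general be fixed — are accurate but do not change the argument.
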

\begin{remark}
Note that here we do not really need $\rho_{\omega}$ to be a closed loop, so it is not necessary that $\omega(p)=p$.
\end{remark}
\begin{proof}
Define $H\colon(M\times[0,1],\partial M\times[0,1])\to(M,\partial M)$ by, for $x\in N$, $s\in[0,1)$ and $t\in[0,1]$,
\[
H(x,s,t)=
\begin{cases}
(x,s+t) & s+t\leq 1 ,\\
(\omega(x),s+t-1) &1< s+t.
\end{cases}
\]
Note that $H(x,s,1-s)=(x,1)\sim_{\omega}(\omega(x),0)$
and
$H(\omega(x),0,t)=(\omega(x),t)$ for $x\in N$ and $t\in[0,1]$.
Therefore $H$ is well-defined and continuous.
Moreover, $H(p,0,t)=(p,t)=\rho_{\omega}(t)$ for $t\in[0,1]$.
Therefore $H$ can be used to define $\phi_{\rho_{\omega}}$.
In addition, 
$H(x,s,1)=(\omega(x),s)$.
\end{proof}

\begin{lemma}\label{mustbeperiodiclemma}
Suppose that $M$ is fibred with base space 
$N$ and with monodromy $\omega\colon (N,\partial N)\to (N,\partial N)$.
Express $M$ as $(N\times[0,1])/\!\sim_{\omega}$, where $(x,1)\sim_{\omega}(\omega(x),0)$ for $x\in N$.
Assume $p$ lies on $N$ and $\omega(p)=p$.
Define $\rho_{\omega}\colon[0,1]\to M$ by $\rho_{\omega}(t)=(p,t)$.
Let $\rho_N\colon[0,1]\to M$ be a loop in $\Int(N)$ based at $p$. 

If $\rho_N\in\ker(\Phi_M)$ then $\omega(\rho_N)$ is homotopic to $\rho_N$ in $N$, keeping $p$ fixed.

If $N$ is a surface (so $M$ is a $3$--manifold) and $\rho_{\omega}^m\cdot\rho_N\in\ker(\Phi_M)$ for some $m\in\mathbb{Z}$ then 
$\phi_{\rho_N}\circ\omega^m$ is isotopic to the identity on $N$ keeping $p$ fixed.
In particular, $\omega^m$ is isotopic to the identity on $N$.
\end{lemma}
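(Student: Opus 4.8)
The plan is to work inside the product-with-monodromy description $M=(N\times[0,1])/\!\sim_{\omega}$ and use the earlier point-pushing machinery to turn the hypothesis $\rho_{\omega}^m\cdot\rho_N\in\ker(\Phi_M)$ into a statement purely about $N$. First I would observe that $\phi_{\rho_{\omega}^m\cdot\rho_N}$ is isotopic (on $M_p$, keeping $p$ fixed) to $\phi_{\rho_N}\circ\phi_{\rho_{\omega}^m}$, since concatenation of paths corresponds to composition of the associated homeomorphisms (this is essentially the computation underlying the map $\Phi$ in Proposition~\ref{sequenceprop}). By Lemma~\ref{pushroundfibrelemma}, $\phi_{\rho_{\omega}}$ is the map $(x,s)\mapsto(\omega(x),s)$, so $\phi_{\rho_{\omega}^m}$ is $(x,s)\mapsto(\omega^m(x),s)$; this is a fibre-preserving homeomorphism that restricts to $\omega^m$ on $N=N\times\{0\}$. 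Meanwhile $\rho_N$ is a loop in $\Int(N)$, so the isotopy $H_{\rho_N}$ used to define $\phi_{\rho_N}$ can be taken supported near $N$ and hence $\phi_{\rho_N}$ can be arranged to restrict to the point-push of $N$ along $\rho_N$ (in the surface $N$) on the fibre $N$, and to the identity away from a neighbourhood of that fibre. Thus the composite $\phi_{\rho_N}\circ\phi_{\rho_{\omega}^m}$ restricts on $N$ to $\phi_{\rho_N}\circ\omega^m$, where the first factor now denotes the surface point-pushing homeomorphism of $N$.

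Next, the hypothesis says this composite is isotopic to the identity on $M$ by an isotopy fixing $p$. I would restrict that isotopy to the fibre $N$ — or rather, use it to deduce information about $N$. The cleanest route: since $\phi_{\rho_N}\circ\phi_{\rho_{\omega}^m}$ is isotopic to $\mathrm{id}_M$ fixing $p$, and this homeomorphism already fixes the fibre $N$ setwise and restricts there to $\phi_{\rho_N}\circ\omega^m$, I want to promote the ambient isotopy of $M$ to an isotopy of $N$. Here I would invoke the same circle of ideas as in Lemma~\ref{fixboundarylemma} (controlling an isotopy on a codimension-one submanifold): because $N$ is a fibre, one can push the ambient isotopy of $M$ to respect the fibration in a neighbourhood and thereby extract an isotopy of $N$ from $\phi_{\rho_N}\circ\omega^m$ to the identity, keeping $p$ fixed. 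Combined with the first part of the lemma (which is the ``$m=0$'' analogue already available), and with Lemma~\ref{inimagelemma}/Corollary~\ref{makesimplecor} to handle the surface point-push, this gives that $\phi_{\rho_N}\circ\omega^m\simeq\mathrm{id}_N$ rel $p$; forgetting the base point yields $\omega^m$ isotopic to $\mathrm{id}_N$.

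The main obstacle I expect is precisely the step of extracting a \emph{legitimate isotopy of the fibre $N$} from the ambient isotopy of $M$: a priori the ambient isotopy of $M$ need not preserve the fibration at all, and it may move the fibre $N\times\{0\}$ around wildly inside $M$. To handle this I would argue that, up to isotopy, $\phi_{\rho_N}\circ\phi_{\rho_{\omega}^m}$ can be taken to fix $N$ (both as a map and as a submanifold) — since both factors do — and then the real content is: a homeomorphism of $M$ supported (up to isotopy) in a neighbourhood of $N$ and isotopic to the identity in $M$ must restrict on $N$ to something isotopic to the identity on $N$. This is where one uses that $N\hookrightarrow M$ is $\pi_1$-injective (it is a fibre of a fibration over $\crcle$, and $\pi_1(N)$ is a normal subgroup of $\pi_1(M)$) together with the surface results: Theorem~\ref{surfacehomotoisothm} lets one pass from a homotopy of the restriction to an isotopy, and the $\pi_1$-injectivity lets one see that the trace of the basepoint must be null-homotopic \emph{in $N$}, not merely in $M$. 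The remaining verifications — that concatenation of push-paths composes the maps, that the surface point-push is realised on the fibre, that $\omega^m$ isotopic to $\mathrm{id}_N$ follows from the rel-$p$ statement — are routine given the lemmas already established.
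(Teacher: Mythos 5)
You correctly identify the crux: an ambient isotopy of $M$ from $\phi_{\rho_N}\circ\phi_{\rho_\omega^m}$ to the identity need not respect the fibration, and you need to squeeze an isotopy (or at least homotopy) of the fibre $N$ out of it. But the resolution you sketch — ``push the ambient isotopy of $M$ to respect the fibration in a neighbourhood'' by ``the same circle of ideas as in Lemma~\ref{fixboundarylemma}'' — does not work as stated and is not how the paper proceeds. Lemma~\ref{fixboundarylemma} is tailored to controlling an isotopy on a \emph{toral boundary component}, using a product collar and a sequence of annulus normalisations; it does not apply to an interior fibre of a bundle, and there is no easy analogue. Moreover, your intermediate claim that a homeomorphism of $M$ supported near $N$ and isotopic to $\mathrm{id}_M$ must restrict to something isotopic to $\mathrm{id}_N$ is exactly what is at stake and cannot be assumed.

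The paper's proof avoids fibre-preservation entirely. It passes to the infinite cyclic cover $\widetilde{M}=N\times\mathbb{R}$, lifts the ambient isotopy $H_\rho$ of $M$ (fixing $p$) to an isotopy $\widetilde H_\rho$ of $\widetilde{M}$ fixing $\widetilde p$, and then \emph{projects} via $\pi_N\colon N\times\mathbb{R}\to N$. Crucially, $\pi_N\circ\widetilde H_\rho$ is only a \emph{homotopy} of $N$ (projection of a homeomorphism of $N\times\mathbb{R}$ need not be a homeomorphism of $N$), from $\mathrm{id}_N$ to $\phi_{\rho_N}\circ\omega^m$, keeping $p$ fixed. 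This is precisely why the surface hypothesis and Theorem~\ref{surfacehomotoisothm} (Epstein) are then needed to upgrade the homotopy to an isotopy; without the surface assumption one only gets the weaker homotopy conclusion of the first part. Your proposal mentions Epstein's theorem and $\pi_1$-injectivity at the end, but the route to producing the homotopy is missing; the infinite-cyclic-cover-plus-projection trick is the missing idea.
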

\begin{proof}
Let $\widetilde{M}=N\times\mathbb{R}$ and $\widetilde{p}=(p,0)$.
Define $\pi\colon(\widetilde{M},\partial\widetilde{M},\widetilde{p})\to (M,\partial M,p)$ by requiring that
$\pi(x,s)=(x,s)$ and $\pi(x,r+s)=\pi(\omega(x),r-1+s)$ for $x\in N$, $s\in[0,1)$ and $r\in\mathbb{Z}$.
Then $(\widetilde{M},\widetilde{p})$ is the infinite cyclic cover of $(M,p)$.
Let $\pi_N\colon\widetilde{M}\to N$ be 
projection onto the first factor.

First suppose that $\rho_N\in\ker(\Phi_M)$.
Let $\widetilde{\rho}_N\colon[0,1]\to\widetilde{M}$ be the lift of $\widetilde{\rho}$ beginning at $\widetilde{p}$, and let $\widetilde{\rho}'_N\colon[0,1]\to\widetilde{M}$ be the lift beginning at $(p,1)$.
For $t\in[0,1]$, $\widetilde{\rho}_N(t)=(\rho_N(t),0)$ and $\pi(\widetilde{\rho}'_N(t))=\rho_N(t)=\pi(\widetilde{\rho}_N(t))$, 
so $\widetilde{\rho}'_N(t)=(\omega^{-1}(\rho_N(t)),1)$.
In particular, $\pi_N(\widetilde{\rho}'_N(t))=\omega^{-1}(\rho_N(t))$ for $t\in[0,1]$.

As $\rho_N\in\ker(\Phi_M)$, there exists an isotopy $H_{\rho_N}\colon(M\times[0,1],\partial M\times[0,1])\to (M,\partial M)$ from the identity on $M$ to itself such that $H_{\rho_N}(p,0,t)=(\rho_N(t),0)$ for $t\in[0,1]$.
This lifts to an isotopy
$\widetilde{H}_{\rho_N}\colon(\widetilde{M}\times[0,1],\partial\widetilde{M}\times[0,1])\to(\widetilde{M},\partial\widetilde{M})$ from the identity on $\widetilde{M}$ to itself
such that $\widetilde{H}_{\rho_N}(p,0,t)=\widetilde{\rho}_{N}(t)$
and $\widetilde{H}_{\rho_N}(p,1,t)=\widetilde{\rho}'_N(t)$
for $t\in[0,1]$.

Define $H'_{\rho_N}\colon[0,1]\times[0,1]\to N$ by
$H'_{\rho_N}(s,t)=\pi_N(\widetilde{H}_{\rho_N}(p,t,s))$ for $s,t\in[0,1]$.
Note that, for $t\in[0,1]$, 
\[H'_{\rho_N}(0,t)=\pi_N(\widetilde{H}_{\rho_N}(p,t,0))=\pi_N(p,t)=p=\pi_N(\widetilde{H}_{\rho_N}(p,t,1))=H'_{\rho_N}(1,t).\] Therefore, $s\mapsto H'_{\rho_N}(s,t)$ is a loop in $N$ based at $p$ for $t\in[0,1]$.
Hence $H'_{\rho_N}$ is a homotopy, keeping $p$ fixed, from $\rho_N$ to the path $\rho'_N\colon[0,1]\to N$ given by, for $s\in[0,1]$,
\[
\rho'_N(s)=\pi_N(\widetilde{H}_{\rho_N}(p,1,s))=\pi_N(\widetilde{\rho}'_N(s))=\omega^{-1}(\rho_N(s)).
\]

Now suppose instead that $N$ is a surface and $\rho_{\omega}^m\cdot\rho_N\in\ker(\Phi_M)$ for some $m\in\mathbb{Z}$.
Let $H_N\colon(N\times[0,1],\partial N\times[0,1])\to(N,\partial N)$ be the isotopy used to define $\Phi_N(\rho_N)$ on $N$. We may assume that $H_N$ keeps $\partial N$ fixed throughout.
Take the isotopy $H_N$ on $N\times\{0\}$ together with the identity on $\partial M$, and extend this, using Lemma \ref{bdyisotopylemma}, to an isotopy $H\colon(M\times[0,1],\partial M\times[0,1])\to(M,\partial M)$. Then $H$ can be used to define $\Phi_M(\rho_N)$,
and the restriction of $\Phi_M(\rho_N)$ to $N=N\times\{0\}$ is $\Phi_N(\rho_N)$.

Thus, by Lemma \ref{pushroundfibrelemma}, the restriction of $\Phi_M(\rho)=\Phi_M(\rho_{\omega}^m\cdot\rho_N)$ to $N$ is $\phi_{\rho_N}\circ\omega^m$.
Let $H_{\rho}\colon (M\times[0,1],\partial M\times[0,1],p\times[0,1])\to(M,\partial M,p)$ be the isotopy from the identity on $M$ to $\Phi_M(\rho_{\omega}^m\cdot\rho_N)$.
This lifts to an isotopy $\widetilde{H}_{\rho}\colon(\widetilde{M}\times[0,1],\partial\widetilde{M}\times[0,1],\widetilde{p}\times[0,1])\to(\widetilde{M},\partial\widetilde{M},\widetilde{p})$.

Define $\widehat{H}_N\colon (N\times[0,1],\partial N\times[0,1],p\times[0,1])\to(N,\partial N,p)$ by $\widehat{H}_N(x,t)=\pi_N(\widetilde{H}_{\rho}(x,t))$.
Then $\widehat{H}_N$ is a homotopy, keeping $p$ fixed, from the identity on $N$ to $\phi_{\rho}\circ\omega^m$.
By Theorem \ref{surfacehomotoisothm}, there is therefore an isotopy, keeping $p$ fixed, from the identity on $N$ to $\phi_{\rho_N}\circ\omega^m$.
As $\phi_{\rho_N}$ is isotopic to the identity on $N$, it follows that $\omega^m$ is isotopic to the identity on $N$.
\end{proof}

\begin{remark}
Suppose that the monodromy $\omega\colon (N,\partial N)\to (N,\partial N)$ is such that  $\omega^m$ is isotopic to the identity on $N$ for some $m\neq 0$.
If in fact $\omega^m$ is the identity on $M$ then Lemma \ref{pushroundfibrelemma} shows that $\rho_{\omega}^m\in\ker(\Phi_M)$. It therefore seems natural to ask whether the monodromy may always be chosen in such a way. That is, if $\omega^m$ is isotopic to the identity for some $m>0$, does there exist a homeomorphism $\omega'\colon N\to N$ isotopic to $\omega$ on $N$ such that $(\omega')^m$ is the identity on $N$?
The Nielsen Realisation Theorem says that this is the case in the topological and smooth categories (see, for example, \cite{MR2850125} Theorem 7.2, \cite{MR690845} Theorem 5 and \cite{MR1189862} Corollary 8.1), but the author does not know of a complete proof for the piecewise linear category.
\end{remark}

\begin{corollary}\label{notperiodiccor}
Suppose $M$ is a $3$--manifold and is fibred with base space a surface $N$ 
and with monodromy $\omega\colon (N,\partial N)\to (N,\partial N)$.
Suppose that $\omega^m$ is not isotopic to the identity on $N$ for any $m\neq 0$.
If $N$ is a torus then $\ker(\Phi_M)\trianglelefteq\pi_1(N)\cong\mathbb{Z}^2$, and otherwise
there is an exact sequence
\[
1\to\pi_1(M,p)\to\Mod(M_p)\to\Mod(M)\to 1.
\]
\end{corollary}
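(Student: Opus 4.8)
The plan is to invoke Proposition~\ref{sequenceprop}, so that it is enough to compute $\ker(\Phi_M)$: the Birman exact sequence holds precisely when this kernel is trivial, so I must show $\ker(\Phi_M)\cong 1$ when $N$ is not a torus and $\ker(\Phi_M)\trianglelefteq\pi_1(N)\cong\mathbb{Z}^2$ when it is. The workhorse is Lemma~\ref{mustbeperiodiclemma}, which converts membership of a class of the form $[\rho_\omega^m\cdot\rho_N]$ in $\ker(\Phi_M)$ into control of $\omega$ on the fibre $N$; to apply it I first want the monodromy to fix the basepoint. So I would begin by using Corollary~\ref{fixpointcor} to replace $\omega$ by an isotopic homeomorphism of $N$ with a fixed point $q$, then Lemma~\ref{changemonodromylemma} to replace the mapping torus accordingly (the new monodromy still has no nonzero power isotopic to the identity, being isotopic to $\omega$), and finally take $p=q$, so that henceforth $\omega(p)=p$ with $p$ on $N$. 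Writing $M=(N\times[0,1])/\!\sim_\omega$ and $\rho_\omega(t)=(p,t)$, the resulting bundle projection $M\to\crcle$ induces a homomorphism $\pi_1(M,p)\to\mathbb{Z}$ sending $[\rho_\omega]$ to a generator and with kernel the fibre subgroup $\pi_1(N,p)$, so every class in $\pi_1(M,p)$ can be written as $[\rho_\omega^m\cdot\rho_N]$ for some $m\in\mathbb{Z}$ and some loop $\rho_N$ in $\Int(N)$ based at $p$.

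Next I would take $g=[\rho_\omega^m\cdot\rho_N]\in\ker(\Phi_M)$ and apply Lemma~\ref{mustbeperiodiclemma} to conclude that $\omega^m$ is isotopic to the identity on $N$; the hypothesis then forces $m=0$, so $\ker(\Phi_M)\leq\pi_1(N,p)$. If $N$ is a torus this finishes the first case, since $\pi_1(N,p)\cong\mathbb{Z}^2$ is abelian and hence $\ker(\Phi_M)$ is automatically normal in it.

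If $N$ is not a torus, I would first rule out $\chi(N)=0$: then $N$ would be an annulus, a M\"obius band or a Klein bottle, all of which have finite mapping class group, so some nonzero power of $\omega$ would be isotopic to the identity, against hypothesis. Thus $\chi(N)\neq 0$. Now for $g=[\rho_N]\in\ker(\Phi_M)$ (the case $m=0$ just established), the last clause of Lemma~\ref{mustbeperiodiclemma} gives that $\phi_{\rho_N}$ is isotopic to the identity on $N$ keeping $p$ fixed, i.e.\ $[\rho_N]\in\ker(\Phi_N)$; since $\chi(N)\neq 0$, Proposition~\ref{surfacesprop} says $\ker(\Phi_N)\cong 1$, so $\rho_N$ is null-homotopic in $N$ and $g=1$. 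Hence $\ker(\Phi_M)\cong 1$, and Proposition~\ref{sequenceprop} delivers the exact sequence.

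The point I expect to need the most care is the opening normalization to $\omega(p)=p$, which is what lets Lemma~\ref{mustbeperiodiclemma} be used at all: one must check that the replacement via Corollary~\ref{fixpointcor} and Lemma~\ref{changemonodromylemma} preserves the hypothesis (clear, since the new monodromy is isotopic to $\omega$) and does not affect the conclusion. In the torus case the latter needs a remark: the homeomorphism produced by Lemma~\ref{changemonodromylemma} commutes with the projections to $\crcle$, hence carries the fibre subgroup to the fibre subgroup, so ``$\ker(\Phi_M)\leq\pi_1(N)$'' is unchanged; and as $\ker(\Phi_M)$ is normal in $\pi_1(M,p)$, moving the basepoint along a path in $M$ causes no harm. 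The remaining steps are straightforward applications of the results already proved.
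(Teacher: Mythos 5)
Your proof is correct and follows essentially the same route as the paper's: normalize so that $\omega(p)=p$, write each class as $[\rho_\omega^m\cdot\rho_N]$, invoke Lemma~\ref{mustbeperiodiclemma} to force $m=0$ and hence $\ker(\Phi_M)\leq\pi_1(N,p)$, then in the $\chi(N)\neq 0$ case use the last clause of Lemma~\ref{mustbeperiodiclemma} together with Proposition~\ref{surfacesprop} to get triviality. The only cosmetic differences are that you spell out the basepoint normalization via Corollary~\ref{fixpointcor} and Lemma~\ref{changemonodromylemma} (which the paper simply states as ``we may assume''), you compress the $\chi(N)=0$ exclusion into the single observation that annuli, M\"obius bands and Klein bottles all have finite mapping class groups (rather than citing the individual surface theorems as the paper does), and you justify normality of $\ker(\Phi_M)$ in $\pi_1(N)$ for the torus case via commutativity of $\mathbb{Z}^2$ rather than via normality in $\pi_1(M,p)$ — both are valid.
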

\begin{proof}
We first show that if $N$ in not a torus then $\chi(N)\neq 0$.
If $\chi(N)=0$ then $N$ is a torus, an annulus, a M\"obius band or a Klein bottle.
Suppose $N$ is an annulus. Then, as $\omega^2$ is orientation-preserving,  \cite{MR0214087} Theorem 5.6 says that $\omega^2$ is isotopic to the identity on $N$, contrary to hypothesis.
If $N$ is a Klein bottle, \cite{MR0145498} Lemma 5 gives that $\Mod(N)\cong\mathbb{Z}_2\times\mathbb{Z}_2$, so $\omega^2$ must be isotopic to the identity.
Suppose finally that $N$ is a M\"obius band. The restriction of $\omega^2$ to $\partial N$ is orientation-preserving and so is isotopic to the identity map on $\partial N$, by Theorem \ref{spheremapthm}. 
By Lemma \ref{bdyisotopylemma}, this isotopy can be extended to an isotopy of $N$. Thus we may assume that the restriction of $\omega^2$ to $\partial N$ is the identity.
Therefore \cite{MR0214087} Theorem 3.4 gives that $\omega^2$ is isotopic to the identity on $N$. This again contradicts the hypotheses.
Hence we see that $\chi(N)\neq 0$ unless $N$ is a torus.

Express $M$ as $(N\times[0,1])/\!\sim_{\omega}$, where $(x,1)\sim_{\omega}(\omega(x),0)$ for $x\in N$.
We may assume that $p$ lies on $N$ and that $\omega(p)=p$.
Let $\rho_{\omega}\colon[0,1]\to \Int(M)$ be the path $\rho_{\omega}(t)=(p,t)$.
Then every element of $\pi_1(M,p)$ can be expressed as $\rho_{\omega}^m\cdot\rho_N$ for some $m\in\mathbb{Z}$ and some loop $\rho_N$ based at $p$ and lying in $\Int(N)$.

Choose a loop $\rho_N$ in $\Int(N)$ based at $p$ and $m\in\mathbb{Z}$ such that $\rho_{\omega}^m\cdot\rho_N\in\ker(\Phi_M)$.
Lemma \ref{mustbeperiodiclemma} implies that $\omega^m$ is isotopic to the identity on $N$.
By hypothesis this means that $m=0$. 
Hence $\ker(\Phi_M)\leq\pi_1(N,p)$. As $\ker(\Phi_M)\trianglelefteq\pi_1(M,p)$, it follows automatically that $\ker(\Phi_M)\trianglelefteq\pi_1(N,p)$.

Now assume $N$ is not a torus.
Again from Lemma \ref{mustbeperiodiclemma}, we see that $\Phi_{N}(\rho_N)$ is isotopic to the identity on $N$ keeping $p$ fixed. That is, $\rho_N\in\ker(\Phi_N)$.
As $\chi(N)\neq 0$, Proposition \ref{surfacesprop} shows that $\rho_N$ is null-homotopic in $N$.
Thus $\rho$ is null-homotopic in $M$.
Therefore $\ker(\Phi_M)\cong 1$.
\end{proof}

\begin{proposition}\label{productbundleprop}
Suppose $M$ is fibred with base space $N$ and monodromy $\omega\colon (N,\partial N)\to (N,\partial N)$.
Suppose that $\omega$ is isotopic to the identity on $N$.
Then there is an exact sequence
\[\pi_1(N,p)\to\Mod(M_p)\to\Mod(M)\to 1.\]

If $N$ is a compact surface then there is an exact sequence
\[
1\to\ker(\Phi_N)\to\pi_1(N,p)\to\Mod(M_p)\to\Mod(M)\to 1.
\]
\end{proposition}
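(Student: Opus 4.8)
The plan is to reduce to the product bundle $M=N\times\crcle$ and then split $\pi_1(M,p)$ into the fundamental group of the base and that of the $\crcle$-fibre. Concretely, I would first invoke Lemma \ref{changemonodromylemma}: since $\omega$ is isotopic to the identity on $N$, the mapping torus $M$ is homeomorphic to $N\times\crcle$. As $\Mod$, $\pi_1$, and the maps of Proposition \ref{sequenceprop} depend only on the homeomorphism type of $M$, and as $M$ (hence $N$) is connected with every point of $N\times\crcle$ lying on a fibre, I may assume $M=N\times\crcle$ and, after reparametrising $\crcle$, that $p=(p_0,0)$ with $p_0\in\Int(N)$, identifying $N$ with the fibre $N\times\{0\}$. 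Then $\pi_1(M,p)=\pi_1(N,p_0)\times\langle\rho_\omega\rangle\cong\pi_1(N,p_0)\times\mathbb{Z}$, where $\rho_\omega(t)=(p_0,t)$ runs once round the $\crcle$-factor, and I take the map $\pi_1(N,p)\to\Mod(M_p)$ occurring in both displayed sequences to be the fibre inclusion $\pi_1(N,p_0)\hookrightarrow\pi_1(M,p)$ followed by $\Phi_M$.

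For the first sequence, the key point is that $\rho_\omega\in\ker(\Phi_M)$: with the monodromy now equal to the identity, Lemma \ref{pushroundfibrelemma} says that $\phi_{\rho_\omega}$ is the map $(x,s)\mapsto(x,s)$, i.e.\ the identity on $M$, so $\Phi_M(\rho_\omega)=1$. Because $\Phi_M$ is a homomorphism and $\langle\rho_\omega\rangle$ is a direct factor of $\pi_1(M,p)$ contained in $\ker(\Phi_M)$, every element of $\pi_1(M,p)$ has the same $\Phi_M$-image as some element of $\pi_1(N,p_0)$, so $\im(\Phi_M)=\Phi_M(\pi_1(N,p_0))$. By Proposition \ref{sequenceprop}, $\im(\Phi_M)$ equals the kernel of the filling-in map $\Mod(M_p)\to\Mod(M)$, which is surjective; hence
\[
\pi_1(N,p)\to\Mod(M_p)\to\Mod(M)\to 1
\]
is exact, which is the first assertion.

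For the second sequence I would prove that, when $N$ is a surface, $\ker(\Phi_M)\cap\pi_1(N,p_0)=\ker(\Phi_N)$; since the left-hand side is precisely the kernel of the map $\pi_1(N,p)\to\Mod(M_p)$ above, prepending it then yields the exact sequence $1\to\ker(\Phi_N)\to\pi_1(N,p)\to\Mod(M_p)\to\Mod(M)\to1$. One inclusion is essentially immediate: if a loop $\rho_N$ in $\Int(N)$ based at $p_0$ lies in $\ker(\Phi_M)$, then the second part of Lemma \ref{mustbeperiodiclemma} with $m=0$ (the monodromy being the identity) says precisely that $\phi_{\rho_N}$ is isotopic to the identity on $N$ keeping $p_0$ fixed, i.e.\ $\rho_N\in\ker(\Phi_N)$. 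For the reverse inclusion, given $\rho_N\in\ker(\Phi_N)$, I would concatenate an isotopy of $N$ from the identity to $\phi_{\rho_N}$ that pushes $p_0$ once along $\rho_N$ (Definition \ref{homeodefn}) with an isotopy of $N$ from $\phi_{\rho_N}$ back to the identity fixing $p_0$ (the hypothesis on $\rho_N$), obtaining an isotopy $K\colon N\times[0,1]\to N$ with $K_0=K_1=\mathrm{id}_N$ whose first half pushes $p_0$ once along $\rho_N$ and whose second half fixes $p_0$. Then $\widehat K_u(x,s)=(K_u(x),s)$ is an isotopy of $M=N\times\crcle$ from the identity to the identity; its first half starts at the identity and pushes $p$ once along $\rho_N$ (viewed in $N\times\{0\}$), so by Lemma \ref{welldefinedlemma} (with Remark \ref{reparamremark} to absorb the reparametrisation) its midpoint map is isotopic rel $p$ to $\phi_{\rho_N}$ computed in $M$, while its second half isotopes that midpoint map back to the identity keeping $p$ fixed. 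Hence $\rho_N\in\ker(\Phi_M)$. No boundary condition intervenes here, since in this section we study only $\Mod$, for which by Lemma \ref{replaceboundarylemma} the isotopies defining $\phi_\rho$ need not fix $\partial M$.

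The reduction and the first exact sequence are routine. The step I expect to need the most care is the inclusion $\ker(\Phi_N)\subseteq\ker(\Phi_M)$: one must carry out the product-isotopy construction precisely and, crucially, use the well-definedness of $\phi_{\rho_N}$ up to isotopy rel $p$ so that the two halves of $\widehat K$ combine into a single loop, based at the identity, in the homeomorphisms of $M_p$ realising the point-pushing class of $\rho_N$.
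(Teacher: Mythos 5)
Your proposal is correct and follows essentially the same route as the paper: reduce to $M=N\times\crcle$ via Lemma \ref{changemonodromylemma}, use Lemma \ref{pushroundfibrelemma} to show $\rho_\omega\in\ker(\Phi_M)$ for the first sequence, and for the second sequence deduce $\ker(\Phi_M)\cap\pi_1(N,p)\leq\ker(\Phi_N)$ from Lemma \ref{mustbeperiodiclemma} (with $m=0$) and the reverse inclusion by promoting an identity-to-identity isotopy of $N$ tracking $\rho_N$ to the product isotopy $(x,s,t)\mapsto(H_N(x,t),s)$ of $M$. Your extra explication of how the two halves of the isotopy of $N$ are concatenated and reparametrised (via Lemma \ref{welldefinedlemma} and Remark \ref{reparamremark}) makes explicit a step the paper states tersely, but it is the same argument.
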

\begin{proof}
By Lemma \ref{changemonodromylemma}, we may assume that $\omega$ is the identity, and so $M\cong N\times\crcle$. 
Express $M$ as $(N\times[0,1])/\!\sim$, where $(x,0)\sim(x,1)$ for $x\in N$.
Define $\rho\colon[0,1]\to M$ by $\rho(t)=(p,t)$.
Then $\rho$ is a simple closed curve based at $p$, and $\pi_1(M,p)\cong\pi_1(N,p)\times\langle \rho\rangle$.
By Lemma \ref{pushroundfibrelemma}, $\Phi_M(\rho)$ is the identity map.
Therefore $\rho\in\ker(\Phi_M)$, and $\Phi_M(\pi_1(M,p))=\Phi_M(\pi_1(N,p))$.

Now assume $N$ is a surface. Let $\rho_N$ be a loop in $\Int(N)$ based at $p$ such that $\rho_N\in\ker(\Phi_M)$.
By Lemma \ref{mustbeperiodiclemma}, it follows that $\rho_N\in\ker(\Phi_N)$. 
Hence $\ker(\Phi_M)\cap\pi_1(N,p)\leq\ker(\Phi_N)$.
Note that if $\chi(N)\neq 0$ then $\ker(\Phi_N)\cong 1$, so this gives an exact sequence
\[
1\to\pi_1(N,p)\to\Mod(M_p)\to\Mod(M)\to 1.
\]

Conversely, let $\rho_N$ be a loop in $\Int(N)$ based at $p$ such that $\rho_N\in\ker(\Phi_N)$.
Then there is an isotopy $H_N\colon(N\times[0,1],\partial N\times[0,1])\to(N,\partial N)$ from the identity on $N$ to itself that can be used to define $\Phi_N(\rho_N)$.
Define $H_M\colon(M\times[0,1],\partial M\times[0,1])\to(M,\partial M)$ by
$H_M(x,s,t)=(H_N(x,t),s)$ for $x\in N$, $s\in[0,1)$ and $t\in[0,1]$.
Then $H_M$ is an isotopy from the identity on $M$ to itself that can be used to define $\Phi_M(\rho_N)$.
Thus $\rho_N\in\ker(\Phi_M)$.
Hence $\ker(\Phi_M)\cap\pi_1(N,p)=\ker(\Phi_N)$.
\end{proof}

\begin{corollary}\label{chi0cor}
If $M$ is an annulus or a torus then there is an exact sequence
\[
1\to\Mod(M_p)\to\Mod(M)\to 1.
\]
If $M$ is a M\"obius band then there is an exact sequence
\[
1\to\mathbb{Z}_2\to\Mod(M_p)\to\Mod(p)\to 1.
\]
If $M$ is a Klein bottle then there is an exact sequence
\[1\to\mathbb{Z}\rtimes\mathbb{Z}_2\to\Mod(M_p)\to\Mod(M)\to 1.\]
\end{corollary}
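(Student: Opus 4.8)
The plan is to read everything off Proposition \ref{sequenceprop}: for each such $M$ it gives an exact sequence $\pi_1(M,p)\to\Mod(M_p)\to\Mod(M)\to 1$ whose first map is $\Phi_M$, so once $\ker(\Phi_M)\leq\pi_1(M,p)$ is computed the sequence becomes
\[
1\to\pi_1(M,p)/\ker(\Phi_M)\to\Mod(M_p)\to\Mod(M)\to 1,
\]
and all that remains is to recognise the quotient. Each of the four surfaces is a mapping torus $M=(N\times[0,1])/\!\sim_\omega$ with $\omega$ chosen to have a fixed point $p_0$, and the tool I would use throughout is Lemma \ref{pushroundfibrelemma}: with $p=(p_0,0)$ the loop $\rho_\omega(t)=(p_0,t)$ satisfies that $\phi_{\rho_\omega}$ is precisely the homeomorphism $(x,s)\mapsto(\omega(x),s)$, so $\phi_{\rho_\omega}^k$ is $(x,s)\mapsto(\omega^k(x),s)$. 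In particular $[\rho_\omega]^k\in\ker(\Phi_M)$ whenever $\omega^k=\mathrm{id}$, while by Lemma \ref{reverseorientationlemma} $[\rho_\omega]\notin\ker(\Phi_M)$ whenever $\rho_\omega$ is orientation-reversing.

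For the annulus and the torus I would take $\omega=\mathrm{id}$. The annulus is the mapping torus of $\mathrm{id}$ on an interval, so $\pi_1(M,p)\cong\mathbb{Z}$ is generated by $[\rho_\omega]$ and $\phi_{\rho_\omega}=\mathrm{id}$; hence $\ker(\Phi_M)=\pi_1(M,p)$ and the sequence reads $1\to\Mod(M_p)\to\Mod(M)\to 1$ (this also follows from the first part of Proposition \ref{productbundleprop} with base space an interval). The torus is the mapping torus of $\mathrm{id}$ on $\crcle$, which puts one of the two standard generators of $\pi_1(M,p)\cong\mathbb{Z}^2$ in $\ker(\Phi_M)$; interchanging the roles of the two $\crcle$ factors of $\crcle\times\crcle$, and taking $p$ on the common fibre of the two fibrations, puts the other generator in $\ker(\Phi_M)$ as well, so again $\ker(\Phi_M)=\pi_1(M,p)$ and the sequence is $1\to\Mod(M_p)\to\Mod(M)\to 1$.

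For the M\"obius band and the Klein bottle I would take $\omega$ to be an orientation-reversing involution: the flip of an interval for the M\"obius band, and a reflection of $\crcle$ fixing a point for the Klein bottle. In both cases $\rho_\omega$ is orientation-reversing and $\omega^2=\mathrm{id}$, so $[\rho_\omega]\notin\ker(\Phi_M)$ but $[\rho_\omega]^2\in\ker(\Phi_M)$. For the M\"obius band $\pi_1(M,p)=\langle[\rho_\omega]\rangle\cong\mathbb{Z}$, which forces $\ker(\Phi_M)$ to be the index-two subgroup, so $\pi_1(M,p)/\ker(\Phi_M)\cong\mathbb{Z}_2$ and the stated sequence follows. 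For the Klein bottle $\pi_1(M,p)=\langle a\rangle\rtimes\langle[\rho_\omega]\rangle$, with $a$ a generator of the fibre group $\pi_1(N,p)\cong\mathbb{Z}$ on which $\omega$ acts by inversion, so every element is $a^i[\rho_\omega]^k$. When $k$ is odd the element is orientation-reversing, hence not in $\ker(\Phi_M)$; when $k=2j$, $\phi_{\rho_\omega}^{2j}=\mathrm{id}$, so $a^i[\rho_\omega]^{2j}\in\ker(\Phi_M)$ if and only if $a^i\in\ker(\Phi_M)$, and the first part of Lemma \ref{mustbeperiodiclemma} applied to a loop representing $a^i$ (its image under $\omega$ represents $a^{-i}$, which is homotopic to $a^i$ in $\crcle$ only when $i=0$) forces $i=0$. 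Thus $\ker(\Phi_M)=\langle[\rho_\omega]^2\rangle\cong\mathbb{Z}$, and $\pi_1(M,p)/\ker(\Phi_M)$ is the quotient of the Klein bottle group $\langle a,b\mid bab^{-1}=a^{-1}\rangle$ (with $b=[\rho_\omega]$) by its central subgroup $\langle b^2\rangle$, namely $\mathbb{Z}\rtimes\mathbb{Z}_2$, which yields the final sequence.

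The routine ingredients are identifying $\pi_1$ of each surface with the appropriate mapping-torus group and checking that $[\rho_\omega]$ is orientation-reversing in the last two cases (equivalently, that the relevant core circle has a M\"obius-band neighbourhood). The one genuinely delicate point is the Klein-bottle case: the fibre class $a$ and its nonzero powers are orientation-preserving, so Lemma \ref{reverseorientationlemma} says nothing about them, and ruling them out of $\ker(\Phi_M)$ is exactly what the first part of Lemma \ref{mustbeperiodiclemma} is for — the monodromy inverts $a$, so it cannot preserve its based homotopy class, whence $a^i\notin\ker(\Phi_M)$ for $i\neq 0$.
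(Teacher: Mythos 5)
Your proposal follows the paper's proof almost exactly: in each case you realise $M$ as a mapping torus, use Lemma~\ref{pushroundfibrelemma} to put $[\rho_\omega]^2$ (or $[\rho_\omega]$ itself, when $\omega=\mathrm{id}$) into $\ker(\Phi_M)$, and use Lemma~\ref{reverseorientationlemma} to keep $[\rho_\omega]$ out of it when $\omega$ is a reflection; the annulus and torus cases then go through Proposition~\ref{productbundleprop}, with the torus handled by swapping the two circle factors, just as the paper does. The only divergence is in the Klein bottle case, where the paper rules out $\sigma^s\in\ker(\Phi_M)$ for $s\neq 0$ by a direct conjugation argument (the isotopy conjugating $\rho$ to $\sigma^{2s}\rho$, impossible unless $s=0$) while you invoke the first part of Lemma~\ref{mustbeperiodiclemma} to see that $\omega$ would have to fix the based homotopy class of $a^i$; both are correct and about the same length, and your choice has the small advantage of reusing a lemma already proved for this purpose rather than running an ad hoc computation in the Klein bottle group.
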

\begin{proof}
If $M$ is an annulus then it is fibred over a closed interval with monodromy the identity.

If $M$ is a torus then $\pi_1(M,p)\cong\mathbb{Z}^2$.
The proof of Proposition \ref{productbundleprop} shows that the generators of this group are in $\ker(\Phi)$.

If $M$ is a M\"obius band then it is fibred with base space a closed interval and monodromy a reflection. 
Let $\rho$ be a simple closed curve in $\Int(M)$ based at $p$ running once around the fibration. Then $\rho$ generates $\pi_1(M,p)$.
By Lemma \ref{pushroundfibrelemma}, $\phi_{\rho^2}$ acts as the identity on $M$. Hence $\rho^2\in\ker(\Phi)$.
On the other hand, the loop $\rho$ is orientation-reversing, so by Lemma \ref{reverseorientationlemma} we know that $\rho\notin\ker(\Phi)$.
Thus $\pi_1(M,p)/\ker(\Phi)\cong\mathbb{Z}_2$.

Now let $M$ be a Klein bottle. Then $M$ is fibred with base space a circle and monodromy a reflection.
Let $\rho$ and $\sigma$ be simple closed curves in $M$ based at $p$ such that $\rho$ runs once around the fibration and $\sigma$ runs once around the base circle.
Then every element of $\pi_1(M,p)$ can be expressed as $\sigma^s\cdot\rho^r$ for some $r,s\in\mathbb{Z}$.

Suppose that $\sigma^s\cdot\rho^r\in\ker(\Phi)$. 
Then there is an isotopy $H\colon M\times[0,1]\to M$ from the identity to itself that takes $p$ once around $\rho^r\cdot\sigma^s$.
This shows that, relative to its endpoints, $\rho$ is homotopic to $(\sigma^s\cdot\rho^r)\cdot\rho\cdot(\rho^{-r}\cdot\sigma^{-s})\simeq\sigma^s\cdot\rho\cdot\sigma^{-s}\simeq\sigma^{2s}\cdot\rho$.
However, in $\pi_1(M,p)$ this is only true if $s=0$.

On the other hand, as for a M\"obius band, $\rho\notin\ker(\Phi)$ but $\rho^2\in\ker(\Phi)$.
Therefore $\ker(\Phi)=\langle \rho^2\rangle\cong\mathbb{Z}$ and
$\pi_1(M,p)/\ker(\Phi)\cong\mathbb{Z}\rtimes\mathbb{Z}_2$.
\end{proof}

\begin{corollary}\label{productscor}
If $M\cong\sphere\times\crcle$ or $M$ is a solid torus then there is an exact sequence
\[
1\to\Mod(M_p)\to\Mod(M)\to 1.
\]
If $M\cong\sphere\widetilde{\times}\crcle$ or $M$ is a solid Klein bottle then there is an exact sequence
\[
1\to\mathbb{Z}_2\to\Mod(M_p)\to\Mod(M)\to 1.
\]
\end{corollary}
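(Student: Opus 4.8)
The proof mirrors that of Corollary \ref{chi0cor} for the annulus and the M\"obius band, with the closed interval used there as the fibre replaced by $\sphere$ or by the disc $D^2$. First suppose $M\cong\sphere\times\crcle$ or $M$ is a solid torus. Then $M$ is fibred with base space $N=\sphere$ or $N=D^2$ and with monodromy the identity. As $\sphere$ and $D^2$ are simply connected, Proposition \ref{productbundleprop} provides an exact sequence $\pi_1(N,p)\to\Mod(M_p)\to\Mod(M)\to 1$ in which the first term is trivial; thus $\Mod(M_p)\to\Mod(M)$ is an isomorphism, which is to say that $1\to\Mod(M_p)\to\Mod(M)\to 1$ is exact.

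Now suppose $M\cong\sphere\widetilde{\times}\crcle$ or $M$ is a solid Klein bottle. Each of these is a bundle over $\crcle$ with orientation-reversing structure group and with fibre $\sphere$, respectively $D^2$. By Lemma \ref{changemonodromylemma} we may take the monodromy $\omega\colon(N,\partial N)\to(N,\partial N)$ to be a reflection, so that $\omega^2$ is the identity on $N$ and $\omega$ fixes some point $p\in\Int(N)$. Write $M=(N\times[0,1])/\!\sim_{\omega}$, and let $\rho_{\omega}$ be the simple closed curve given by $\rho_{\omega}(t)=(p,t)$. Since $\pi_1(N,p)\cong 1$, the group $\pi_1(M,p)$ is infinite cyclic and is generated by $\rho_{\omega}$. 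By Lemma \ref{pushroundfibrelemma} the map $\phi_{\rho_{\omega}^2}$ is $(x,s)\mapsto(\omega^2(x),s)$, i.e.\ the identity on $M$, so $\rho_{\omega}^2\in\ker(\Phi_M)$. On the other hand $M$ is non-orientable and $\rho_{\omega}$ is an orientation-reversing loop in $\Int(M)$, so $\rho_{\omega}\notin\ker(\Phi_M)$ by Lemma \ref{reverseorientationlemma}. A subgroup of $\pi_1(M,p)\cong\mathbb{Z}$ that contains $\rho_{\omega}^2$ but not $\rho_{\omega}$ must equal $\langle\rho_{\omega}^2\rangle$; hence $\ker(\Phi_M)=\langle\rho_{\omega}^2\rangle$ has index $2$ and $\pi_1(M,p)/\ker(\Phi_M)\cong\mathbb{Z}_2$. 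Combining this with Proposition \ref{sequenceprop} yields the exact sequence $1\to\mathbb{Z}_2\to\Mod(M_p)\to\Mod(M)\to 1$.

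The argument is routine once the fibred structures are recognised; the points requiring care are precisely those in Corollary \ref{chi0cor}, namely arranging that $\omega^2$ is literally the identity on $N$ rather than merely isotopic to it, verifying that $\rho_{\omega}$ is orientation-reversing in the twisted cases, and keeping $\rho_{\omega}$ off $\partial M$ so that Lemma \ref{reverseorientationlemma} is applicable. An alternative to the last step is to bound $\ker(\Phi_M)$ from above via Lemma \ref{coverlemma}: the orientation double cover of $\sphere\widetilde{\times}\crcle$ is $\sphere\times\crcle$ and that of the solid Klein bottle is the solid torus, and for each of these Lemma \ref{pushroundfibrelemma} shows that $\ker(\Phi_{\widehat{M}})=\pi_1(\widehat{M})$, which $\pi_*$ carries onto the index-$2$ subgroup $\langle\rho_{\omega}^2\rangle\leq\pi_1(M,p)$.
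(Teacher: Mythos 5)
Your proof is correct and takes essentially the same route as the paper: Proposition~\ref{productbundleprop} handles the orientable cases (since $\pi_1(\sphere)$ and $\pi_1(D^2)$ are trivial), while Lemmas~\ref{pushroundfibrelemma} and~\ref{reverseorientationlemma} pin down $\ker(\Phi_M)=\langle\rho_\omega^2\rangle$ in the non-orientable cases. The alternative upper bound you sketch via Lemma~\ref{coverlemma} and the orientation double cover is a valid variant, but the core argument matches the paper's.
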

\begin{proof}
If $M$ is orientable then it is fibred over either a sphere or a disc with monodromy the identity. In either case, Proposition \ref{productbundleprop} gives the desired result.

If $M$ is non-orientable then it is fibred over either a sphere or a disc with monodromy a reflection.
Let $\rho$ be a loop in $\Int(M)$ based at $p$ that generates $\pi_1(M,p)\cong\mathbb{Z}$.
By Lemma \ref{reverseorientationlemma}, $\rho\notin\ker(\Phi)$.
On the other hand, Lemma \ref{pushroundfibrelemma} shows that $\rho^2\in\ker(\Phi)$.
Therefore $\ker(\Phi)\cong\mathbb{Z}$ and $\pi_1(M,p)/\ker(\Phi)\cong\mathbb{Z}_2$.
\end{proof}

\begin{lemma}\label{toruslemma}
Let $N=\mathbb{R}^2/\!\sim$, where $(a,b)\sim(a+m,b+m')$ for $m,m'\in\mathbb{Z}$ and $a,b\in\mathbb{R}$.
Let $\psi\colon N\to N$ be a homeomorphism.
Then there exists a matrix $\mathbf{A}\in \SL_2(\mathbb{Z})$ such that $\psi$ is isotopic to the map $\psi'\colon N\to N$ given by $\psi'(a,b)=\mathbf{A}(a,b)$ for $a,b\in\mathbb{R}$.
\end{lemma}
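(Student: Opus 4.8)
The plan is to read off from $\psi$ its action on first homology and to show this action determines $\psi$ up to isotopy. Since $N$ is a torus, $\pi_1(N,p)\cong H_1(N;\mathbb Z)\cong\mathbb Z^2$, and $\psi$ induces an automorphism of $\mathbb Z^2$, i.e.\ a matrix $\mathbf A\in\mathrm{GL}_2(\mathbb Z)$ (with $\det\mathbf A=1$, so $\mathbf A\in\SL_2(\mathbb Z)$ as in the statement, precisely when $\psi$ preserves orientation). As $\mathbf A$ has integer entries and integer inverse, the linear map $x\mapsto\mathbf Ax$ on $\mathbb R^2$ preserves the lattice $\mathbb Z^2$, hence descends to a homeomorphism $\psi'$ of $N$, and $\psi'$ again induces $\mathbf A$ on $H_1(N;\mathbb Z)$.

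Set $\psi''=(\psi')^{-1}\circ\psi$, so that $\psi''$ induces the identity on $H_1(N;\mathbb Z)$. Since $\psi=\psi'\circ\psi''$, it is enough to show that $\psi''$ is isotopic to the identity on $N$: composing such an isotopy with $\psi'$ then exhibits $\psi$ as isotopic to $\psi'$. First I would compose $\psi''$ with a suitable translation of $N$; translations are isotopic to the identity and act trivially on $H_1$, so I may assume that $\psi''$ fixes the point $p$ coming from $0\in\mathbb R^2$ while still inducing the identity on $\pi_1(N,p)$.

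Now lift $\psi''$ to a homeomorphism $\widetilde\psi''$ of the universal cover $\mathbb R^2$, chosen with $\widetilde\psi''(0)=0$. Because $\psi''$ acts trivially on $\pi_1(N,p)$, this lift commutes with the deck transformations; explicitly $\widetilde\psi''(x+v)=\widetilde\psi''(x)+v$ for all $v\in\mathbb Z^2$. The straight-line homotopy $\widetilde H(x,t)=(1-t)\widetilde\psi''(x)+tx$ then also satisfies $\widetilde H(x+v,t)=\widetilde H(x,t)+v$, so it descends to a homotopy on $N$ from $\psi''$ to the identity, and since $\widetilde\psi''(0)=0$ this homotopy keeps $p$ fixed. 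Applying Theorem \ref{surfacehomotoisothm} to this homotopy upgrades it to an isotopy from $\psi''$ to the identity, which completes the argument.

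The only genuinely delicate point is the last step: the straight-line homotopy $\widetilde H(\cdot,t)$ need not be a homeomorphism for intermediate $t$, so the construction above only produces a \emph{homotopy} from $\psi''$ to the identity, not an isotopy, and it is Theorem \ref{surfacehomotoisothm} that closes this gap. The remaining ingredients — that $\mathbf A\in\mathrm{GL}_2(\mathbb Z)$, that every lift of $\psi''$ commutes with the deck transformations once $\psi''$ acts trivially on $\pi_1$, and that translations of $N$ are isotopic to the identity — are all routine.
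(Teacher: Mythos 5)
Your proof is correct, but it takes a genuinely different route from the paper's. You reduce $\psi$ to a map $\psi''=(\psi')^{-1}\circ\psi$ that acts trivially on $\pi_1$, lift to the universal cover $\mathbb{R}^2$ where the equivariance $\widetilde\psi''(x+v)=\widetilde\psi''(x)+v$ lets the straight-line homotopy descend, and then invoke Theorem~\ref{surfacehomotoisothm} (Epstein's homotopy-implies-isotopy for surfaces) to upgrade the descended homotopy to an isotopy fixing $p$. The paper instead proceeds by the ``Alexander method'': it isotopes $\psi$ so that the image of a generating loop $\rho_1$ coincides with the linear representative $\sigma_1$ (via the change-of-coordinates result \cite{MR0214087} Theorem 4.1), then uses Theorem~\ref{homotopicarcsthm} to make $\rho_2$ match $\sigma_2$ while fixing $\sigma_1$, and finishes with the Alexander trick (Theorem~\ref{alextrickthm}) on the complementary disc. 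Your approach is shorter and conceptually cleaner, at the cost of calling on the heavier Theorem~\ref{surfacehomotoisothm}, whereas the paper builds the isotopy explicitly from more elementary surface-topology ingredients without ever needing to pass through a non-level-preserving homotopy. You also correctly flag that for orientation-reversing $\psi$ one obtains $\mathbf A\in\mathrm{GL}_2(\mathbb Z)$ with $\det\mathbf A=-1$ rather than $\SL_2(\mathbb Z)$; the lemma as stated in the paper glosses over this (it is applied to monodromies of orientable torus bundles, where the orientation-preserving case is what is needed). One small streamlining you could make: once you normalize $\psi$ to fix $(0,0)$ (as the paper does via Corollary~\ref{fixpointcor}, or as you do via a translation), the composite $\psi''=(\psi')^{-1}\circ\psi$ already fixes $p$ automatically, since $\psi'$ is linear; the extra translation step after forming $\psi''$ is redundant.
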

\begin{proof}
By Corollary \ref{fixpointcor}, we may assume that $\psi(0,0)=(0,0)$.
Then $\psi$ induces an isomorphism $\psi_*\colon\pi_1(N,(0,0))\to\pi_1(N,(0,0))$.
Since $\pi_1(N,(0,0))\cong\mathbb{Z}^2$, this isomorphism is defined by a matrix $\mathbf{A}\in \SL_2(\mathbb{Z})$.
Let $a_1,a_2,a_3,a_4\in\mathbb{Z}$ such that
\[
\mathbf{A}=\left(
\begin{array}{cc}
a_1&a_2\\
a_3&a_4
\end{array}
\right).
\]
Define $\psi'$ using $\mathbf{A}$.

Define $\rho_1,\rho_2\colon[0,1]\to N$ by $\rho_1(t)=(t,0)$ and $\rho_2(t)=(0,t)$.
The path $t\mapsto \psi(\rho_1(t))$ is a simple closed curve in $N$ based at $p$, and is homotopic, keeping $p$ fixed, to $\rho_1^{a_1}\cdot\rho_2^{a_3}$.
Define $\sigma_1\colon[0,1]\to N$ by $\sigma_1(t)=(a_1 t,a_3 t)$.
Then $\sigma_1$ is a simple closed curve and is homotopic, keeping $(0,0)$ fixed, to $\rho_1^{a_1}\cdot\rho_2^{a_3}$.
By \cite{MR0214087} Theorem 4.1, there is an isotopy of $N$, keeping $(0,0)$ fixed, from $\psi$ to a homeomorphism $\psi''\colon N\to N$ such that $\psi''(t,0)=\sigma_1(t)=\psi'(t,0)$ for $t\in[0,1]$.

Define $\sigma_2\colon[0,1]\to N$ by $\sigma_2(t)=(a_2 t,a_4 t)$.
The path $t\mapsto\psi''(0,t)$ is a simple closed curve in $N$ based at $(0,0)$ that only meets $\sigma_1$ at $(0,0)$ and is homotopic, keeping $\sigma_1$ fixed, to $\sigma_2$.
By Theorem \ref{homotopicarcsthm}, there is an isotopy of $N$, keeping $\sigma_1$ fixed, from $\psi''$ to a homeomorphism $\psi'''\colon N\to N$ such that $\psi'''(0,t)=\sigma_2(t)=\psi'(t)$ for $t\in[0,1]$.

Finally, Theorem \ref{alextrickthm} shows that $\psi'''$ is isotopic to $\psi'$ keeping $\sigma_1$ and $\sigma_2$ fixed.
\end{proof}

\begin{corollary}
Suppose that $M$ is fibred over a torus $N$ with monodromy $\omega\colon N\to N$.
Express $N$ as $\mathbb{R}^2/\!\sim$, where $(a,b)\sim(a+m,b+m')$ for $m,m'\in\mathbb{Z}$ and $a,b\in\mathbb{R}$, and
let $\mathbf{A}$ be the matrix given by Lemma \ref{toruslemma}.
Express $M$ as $(N\times[0,1])/\!\sim_{\omega}$, where $(x,1)\sim_{\omega}(\omega(x),0)$ for $x\in N$.
In addition, assume that $p=(0,0,0)$.
Define $\rho_1,\rho_2,\rho_{\omega}\colon[0,1]\to M$ by, for $t\in[0,1]$,
\[
\rho_1(t)=(t,0,0),\quad \rho_2(t)=(0,t,0),\quad\rho_{\omega}(t)=(0,0,t).
\]

If $\rho_{\omega}^{m_3}\cdot\rho_2^{m_2}\cdot\rho_1^{m_1}\in\ker(\Phi_M)$ for some $m_1,m_2,m_3\in\mathbb{Z}$ then
$\mathbf{A}^{m_3}=1$.
Conversely, if $\mathbf{A}^{m_3}=1$ for some $m_3\in\mathbb{Z}$ then $\rho_{\omega}^{m_3}\in\ker(\Phi_M)$.

In addition, given $m_1,m_2\in\mathbb{Z}$, $\sigma_2^{m_2}\cdot\sigma_1^{m_1}\in\ker(\Phi_M)$ if and only if $(m_1,m_2)$ is an eigenvector of $\mathbf{A}$ with eigenvalue $1$.
\end{corollary}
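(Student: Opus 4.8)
The plan is to reduce at once to the case of a linear monodromy, then to obtain the two ``forward'' implications from Lemma~\ref{mustbeperiodiclemma} and the two ``converse'' implications from explicit ambient isotopies. First I would apply Lemma~\ref{changemonodromylemma}: Lemma~\ref{toruslemma} gives an isotopy from $\omega$ to the linear map $\psi'\colon(a,b)\mapsto\mathbf{A}(a,b)$, so $M=(N\times[0,1])/\!\sim_{\omega}$ is homeomorphic to $(N\times[0,1])/\!\sim_{\psi'}$. Choosing the isotopy of Lemma~\ref{toruslemma} to fix $p=(0,0,0)$ (possible for the torus by Corollary~\ref{chi0cor}), the resulting homeomorphism fixes $p$ and sends $\rho_1,\rho_2,\rho_{\omega}$ to loops homotopic to the corresponding loops for $\psi'$; hence it suffices to prove the three assertions when $\omega(a,b)=\mathbf{A}(a,b)$. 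In that case $\omega(p)=p$ and $\omega^m$ is the linear map $\mathbf{A}^m$ for all $m\in\mathbb{Z}$. I identify $\pi_1(N,p)$ with $\mathbb{Z}^2$ so that $[\rho_1]=(1,0)$, $[\rho_2]=(0,1)$ and $\omega_*=\mathbf{A}$, and write $v=(m_1,m_2)$.

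For the first assertion, suppose $\rho_{\omega}^{m_3}\cdot\rho_2^{m_2}\cdot\rho_1^{m_1}\in\ker(\Phi_M)$. Applying the second part of Lemma~\ref{mustbeperiodiclemma} with $\rho_N=\rho_2^{m_2}\cdot\rho_1^{m_1}$ and $m=m_3$ shows that $\omega^{m_3}$ is isotopic to the identity on $N$; since the induced automorphism of $H_1(N)\cong\mathbb{Z}^2$ is an isotopy invariant, this forces $(\omega^{m_3})_{*}=1$ on $H_1(N)$, hence $\mathbf{A}^{m_3}=1$. For the forward direction of the third assertion, if $\rho_2^{m_2}\cdot\rho_1^{m_1}\in\ker(\Phi_M)$ then the first part of Lemma~\ref{mustbeperiodiclemma} shows that $\omega(\rho_2^{m_2}\cdot\rho_1^{m_1})$ is homotopic to $\rho_2^{m_2}\cdot\rho_1^{m_1}$ in $N$ keeping $p$ fixed, so these loops agree in $\pi_1(N,p)$; that is $\mathbf{A}v=v$, which is exactly the statement that $v$ is a (possibly zero) eigenvector of $\mathbf{A}$ with eigenvalue~$1$.

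Both converses exploit that $\partial M=\emptyset$, so ambient isotopies of $M$ need respect no boundary. If $\mathbf{A}^{m_3}=1$ then $\omega^{m_3}=\mathrm{id}_N$, so the ``translate along the fibre'' flow underlying the isotopy of Lemma~\ref{pushroundfibrelemma} is periodic of period dividing $m_3$; running it for time $m_3$ is an ambient isotopy of $M$ from the identity to the identity that carries $p$ once around $\rho_{\omega}^{m_3}$, so $\phi_{\rho_{\omega}^{m_3}}$ may be taken to be $\mathrm{id}_M$, whence $\rho_{\omega}^{m_3}\in\ker(\Phi_M)$. If instead $\mathbf{A}v=v$, the translation $T_{\theta}\colon N\to N$, $T_{\theta}(a,b)=(a,b)+\theta v$, commutes with $\omega=\mathbf{A}$ and so descends to a homeomorphism $\widehat{T}_{\theta}\colon M\to M$, $(x,s)\mapsto(T_{\theta}(x),s)$; here $\widehat{T}_0=\widehat{T}_1=\mathrm{id}_M$ because $v\in\mathbb{Z}^2$, and $\widehat{T}_{\theta}(p)=(\theta v,0)$. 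Thus $(x,t)\mapsto\widehat{T}_t(x)$ is an ambient isotopy of $M$ from the identity to itself carrying $p$ once around a loop $\rho$ with $[\rho]=v=[\rho_2^{m_2}\cdot\rho_1^{m_1}]$ and with $\phi_{\rho}=\mathrm{id}_M$; by Corollary~\ref{pwembeddedcor}, $[\phi_{\rho_2^{m_2}\cdot\rho_1^{m_1}}]=[\phi_{\rho}]=1$ in $\Mod(M_p)$, so $\rho_2^{m_2}\cdot\rho_1^{m_1}\in\ker(\Phi_M)$.

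I expect the main obstacle to be the opening reduction, specifically the claim that the homeomorphism from Lemma~\ref{changemonodromylemma} can be taken to respect $p$ and the homotopy classes of $\rho_1,\rho_2,\rho_{\omega}$; this comes down to choosing the isotopy of Lemma~\ref{toruslemma} to fix $p$, which for the torus follows from Corollary~\ref{chi0cor}. The reduction is genuinely needed for the converse $\mathbf{A}^{m_3}=1\Rightarrow\rho_{\omega}^{m_3}\in\ker(\Phi_M)$: the hypothesis only gives that $\omega^{m_3}$ is \emph{isotopic} to the identity on $N$, and --- as noted in the remark after Lemma~\ref{mustbeperiodiclemma} --- that alone need not realise $\rho_{\omega}^{m_3}$ by the identity map; passing to the linear representative of $\omega$ makes $\omega^{m_3}$ equal to the identity on the nose, after which Lemma~\ref{pushroundfibrelemma} applies directly.
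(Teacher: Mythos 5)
Your proof is correct and follows essentially the same route as the paper's: reduce to linear monodromy via Lemma~\ref{changemonodromylemma} and Lemma~\ref{toruslemma}, obtain the forward implications from the two parts of Lemma~\ref{mustbeperiodiclemma}, and obtain the converses from Lemma~\ref{pushroundfibrelemma} (fibre flow) and an explicit translation isotopy $H_N(a,b,t)=(a+m_1t,\,b+m_2t)$, which is exactly your $\widehat{T}_t$. The only differences are that you make explicit two steps the paper leaves tacit: that ``$\omega^{m_3}$ isotopic to the identity'' upgrades to ``$\mathbf{A}^{m_3}=1$'' by passing to the induced map on $H_1(N)$, and that the homeomorphism from Lemma~\ref{changemonodromylemma} can be arranged to fix $p$ and preserve the homotopy classes of $\rho_1,\rho_2,\rho_\omega$.
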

\begin{proof}
By Lemma \ref{changemonodromylemma}, we may assume that $\omega(a,b)=\mathbf{A}(a,b)$ for $a,b\in\mathbb{R}$.
Therefore, for $m\in\mathbb{Z}$, $\omega^m$ is the identity if and only if $\mathbf{A}^m=1$.
Lemmas \ref{mustbeperiodiclemma} and \ref{pushroundfibrelemma} give the first two conclusions.

Suppose $\rho_2^{m_2}\cdot\rho_1^{m_1}\in\ker(\Phi_M)$ for some $m_1,m_2\in\mathbb{Z}$.
By Lemma \ref{mustbeperiodiclemma}, $\omega(\rho_2^{m_2}\cdot\rho_1^{m_1})$ is homotopic to $\rho_2^{m_2}\cdot\rho_1^{m_1}$ in $N$ keeping $(0,0)$ fixed.
On the other hand, $\omega(\rho_2^{m_2}\cdot\rho_1^{m_1})$ is homotopic to $\rho_2^{m'_2}\cdot\rho_1^{m'_1}$ where $(m'_1,m'_2)=\mathbf{A}(m_1,m_2)$.
Therefore $(m'_1,m'_2)=(m_1,m_2)$ and $(m_1,m_2)$ is an eigenvector of $\mathbf{A}$ with eigenvalue $1$.

Suppose instead that $(m_1,m_2)$ is an eigenvector of $\mathbf{A}$ with eigenvalue $1$ for some $m_1,m_2\in\mathbb{Z}$.
Define $H_N\colon N\times[0,1]\to N$ by
$H_N(a,b,t)=(a+m_1 t,b+m_2 t)$ for $a,b\in\mathbb{R}$ and $t\in[0,1]$.
Further define $H\colon M\times[0,1]\to M$ by
$H(a,b,s,t)=(H_N(a,b,t),s)$ for $a,b\in\mathbb{R}$, $s\in[0,1)$ and $t\in[0,1]$.
For $a,b\in \mathbb{R}$ and $t\in[0,1]$, 
\begin{align*}
H(a,b,1,t)&=(a+m_1 t,b+m_2 t,1)\\
&\sim (\mathbf{A}(a+m_1 t,b+m_2 t),0)\\
&=(\mathbf{A}(a,b)+t(m_1,m_2),0)\\
&=(H_N(\mathbf{A}(a,b),t),0)\\
&=H(\omega(a,b),0,t).
\end{align*}
Hence $H$ is well-defined.
In addition, for $a,b\in\mathbb{R}$ and $s\in[0,1)$,
\[
H(a,b,s,1)=(H_N(a,b,1),s)=(a+m_1,b+m_2,s).
\]
Thus $H$ is an isotopy from the identity on $M$ to itself.
Define $\rho\colon[0,1]\to M$ by $\rho(t)=H(0,0,0,t)$ for $t\in[0,1]$.
Then $\rho\in\ker(\Phi_M)$.
Since, for $t\in[0,1]$,
\[
\rho(t)=(H_N(0,0,t),0)=(m_1 t,m_2 t,0),
\]
$\rho$ is homotopic in $N$, keeping $(0,0)$ fixed, to $\rho_2^{m_2}\cdot\rho_1^{m_1}$.
Therefore $\rho_2^{m_2}\cdot\rho_1^{m_1}\in\ker(\Phi_M)$.
\end{proof}

\begin{corollary}\label{torusbundlecor}
Suppose that $M$ is fibred over a torus $N$ with monodromy $\omega\colon N\to N$.
Express $N$ as $\mathbb{R}^2/\!\sim$, where $(a,b)\sim(a+m,b+m')$ for $a,b\in\mathbb{R}$ and $m,m'\in\mathbb{Z}$.
Assume that
$\omega(a,b)=\mathbf{A}(a,b)$ for $a,b\in\mathbb{R}$, where $\mathbf{A}\in\SL_2(\mathbb{Z})$.
Then there is an exact sequence
\[
1\to K\to\pi_1(M,p)\to\Mod(M_p)\to\Mod(M)\to 1,
\]
where $K$ is as follows.
\begin{itemize}
\item If $\mathbf{A}$ is the identity matrix then $K=\mathbb{Z}^3$.
\item If $\mathbf{A}$ is not the identity matrix but $\tr(\mathbf{A})=2$ then $K=\mathbb{Z}$.
\item If $-\mathbf{A}$ is the identity matrix then $K=\mathbb{Z}$.
\item If $\tr(\mathbf{A})\in\{0,\pm 1\}$ then $K=\mathbb{Z}$.
\item Otherwise $K=1$.
\end{itemize}
\end{corollary}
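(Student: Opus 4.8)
The plan is to compute $K=\ker(\Phi_M)$, the kernel of the map $\Phi_M\colon\pi_1(M,p)\to\Mod(M_p)$; the asserted five--term exact sequence is then immediate from Proposition \ref{sequenceprop}. By Lemma \ref{changemonodromylemma} I may assume the monodromy $\omega$ is the linear map $(a,b)\mapsto\mathbf{A}(a,b)$, so that the loops $\rho_1,\rho_2,\rho_\omega$ of the preceding corollary are available and every element of $\pi_1(M,p)$ is of the form $\rho_\omega^{m_3}\rho_2^{m_2}\rho_1^{m_1}$. The first step is to merge the three conclusions of the preceding corollary into one description of $K$. If $\rho_\omega^{m_3}\rho_2^{m_2}\rho_1^{m_1}\in K$, then $\mathbf{A}^{m_3}=I$, hence $\rho_\omega^{m_3}\in K$, hence $\rho_2^{m_2}\rho_1^{m_1}=(\rho_\omega^{m_3})^{-1}(\rho_\omega^{m_3}\rho_2^{m_2}\rho_1^{m_1})\in K$, hence $(m_1,m_2)$ is fixed by $\mathbf{A}$; conversely, if $\mathbf{A}^{m_3}=I$ and $\mathbf{A}(m_1,m_2)=(m_1,m_2)$ then $\rho_\omega^{m_3}\in K$ and $\rho_2^{m_2}\rho_1^{m_1}\in K$, so their product lies in $K$. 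Therefore
\[
K=\{\,\rho_\omega^{m_3}\rho_2^{m_2}\rho_1^{m_1}\ :\ \mathbf{A}^{m_3}=I\ \text{and}\ \mathbf{A}(m_1,m_2)=(m_1,m_2)\,\}.
\]

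The second step is to read off the two ingredients from the structure of $\mathbf{A}\in\SL_2(\mathbb{Z})$. Since $\det(\mathbf{A}-I)=\det(\mathbf{A})-\tr(\mathbf{A})+1=2-\tr(\mathbf{A})$, the fixed subgroup $F=\{v\in\mathbb{Z}^2:\mathbf{A}v=v\}$ is trivial unless $\tr(\mathbf{A})=2$; and when $\tr(\mathbf{A})=2$, Cayley--Hamilton gives $(\mathbf{A}-I)^2=\mathbf{A}^2-2\mathbf{A}+I=0$, so $\mathbf{A}-I$ is either zero (the case $\mathbf{A}=I$, $F=\mathbb{Z}^2$) or of rank one (in which case $F\cong\mathbb{Z}$). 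On the other hand $\mathbf{A}^{m_3}=I$ holds for some $m_3\neq0$ exactly when $\mathbf{A}$ has finite order, which --- again via Cayley--Hamilton --- happens precisely for $\mathbf{A}\in\{\pm I\}$ and for $\tr(\mathbf{A})\in\{0,\pm1\}$ (where $\mathbf{A}$ then has order $3$, $4$ or $6$); for all other $\mathbf{A}$, which are $\pm$ a non-trivial unipotent or are hyperbolic, one has $\mathbf{A}^{m_3}\neq I$ for every $m_3\neq0$. When $\mathbf{A}$ has finite order $m$, $\{m_3:\mathbf{A}^{m_3}=I\}=m\mathbb{Z}$. Finally, $\rho_\omega$ has infinite order in $\pi_1(M,p)$, since it projects to a generator of $\pi_1(M,p)/\pi_1(N,p)\cong\mathbb{Z}$.

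Feeding this into the displayed description, case by case, gives the five possibilities. If $\mathbf{A}=I$, both conditions are vacuous and $K=\pi_1(M,p)\cong\mathbb{Z}^3$. If $\mathbf{A}\neq I$ but $\tr(\mathbf{A})=2$, then $\mathbf{A}$ has infinite order, so $m_3=0$ is forced and $K=F\cong\mathbb{Z}$. If $\mathbf{A}=-I$, or $\tr(\mathbf{A})\in\{0,\pm1\}$, then $\mathbf{A}$ has finite order $m>1$ and $F=\{0\}$, so $K=\{\rho_\omega^{km}:k\in\mathbb{Z}\}=\langle\rho_\omega^m\rangle\cong\mathbb{Z}$. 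In every remaining case $\mathbf{A}$ has infinite order and $F=\{0\}$, so $K=1$. I expect no genuine obstacle here: the substantive content is carried entirely by the preceding corollary, and the only thing requiring care is the elementary classification of the torsion and unipotent elements of $\SL_2(\mathbb{Z})$ used in the second step (together with checking that the listed five cases are mutually exclusive and exhaustive, which falls out of that classification).
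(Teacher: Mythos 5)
Your proposal is correct, and since the paper states Corollary~\ref{torusbundlecor} with no written proof (it is presented as an immediate consequence of the corollary preceding it), your deduction is exactly what the paper intends the reader to supply. You have carried it out carefully: the merging of the three conclusions of the preceding corollary into the single description of $K=\ker(\Phi_M)$ uses the subgroup property of $K$ in the right place, and the case analysis via $\det(\mathbf{A}-I)=2-\tr(\mathbf{A})$, Cayley--Hamilton, and the classification of torsion in $\SL_2(\mathbb{Z})$ is complete and checks that the five listed cases are mutually exclusive and exhaustive.
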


\section{Non-fibred $3$--manifolds}\label{nonfibredsection}

\begin{proposition}\label{nottorusprop}
Suppose $M$ is a $3$--manifold, that $\partial M\neq\emptyset$, and that at least one component of $\partial M$ is neither a torus nor a Klein bottle.
Then there is an exact sequence
\[
1\to\pi_1(M,p)\to \Mod(M_p)\to\Mod(M)\to 1.
\]
\end{proposition}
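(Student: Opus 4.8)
The plan is to show, via Proposition~\ref{sequenceprop}, that $\ker(\Phi_M)=1$. The governing idea is that although an isotopy realising $\phi_{\rho}\simeq\mathrm{id}_M$ rel $p$ need not fix $\partial M$, its restriction to the exceptional boundary component is a point-pushing isotopy of that surface; since the Euler characteristic of that surface is nonzero, the Birman exact sequence for surfaces controls how far the boundary can move, and Lemma~\ref{fixbdypointlemma} then closes the argument.

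In detail, let $S$ be a component of $\partial M$ that is neither a torus nor a Klein bottle. Since these are the only closed surfaces with zero Euler characteristic, $\chi(S)\neq 0$, and $S$ is compact and connected; fix $p'\in S$. Let $g\in\ker(\Phi_M)$ and choose a simple closed curve $\rho$ in $\Int(M)$ based at $p$ with $[\rho]=g$ (possible because $\dim M=3$). By definition of $\ker(\Phi_M)$ there is an isotopy $G\colon M\times[0,1]\to M$ from $\phi_{\rho}$ to $\mathrm{id}_M$ with $G(p,t)=p$ for all $t$. Because $\phi_{\rho}$ is the identity on $\partial M$, the path $t\mapsto G(\,\cdot\,,t)$ starts at a homeomorphism fixing $\partial M$ pointwise, and the induced permutation of the finitely many components of $\partial M$ is locally constant in $t$; hence each $G(\,\cdot\,,t)$ preserves the component $S$, so the restriction $G_S\colon S\times[0,1]\to S$ is an ambient isotopy of $S$ from $\mathrm{id}_S$ to $\mathrm{id}_S$ that carries $p'$ along the loop $\rho'(t):=G_S(p',t)$.

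The key observation is then that $G_S$, beginning at the identity and taking $p'$ once along $\rho'$, is exactly an isotopy of the kind used to define the point-pushing homeomorphism $\phi_{\rho'}$ of the surface $S$ (Definition~\ref{homeodefn} applied to $S$); since $G_S(\,\cdot\,,1)=\mathrm{id}_S$ this gives $\phi_{\rho'}=\mathrm{id}_S$, so $[\rho']\in\ker(\Phi_S)$. As $\chi(S)\neq 0$, Proposition~\ref{surfacesprop} gives $\ker(\Phi_S)=1$, whence $\rho'$ is null-homotopic in $S$, and therefore in $\partial M$. Finally $G$ is an isotopy from $\phi_{\rho}$ to $\mathrm{id}_M$ that fixes $p$ and moves $p'$ around the null-homotopic loop $\rho'$, so Lemma~\ref{fixbdypointlemma} shows that $\rho$ is null-homotopic in $M$; thus $g=1$ and $\ker(\Phi_M)=1$.

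I do not expect a serious obstacle here: the real content is carried entirely by Proposition~\ref{surfacesprop} and Lemma~\ref{fixbdypointlemma}, and the only care needed is in the two bookkeeping points above (that $G$ cannot permute the boundary components, and that $G_S$ literally satisfies the definition of a point-pushing isotopy on $S$, which it does because both $G_S(\,\cdot\,,0)$ and $G_S(\,\cdot\,,1)$ are $\mathrm{id}_S$). It is worth noting for contrast that if every boundary component were a torus or Klein bottle the argument would break precisely at the appeal to Proposition~\ref{surfacesprop}, and the conclusion can indeed fail in that situation (for instance for a solid torus), so the hypothesis is used essentially.
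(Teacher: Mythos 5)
Your proof is correct and follows essentially the same path as the paper: restrict the isotopy witnessing $\phi_\rho\simeq\mathrm{id}_{M_p}$ to the non-torus, non-Klein-bottle boundary component $S$ to obtain a point-pushing isotopy on $S$ along the track $\rho'$ of $p'$; invoke Proposition~\ref{surfacesprop} (via $\chi(S)\neq 0$) to conclude $\rho'$ is null-homotopic in $S$; and then apply Lemma~\ref{fixbdypointlemma} to conclude $\rho$ is null-homotopic in $M$. The only cosmetic difference is that the paper concatenates $H_\rho$ with your $G$ to get an isotopy from $\mathrm{id}_M$ to $\mathrm{id}_M$ before restricting to $S$, whereas you restrict $G$ directly; since $H_\rho$ is constant on $\partial M$, the restricted isotopy and the loop $\rho'$ are the same either way.
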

\begin{proof}
Let $S$ be a boundary component of $M$ that is neither a torus nor a Klein bottle.
Then $S$ is a closed, connected surface with $\chi(S)\neq 0$.
Let $\rho$ be a simple closed curve in $\Int(M)$ based at $p$ such that $\rho\in\ker(\Phi_M)$.
Then there is an isotopy
$H\colon (M\times[0,1],\partial M\times[0,1])\to(M,\partial M)$ from the identity on $M$ to itself that can be used to define $\Phi_M(\rho)$.

Pick a point $p'$ on $S$.
Let $H_S\colon(S\times[0,1],\partial S\times[0,1])\to(S,\partial S)$ be the restriction of $H$ to $S$.
Then $H_S$ gives an isotopy of $S$ from the identity to itself.
Let $\rho'\colon[0,1]\to S$ be the path followed by $p'$ under $H_S$.
Then $H_S$ can be used to define $\Phi_S(\rho')$ on $S$.
By Proposition \ref{surfacesprop}, this means $\rho'$ is null-homotopic in $S$.
Lemma \ref{fixbdypointlemma} therefore says that $\rho$ is null-homotopic in $M$.
Hence $\ker(\Phi_M)\cong 1$.
\end{proof}

\begin{lemma}[\cite{MR0224099} Lemma 7.4]\label{swapsideslemma}
Let $M'$ be a closed, irreducible $3$--manifold, and let $R\subseteq M'$ be an incompressible surface. Let $\psi\colon M'\to M'$ be a homeomorphism with $\psi(R)=R$.
If $\psi$ is homotopic to the identity then $\psi$ does not switch the sides of $R$.
\end{lemma}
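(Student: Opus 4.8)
The plan is to distinguish two cases according to whether $R$ separates $M'$. Throughout I may assume $R$ is connected and two-sided (a one-sided component has no sides to switch, and the general case reduces to the connected one). Fix a collar $\mathcal{N}(R)\cong R\times[-1,1]$ with $R=R\times\{0\}$; this co-orients $R$, and ``$\psi$ switches the sides of $R$'' means precisely that $\psi$ reverses this co-orientation. Write $F$ for a homotopy from $\mathrm{id}_{M'}$ to $\psi$. Since $R$ is connected and two-sided, $M'\setminus R$ has exactly one or two components, that is, $R$ is non-separating or separating.

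If $R$ is non-separating I would argue via intersection numbers, using neither irreducibility nor incompressibility. Summing the signs (read from the co-orientation) of the transverse intersections of a loop with $R$ defines a homomorphism $\alpha\colon\pi_1(M')\to\mathbb{Z}$, well defined by a standard transversality argument and requiring only that $R$ be two-sided; it is conjugation-invariant, and $\alpha\neq 0$ precisely because $R$ is non-separating. Choose a loop $\ell$ meeting $R$ transversely in a single point $x$, of sign $+1$, so $\alpha([\ell])=1$; such an $\ell$ exists since $M'\setminus R$ is connected. Since $\psi\simeq\mathrm{id}$, $\psi$ sends each loop to a freely homotopic one, so $\psi_{*}[\ell]$ is conjugate to $[\ell]$ and hence $\alpha(\psi_{*}[\ell])=1$. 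On the other hand $\psi$ is a homeomorphism with $\psi(R)=R$, so $\psi\circ\ell$ meets $R$ only at $\psi(x)$, and the sign of that intersection is $+1$ if $\psi$ preserves the co-orientation near $x$ and $-1$ if it reverses it. Hence $\psi$ preserves it at $x$, and therefore, $R$ being connected, everywhere: $\psi$ does not switch the sides of $R$.

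If $R$ separates, write $M'\setminus R=\Int(M'_1)\sqcup\Int(M'_2)$ with $M'_i$ compact and $\partial M'_i\cong R$; now switching the sides of $R$ means $\psi(M'_1)=M'_2$, which I assume, aiming for a contradiction. Incompressibility of $R$ and van Kampen's theorem give $\pi_1(M')\cong A\ast_C B$, with $A=\pi_1(M'_1)$, $B=\pi_1(M'_2)$, $C=\pi_1(R)$ and both edge maps injective; moreover they are proper, since if $C\to A$ were onto then $M'_1$ — being irreducible, as a piece of the irreducible $M'$ cut along the incompressible $R$, with a boundary component carrying all of its fundamental group — would be $R\times[0,1]$, forcing $M'$ to have a free boundary component, contrary to $M'$ being closed. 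Since $\psi\simeq\mathrm{id}$, $\psi$ carries every loop to a freely homotopic one, hence acts trivially on conjugacy classes of $\pi_1(M')$; since $\psi$ maps $M'_1$ homeomorphically onto $M'_2$, the subgroup $\psi_{*}(A)$ is conjugate to $B$. Combining these, every element of $A$ is conjugate in $\pi_1(M')$ into $B$, and therefore — by the conjugacy theorem for amalgamated free products, equivalently a one-line argument with the Bass--Serre tree of the splitting — conjugate within $A$ into $C$. Thus $A=\bigcup_{a\in A}aCa^{-1}$.

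I expect the final step, ruling out $A=\bigcup_{a}aCa^{-1}$, to be the main obstacle. If $[A:C]<\infty$ this is Jordan's theorem: $A$ acts transitively on the finite set $A/C$ of cardinality $\geq 2$, and a transitive permutation group of degree $\geq 2$ contains a fixed-point-free element, i.e.\ an element of $A$ lying in no conjugate of $C$. If $[A:C]=\infty$, the cover of $M'_1$ with fundamental group $C$ is a non-compact irreducible $3$--manifold having an incompressible boundary component that carries its whole fundamental group, hence of the form $R\times[0,1)$; one then uses the geometric (or Haken-hierarchy) decomposition of $M'_1$ to produce a loop not freely homotopic into $R$, again an element of $A$ in no conjugate of $C$. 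Either way $A\neq\bigcup_{a}aCa^{-1}$, the contradiction sought, so $\psi$ does not switch the sides of $R$.
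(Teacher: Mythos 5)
The paper offers no proof of this lemma; it is cited verbatim from Waldhausen (\cite{MR0224099}, Lemma 7.4), so the only internal comparison is with the proof of the sphere analogue, Lemma \ref{notswitchsideslemma}. Your non-separating case is correct and clean (and, as you note, needs neither irreducibility nor incompressibility). The separating case, however, has a genuine gap, and it stems from discarding information too early. From $\psi\simeq\mathrm{id}$ you retain only that $\psi$ preserves conjugacy classes of \emph{elements}. But a homotopy $F$ from $\mathrm{id}$ to $\psi$ traces a path $\gamma=F(p',\cdot)$ from $p'$ to $\psi(p')$; closing $\gamma$ up through $R$ by a path $\tau\subseteq R$ gives $g=[\tau\cdot\gamma]\in\pi_1(M',p')$, and the induced automorphism is conjugation by $g$. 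Hence $gAg^{-1}=B$ \emph{as subgroups}, not merely element-by-element. This is exactly the fact the paper's proof of Lemma \ref{notswitchsideslemma} exploits (``there is an inner automorphism \dots\ that interchanges $\pi_1(M_1,p')$ and $\pi_1(M_2,p')$''). Having weakened it, you are reduced to excluding $A=\bigcup_{a\in A}aCa^{-1}$, and your treatment of the infinite-index subcase is only a sketch: ``one then uses the geometric (or Haken-hierarchy) decomposition of $M'_1$ to produce a loop not freely homotopic into $R$'' is not an argument, and it is not obvious how to make it one. That is the hole.

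With the subgroup-level statement the proof closes without any case split. Since $gAg^{-1}=B$, in the Bass--Serre tree $T$ of $A*_CB$ the group $A$ stabilizes the $A$-vertex $v_A$ and also $g^{-1}v_B$; these are distinct (they lie in different $G$-orbits), so $A$ fixes the geodesic between them and in particular an edge, giving $A\subseteq kCk^{-1}$ for some $k$. Together with the inclusion $C\subseteq A$ this yields $C\subseteq A\subseteq kCk^{-1}$. If $R$ has genus $\geq 2$, then $C$ embeds into $A$ and $A$ embeds into a copy of $C$; multiplicativity of Euler characteristic (and co-Hopficity of closed surface groups) forces $[A:C]=1$, contradicting the properness you already established. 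If $R$ is a torus, $A$ is abelian of rank $2$, so $M'_1$ is an irreducible $3$--manifold with $\pi_1\cong\mathbb{Z}^2$ and one incompressible torus boundary component, hence $T^2\times[0,1]$ --- impossible, as $\partial M'_1$ is connected. (An incompressible sphere cannot occur in an irreducible $M'$, so these are the only genera.) This replaces both your Jordan argument and the missing infinite-index argument, and it is the route the paper's own sphere proof is gesturing at.
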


Imitating the proof of Lemma \ref{swapsideslemma} gives the following.

\begin{lemma}\label{notswitchsideslemma}
Let $S$ be a separating $2$--sphere in a compact, connected $3$--manifold $M'$ other than a $3$--sphere.
Let $\psi\colon M'\to M'$ be a homeomorphism such that $\psi(S)=S$.
Suppose that $\psi$ is isotopic to the identity. Then $\psi$ does not switch the sides of $S$. 
\end{lemma}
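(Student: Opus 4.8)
The plan is to argue by contradiction: suppose $\psi$ switches the sides of $S$. Since $S$ has empty boundary it lies in $\Int(M')$, and being separating it cuts $M'$ into two compact pieces $M_1,M_2$ (the closures of the components of $M'\setminus S$) which meet along $S$; switching sides means $\psi(M_1)=M_2$ and $\psi(M_2)=M_1$. Because $S$ is a $2$--sphere, hence simply connected, van Kampen's theorem applied to slight open thickenings of $M_1$ and $M_2$ across a collar of $S$ gives $\pi_1(M')\cong\pi_1(M_1)*\pi_1(M_2)$, where for a basepoint $x_0\in S$ the two free factors are the images of $\pi_1(M_1,x_0)$ and $\pi_1(M_2,x_0)$.

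The main step is to deduce $\pi_1(M_1)=\pi_1(M_2)=1$ from the fact that $\psi$ is isotopic to the identity. Let $H$ be an isotopy with $H(x,0)=x$ and $H(x,1)=\psi(x)$ for all $x\in M'$, and put $\gamma(t)=H(x_0,t)$, a path from $x_0$ to $\psi(x_0)$. For any loop $\ell$ based at $x_0$, the map $(s,t)\mapsto H(\ell(s),t)$ is a homotopy from $\ell$ to $\psi\circ\ell$ that drags the basepoint along $\gamma$, so $[\ell]=[\gamma\cdot(\psi\circ\ell)\cdot\overline{\gamma}]$ in $\pi_1(M',x_0)$. Now assume $g\in\pi_1(M_1,x_0)$ is nontrivial and pick a loop $\ell$ in $M_1$ representing $g$. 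Its image $\psi\circ\ell$ is a loop lying in $\psi(M_1)=M_2$, so by the displayed identity $g$ is conjugate in $\pi_1(M')$ to an element represented by a loop in $M_2$, that is, to an element of the free factor $\pi_1(M_2)$; on the other hand $g$ already lies in the free factor $\pi_1(M_1)$. Since a nontrivial element of a free product cannot be conjugate into two distinct free factors, this is a contradiction. Hence $\pi_1(M_1)=1$, and $\pi_1(M_2)\cong\pi_1(M_1)=1$ because $\psi$ restricts to a homeomorphism $M_1\to M_2$.

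Finally I identify $M'$. Every component of $\partial M'$ is connected and disjoint from $S$, so it lies in $\Int(M_1)$ or in $\Int(M_2)$. As $\psi$ is isotopic to the identity it restricts to an isotopy of $\partial M'$ starting at the identity, and hence fixes every component of $\partial M'$ setwise; but $\psi$ carries $\Int(M_1)$ onto the disjoint set $\Int(M_2)$, so $\partial M'$ can have no components at all, that is, $M'$ is closed and $\partial M_i=S$. Capping $S$ off with a $3$--ball on each side produces simply connected closed $3$--manifolds $M_i\cup_S B^3$, which are homeomorphic to $\Sphere$ by the Poincar\'e conjecture; thus each $M_i$ is a $3$--ball, and gluing two $3$--balls along their boundary $2$--spheres always yields $\Sphere$, so $M'\cong\Sphere$, contrary to hypothesis. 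I expect the crux to be the middle step: since the isotopy $H$ need not preserve $S$, one cannot work with the pair $(M',S)$ or with homology of that pair, and must instead extract precisely the invariant that survives an arbitrary isotopy to the identity---the inner action on $\pi_1(M')$---and play it against the free product decomposition coming from $S$; a secondary point is the (now standard) appeal to the Poincar\'e conjecture, which is genuinely needed to rule out the two sides of $S$ being fake $3$--balls.
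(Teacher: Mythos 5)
Your proof is correct, but it reorganizes the argument relative to the paper's. The paper first disposes of two degenerate cases directly: if some component of $\partial M'$ lies in exactly one side, $\psi$ cannot switch sides because it preserves $\partial M'$; and if one side is a $3$--ball, the other cannot be (since $M'\neq\Sphere$), so the sides are non-homeomorphic. Only in the remaining case, where both $\pi_1(M_i)$ are nontrivial (invoking Poincar\'e precisely to make this deduction), does the paper run the free-product argument, and it does so by an explicit normal-form computation: writing the conjugating element as $x_1 y_1\cdots x_m y_m$ and observing the conjugate of a nontrivial $y\in\pi_1(M_2)$ cannot land in $\pi_1(M_1)$. You instead run the algebra first. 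You extract the same inner automorphism from the isotopy (via $[\ell]=[\gamma\cdot(\psi\circ\ell)\cdot\overline{\gamma}]$), but rather than compute, you appeal to the standard fact that a nontrivial element of a free product cannot be conjugate into two distinct free factors, and use this to conclude $\pi_1(M_1)=\pi_1(M_2)=1$ unconditionally. You then derive the boundary constraint \emph{afterwards} (each component of $\partial M'$ is fixed setwise by an isotopy to the identity, yet $\psi$ exchanges the two sides, forcing $\partial M'=\emptyset$), and finish by capping $S$ with balls and applying Poincar\'e to conclude $M_i\cong B^3$, hence $M'\cong\Sphere$, a contradiction. The net effect is the same and the ingredients overlap completely; what you gain is a more uniform treatment (no separate casework on $\partial M'$ or on $3$--ball sides), at the cost of citing the conjugacy theorem for free products as a black box where the paper computes directly, and of using Poincar\'e at the end to identify $M'$ rather than at the start to ensure nontriviality of the factors.
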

\begin{proof}
Let $M_1$ and $M_2$ be the two pieces into which $S$ divides $M'$. That is, $M_1$ and $M_2$ are the closures of the two components of $M'\setminus S$.
First suppose that $\partial M_1\setminus S\neq\emptyset$. Since any isotopy of $M'$ keeps $\partial M'$ invariant, $\psi(\partial M_1\setminus S)\subseteq M_1$. Hence $\psi$ does not switch the sides of $S$.
Suppose next that $M_1$ is a $3$--ball. As $M'$ is not $\Sphere$, this means $M_2$ is not a $3$--ball. Thus, since $M_1\ncong M_2$, again $\psi$ cannot switch the sides of $S$.

Now assume that $M_1$ and $M_2$ each have no boundary other than $S$, and that neither is a $3$--ball. By the Poincar\'e Conjecture this means that $\pi_1(M_1)\ncong 1\ncong\pi_1(M_2)$.
Let $p'$ be a point on $S$.
Then $\pi_1(M',p')\cong\pi_1(M_1,p')*\pi_1(M_2,p')$.
If $\psi$ switches $M_1$ and $M_2$, there is an inner automorphism of $\pi_1(M',p')$ (given by the path travelled by $p'$ under the isotopy) that interchanges $\pi_1(M_1,p')$ and $\pi_1(M_2,p')$.
Express the corresponding element as
$x_1\cdot y_1\cdots x_m\cdot y_m$, where $x_i\in \pi_1(M_1,p')$ and $y_i\in\pi_1(M_2,p')$ for $1\leq i\leq m$, and
$x_{i+1}\neq 1\neq y_i$ for $1\leq i<m$.
Without loss of generality we may assume that $x_1\neq 1$.
Let $y\in\pi_1(M_2,p)\setminus\{1\}$.
Then by assumption $(x_1\cdot y_1\cdots x_m\cdot y_m)^{-1}\cdot y\cdot (x_1\cdot y_1\cdots x_m\cdot y_m)\in\pi_1(M_1,p')$.
This is a contradiction, since $y_m^{-1}\cdot x_m^{-1}\cdots y_1^{-1}\cdot x_1^{-1}\cdot y\cdot x_1\cdot y_1\cdots x_m\cdot y_m\notin\pi_1(M_1,p')$.
Therefore $\psi$ does not switch the sides of $S$.
\end{proof}

The following lemma follows from the proof of \cite{MR0314054} Theorem 1.1 together with the Poincar\'e Conjecture.

\begin{lemma}\label{homotopicsphereslemma}
Let $M'$ be a $3$--manifold, and let $S_1$ and $S_2$ be $2$--spheres disjointly embedded in $M'$.
If $S_1$ is homotopically trivial then it bounds a $3$--ball.
If neither $S_1$ nor $S_2$ bounds a $3$--ball but
they are homotopic, then they co-bound a manifold homeomorphic to $\sphere\times[0,1]$.
\end{lemma}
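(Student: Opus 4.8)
The plan is to extract both conclusions from the structure of the proof of \cite{MR0314054} Theorem~1.1 and to upgrade the homotopy $3$--balls and homotopy $\sphere\times[0,1]$'s that appear there to genuine ones using the Poincar\'e Conjecture. Two elementary facts will be used throughout: every embedded $2$--sphere in a $3$--manifold is two-sided, since its normal line bundle is classified by $H^1(\sphere;\mathbb{Z}/2)=0$; and an embedded $2$--sphere that is null-homologous separates.

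For the first statement, note that a homotopically trivial $S_1$ is in particular null-homologous, hence separating. Let $M_1$ and $M_2$ be the closures of the two components of $M'\setminus S_1$, and cap each off with a $3$--ball along $S_1$ to obtain a connected-sum splitting of $M'$ with connecting sphere $S_1$. The part of the argument for \cite{MR0314054} Theorem~1.1 dealing with null-homotopic spheres --- an application of the Sphere Theorem combined with a tower construction --- shows that one of the two capped-off pieces, say the one containing (a copy of) $M_1$, is a homotopy $3$--sphere. By the Poincar\'e Conjecture it is $\Sphere$, so $M_1$ is a $3$--ball and $S_1=\partial M_1$ bounds it.

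For the second statement, the first statement shows that neither $S_1$ nor $S_2$ is null-homotopic; being homotopic, they represent the same free homotopy class of maps $\sphere\to M'$. Feeding a homotopy between $S_1$ and $S_2$ into the tower argument behind \cite{MR0314054} Theorem~1.1, and using that the two spheres are already disjointly embedded, one obtains a submanifold $W\subseteq M'$ with $\partial W=S_1\sqcup S_2$ for which both inclusions $S_i\hookrightarrow W$ are homotopy equivalences; in particular $\pi_1(W)\cong 1$. Capping the two boundary spheres of $W$ off with $3$--balls yields a closed, simply-connected $3$--manifold, which is $\Sphere$ by the Poincar\'e Conjecture, so $W$ is $\Sphere$ with two disjoint open $3$--balls removed, i.e.\ $W\cong\sphere\times[0,1]$, and $S_1$ and $S_2$ co-bound a copy of it.

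I expect the main obstacle to be the middle step of the second statement: isolating from the (tower-theoretic) proof of \cite{MR0314054} Theorem~1.1 precisely the region $W$ between the two spheres, and checking that this goes through when the spheres are non-separating and when $M'$ is non-orientable. Once $W$ is known to be a homotopy $\sphere\times[0,1]$, the passage to an honest product is immediate, and likewise the first statement reduces immediately, via the Poincar\'e Conjecture, to the classical fact that a null-homotopic embedded $2$--sphere bounds a homotopy $3$--ball.
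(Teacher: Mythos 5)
Your proposal follows the same route as the paper, which likewise just cites the tower/sphere-theorem argument behind \cite{MR0314054} Theorem 1.1 and then invokes the Poincar\'e Conjecture to promote homotopy balls and homotopy products to genuine ones. The expansion you give (separating via null-homology, capping off, and recognising $\Sphere$ minus two balls as $\sphere\times[0,1]$) is correct, and the step you flag as the main obstacle --- isolating the product region from Laudenbach's proof --- is exactly the part the paper also leaves to the citation.
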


\begin{remark}\label{irreducibleremark}
Together with the Sphere Theorem (see, for example, \cite{MR0415619} Theorem 4.3), this implies that an orientable $3$--manifold $M'$ is irreducible if and only if $\pi_2(M')\cong 1$.
Since $\pi_2(\widetilde{M}')=\pi_2(M')$ for any cover $\widetilde{M}'$ of $M'$ (see, for example, \cite{MR1867354} Proposition 4.1), this implies that $\widetilde{M}'$ is irreducible if and only if $M'$ is.
\end{remark}

\begin{theorem}[\cite{MR0346792} Corollary 1]\label{quotientthm}
If $\sphere\times\crcle$ is a covering space of a manifold $M'$ then $M'$ is either 
$\sphere\times\crcle$, $\sphere\widetilde{\times}\crcle$,
$\mathbb{R}P^2\times\crcle$ or $\mathbb{R}P^3\#\mathbb{R}P^3$.
\end{theorem}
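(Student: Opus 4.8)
The plan is to reduce the statement to a classification of free, properly discontinuous, cocompact actions on $\sphere\times\mathbb{R}$, and then to carry out that classification geometrically. Since $\sphere\times\crcle$ is compact, $M'$ is compact and the covering is finite-sheeted, so $\pi_1(\sphere\times\crcle)\cong\mathbb{Z}$ is a finite-index subgroup of $\Gamma:=\pi_1(M')$; hence $\Gamma$ is infinite and two-ended, and the universal cover of $M'$ coincides with that of $\sphere\times\crcle$, namely $\sphere\times\mathbb{R}$, on which $\Gamma$ acts freely, properly discontinuously and cocompactly. Give $\sphere\times\mathbb{R}$ the product of a round metric on $\sphere$ and the flat metric on $\mathbb{R}$, so that $\mathrm{Isom}(\sphere\times\mathbb{R})\cong O(3)\times(\mathbb{R}\rtimes\mathbb{Z}_2)$.

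The key step is to show that, after conjugating $\Gamma$ by a homeomorphism of $\sphere\times\mathbb{R}$, its action is by isometries. I would argue directly: by Lemma \ref{homotopicsphereslemma}, any two disjoint essential (non-ball-bounding) embedded $2$--spheres in $\sphere\times\mathbb{R}$ co-bound a copy of $\sphere\times[0,1]$, so there is a unique isotopy class of such spheres, which every self-homeomorphism must preserve; after an equivariant isotopy the whole group preserves the product foliation $\{\sphere\times\{t\}\}$. Each element then has the form $(x,t)\mapsto(\tau(x),\pm t+c)$ with $\tau\in\homeo(\sphere)$, and since every homeomorphism of $\sphere$ is isotopic to an element of $O(3)$ one straightens the $\sphere$--factor (again equivariantly) and the affine action on the $\mathbb{R}$--factor into $\mathbb{R}\rtimes\mathbb{Z}_2$.

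It then remains to enumerate the discrete, free, cocompact subgroups $\Gamma\leq O(3)\times(\mathbb{R}\rtimes\mathbb{Z}_2)$. The projection of $\Gamma$ to $\mathbb{R}\rtimes\mathbb{Z}_2$ has discrete cocompact image $\overline\Gamma$, which is $\mathbb{Z}$ or the infinite dihedral group $\mathbb{Z}_2*\mathbb{Z}_2$; the kernel $K=\Gamma\cap(O(3)\times\{1\})$ is discrete in the compact group $O(3)$, hence finite, and acts freely on each slice $\sphere\times\{t\}$, so $K=1$ or $K=\{\pm I\}\cong\mathbb{Z}_2$ (no rotation acts freely on $\sphere$, and the only orientation-reversing free involution of $\sphere$ is the antipodal map). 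Examining the four extensions $1\to K\to\Gamma\to\overline\Gamma\to 1$ and imposing freeness gives: with $K=1$ and $\overline\Gamma=\mathbb{Z}$, a generator lifts to $(\rho,\,t\mapsto t+a)$ with $a\neq 0$, and since $SO(3)$ and its other coset are each connected this yields, via Lemma \ref{changemonodromylemma}, $\sphere\times\crcle$ or $\sphere\widetilde{\times}\crcle$; with $K=1$ and $\overline\Gamma=\mathbb{Z}_2*\mathbb{Z}_2$, freeness forces both reflection generators to have $\sphere$--part $-I$, and the quotient is $\mathbb{R}P^3\#\mathbb{R}P^3$; with $K=\{\pm I\}$ and $\overline\Gamma=\mathbb{Z}$, every non-central element translates along $\mathbb{R}$, so the action is automatically free and, as $\Mod(\mathbb{R}P^2)$ is trivial, the quotient is $\mathbb{R}P^2\times\crcle$; and with $K=\{\pm I\}$ and $\overline\Gamma=\mathbb{Z}_2*\mathbb{Z}_2$, freeness of a reflection generator would force its $\sphere$--part to be $-I$, but composing with $-I\in K$ then produces an element fixing $\sphere\times\{0\}$, so this case is vacuous. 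Thus $M'$ is one of the four listed manifolds.

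The main obstacle is the second paragraph: promoting the topological action to a product-respecting (equivalently, isometric) one. I expect the cleanest route is the one indicated, combining the uniqueness up to isotopy of the essential sphere in $\sphere\times\mathbb{R}$ with an equivariant isotopy argument made plausible by the fact that $\Gamma$ has a finite normal subgroup with quotient $\mathbb{Z}$ or infinite dihedral; invoking general geometrization would also work but is heavier than needed. An alternative that sidesteps this step is to pass to the Kneser--Milnor prime decomposition of $M'$ and use that a two-ended group is freely indecomposable unless it is $\mathbb{Z}_2*\mathbb{Z}_2$, but one then has to identify the prime summands (as $\sphere$--bundles over $\crcle$, as $\mathbb{R}P^2\times\crcle$, or as $\mathbb{R}P^3$) using the Poincar\'e and elliptisation theorems, with extra care in the non-orientable case.
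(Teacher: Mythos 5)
The paper does not prove this theorem; it is cited directly from Tollefson \cite{MR0346792}, so there is no in-paper proof to compare against. Your attempt is therefore a blind reconstruction, and it is worth evaluating on its own merits.

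The overall plan --- reduce to classifying free, properly discontinuous, cocompact actions on $\sphere\times\mathbb{R}$, show they may be taken to be isometries of the product metric, and enumerate the discrete cocompact subgroups of $O(3)\times(\mathbb{R}\rtimes\mathbb{Z}_2)$ --- is sound, and your third paragraph (the enumeration) is correct in all details: the kernel $K$ of the projection to $\mathbb{R}\rtimes\mathbb{Z}_2$ is a finite subgroup of $O(3)$ acting freely on $\sphere$, hence trivial or $\{\pm I\}$; the four resulting cases give exactly the four listed manifolds, and the fourth case is correctly shown to be vacuous.

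The genuine gap is the one you flag yourself, and it is more serious than the tone of the second paragraph suggests. Knowing that there is a unique isotopy class of essential sphere in $\sphere\times\mathbb{R}$ (from Lemma \ref{homotopicsphereslemma}) does \emph{not} immediately produce a $\Gamma$--invariant foliation by such spheres: the group fixes no individual sphere, so there is no single object to isotope equivariantly. What one actually needs is a $\Gamma$--invariant locally finite family of disjoint essential spheres, and producing it requires genuine work --- for instance, taking the non-separating sphere downstairs in $M'$, pulling back its full preimage, and verifying disjointness and invariance; or, after quotienting by a normal $\mathbb{Z}\leq\Gamma$ (and one must check such a normal $\mathbb{Z}$ exists), invoking the geometrization of finite group actions on $\sphere\times\crcle$ in the spirit of Meeks--Scott. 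As written, the phrase ``after an equivariant isotopy the whole group preserves the product foliation'' is an assertion of precisely the hard step, not an argument for it. Likewise the subsequent ``straightens the $\sphere$--factor (again equivariantly)'' needs a Nielsen-realisation-type input for $\homeo(\sphere)$, which is available but not free.

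Your sketched alternative --- prime decomposition, then use that a two-ended group is freely indecomposable unless it is $\mathbb{Z}_2*\mathbb{Z}_2$, then identify the prime summands via the Poincar\'e and Elliptization theorems --- is in fact closer to the cited proof of Tollefson and avoids the equivariant-straightening issue entirely, though as you note it requires care in the non-orientable case (for example ruling out the remaining virtually cyclic group structures on $\pi_1(M')$, and distinguishing the prime non-irreducible possibilities $\sphere\times\crcle$, $\sphere\widetilde\times\crcle$, and $\mathbb{R}P^2\times\crcle$). Either route can be made to work; the one you chose has the cleaner endgame but a harder opening move, and that opening move is where your write-up currently falls short of a proof.
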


\begin{proposition}\label{nonprimeprop}
Suppose $M$ is a $3$--manifold and contains an essential separating $2$--sphere.
Assume $M$ is not $\mathbb{R}P^3\#\mathbb{R}P^3$.
Then there is an exact sequence
\[1\to\pi_1(M,p)\to\Mod(M_p)\to\Mod(M)\to 1.\]
\end{proposition}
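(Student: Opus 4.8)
The plan is to cut $M$ along the essential sphere and reduce to Proposition~\ref{nottorusprop}. By Proposition~\ref{sequenceprop} it is enough to prove $\ker(\Phi_M)\cong 1$, so fix a simple closed curve $\rho$ in $\Int(M)$ based at $p$ with $[\rho]\in\ker(\Phi_M)$; the aim is to show $\rho$ is null-homotopic in $M$. Choose an essential separating $2$--sphere $S\subseteq M$; after an isotopy of $S$ we may assume $p\notin S$, and we let $M_1,M_2$ be the closures of the two components of $M\setminus S$, with $p\in\Int(M_1)$. Since $S$ is essential, neither $M_i$ is a $3$--ball. As $[\rho]\in\ker(\Phi_M)$, concatenating an ambient isotopy defining $\phi_\rho$ with an isotopy from $\phi_\rho$ to the identity that fixes $p$ yields an ambient isotopy $H\colon M\times[0,1]\to M$ from the identity to the identity with $H(p,t)=\rho(t)$ for all $t$.

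The heart of the argument is to replace $H$ by an ambient isotopy $H'$ that is again from the identity to the identity, still carries $p$ around a loop homotopic to $\rho$ in $M$, and satisfies $H'(S,t)=S$ for every $t\in[0,1]$. Granting this, each $H'(\cdot,t)$ preserves $M_1$ (the set of such $t$ is open, closed and contains $0$), so $H'$ restricts to an ambient isotopy of the compact, connected $3$--manifold $M_1$ from the identity to the identity, carrying $p$ around a loop $\rho'\subseteq M_1$ whose class maps to $[\rho]$ under $\pi_1(M_1,p)\to\pi_1(M,p)$; hence $[\rho']\in\ker(\Phi_{M_1})$. But $S$ is a boundary component of $M_1$ which is a $2$--sphere, in particular neither a torus nor a Klein bottle, so Proposition~\ref{nottorusprop} gives $\ker(\Phi_{M_1})\cong 1$; thus $\rho'$, and therefore $\rho$, is null-homotopic in $M$, so $\ker(\Phi_M)\cong 1$ as required, and Proposition~\ref{sequenceprop} finishes the argument.

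To produce $H'$ I would study the family $S_t=H(S,t)$, a loop based at $S$ of essential $2$--spheres each isotopic to $S$, and straighten it back to the constant family at $S$. Lemma~\ref{notswitchsideslemma}, applied at $t=1$ and along the isotopy, shows that the two sides of $S$ are never interchanged, so such a straightening does not move $p$ to the wrong side. Making consecutive members of the family disjoint and applying Lemma~\ref{homotopicsphereslemma}, they cobound copies of $\sphere\times[0,1]$, and pushing through these product regions isotopes the whole family back onto $S$; the residual freedom is a rotation of a product neighbourhood $S\times[-1,1]$ of $S$, which can be taken disjoint from $p$ and absorbed into $H'$ without altering the track of $p$ --- unless this rotation cannot be undone coherently around the loop, in which case the family of spheres exhibits $\sphere\times\crcle$ as a covering space of $M$. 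Theorem~\ref{quotientthm} then forces $M\in\{\sphere\times\crcle,\ \sphere\widetilde{\times}\crcle,\ \mathbb{R}P^2\times\crcle,\ \mathbb{R}P^3\#\mathbb{R}P^3\}$, and of these only $\mathbb{R}P^3\#\mathbb{R}P^3$ contains an essential separating $2$--sphere; that case is excluded by hypothesis. I expect this straightening step --- keeping control of the track of $p$ while isotoping the loop of spheres, and verifying that $\mathbb{R}P^3\#\mathbb{R}P^3$ (alongside the $\sphere\times\crcle$--covered manifolds) is exactly where it breaks down --- to be the main obstacle.
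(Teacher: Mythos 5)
Your approach is genuinely different from the paper's. The paper isotopes $S$ so that $p$ lies \emph{on} $S$, homotopes $\rho$ to meet $S$ only at $p$, lifts to the universal cover where the preimages of $S$ are linearly ordered separating spheres bounding $\sphere\times[0,1]$ regions (via Lemma~\ref{homotopicsphereslemma}), and applies Theorem~\ref{quotientthm}; the difficult case $m=1$ forces $M_1\cong\mathbb{R}P^3$ and is then killed by constructing a specific index-two cover and invoking Lemma~\ref{notswitchsideslemma}. You instead keep $p$ off $S$, try to deform the ambient isotopy $H$ so that it carries $S$ to itself at every time, restrict to $M_1$, and then quote Proposition~\ref{nottorusprop}. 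This is an attractive plan, and your bookkeeping (the reduction to $\ker(\Phi_M)\cong 1$, that $H'(S,t)=S$ forces $H'(M_1,t)=M_1$, that $S$ is a sphere boundary component of $M_1$ so Proposition~\ref{nottorusprop} applies) is fine. But the central step is not just technically hard, as you suggest; I think it is wrong as stated.

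Here is the concrete problem. Write $H'(\cdot,t)=G_t\circ H(\cdot,t)$ for some isotopy $G$ of $M$ with $G_0=\mathrm{id}$; to have $H'$ also end at the identity you need $G_1=\mathrm{id}$ (or at least $G_1$ to fix $p$, which you arrange). Then the track of $p$ under $H'$ is $t\mapsto G_t(\rho(t))$, which is homotopic rel endpoints to the concatenation $\rho\cdot\gamma$ where $\gamma(s)=G_s(p)$ is the track of $p$ under $G$. You assert that the corrections ``can be taken disjoint from $p$,'' i.e.\ that $\gamma$ is constant, but this is exactly where the argument needs work and in general cannot be achieved. Indeed, if $H'(S,t)=S$ for all $t$ then the track of $p$ under $H'$ is forced to lie entirely in $M_1$, hence represents an element of the free factor $\pi_1(M_1,p)\leq\pi_1(M,p)$. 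There is no a priori reason for the given $[\rho]\in\ker(\Phi_M)$ to lie in (or be conjugate into) either free factor --- $\ker(\Phi_M)$ is only known to be normal in $\pi_1(M_1,p)*\pi_1(M_2,p)$, and normal subgroups of free products need not be contained in a factor. So if $[\rho]\notin\pi_1(M_1,p)$, no straightening can keep the track homotopic to $\rho$; the correction $\gamma$ is forced to be non-trivial. But $\gamma\in\ker(\Phi_M)$ is otherwise uncontrolled, and ``$\rho\cdot\gamma$ is null-homotopic for some $\gamma\in\ker(\Phi_M)$'' is a tautology. The proposed dichotomy --- straightening succeeds, or else $\sphere\times\crcle$ covers $M$ --- is not established, and I do not believe it is the right dichotomy. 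The paper sidesteps all of this by lifting to the universal cover, where the preimages of $S$ are genuinely separated and carried one to another by the lifted isotopy, so no ``straightening of a loop of spheres'' in $M$ itself is ever required.

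Subsidiary points: your use of Lemma~\ref{notswitchsideslemma} to control the sides is fine for $t=1$, but you also need it ``along the isotopy,'' which requires knowing each $H(\cdot,t)$ preserves some copy of $S$ --- this is again what you are trying to construct, not a given. Also, even granting the straightening, to conclude $\rho'\in\ker(\Phi_{M_1})$ you are implicitly using the remark after Lemma~\ref{fixbdypointlemma} that for $\Mod$ (as opposed to $\Mod_\partial$) the defining isotopies need not fix the boundary; worth flagging, since $H'|_{M_1}$ will move $S=\partial M_1\cap S$.
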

\begin{proof}
Let $S$ be an essential separating $2$--sphere in $M$, and isotope $S$ in $M$ so that $p$ lies on $S$.
Suppose that $\ker(\Phi)\ncong 1$.
Let $\rho\colon[0,1]\to M$ be a loop based at $p$ such that $\rho\in\ker(\Phi)\setminus\{1\}$.
Homotope $\rho$, keeping its endpoints fixed, to be in general position with respect to $S$ and to have minimal intersection with $S$.
Further homotope $\rho$ so that it only meets $S$ at $p$.
Then the points where $\rho$ meets $S$ divide it into loops $\rho_1,\ldots,\rho_m$ based at $p$.
Since $|\rho\cap S|$ is minimal, for $1\leq i\leq j\leq m$ the loop $\rho_i\cdots\rho_j$ is not null-homotopic in $M$.

Let $(\widetilde{M},\widetilde{p})$ be the universal cover of $(M,p)$. 
Let $\widetilde{S}$ be the lift of $S$ containing $\widetilde{p}$, and let $\widetilde{\rho}\colon[0,1]\to\widetilde{M}$ be the lift of $\rho$ that begins at $\widetilde{p}$.
Because no sub-loop of $\rho$ is null-homotopic, $\widetilde{\rho}$ does not meet any lift of $S$ more than once.
Let $\widetilde{S}'$ be the lift of $S$ containing $\widetilde{\rho}(1)$.
Then $\widetilde{S}'$ is distinct from $\widetilde{S}$.

Since $\rho\in\ker(\Phi)$, there is an isotopy $H_{\rho}\colon (M\times[0,1],\partial M\times[0,1])\to(M,\partial M)$ from the identity on $M$ to itself that can be used to define $\phi_{\rho}$.
This lifts to an isotopy of $\widetilde{M}$ that takes $\widetilde{S}$ to $\widetilde{S}'$.

Lemma \ref{homotopicsphereslemma} shows that $\widetilde{S}$ and $\widetilde{S}'$ together bound a manifold $N\cong\sphere\times[0,1]$ in $\widetilde{M}$.
As $\widetilde{\rho}$ connects the two boundary components of $N$, and $\widetilde{M}$ is simply connected, $N$ contains $\widetilde{\rho}$.
Every lift of $S$ that $\widetilde{\rho}$ passes through therefore lies in $N$, and together these divide $N$ into $m$ copies of $\sphere\times[0,1]$.

Suppose $m>1$. Then $\rho_1\cdot\rho_2$ forms a closed curve in $M$ that meets $S$ transversely twice. 
The lift of $\rho_1\cdot\rho_2$ to $\widetilde{M}$ ends on a lift $\widetilde{S}_2$ of $S$ contained in $N$.
Thus $\widetilde{S}$ and $\widetilde{S}_2$ also bound a manifold homeomorphic to $\sphere\times[0,1]$.
Let $G_2$ be the cyclic subgroup of $\pi_1(M,p)$ generated by $\rho_1\cdot\rho_2$.
Quotienting $\widetilde{M}$ by the action of $G_2$ glues together $\widetilde{S}$ and $\widetilde{S}_2$, giving a manifold homeomorphic to $\sphere\times\crcle$.
Hence $M$ is a quotient of $\sphere\times\crcle$, and Theorem \ref{quotientthm} applies.
Recall that by assumption $M$ is not $\mathbb{R}P^3\#\mathbb{R}P^3$, and also $M$ is not prime. 
Both $\sphere\times\crcle$ and $\sphere\widetilde{\times}\crcle$ are prime (see \cite{MR0415619} 3.12).
Since $\pi_1(\mathbb{R}P^2\times\crcle)\cong\mathbb{Z}_2\times\mathbb{Z}$, which is abelian and therefore not a non-trivial free product, $\mathbb{R}P^2\times\crcle$ is also prime (see \cite{MR0415619} 3.11 and note that the Poincar\'e conjecture shows that there are no fake $3$--balls).
Thus this case cannot occur.

Now suppose that $m=1$. 
Let $M_1$ and $M_2$ be the closures of the two components of $M\setminus S$. 
Then, under the covering map, $N$ projects to either $M_1$ or $M_2$.
Without loss of generality, assume this is $M_1$.
By doubling $N$ along its boundary to $\sphere\times\crcle$, we may therefore define a covering map from $\sphere\times\crcle$ to $M_1\# M_1$.
Again applying Theorem \ref{quotientthm} we find that $M_1\# M_1$ is homeomorphic to $\mathbb{R}P^3\#\mathbb{R}P^3$.
Because $\mathbb{R}P^3$ is prime, 
by \cite{MR0415619} Theorem 3.21 this means that $M_1\cong\mathbb{R}P^3$.
Therefore we know that $\rho$ is the only non-trivial element of $\pi_1(M_1,p)$, which has order $2$. 

Every element of $\pi_1(M,p)$ can be expressed as $\rho^{\varepsilon_1}\cdot (\sigma_1\cdot\rho)\cdots(\sigma_r\cdot\rho)\cdot\rho^{\varepsilon_2}$, where $r\in\mathbb{Z}_{\geq 0}$, $\sigma_i\in\pi_1(M_2,p)\setminus\{1\}$, and $\varepsilon_1,\varepsilon_2\in\{0,1\}$.
Let $G_1$ be the subgroup of $\pi_1(M,p)$ consisting of elements for which $r+\varepsilon_1+\varepsilon_2$ is even.
Then $G_1$ has index $2$ in $\pi_1(M,p)$, with $\{1,\rho\}$ as a set of coset representatives. 
In particular, $\rho\notin G_1$.

Let $(\widehat{M},\widehat{p})$ be the double cover of $(M,p)$ with $\pi_1(\widehat{M},\widehat{p})\cong G_1$.
Let $\widehat{S}$ be the lift of $S$ containing $\widehat{p}$.
Let $\widehat{S}'$ be the other lift of $S$ and $\widehat{p}'$ the lift of $p$ in $\widehat{S}'$.
As before we find that $\widehat{S}$ and $\widehat{S}'$ are homotopic in $\widehat{M}$ and so bound a manifold $\widehat{N}\cong\sphere\times[0,1]$.

Let $\widehat{\rho}$ be the lift of $\rho$ that begins at $\widehat{p}$.
If $\widehat{\rho}$ is not contained in $\widehat{N}$ then $\widehat{N}$ projects onto $M_2$ and we again find that $M\cong\mathbb{R}P^3\#\mathbb{R}P^3$.
Assume instead that $\widehat{\rho}$ is contained in $\widehat{N}$.
Suppose $\widehat{M}\setminus\widehat{N}$ is connected. Then there is a path $\widehat{\sigma}$ in $\widehat{M}\setminus\widehat{N}$ connecting $\widehat{p}'$ to $\widehat{p}$.
This projects to a loop $\sigma$ in $M_2$ based at $p$. Then $\rho\cdot\sigma\notin G_1$. This contradicts that $\widehat{\rho}\cdot\widehat{\sigma}$ forms a closed loop in $\widehat{M}$.
Hence $\widehat{M}\setminus\widehat{N}$ has two components, and $\widehat{S}$ is separating in $\widehat{M}$. 

As before, $H_{\rho}$ lifts to an isotopy of $\widehat{M}$ from the identity to a map $\psi\colon\widehat{M}\to\widehat{M}$.
Moreover, $\psi(\widehat{S})=\widehat{S}'$, $\psi(\widehat{S}')=\widehat{S}$ and $\psi(\widehat{N})=\widehat{N}$, since $\widehat{M}\setminus\widehat{N}$ is disconnected. 
Let $\widehat{H}'\colon(\widehat{M}\times[0,1],\partial\widehat{M}\times[0,1])\to(\widehat{M},\partial\widehat{M})$ be an isotopy that pushes $\widehat{S}'$ across the product structure of $\widehat{N}$ to $\widehat{S}$.
This gives an isotopy from $\psi$ to 
a map $\psi'\colon\widehat{M}\to\widehat{M}$ with $\psi'(\widehat{S})=\widehat{S}$.
By construction, $\psi'$ switches the sides of $\widehat{S}$ in $\widehat{M}$.
However, $\psi'$ is isotopic to the identity on $\widehat{M}$, contradicting Lemma \ref{notswitchsideslemma}.

Therefore $\ker(\Phi)\cong 1$. 
\end{proof}

\begin{lemma}\label{rp3lemma}
If $M=\mathbb{R}P^3\# \mathbb{R}P^3$ then there is an exact sequence
\[
1\to\pi_1(M,p)\to\Mod(M_p)\to\Mod(M)\to 1.
\]
\end{lemma}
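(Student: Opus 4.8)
We must show $\ker(\Phi_M)\cong 1$; given Proposition \ref{sequenceprop}, the stated exact sequence then follows. The plan is to bound $\ker(\Phi_M)$ using a double cover of $M$ by $\sphere\times\crcle$ together with Lemma \ref{coverlemma}, and then to eliminate the one remaining possibility by an elementary observation about $\pi_1(M)$.

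First I would pin down the relevant cover. Write $\pi_1(M,p)\cong\mathbb{Z}_2*\mathbb{Z}_2$, with the two $\mathbb{Z}_2$ factors, generated by elements $a$ and $b$, carried by the two $\mathbb{R}P^3$ summands. Since $a(ab)a^{-1}=(ab)^{-1}$, the infinite cyclic subgroup $\langle ab\rangle$ is normal of index $2$, with quotient $J\cong\mathbb{Z}_2$; let $\pi\colon(\widehat{M},\widehat{p})\to(M,p)$ be the associated regular double cover. To identify $\widehat{M}$, decompose $M$ as $P_1\cup_S P_2$, where $S$ is an essential separating $2$--sphere and each $P_i$ is a copy of $\mathbb{R}P^3$ with an open ball removed, so that $\pi_1(P_i)\cong\mathbb{Z}_2$. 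The generator of $\pi_1(P_i)$ is conjugate into $\langle a\rangle$ or $\langle b\rangle$ and so does not lie in $\langle ab\rangle$; hence each $\pi^{-1}(P_i)$ is connected and is the universal cover of $P_i$, namely $\Sphere$ with two open balls removed, i.e.\ $\sphere\times[0,1]$. Gluing the two copies of $\sphere\times[0,1]$ along $\pi^{-1}(S)$ (a pair of spheres) presents $\widehat{M}$ as an $\sphere$--bundle over $\crcle$, which, being orientable since $M$ is, must be $\sphere\times\crcle$; here one may invoke Lemma \ref{changemonodromylemma} together with the fact that an orientation-preserving homeomorphism of $\sphere$ is isotopic to the identity (Theorem \ref{spheremapthm}).

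Next I would feed this into the machinery already available. By Corollary \ref{productscor}, $\ker(\Phi_{\widehat{M}})\cong 1$. Applying Lemma \ref{coverlemma} to the regular double cover $\pi$ with covering group $J$ produces a group $L\leq\pi_*(\ker(\Phi_{\widehat{M}}))$; since $\ker(\Phi_{\widehat{M}})\cong 1$ this forces $L\cong 1$, and the lemma then gives that $\ker(\Phi_M)=\ker(\Phi_M)/L$ is isomorphic to a normal subgroup of $J\cong\mathbb{Z}_2$. In particular $\ker(\Phi_M)$ has order at most $2$.

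Finally I would rule out $\ker(\Phi_M)\cong\mathbb{Z}_2$. As $\Phi_M$ is a group homomorphism, $\ker(\Phi_M)\trianglelefteq\pi_1(M,p)\cong\mathbb{Z}_2*\mathbb{Z}_2$. A normal subgroup of order $2$ is central, since its non-trivial element is fixed by every conjugation; but the infinite dihedral group $\mathbb{Z}_2*\mathbb{Z}_2$ has trivial centre. Therefore $\ker(\Phi_M)\cong 1$. The only step that is not essentially formal is the identification $\widehat{M}\cong\sphere\times\crcle$, which I expect to be the main point requiring care; the rest is a routine combination of Corollary \ref{productscor}, Lemma \ref{coverlemma}, and the structure of $\mathbb{Z}_2*\mathbb{Z}_2$.
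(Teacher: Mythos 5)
Your argument fails at the step where you invoke Corollary \ref{productscor}. That corollary states that for $\widehat{M}\cong\sphere\times\crcle$ the exact sequence is
\[
1\to\Mod(\widehat{M}_{\widehat{p}})\to\Mod(\widehat{M})\to 1,
\]
which means precisely that the image of $\pi_1(\widehat{M},\widehat{p})$ in $\Mod(\widehat{M}_{\widehat{p}})$ is \emph{trivial}; equivalently $\ker(\Phi_{\widehat{M}})=\pi_1(\widehat{M},\widehat{p})\cong\mathbb{Z}$, not $1$. (This is stated explicitly in the proof of Theorem \ref{kernelthm}: ``$M\cong\sphere\times\crcle$ \ldots\ and Corollary \ref{productscor} gives that $\ker(\Phi_M)=\pi_1(M,p)\cong\mathbb{Z}$.'') Intuitively, pushing $p$ once around the $\crcle$ factor is an ambient isotopy of $\sphere\times\crcle$ supported nowhere in particular, so the generator of $\pi_1$ lies in the kernel.

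With the correct value $\ker(\Phi_{\widehat{M}})\cong\mathbb{Z}$, Lemma \ref{coverlemma} only produces a subgroup $L\leq\pi_*(\ker(\Phi_{\widehat{M}}))=\langle ab\rangle$ with $L\trianglelefteq\ker(\Phi_M)$ and $\lvert\ker(\Phi_M)/L\rvert\leq 2$. This is far from forcing $\ker(\Phi_M)\cong 1$: it is consistent with $\ker(\Phi_M)$ being, say, $\langle ab\rangle$ or $\langle(ab)^2\rangle$, both of which are infinite normal subgroups of $\mathbb{Z}_2*\mathbb{Z}_2$. Your concluding observation that the infinite dihedral group has no normal subgroup of order two is correct but becomes irrelevant once the bound $\lvert\ker(\Phi_M)\rvert\leq 2$ is lost. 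The single fixed double cover by $\sphere\times\crcle$ simply cannot do the job, because that cover already has a large kernel. The paper instead fixes a candidate nontrivial $\rho\in\ker(\Phi_M)$ and builds a finite cover of $M$ tailored to $\rho$ (a long chain of copies of $\sphere\times[0,1]$ capped off with two punctured $\mathbb{R}P^3$'s), on which the lifted isotopy would have to carry a cap piece $N_1\cong\mathbb{R}P^3\setminus B^3$ to a middle piece homeomorphic to $\sphere\times[0,1]$ --- an immediate contradiction. The essential idea you are missing is that the length of the cover must grow with the word length of $\rho$; no single finite cover with nontrivial $\ker(\Phi_{\widehat M})$ will close the argument.
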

\begin{proof}
Let $S$ be an essential separating $2$--sphere in $M$, and isotope $S$ so that $p$ lies on $S$.
Let $M_1$ and $M_2$ be the closures of the two components of $M\setminus S$.
Recall that $\pi_1(\mathbb{R}P^3)\cong\mathbb{Z}_2$, so $\pi_1(M,p)\cong\mathbb{Z}_2 *\mathbb{Z}_2$.
Let $x$ and $y$ be simple closed curves in $M_1$ and $M_2$ respectively based at $p$ such that $\pi_1(M_1,p)=\{1,x\}$  and $\pi_1(M_2,p)=\{1,y\}$.
Then every element of $\pi_1(M,p)$ can be expressed uniquely as $x^{\varepsilon_1}\cdot(y\cdot x)^r\cdot y^{\varepsilon_2}$ for some $r\in\mathbb{Z}_{\geq 0}$ and $\varepsilon_1,\varepsilon_2\in\{0,1\}$.

Let $\rho$ be a loop in $M$ based at $p$ such that $\rho\in\ker(\Phi_M)\setminus\{1\}$.
Without loss of generality, assume $\rho=(y\cdot x)^{r_{\rho}}\cdot y^{\varepsilon_{\rho}}$, where either $r_{\rho}\geq 1$ or $\varepsilon_{\rho}=1$.

We now build a finite cover of $M$.
Set $m=2r_{\rho}+\varepsilon_{\rho}$. Then $m>0$.
Let $N_1$ and $N_{m+3}$ be copies of $M_1$, and let $N_2,\ldots,N_{m+2}$ be copies of $\sphere\times[0,1]$.
Note that $\sphere\times[0,1]$ double covers $M_1$.
For $1\leq i\leq m+2$, glue $N_i$ to $N_{i+1}$ along one boundary component of each, forming a closed $3$--manifold $\widehat{M}$.
Denote by $S_i$ the copy of $\sphere$ between $N_i$ and $N_{i+1}$ for $1\leq i\leq m+2$.
Let $\widehat{p}$ be the copy of $p$ in $N_1$.
We may define a covering map $\pi\colon(\widehat{M},\widehat{p})\to (M,p)$ by specifying that
$N_i$ covers $M_1$ if $i$ is odd and covers $M_2$ if $i$ is even.

Let $\widehat{\rho}\colon[0,1]\to\widehat{M}$ be the lift of $\rho$ beginning at $\widehat{p}$.
Then $\widehat{\rho}(1)$ lies on $S_{m+1}$.
Let $H\colon M\times[0,1]\to M$ be an isotopy from the identity on $M$ to itself that can be used to define $\Phi_M(\rho)$.
This lifts to an isotopy $\widehat{H}\colon\widehat{M}\times[0,1]\to \widehat{M}$ from the identity on $\widehat{M}$ to a covering transformation $\widehat{\psi}\colon\widehat{M}\to\widehat{M}$.
As $\widehat{\psi}(\widehat{p})=\widehat{\rho}(1)$, we see that $\widehat{\psi}(N_1)$ is either $N_{m+1}$ (if $\varepsilon_{\rho}=0$) or $N_{m+2}$ (if $\varepsilon_{\rho}=1$).
This contradicts that $N_0\cong M_1\ncong\sphere\times[0,1]\cong N_{m+1}\cong N_{m+2}$.

Hence $\ker(\Phi_M)\cong 1$.
\end{proof}

\begin{remark}
Given Proposition \ref{nottorusprop}, the proof of Lemma \ref{rp3lemma} could be extended to give a proof of Proposition \ref{nonprimeprop}, using that a $3$--manifold with a toral boundary component has a non-trivial fundamental group and that the fundamental group of a compact $3$--manifold is residually finite (see, for example, \cite{2012arXiv1205.0202A} C.25). However, the proof of Lemma \ref{hakenlemma} will follow the pattern of that of Proposition \ref{nonprimeprop}.
\end{remark}

\begin{remark}\label{solidtorusremark}
Suppose $M'$ is a connected, orientable, irreducible $3$--manifold with non-empty boundary consisting of tori. Then $M'$ has compressible boundary if and only if $M'$ is a solid torus.
\end{remark}

\begin{lemma}[\cite{MR565450} III.8]\label{pi1injectivelemma}
An orientable surface other than a $2$--sphere in an orientable $3$--manifold is incompressible if and only if it is $\pi_1$--injective.
\end{lemma}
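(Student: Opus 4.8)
We may take the surface $R$ to be connected (the general statement following componentwise) and properly embedded in $M$, and we may assume $R$ is not a disc, since then $\pi_1(R)$ is trivial and $R$ carries no essential simple closed curve, so both conditions hold vacuously. Throughout we use the standard fact that an embedded simple closed curve on a surface which is not a $2$--sphere bounds a disc on the surface if and only if it is null-homotopic there. Since $R$ is orientable and $M$ is orientable, $R$ is two-sided, so it has a product neighbourhood $R\times[-1,1]$ in $M$.

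\textbf{Compressible $\Rightarrow$ not $\pi_1$--injective.} This is the easy direction. Suppose $R$ is compressible, and let $D\subseteq M$ be a compressing disc, so $D\cap R=\partial D$ and $\partial D$ does not bound a disc in $R$. As $R$ is not a $2$--sphere, the embedded curve $\partial D$ is not null-homotopic in $R$, so $[\partial D]\neq 1$ in $\pi_1(R)$. But $D$ is a disc in $M$ with boundary $\partial D$, so $[\partial D]=1$ in $\pi_1(M)$. Hence $\ker\big(\pi_1(R)\to\pi_1(M)\big)\neq 1$.

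\textbf{Not $\pi_1$--injective $\Rightarrow$ compressible.} This is the substantive direction. Let $M^*$ be obtained by cutting $M$ along $R$; since $R$ is two-sided, the frontier of the excised product neighbourhood contributes two copies $R_+,R_-$ of $R$ to $\partial M^*$, each lying in the boundary of a (possibly the same) component of $M^*$. By van Kampen's theorem, $\pi_1(M)$ is obtained from the fundamental groups of the components of $M^*$ either as an amalgamated free product (if $R$ separates) or as an HNN extension (if not), along the two copies of $\pi_1(R)$. Standard facts about these constructions — the factor/base groups inject, and the edge group injects provided its two inclusions are injective — show that if $\pi_1(R')\to\pi_1(M')$ were injective for every component $M'$ of $M^*$ and every copy $R'\subseteq\partial M'$ of $R$, then $\pi_1(R)\to\pi_1(M)$ would be injective. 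Since by hypothesis it is not, there is a component $M'$ of $M^*$ and a copy $R'\subseteq\partial M'$ of $R$ with $\ker\big(\pi_1(R')\to\pi_1(M')\big)\neq 1$. Now apply the Loop Theorem (Papakyriakopoulos; see \cite{MR565450}) to the pair $(M',R')$: there is an embedding $g\colon(D^2,\partial D^2)\to(M',R')$ with $[g(\partial D^2)]\neq 1$ in $\pi_1(R')$. As $R'\cong R$ is not a $2$--sphere, $g(\partial D^2)$ does not bound a disc in $R'$. Re-gluing $M^*$ to recover $M$, the disc $g(D^2)$ has interior disjoint from $R$ and boundary an essential simple closed curve on $R$; it is therefore a compressing disc, contradicting incompressibility. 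Hence $\pi_1(R)\to\pi_1(M)$ is injective.

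\textbf{Main obstacle.} The crux is the Loop Theorem itself, which we treat as a black box. The point requiring care is the reduction from non-injectivity of $\pi_1(R)\to\pi_1(M)$ to non-injectivity over a single component of the cut-open manifold $M^*$: one must distinguish the separating and non-separating cases, quote the correct injectivity statements for amalgamated products and HNN extensions, and check that the two copies of $R$ really do sit in $\partial M^*$ so that the boundary version of the Loop Theorem applies and the resulting disc descends to an honest compressing disc for $R$ in $M$.
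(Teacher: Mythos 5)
The paper gives no proof of this lemma---it simply cites Jaco \cite{MR565450}---so there is no argument in the paper to compare against; your proof is correct and is the standard one found in that reference. The easy direction is fine, and the substantive direction is handled correctly: you cut along $R$, invoke the normal-form facts for amalgamated free products and HNN extensions (via van Kampen) to locate a component $M'$ of the cut-open manifold and a boundary copy $R'$ of $R$ into which $\pi_1(R)$ fails to inject, apply the Loop Theorem to $(M',R')$, and observe that the resulting disc is properly embedded in $M'$, so its interior misses $R$ after re-gluing and it is a genuine compressing disc for $R$ in $M$.

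The one soft spot is the opening reduction to connected $R$. The implication ``$R$ incompressible $\Rightarrow$ each component $R_i$ of $R$ incompressible'' is not immediate: a compressing disc for $R_i$ alone may a priori meet the other components of $R$ in its interior, and eliminating those intersections requires an innermost-circle argument. In fact you do not need the reduction at all---your argument runs unchanged for disconnected $R$ if you phrase van Kampen in its graph-of-groups form, and the Loop Theorem disc, being properly embedded in a component of the cut-open manifold, automatically has interior disjoint from \emph{every} component of $R$, so it descends directly to a compressing disc for $R$.
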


\begin{proposition}[\cite{MR1315011} Proposition 4.8]\label{productregionprop}
Let $M'$ be an orientable, irreducible, $\partial$--irreducible, Haken $3$--manifold.
Let $R_1$ and $R_2$ be disjoint, incompressible and $\partial$--incompressible surfaces, other than $2$--spheres, properly embedded in $M'$. If $R_1$ and $R_2$ are isotopic then they co-bound a submanifold of $M'$ homeomorphic to $R_1\times[0,1]$.
\end{proposition}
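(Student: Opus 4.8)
The plan is to locate the submanifold $W\subseteq M'$ lying \emph{between} $R_1$ and $R_2$ and to prove that the hypotheses force $W\cong R_1\times[0,1]$ by iterating an ambient isotopy and appealing to Kneser--Haken finiteness. First some reductions: as $M'$ is orientable each $R_i$ is two--sided, and a product $R_i\times[0,1]$ embeds in an orientable $3$--manifold only when $R_i$ is orientable, so I may assume the $R_i$ orientable; I may also assume them connected, since the isotopy taking $R_1$ to $R_2$ matches up their components and the argument runs componentwise. By the isotopy extension theorem the given isotopy becomes an ambient isotopy $\{h_t\}_{t\in[0,1]}$ of $M'$, preserving $\partial M'$ setwise, with $h_0=\mathrm{id}$ and $h_1(R_1)=R_2$. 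Using irreducibility of $M'$ and incompressibility of the $R_i$, an innermost--curve/innermost--disc argument lets me modify $\{h_t\}$, keeping $h_0$ and $h_1$, so that $h_t(R_1)\cap(R_1\cup R_2)=\emptyset$ for $0<t<1$. Set $W=\overline{\bigcup_{0<t<1}h_t(R_1)}$: this is the closure of a component of $M'\setminus(R_1\cup R_2)$, with $\partial W=R_1\cup A\cup R_2$ where $A=W\cap\partial M'$ is a (possibly empty) subsurface having $\partial A=\partial R_1\sqcup\partial R_2$, and the homotopy $t\mapsto h_t(R_1)$ takes place in $W$. Incompressibility and $\partial$--incompressibility of the $R_i$, with $\partial$--irreducibility of $M'$, then show $W$ is irreducible with incompressible boundary.

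Next I treat the case in which $R_1$ separates $M'$, writing $M'=X_1\cup_{R_1}W\cup_{R_2}X_2$ with $W$ the central piece. Since $h_1$ is isotopic to the identity, its restriction to $\partial M'$ is isotopic to the identity and it cannot interchange the two sides of the separating incompressible surface $R_1$; the mechanism is exactly that of Lemmas \ref{swapsideslemma} and \ref{notswitchsideslemma} (on $\pi_1$, amalgamation structure). Hence $h_1(X_1)=X_1\cup W$, so $h_1(W)\subseteq X_2$, and $R_3:=h_1(R_2)$ is an incompressible surface in $\Int X_2$ disjoint from $R_2$. Iterating, the surfaces $R_k:=h_1^{\,k-1}(R_1)$ for $k\ge1$ are pairwise disjoint and nested, and $h_1^{\,k-1}(W)$ is the region co--bounded by $R_k$ and $R_{k+1}$, homeomorphic to $W$.

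Now assume, for contradiction, $W\not\cong R_1\times[0,1]$. I claim $R_1,\dots,R_{n+1}$ are pairwise non--parallel for every $n$. Suppose $R_i$ and $R_j$ with $i<j$ were isotopic; they co--bound a product $P\cong R_1\times[0,1]$, and the nested picture puts $R_{i+1}$ inside $P$, disjoint from its two ends $R_i,R_j$. By the classification of two--sided incompressible surfaces in the product $R_1\times[0,1]$, such a surface disjoint from both ends is parallel in $P$ to an end, so $R_{i+1}$ is parallel to $R_i$; but the region between $R_i$ and $R_{i+1}$ is $h_1^{\,i-1}(W)\cong W$, which would then be a product, a contradiction. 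Hence for every $n$ the compact manifold $M'$ contains $n+1$ pairwise disjoint, pairwise non--parallel, two--sided incompressible surfaces, contradicting Kneser--Haken finiteness. Therefore $W\cong R_1\times[0,1]$, and by construction its two ends are $R_1$ and $R_2$ and $\partial R_1\times[0,1]$ is carried onto $A$ --- this is the assertion. If instead $R_1$ is non--separating, I cut $M'$ along $R_1$ to get a compact, orientable, irreducible, $\partial$--irreducible Haken manifold $N$ with $R_2$ in its interior; a copy of the boundary surface of $N$ on the side into which $\{h_t\}$ pushes, shoved slightly inward, is ambient--isotopic in $N$ to $R_2$ (extract this from $\{h_t\mid t\ge\varepsilon\}$, which misses $R_1$, via isotopy extension in $N$), so the separating case applied inside $N$ identifies $W$ as the product $R_1\times[0,1]$.

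The main obstacle is the step converting the self--similar chain $R_1,R_2,R_3,\dots$ together with the assumption ``$W$ is not a product'' into a contradiction: this leans on Kneser--Haken finiteness and, crucially, on the classification of incompressible surfaces in a product $R\times[0,1]$ --- a result that must be proved for products in their own right (a two--sided incompressible surface disjoint from the two ends is $\partial$--parallel), without circular appeal to the proposition at hand. A subsidiary technical point, again powered by irreducibility and incompressibility, is the opening manoeuvre of sweeping all the intermediate surfaces $h_t(R_1)$ off $R_1\cup R_2$ so that the region $W$, and the homotopy into it, become available.
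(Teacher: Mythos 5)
Your iteration-plus-Haken-finiteness strategy is a genuinely different route from the classical proofs of the product region theorem (which instead use Waldhausen's homotopy-equivalence machinery or a collar/s-cobordism argument), and the closing contradiction --- if $W$ is not a product then the nested surfaces $R_k=h_1^{k-1}(R_1)$ are pairwise non-parallel --- is well-structured. However, the set-up contains a load-bearing gap: the claim that an ``innermost-curve/innermost-disc argument'' lets you modify $\{h_t\}$ so that $h_t(R_1)\cap(R_1\cup R_2)=\emptyset$ for all $0<t<1$. This is not a surgery on a single surface: the $h_t(R_1)$ must remain a one-parameter family of embeddings, and coherently removing their intersections with $R_1\cup R_2$ across the whole interval carries essentially the full content of the proposition. (If such a straightened isotopy existed, $R_1$ and $R_2$ would already cobound the closure of the complementary component through which it sweeps, with a proper trace map $R_1\times[0,1]\to W$; and it would then be Waldhausen's theorem, not an innermost-disc move, that delivers the product structure.) The gap is not symmetric across your two cases. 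In the separating case you can avoid it entirely by defining $W$ directly as the unique component of $M'\setminus(R_1\cup R_2)$ whose frontier contains both $R_1$ and $R_2$, using from $\{h_t\}$ only the side-preservation fact $h_1(X_1)=X_1\cup W$ --- a $\pi_1$/amalgamation argument in the spirit of Lemma \ref{swapsideslemma}, which however needs adapting since that lemma is stated for closed $M'$ with $\psi(R)=R$. In the non-separating case your reduction explicitly invokes ``$\{h_t\mid t\geq\varepsilon\}$, which misses $R_1$'', so it inherits the gap in full and, as written, does not go through; a standard repair is to pass to the infinite cyclic cover dual to $[R_1]$, where every lift of $R_1$ separates, rather than to cut along $R_1$.

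Two smaller points. First, $\overline{\bigcup_{0<t<1}h_t(R_1)}$ need not be a $3$-dimensional submanifold (the swept surfaces can coincide or overlap for different $t$), so $W$ should be defined as the closure of the relevant complementary component rather than as the closure of a union of surfaces. Second, ``Suppose $R_i$ and $R_j$ with $i<j$ were isotopic; they co-bound a product'' should say ``parallel'': deducing a product region from mere isotopy is precisely the proposition at hand, and Haken finiteness requires non-parallelism, not non-isotopy. With these repairs, and with the (correctly flagged, and independently provable) classification of incompressible, $\partial$-incompressible surfaces in a trivial $I$-bundle, your separating-case iteration is salvageable; the isotopy-straightening step and the non-separating reduction are the genuine gaps.
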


\begin{proposition}[\cite{MR565450} VI.25]\label{cyclicsubgroupprop}
Let $M'$ be a compact, orientable, Haken $3$--manifold other than a solid torus, and let $G$ be an infinite cyclic normal subgroup of $\pi_1(M')$. 
Suppose there exists an orientable, incompressible, $\partial$--incompressible surface $R$ properly embedded in $M'$ with $G\leq\pi_1(R)$.
Then $R$ is an annulus or a torus, and $M$ is homeomorphic to a Seifert fibred manifold via a homeomorphism taking $R$ to a union of fibres, and $G$ is a subgroup of the infinite cyclic group generated by the path around a regular fibre.
\end{proposition}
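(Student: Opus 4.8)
The plan is to use the algebra of the normal subgroup $G$ to force $M'$ to be Seifert fibred, and then to recognise $R$ as a union of regular fibres. Write $g$ for a generator of $G$.

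First I would pin down the topology of $R$. Since $R$ is incompressible and $\partial$--incompressible and is neither a $2$--sphere nor a disc (it has $\pi_1(R)\supseteq G$ infinite), Lemma \ref{pi1injectivelemma} gives $\pi_1(R)\hookrightarrow\pi_1(M')$; being orientable, $R$ is not a M\"obius band or a Klein bottle, so $R$ is an annulus, a torus, or a surface with $\chi(R)<0$. To rule out the last case, observe that $G\cong\mathbb Z$ normal makes conjugation a homomorphism $\pi_1(M')\to\operatorname{Aut}(\mathbb Z)=\{\pm1\}$, so the centraliser $C=C_{\pi_1(M')}(g)$ has index at most $2$, whence $C_{\pi_1(R)}(g)=C\cap\pi_1(R)$ has index at most $2$ in $\pi_1(R)$. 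But if $\chi(R)<0$ then $\pi_1(R)$ is a free group of rank $\geq2$ or a closed surface group of genus $\geq2$, in which the centraliser of any non-trivial element is infinite cyclic; then $\pi_1(R)$ would be virtually $\mathbb Z$, which such a surface group is not. Hence $\chi(R)=0$ and $R$ is an annulus or a torus.

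Next I would produce the Seifert fibring. Now $G$ is a non-trivial normal cyclic subgroup of $\pi_1(M')$ carried, up to taking a root, by an essential simple closed curve on the annulus or torus $R$. This is the situation governed by the characteristic submanifold theory of Jaco--Shalen and Johannson: $R$ isotopes into the characteristic submanifold $\Sigma$ of $M'$, and normality of the cyclic subgroup it carries forces the enclosing component of $\Sigma$ to be Seifert fibred and (since $M'$ is not a solid torus) to equal $M'$. Alternatively, pass to the at-most-double cover $\widehat M\to M'$ corresponding to $C$: it is compact, orientable, irreducible (Remark \ref{irreducibleremark}), Haken, and has $g$ in the centre of $\pi_1(\widehat M)$, so Waldhausen's centre theorem (with its extension to the closed case) shows $\widehat M$ is Seifert fibred with its centre inside the fibre subgroup, and then a Haken manifold with infinite fundamental group finitely covered by a Seifert fibred space is itself Seifert fibred, with the covering carrying fibres to multiples of fibres. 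Either way, $M'$ is Seifert fibred and $G$ lies in the infinite cyclic normal subgroup $F\trianglelefteq\pi_1(M')$ generated by a regular fibre.

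Finally I would locate $R$ inside the fibring. As $R$ is incompressible and $\partial$--incompressible in the Haken, infinite-$\pi_1$ Seifert fibred manifold $M'$, the standard position theorem lets us isotope $R$ to be horizontal or vertical; a horizontal $R$ would be transverse to every fibre, so $\pi_1(R)$ would meet $F$ trivially, contradicting $1\neq G\leq\pi_1(R)\cap F$. Hence $R$ is vertical, that is, a union of regular fibres --- necessarily an annulus or a torus, consistent with the first step --- and in $\pi_1(R)$ the fibre direction is exactly $\pi_1(R)\cap F$, so from $G\leq\pi_1(R)$ and $G\leq F$ we obtain $G\leq\pi_1(R)\cap F$, which is the infinite cyclic group generated by a regular fibre of $R$, as required. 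The main obstacle is the middle step: that a compact orientable Haken $3$--manifold which is not a solid torus and whose fundamental group contains a non-trivial normal cyclic subgroup must be Seifert fibred is a deep theorem (resting on the characteristic submanifold machinery, or on Waldhausen's centre theorem via the centraliser cover), and is exactly why this proposition is quoted rather than reproved; the first and last steps are then essentially bookkeeping about surfaces in Seifert fibred spaces.
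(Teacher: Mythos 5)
The paper does not prove this proposition: it is quoted verbatim from Jaco's \emph{Lectures on Three-Manifold Topology} (\cite{MR565450} VI.25), so there is no argument in the paper against which to check yours. On its own merits, your outline is correct and reflects the standard structure of the result. Your first step is a clean way to force $\chi(R)=0$: normality of $G$ gives a homomorphism $\pi_1(M')\to\operatorname{Aut}(\mathbb Z)\cong\mathbb Z/2$, so the centraliser of $g$ has index at most $2$, and intersecting with $\pi_1(R)$ shows $\pi_1(R)$ is virtually cyclic, which rules out free groups of rank $\geq 2$ and genus $\geq 2$ surface groups. Your middle step is, as you say, exactly the deep content — and note that you could have invoked the paper's own Theorem \ref{torusthm} (the Seifert-fibred-space recognition theorem, cited from \cite{MR1189862}): $M'$ is compact, orientable, irreducible, has infinite $\pi_1$ (since $\pi_1(R)$ injects), and $\pi_1(M')$ has the infinite cyclic normal subgroup $G$, so $M'$ is Seifert fibred; this is cleaner than routing through the characteristic submanifold or Waldhausen's centre theorem for a modern reader, though it is anachronistic relative to Jaco. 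Your final step (horizontal/vertical for essential surfaces in Seifert fibred Haken manifolds, with horizontal excluded because $\pi_1(R)\cap F=1$ for a horizontal surface) correctly places $R$ as vertical and locates $G$ inside the fibre subgroup. The one spot worth making explicit is that $\pi_1(R)\cap F=1$ for horizontal $R$ follows because $p|_R\colon R\to B$ is an orbifold covering, hence injective on (orbifold) fundamental groups, and $F=\ker\left(\pi_1(M')\to\pi_1^{\mathrm{orb}}(B)\right)$. Overall this is a sound sanity-check of a quoted theorem, not a gap in the paper.
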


\begin{proposition}\label{alltoriprop}
Suppose $M$ is an orientable, irreducible, $\partial$--irreducible $3$--manifold.
Further assume that $M$ is not fibred, but has non-empty boundary consisting of tori.
Then either there is an exact sequence
\[1\to\pi_1(M,p)\to\Mod(M_p)\to\Mod(M)\to 1,\] 
or $M$ is Seifert fibred and $\ker(\Phi)$ is generated by a power of a regular fibre.
\end{proposition}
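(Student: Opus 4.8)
The plan is to study $G:=\ker(\Phi_M)$, which is a normal subgroup of $\pi_1(M,p)$. By Proposition~\ref{sequenceprop} the displayed four--term exact sequence holds precisely when $G\cong 1$, so I would assume $G\ne 1$ and work towards showing that $M$ is Seifert fibred with $G$ generated by a power of a regular fibre. Two preliminary observations: $M$ is Haken (it is compact, orientable and irreducible with non-empty incompressible toral boundary), and $M$ is not a solid torus (its boundary is incompressible, cf.\ Remark~\ref{solidtorusremark}). Fix once and for all a simple closed curve $\rho$ in $\Int(M)$ representing a non-trivial element of $G$ --- legitimate by Corollary~\ref{makesimplecor} --- and an isotopy $H\colon(M\times[0,1],\partial M\times[0,1])\to(M,\partial M)$ from the identity on $M$ to itself with $H(p,t)=\rho(t)$.

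\emph{Step 1: $\rho$ is peripheral.} Fix a toral boundary component $S$, a point $p'\in S$, and an embedded arc $\alpha$ from $p'$ to $p$. Restricting $H$ to $S$ gives an isotopy of $S$ from the identity to itself; dragging $\alpha$ along $H$ and comparing the loops traced out by its two endpoints (exactly as in the proof of Lemma~\ref{fixbdypointlemma}) shows that $\rho$ is conjugate in $\pi_1(M,p)$, through $\alpha$, to the class of the loop $t\mapsto H(p',t)$, which lies in $i_*\pi_1(S,p')$ for the inclusion $i\colon S\hookrightarrow M$; moreover this class is essential in $S$, since otherwise Lemma~\ref{fixbdypointlemma} would make $\rho$ null-homotopic. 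As $G$ is normal, $\alpha\rho\alpha^{-1}$ lies in $G\cap i_*\pi_1(S)$, so $G$ meets the peripheral $\mathbb{Z}^2$ of $S$ in a non-trivial, hence infinite cyclic, subgroup; the same argument applied to an arbitrary element of $G$ shows that every element of $G$ is conjugate into a peripheral subgroup of $\pi_1(M)$.

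\emph{Step 2: finding an incompressible surface carrying $G$.} The goal is an incompressible, $\partial$--incompressible surface $R$ in $M$, not a $2$--sphere, with $G\le\pi_1(R)$ for a suitable basepath; then Proposition~\ref{cyclicsubgroupprop} completes the argument. I would take $R$ properly embedded with $R\cap S$ a non-empty family of essential curves of the slope on $S$ determined by $\rho$. With that choice the lifting hypothesis $\widetilde\psi_S(\widetilde R_S)=\widetilde R_S$ of Lemma~\ref{fixboundarylemma} holds automatically, because $\rho$ has zero algebraic intersection with the curves of $R\cap S$, so the relevant lift of $\psi_S$ is trivial on $\widetilde R_S$. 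Lemma~\ref{fixboundarylemma} and its corollary then replace $H$ by an isotopy from the identity to some $\psi'$ with $\psi'(R)=R$ that fixes $S$ pointwise throughout, still pushing $p$ around $\rho$. A side-switching analysis of $\psi'$ --- via Lemma~\ref{swapsideslemma} applied in the double of $M$ along $\partial M$, together with Proposition~\ref{productregionprop} and Lemma~\ref{pi1injectivelemma} --- should show that either $\rho$ is forced to be null-homotopic (impossible), or $\rho$ meets no essential surface of $M$ transversally and essentially. In the latter case the Annulus and Torus Theorems for Haken manifolds, together with the hypothesis that $M$ is not fibred, identify $M$ with the total space of a Seifert fibration in which $\rho$ is parallel to a multiple of a regular fibre; the annulus or torus along which $\rho$ lies is then the desired $R$, and $G\le\pi_1(R)$ because $G$ is normal with every element peripheral (so that $G$ cannot have rank $2$ --- a normal $\mathbb{Z}^2$ would again force a fibration --- whence $G$ is infinite cyclic lying in $\pi_1(R)$). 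Applying Proposition~\ref{cyclicsubgroupprop} to this $R$ and $G$ gives that $M$ is Seifert fibred by a homeomorphism taking $R$ to a union of fibres and that $G$ lies in the infinite cyclic group generated by a regular fibre; since $G\ne 1$, this means $G$ is generated by a power of a regular fibre, as required.

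The hard part is Step~2: turning the purely existential datum ``$G\ne 1$'' into an honest incompressible surface with $G\le\pi_1(R)$ --- in effect, showing that when no essential surface of $M$ carries $\rho$'s boundary slope on $S$, the manifold $M$ must itself be the characteristic Seifert piece, and that $G$ is then infinite cyclic. Everything else --- the preliminary reductions, the dragging-an-arc computation of Step~1, and the bookkeeping that threads together Lemmas~\ref{fixboundarylemma}, \ref{swapsideslemma} and Proposition~\ref{cyclicsubgroupprop} --- is routine given Sections~\ref{pointpushingsection} and~\ref{isotopysection}.
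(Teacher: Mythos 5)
Your Step 1 (showing every non-trivial element of $\ker(\Phi_M)$ is conjugate into a peripheral subgroup) is sound and is essentially the argument of Lemma~\ref{fixbdypointlemma} applied to a toral boundary component. But the proof breaks down at the exact point you yourself flag as ``the hard part.'' You posit an incompressible, $\partial$--incompressible surface $R$ with $R\cap S$ a non-empty family of essential curves \emph{of the boundary slope determined by $\rho$}. Nothing guarantees such an $R$ exists: the set of boundary slopes realised by essential surfaces in a bounded Haken manifold is finite (Hatcher), and there is no reason your particular slope should be among them. Once you require $R$'s boundary slope to agree with $\rho$'s peripheral class, the verification of the lifting hypothesis of Lemma~\ref{fixboundarylemma} is trivial --- but the existence of $R$ has become the entire problem, and it is left unaddressed. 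The subsequent invocations of a ``side-switching analysis in the double of $M$,'' the Annulus and Torus Theorems, and the assertion that ``a normal $\mathbb{Z}^2$ would again force a fibration'' for a manifold with boundary are all gestures rather than arguments; none is given a reference or a derivation, and the last one is a non-trivial statement that the paper itself only uses for closed manifolds (Theorem~\ref{infindexthm}).

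The paper resolves the existence question the other way round. It takes an \emph{arbitrary} connected, oriented, incompressible, $\partial$--incompressible surface $R$ with non-empty boundary not null-homologous in $\partial M$, furnished by \cite{MR0224099} Lemma~1.1.6, with no control over its boundary slope. It then forms the infinite cyclic cover $\widetilde M$ dual to $R$ (kernel of algebraic intersection with $R$) and lifts the isotopy $H$ realising $\Phi_M(\rho)\simeq\mathrm{id}$. The lift carries the preferred lift $\widetilde R$ to another lift $\widetilde R'$; if $\widetilde R'\neq\widetilde R$, Proposition~\ref{productregionprop} gives a product region between them, which (using the innermost such lift) exhibits $M$ as an $R$--bundle over $\crcle$, contradicting the non-fibred hypothesis. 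Hence $\widetilde R'=\widetilde R$, which is precisely the lifting condition needed for Lemma~\ref{fixboundarylemma}, and the slope-matching you tried to impose by choice of $R$ \emph{falls out as a conclusion}. After that, one reads off directly that each $\rho\in\ker(\Phi_M)$ is $\sigma^{-1}(\sigma')^r\sigma$ for a fixed arc $\sigma$ and the fixed boundary slope $\sigma'$ of $R$, so $\ker(\Phi_M)$ embeds into a single cyclic group, and Proposition~\ref{cyclicsubgroupprop} finishes. So the overall architecture you have in mind (find an essential surface carrying the kernel, apply Proposition~\ref{cyclicsubgroupprop}) is the right one, but you need to replace the unjustified slope assumption with the infinite-cyclic-cover/product-region argument to produce both the surface and the slope control simultaneously.
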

\begin{proof}
As $M$ is an orientable, irreducible $3$--manifold other than a $3$--ball,
by \cite{MR0224099} Lemma 1.1.6 there exists an oriented, incompressible surface $R$ properly embedded in $M$ with non-empty boundary that is not null-homologous in $\partial M$.
Boundary compressing $R$ will not change the homology class of $\partial R$, so we may assume $R$ is $\partial$--incompressible. We may also assume that $R$ is connected.

Let $S$ be a component of $\partial M$ such that $R\cap S$ is not null-homologous in $S$.
Suppose some curve of $R\cap S$ is inessential in $S$. As $R$ is incompressible, an innermost such curve bounds a disc component of $R$. We have assumed that $R$ is connected, so this implies that $\partial R$ is a single inessential loop in $S$. This contradicts that $\partial R$ is not null-homologous in $\partial M$.
Hence every curve of $R\cap S$ is essential in $S$.
Isotope $R$ so that $p$ lies on $R$. Fix $p'\in R\cap S$, and choose a path $\sigma$ in $R$ from $p'$ to $p$.

Define $\theta\colon \pi_1(M,p)\to\mathbb{Z}$ by taking the algebraic intersection with $R$.
Let $G=\ker(\theta)$, and let $(\widetilde{M},\widetilde{p})$ be the infinite cyclic cover of $(M,p)$ with $\pi_1(\widetilde{M},\widetilde{p})\cong G$.
By Remark \ref{irreducibleremark}, $\widetilde{M}$ is irreducible. 
Since $\partial M$ is incompressible in $M$, using Lemma \ref{pi1injectivelemma} we see that $\partial \widetilde{M}$ is incompressible in $\widetilde{M}$. 
Let $\widetilde{R}$ be the lift of $R$ containing $\widetilde{p}$.

Assume $\ker(\Phi_M)\neq 1$, and let $\rho$ be a simple closed curve in $\Int(M)$ based at $p$ such that $\rho\in\ker(\Phi_M)\setminus\{1\}$.
Then there is an isotopy $H\colon(M\times[0,1],\partial M\times[0,1])\to(M,\partial M)$ from the identity to itself that can be used to define $\Phi_M(\rho)$.
This lifts to an isotopy $\widetilde{H}\colon (\widetilde{M}\times[0,1],\partial\widetilde{M}\times[0,1])\to(\widetilde{M},\partial\widetilde{M})$ that begins at the identity.
This isotopy takes $\widetilde{R}$ to a lift $\widetilde{R}'$ of $R$.

Suppose that $\widetilde{R}'\neq\widetilde{R}$. Then, by Proposition \ref{productregionprop}, $\widetilde{R}$ and $\widetilde{R}'$ bound a sub-manifold $N$ of $\widetilde{M}$ homeomorphic to $R\times[0,1]$.
If $N$ contains any other lift of $R$, then each of these also co-bounds with $\widetilde{R}$ a manifold homeomorphic to $R\times[0,1]$ (see \cite{MR0224099} Corollary 3.2). By taking the lift of $R$ in $N\setminus\widetilde{R}$ closest to $\widetilde{R}$, we find that $M$ is given by quotienting $R\times[0,1]$ by some homeomorphism $\omega\colon R\times\{0\}\to R\times\{1\}$. 
This contradicts that $M$ is not fibred.

Therefore $\widetilde{R}'=\widetilde{R}$. 
This means that Lemma \ref{fixboundarylemma} applies, and we may now assume that $H(R\cap S,t)=R\cap S$ for $t\in[0,1]$. In particular, the path $\rho'$ of $p'$ under $H$ is contained within a single curve of $R\cap S$.

Let $\sigma'\colon[0,1]\to S$ be a path with $\sigma'(0)=\sigma'(1)=p'$ that runs once around the component of $R\cap S$ containing $p'$.
Then $\rho'$ is homotopic to $(\sigma')^r$ for some $r\in\mathbb{Z}$.
As in the proof of Lemma \ref{fixbdypointlemma}, $\sigma$ is homotopic relative to its endpoints to $(\rho')^{-1}\cdot\sigma\cdot\rho$, and so
\[
\rho\simeq \sigma^{-1}\cdot\rho'\cdot\sigma\simeq\sigma^{-1}\cdot(\sigma')^r\cdot\sigma.
\]

We therefore conclude that $\ker(\Phi_M)\leq \pi_1(R,p)$, and either $\ker(\Phi_M)\cong 1$ or $\ker(\Phi_M)\cong\mathbb{Z}$.
If $\ker(\Phi_M)\cong 1$ then there is an exact sequence
\[ 1\to\pi_1(M,p)\to\Mod(M_p)\to\Mod(M)\to 1.\]

Now assume $\ker(\Phi_M)\cong\mathbb{Z}$. Note that $\ker(\Phi_M)\trianglelefteq\pi_1(M,p)$, and $M$ is Haken and not a solid torus.
By Proposition \ref{cyclicsubgroupprop}, $R$ is an annulus and $M$ is Seifert fibred with $\ker(\Phi_M)$ a subgroup of the infinite cyclic group generated by the path around a regular fibre.
\end{proof}

\begin{lemma}
Suppose $M$ is an orientable $3$--manifold and is Seifert fibred with orientable base space. Let $\rho\colon[0,1]\to \Int(M)$ be a loop based at $p$ that runs once around a regular fibre. Then $\rho\in\ker(\Phi_M)$.
\end{lemma}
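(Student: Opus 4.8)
The plan is to realise the Seifert fibration of $M$ by a fibre-preserving circle action and to use the flow of that action as the isotopy defining $\phi_{\rho}$. First I would reduce to a convenient representative: by Corollary~\ref{makesimplecor} it suffices to understand $\phi_{\rho}$ for one simple closed curve in each element of $\pi_1(M,p)$, and since $\ker(\Phi_M)$ is normal (Proposition~\ref{sequenceprop}) I only need to treat $[\rho]$ up to conjugacy. Fixing a regular fibre $f_0$, a point $p_0$ on $f_0$ and an embedded path $\delta$ from $p$ to $p_0$, the loop $\rho$ is freely homotopic to $f_0$, so $[\rho]$ is conjugate to the class of $\delta\cdot f_0\cdot\delta^{-1}$; by the compatibility of the maps $\phi$ with concatenation of paths (used exactly as in the proof of Corollary~\ref{pwembeddedcor}), $\phi_{\delta\cdot f_0\cdot\delta^{-1}}$ is then isotopic in $M_p$ to a conjugate of $\phi_{f_0}$ regarded as a homeomorphism of $M_{p_0}$. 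Hence it suffices to prove the lemma when $p$ lies on a regular fibre $f$ and $\rho$ parametrises $f$.

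Next I would invoke the structure theory of Seifert fibred spaces: since $M$ and its base surface are orientable, the fibration is induced by an effective action of $\crcle=\mathbb{R}/\mathbb{Z}$ on $M$ whose orbits are exactly the fibres and whose orbit map $\crcle\to f$ is a homeomorphism for every regular fibre $f$. (This is the classical fact that Seifert fibrations arise from circle actions; concretely, take the fibration to be a product $\Sigma\times\crcle$ over the base with small disc neighbourhoods of the cone points removed and rotate the $\crcle$--factor there, and over each remaining fibred solid torus $(D^2\times[0,1])/\!\sim$, where $(z,1)\sim(\zeta z,0)$ with $\zeta=e^{2\pi i\beta/\alpha}$, use the flow advancing the $[0,1]$--coordinate at constant speed $\alpha$; since $\zeta^{\alpha}=1$ this returns to the identity in unit time and matches the boundary rotation coming from the product region.) Write $\theta\cdot x$ for the action of $\theta\in\crcle$ on $x\in M$.

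Now define $H\colon M\times[0,1]\to M$ by $H(x,t)=t\cdot x$, reading $t\in[0,1]$ as an element of $\crcle$. Each $x\mapsto t\cdot x$ is a self-homeomorphism of $M$, so $H$ is an isotopy; $H(x,0)=x$; and, as $p$ lies on a regular fibre, $t\mapsto H(p,t)$ runs once around that fibre, so after re-parametrising $\rho$ (Remark~\ref{reparamremark}) we may assume $H(p,t)=\rho(t)$. Thus $H$ is an admissible choice of defining isotopy for $\phi_{\rho}$ in the sense of Definition~\ref{homeodefn} (recall that, when studying $\Mod$, the defining isotopy need not fix $\partial M$). But $H(x,1)=1\cdot x=x$, so $\phi_{\rho}$ is the identity map of $M$; in particular $[\phi_{\rho}]$ is trivial in $\Mod(M_p)$, i.e.\ $\rho\in\ker(\Phi_M)$.

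The only genuinely non-routine step is the middle one: producing the fibre-preserving circle action, equivalently a vertical flow on $M$ that permutes the fibres, moves each regular point once around its fibre, and returns to the identity in unit time. The orientability of $M$ and of the base enters exactly here, to guarantee a globally consistent positively-oriented vertical flow; granting it, everything else is bookkeeping with the results of Sections~\ref{pointpushingsection} and~\ref{sequencessection}.
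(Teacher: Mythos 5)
Your proof is correct and takes essentially the same approach as the paper: both construct the isotopy that flows once around the Seifert fibration, rotating the circle factor at unit speed over the complement of the exceptional fibres and advancing at speed $r_i$ over the $i$th exceptional fibre neighbourhood so the flow returns to the identity at time $1$. Packaging this as a circle action rather than writing out the paper's explicit coordinate construction is a cosmetic difference, and your preliminary reduction via normality of $\ker(\Phi_M)$ achieves the same thing as the paper's ``by an isotopy we may assume that $p$ lies on a regular fibre.''
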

\begin{proof}
Suppose that $M$ is Seifert fibred with base orbifold a compact, connected, orientable surface $N$, and with $m$ exceptional fibres with coefficients $(r_1,s_1),\ldots,(r_m,s_m)$.
The idea of this proof is similar to that of Lemma \ref{pushroundfibrelemma}. We isotope each point on a regular fibre once around that fibre. However, to achieve a continuous map we must isotope each point on an exceptional fibre around the fibre $r_i$ times.

By an isotopy we may assume that $p$ lies on a regular fibre.
Let $M'$ be the manifold obtained from $M$ by removing an open fibred neighbourhood of each exceptional fibre.
Let $N'$ be the orientable surface obtained from $N$ by removing $m$ open discs.
Then $M'$ is an $\crcle$--bundle over $N'$.
As $M'$ is orientable, in fact $M'\cong N'\times\crcle$.
Express $M'$ as $(N'\times[0,1])/\!\sim_{N'}$, where $(x,0)\sim_{N'}(x,1)$ for $x\in N'$, such that each fibre is of the form $x\times[0,1]$ for some $x\in N'$.

Define $H'\colon (M'\times[0,1],\partial M'\times[0,1])\to (M',\partial M')$ by, for $x\in N'$, $s\in[0,1)$ and $t\in[0,1]$,
\[
H'(x,s,t)=
\begin{cases}
(x,s+t)&s+t<1,\\
(x,s+t-1)&s+t\geq 1.
\end{cases}
\]
Then $H'$ is an isotopy from the identity on $M'$ to itself that takes $p$ once around a regular fibre of $M$.
We wish to extend $H'$ to an isotopy of $M$.

Consider the $i$th exceptional fibre of $M$, for $1\leq i\leq m$.
Let $\nhd_i$ be the closure of the fibred neighbourhood of this fibre that was removed from $M$ to give $M'$.
Then $\nhd_i$ is homeomorphic to $(S\times[0,1])/\!\sim_i$, where $S$ is a disc, $\omega_i\colon (S,\partial S) \to(S,\partial S)$ is rotation by $2\pi s_i/r_i$ and $(x,1)\sim_i(\omega_i(x),0)$ for $x\in S$.
The product structure of $M'$ carries over to $\partial\!\nhd_i$, so that $\partial\!\nhd_i\cong\mathbb{R}^2/\!\sim$, where $(a+m_1,b+m_1)\sim(a,b)$ for $a,b\in\mathbb{R}$ and $m_1,m_2\in\mathbb{Z}$, and each fibre is of the form $\{a\}\times\mathbb{R}$ for some $a\in\mathbb{R}$. 
Let $H_i$ be the restriction of $H'$ to $\partial\!\nhd_i$.
Then $H_i(a,b,t)=(a,b+t)$ for $a,b\in[0,1)$ and $t\in[0,1]$.

Define $\sigma_i\colon[0,1]\to\partial\!\nhd_i$ by $\sigma_i(t)=(s_i t,r_i t)$ for $t\in[0,1]$.
Then $\sigma$ is a simple closed curve in $\partial \!\nhd_i$ and bounds a disc in $\nhd_i$ (note that here sign conventions are of no concern as they will not play a part in the proof).
We may assume that this disc is $S$.
The fibres of $\mathbb{R}^2/\!\sim$ match up with the Seifert fibres of $(S\times[0,1])/\!\sim_i$ in such a way that each fibre of $\mathbb{R}^2/\!\sim$ is the union of $r_i$ arcs of the form $\{x\}\times[0,1)$ in $(S\times[0,1])/\!\sim_i$.
It is now clear that we may extend $H_i$ to an isotopy of $\nhd_i$ from the identity to itself. This isotopy takes each point of $(S\times[0,1])/\!\sim_i$ around $r_i$ of these arcs. Away from the exceptional fibre of $M$, the $r_i$ arcs are distinct, but for the exceptional fibre these are all the central arc of $(S\times[0,1])/\!\sim_i$.

After extending $H'$ to $\nhd_1,\ldots,\nhd_m$ in this way, we have an isotopy of $M$ from the identity to itself that takes $p$ once around a regular fibre, as required.
\end{proof}

\begin{theorem}[\cite{2012arXiv1210.7298P} Theorem 1.1]\label{separablethm}
Let $M'$ be a compact, connected $3$--manifold, and let $R$ be a connected, $\pi_1$--injective surface properly embedded in $M'$. Then $\pi_1(R)$ is separable in $\pi_1(M')$. That is, $\pi_1(R)$ equals the intersection of the finite index subgroups of $\pi_1(M')$ containing it.
\end{theorem}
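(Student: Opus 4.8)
The statement is a LERF-type separability result, and I would prove it by reducing to the geometric pieces of $M'$ and then assembling along the JSJ decomposition, following the strategy of Przytycki and Wise (itself built on Scott, Wise and Agol).

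\textbf{Reductions.} First dispose of the trivial cases: if $R$ is a $2$--sphere or a disc there is nothing to prove, so assume $\pi_1(R)$ is infinite. Using the orientation double cover together with the facts that separability of a subgroup is inherited by finite-index overgroups and is preserved under passing between a group and a finite-index subgroup, one reduces to $M'$ orientable. The prime decomposition, combined with the observation that a $\pi_1$--injective surface can be homotoped into (and its separability question read off inside) a single prime factor, reduces to $M'$ prime; discarding $\sphere\times\crcle$, one may assume $M'$ is irreducible and aspherical.

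\textbf{Geometric pieces.} If $M'$ is Seifert fibred then $\pi_1(M')$ is LERF by Scott's theorem, so the finitely generated subgroup $\pi_1(R)$ is separable. If $M'$ is hyperbolic (closed or cusped) then $\pi_1(M')$ is virtually compact special by the work of Wise and Agol, hence relatively quasiconvex subgroups are separable; and an incompressible surface in a hyperbolic $3$--manifold is either geometrically finite, so $\pi_1(R)$ is quasiconvex and we are done, or a virtual fibre, in which case $\pi_1(R)$ is (after passing to a finite cover) normal and hence automatically separable.

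\textbf{Assembly along the JSJ.} In general, decompose $M'$ along its JSJ tori and annuli into Seifert fibred and atoroidal blocks $M_v$, and put $R$ in normal form so that it cuts into incompressible, $\partial$--incompressible pieces $R_v \subseteq M_v$. Then $\pi_1(M')$ is the fundamental group of a graph of groups with vertex groups the (LERF relative to their peripheral tori) groups $\pi_1(M_v)$ and edge groups $\mathbb{Z}^2$, and the content is that this structure is \emph{efficient}: the edge groups, and the relevant double cosets, are separable. This is exactly what Przytycki--Wise establish by proving that such $M'$ are virtually special -- cubulating the relatively hyperbolic group $\pi_1(M')$ via the Bergeron--Wise and Hagen--Wise machinery, then applying Wise's malnormal special quotient theorem and Haglund--Wise's separability of quasiconvex subgroups of special groups -- after which a combination theorem for separability in efficient graphs of groups (equivalently, virtual specialness plus relative quasiconvexity of $\pi_1(R)$) gives that $\pi_1(R)$ is separable in $\pi_1(M')$.

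\textbf{Main obstacle.} The genuine difficulty is the mixed case: producing a cocompact action of $\pi_1(M')$ on a $\mathrm{CAT}(0)$ cube complex in which $\pi_1(R)$ sits as a quasiconvex subgroup, i.e.\ the virtual specialness of mixed $3$--manifold groups. Everything else is either classical (Scott, for the Seifert pieces) or a formal consequence of virtual specialness together with standard separability results of Haglund--Wise; the cubulation and the malnormal special quotient argument are where the real work lies.
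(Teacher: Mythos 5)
The paper does not prove this theorem; it is imported as a black box from Przytycki--Wise, so there is no internal proof to compare your outline against. Your sketch does capture the broad architecture of the Przytycki--Wise argument (reductions, Scott's theorem for Seifert pieces, Wise--Agol for hyperbolic pieces, assembly along the JSJ), but there is a genuine error in the assembly step that is worth flagging because it conceals exactly where the embeddedness hypothesis on $R$ does its work.

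You assert that the efficiency of the JSJ graph of groups ``is exactly what Przytycki--Wise establish by proving that such $M'$ are virtually special,'' and that separability of $\pi_1(R)$ then follows formally. That is correct for hyperbolic and mixed $3$--manifolds, but it is false for graph manifolds: a graph manifold has virtually special fundamental group if and only if it is chargeless (equivalently, admits a nonpositively curved Riemannian metric), and many graph manifolds fail this. In fact Niblo--Wise exhibited graph manifold groups that are not LERF, and Rubinstein--Wang produced immersed $\pi_1$--injective surfaces in graph manifolds whose fundamental groups are not separable. So one cannot route the graph-manifold case through virtual specialness plus ``quasiconvexity.'' What Przytycki--Wise actually do there (and for the graph-manifold blocks inside a mixed manifold) is a direct combination argument that leans hard on $R$ being \emph{embedded}: after putting $R$ in normal form one controls how the pieces $R_v$ meet the JSJ tori, builds finite covers of the blocks in which the relevant double cosets $\pi_1(R_v)\,g\,\pi_1(T)$ separate, and glues these covers; no specialness of $\pi_1(M')$ is used or available. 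Your sketch effectively discards the embeddedness by appealing to a general virtual-specialness-implies-separability principle that simply does not hold in this case. A smaller imprecision: in the mixed case $\pi_1(M')$ is hyperbolic relative to the maximal graph-manifold subgroups (and the isolated peripheral tori), not relative to the JSJ tori themselves, and the cubulation you invoke needs this coarser peripheral structure.
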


\begin{lemma}\label{hakenlemma}
Suppose that $M$ is a closed, orientable, irreducible $3$--manifold containing an embedded, orientable, connected, incompressible surface $R$ (with positive genus).
Then either $\ker(\Phi)\trianglelefteq\pi_1(R)$ or $M$ is finitely covered by an $R$--bundle over $\crcle$.
\end{lemma}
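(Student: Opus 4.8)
The plan is to follow the pattern of Propositions~\ref{nonprimeprop} and~\ref{alltoriprop}: pass to a finite cover in which $R$ lifts homeomorphically, using separability (Theorem~\ref{separablethm}); locate there a product region with Proposition~\ref{productregionprop}; and read off a fibration. Since $\ker(\Phi)\trianglelefteq\pi_1(M,p)$, it suffices to show that if $\ker(\Phi)\not\leq\pi_1(R,p)$ then $M$ is finitely covered by an $R$-bundle over $\crcle$. So assume this, having isotoped $R$ so that $p\in R$. By Lemma~\ref{pi1injectivelemma} the inclusion induces an injection $\pi_1(R,p)\hookrightarrow\pi_1(M,p)$, and we may pick a loop $\rho$ in $\Int(M)$ based at $p$ representing an element of $\ker(\Phi)\setminus\pi_1(R,p)$. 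Concatenating the point-pushing isotopy for $\rho$ with an isotopy of $\phi_\rho$ to the identity fixing $p$ gives an isotopy $H\colon M\times[0,1]\to M$ with $H(\cdot,0)=H(\cdot,1)=\mathrm{id}_M$ and $H(p,\cdot)$ homotopic rel endpoints to $\rho$.

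By Theorem~\ref{separablethm} there is a finite-index subgroup $G\leq\pi_1(M,p)$ with $\pi_1(R,p)\leq G$ and $[\rho]\notin G$; let $\pi\colon(\widehat M,\widehat p)\to(M,p)$ be the associated finite cover. As $\pi_*\pi_1(\widehat M,\widehat p)=G\supseteq\pi_1(R,p)$, the inclusion $R\hookrightarrow M$ lifts to an embedding of $R$ onto a component $\widehat R$ of $\pi^{-1}(R)$ through $\widehat p$ with $\pi|_{\widehat R}$ a homeomorphism onto $R$; moreover $\widehat M$ is closed and orientable, is irreducible by Remark~\ref{irreducibleremark}, and is Haken since $\widehat R$ is a two-sided incompressible surface of positive genus. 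Lift $H$ to $\widehat H\colon\widehat M\times[0,1]\to\widehat M$ with $\widehat H(\cdot,0)=\mathrm{id}$. Then $\widehat\psi:=\widehat H(\cdot,1)$ is a lift of $\mathrm{id}_M$, hence a covering transformation of $\pi$, and $\widehat\psi(\widehat p)$ is the endpoint of the lift of $\rho$ at $\widehat p$, so $\widehat\psi(\widehat p)\neq\widehat p$ because $[\rho]\notin G$. A lift of $\mathrm{id}_M$ with a fixed point is the identity, so every non-trivial power of $\widehat\psi$ is fixed-point free, and $\widehat\psi$ has finite order. Also $\widehat R':=\widehat\psi(\widehat R)$ is a component of $\pi^{-1}(R)$ distinct from $\widehat R$, since otherwise $\widehat\psi(\widehat p)$ would be the unique point of $\widehat R$ lying over $p$, namely $\widehat p$. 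Thus $\widehat R$ and $\widehat R'$ are disjoint, each homeomorphic to $R$, incompressible in $\widehat M$, and isotopic in $\widehat M$ via $\widehat H|_{\widehat R\times[0,1]}$.

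Proposition~\ref{productregionprop} now gives a submanifold $W\subseteq\widehat M$ homeomorphic to $R\times[0,1]$ with $\partial W=\widehat R\sqcup\widehat R'$. Any component of $\pi^{-1}(R)$ meeting $\Int(W)\cong R\times(0,1)$ is a closed incompressible surface there, hence a horizontal copy of $R$; taking the innermost such on the $\widehat R$ side (or $W$ itself if there is none) yields a sub-product $W''\subseteq W$ homeomorphic to $R\times[0,1]$, whose two boundary surfaces each map homeomorphically onto $R$ under $\pi$ and whose interior is a whole component of $\widehat M\setminus\pi^{-1}(R)$. Hence $\pi$ restricts to a degree-$d$ covering of $\Int(W'')$ onto a component $U$ of $M\setminus R$, and comparing Euler characteristics gives $d\cdot\chi(\overline U)=\chi(R)$, where $\overline U$ is the closure of $U$ in the manifold obtained by cutting $M$ along $R$; as $R$ has positive genus we may take $\chi(R)<0$ (the torus case needs the separate, easier treatment of Proposition~\ref{alltoriprop}). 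If $R$ is non-separating in $M$ then $\overline U$ has boundary two copies of $R$, so $d=1$, the cut manifold is $R\times[0,1]$, and $M$ is its mapping torus, an $R$-bundle over $\crcle$. If $R$ separates $M$ then $\partial\overline U$ is a single copy of $R$, so $d=2$ and $\overline U$ is an orientable irreducible $3$-manifold with connected incompressible boundary $R$ that is doubly covered by $R\times[0,1]$, hence a twisted $I$-bundle over a closed non-orientable surface whose orientation double cover is $R$; the same analysis on the other side of $R$ shows $M$ is the union of two such twisted $I$-bundles glued along $R$, and gluing the $I$-bundle double covers of the two pieces produces a double cover of $M$ that is an $R$-bundle over $\crcle$.

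The main obstacle is this last step. One must check carefully that $\Int(W'')$ really is a full complementary component of $\pi^{-1}(R)$ (so the Euler-characteristic computation is legitimate), and, in the separating case, produce the product region on the far side of $R$ --- this is precisely where the hypothesis $\ker(\Phi)\not\leq\pi_1(R)$ must be invoked again, via the fixed-point-free covering transformations $\widehat\psi^{\pm1}$ and their iterates acting on the finite set of components of $\pi^{-1}(R)$ --- and then handle the orientation bookkeeping in assembling the $R$-bundle double cover. Everything else is routine covering-space and $3$-manifold theory built on Theorem~\ref{separablethm} and Proposition~\ref{productregionprop}.
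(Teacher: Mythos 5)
Your proposal opens exactly as the paper's proof does: assume $\ker(\Phi_M)\not\leq\pi_1(R,p)$, pick $\rho\in\ker(\Phi_M)\setminus\pi_1(R,p)$, use Theorem~\ref{separablethm} to pass to a finite cover $\widehat M$ in which $R$ lifts homeomorphically to $\widehat R$ but $\rho$ lifts to an arc from $\widehat R$ to a distinct lift $\widehat R'$, and then apply Proposition~\ref{productregionprop} to get a product region. After that the two arguments diverge: the paper finishes with Lemma~\ref{swapsideslemma} (the covering map $\widehat\psi$ preserving $\widehat R$ cannot switch its sides), which works uniformly and yields the $R$--bundle structure directly; you finish with an innermost-surface plus Euler-characteristic argument and a case split on whether $R$ separates $M$.

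Your finishing step has genuine gaps, and they are exactly the ones you flag. First, the Euler-characteristic computation $d\cdot\chi(\overline U)=\chi(R)$ is vacuous when $\chi(R)=0$, and the lemma is stated for $R$ of \emph{positive} genus, so the torus case is in scope. You propose to offload tori to Proposition~\ref{alltoriprop}, but that proposition concerns manifolds with non-empty toral boundary, while here $M$ is closed; the closed incompressible-torus case is handled by Proposition~\ref{torusprop}, whose proof \emph{invokes} Lemma~\ref{hakenlemma}, so the reference would be circular. Second, the same $\chi(R)<0$ hypothesis is silently used a step earlier, when you conclude that the innermost component $T$ of $\pi^{-1}(R)$ inside $W$ maps homeomorphically onto $R$: for $\chi(R)<0$ a degree-$d'$ cover of $R$ homeomorphic to $R$ forces $d'=1$, but for a torus it does not, so even the sub-product $W''$ with the stated properties is not available in the torus case. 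Third, in the separating case your argument only produces a twisted $I$--bundle structure on one side of $R$ (the side into which $\widehat\rho$ initially exits); getting the matching structure on the other side needs a further argument that you leave open. The paper's use of Lemma~\ref{swapsideslemma} sidesteps all three issues at once: no Euler characteristics are taken, so no restriction on $\chi(R)$ arises, and the side-tracking of $\widehat\rho(0,\varepsilon)$ and $\widehat\rho(1-\varepsilon,1)$ relative to the product region $N$ handles separating and non-separating $R$ together. If you want to complete your variant, you would need a separate treatment of the torus case (not relying on Propositions~\ref{alltoriprop} or~\ref{torusprop}) and an explicit argument for the far side of a separating $R$; alternatively, replacing your finishing step with the non-side-switching argument of Lemma~\ref{swapsideslemma} restores the uniform proof.
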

\begin{proof}
Isotope $R$ so that $p$ lies on $R$.
As $R$ is incompressible, $\pi_1(R,p)\leq\pi_1(M,p)$.
Suppose $\ker(\Phi_M)\nleq\pi_1(R,p)$, and
let $\rho$ be a simple closed curve in $M$ based at $p$ such that $\rho\in\ker(\Phi_M)\setminus\pi_1(R,p)$.
We may choose $\rho$ to be transverse to $R$.
By Theorem \ref{separablethm}, there exists a group $G\leq\pi_1(M,p)$
of finite index with $\pi_1(R,p)\leq G$ and $\rho\notin G$.
Let $(\widehat{M},\widehat{p})$ be the finite-sheeted cover of $(M,p)$ with $\pi(\pi_1(\widehat{M},\widehat{p}))= G$, where $\pi\colon\widehat{M}\to M$ is the covering map.
In addition, let $\widehat{\rho}$ be the lift of $\rho$ beginning at $\widehat{p}$ and  let $\widehat{R}$ be the component of $\pi^{-1}(R)$ 
containing $\widehat{p}$.
As $\pi_1(R,p)\leq G$, $\widehat{R}$ is a lift of $R$.

Suppose that $\widehat{\rho}(1)\in\widehat{R}$.
Then there is a path $\widehat{\rho}'$ in $\widehat{R}$ from $\widehat{\rho}(1)$ to $\widehat{p}$. 
This path projects to a loop $\rho'$ in $R$ based at $p$. 
Since $\widehat{\rho}\cdot\widehat{\rho}'$ is a loop in $\widehat{M}$, $\rho\cdot\rho'\in G$.
However, since $\rho'\in G$, this contradicts that $\rho\notin G$.
Hence $\widehat{\rho}(1)\notin\widehat{R}$.
Let $\widehat{R}'$ be the component of $\pi^{-1}(R)$ containing $\widehat{\rho}(1)$.
As $\rho\in\ker(\Phi_M)$, there is an isotopy $H\colon M\times[0,1]\to M$ from the identity to itself that can be used to define $\phi_{\rho}$.
This lifts to an isotopy $\widehat{H}\colon\widehat{M}\times[0,1]\to\widehat{M}$ that begins at the identity.
This isotopy takes $\widehat{R}$ to $\widehat{R}'$. Therefore $\widehat{R}'$ is a lift of $R$.

Choose $\varepsilon>0$ such that $(\rho(0,\varepsilon)\cup\rho(1-\varepsilon,1))\cap R=\emptyset$.
Remark \ref{irreducibleremark} tells us that $\widehat{M}$ is irreducible, since $M$ is.
By Lemma \ref{pi1injectivelemma}, $R$ is $\pi_1$--injective in $M$.
As $\pi_1(R,p)\leq G$, we see that $\widehat{R}$, and so also $\widehat{R}'$, is $\pi_1$--injective and hence incompressible.
By Proposition \ref{productregionprop}, $\widehat{R}$ and $\widehat{R}'$ bound a manifold $N$ homeomorphic to $R\times[0,1]$.
Let $\widehat{H'}\colon \widehat{M}\times[0,1]\to \widehat{M}$ be an isotopy that pushes $\widehat{R}'$ across $N$ to $\widehat{R}$.
Then the isotopy given by doing first $\widehat{H}$ and then $\widehat{H}'$ ends at a map $\widehat{\psi}\colon\widehat{M}\to\widehat{M}$ with $\widehat{\psi}(\widehat{R})=\widehat{R}$.
By Lemma \ref{swapsideslemma}, $\psi$ does not switch the sides of $\widehat{R}$.
This tells us that $\widehat{\rho}(0,\varepsilon)$ and $\widehat{\rho}(1-\varepsilon,1)$ either both lie in $N$ or both lie outside of $N$, as $\rho$ crosses $R$ transversely at $p$.
From this we see that $M$ is a quotient of a manifold obtained from $N$ by gluing $R\times\{0\}$ to $R\times\{1\}$ by some homeomorphism.
\end{proof}

\begin{theorem}[\cite{MR0415619} Theorem 10.6]\label{finiteindexthm}
Let $M'$ be a closed, orientable $3$--manifold. Then there does not exists a finite index subgroup $G$ of $\pi_1(M')$ such that $G$ is isomorphic to the fundamental group of a closed surface other than $\sphere$ or $\mathbb{R}P^2$.
\end{theorem}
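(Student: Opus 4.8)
The plan is to pass to the finite cover realising $G$ and obtain a contradiction from a dimension count on homology. Suppose such a $G$ exists, and let $\pi\colon(\widehat{M},\widehat{p})\to(M',p)$ be the finite-sheeted covering with $\pi_*(\pi_1(\widehat{M},\widehat{p}))=G\cong\pi_1(\Sigma)$, where $\Sigma$ is a closed surface other than $\sphere$ or $\mathbb{R}P^2$. Since $M'$ is closed and orientable and $\pi$ has finite degree, $\widehat{M}$ is a closed, connected, orientable $3$--manifold. I would use three standard facts about the group $\pi_1(\Sigma)$: it is infinite; it is one-ended (directly in the torus and Klein bottle cases, and from cocompactness of the action on $\widetilde{\Sigma}$ when $\chi(\Sigma)<0$), hence not isomorphic to a non-trivial free product; and it is not infinite cyclic, since no closed surface is homotopy equivalent to $\crcle$. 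Also, as $\Sigma\neq\sphere,\mathbb{R}P^2$, the surface $\Sigma$ is aspherical, so it is a $K(\pi_1(\Sigma),1)$.

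First I would reduce to the case that $\widehat{M}$ is irreducible. By the prime decomposition theorem $\widehat{M}$ is a connected sum of prime pieces, so $\pi_1(\widehat{M})$ is the free product of their fundamental groups; discarding the homotopy-$3$--sphere summands by the Poincar\'e conjecture, and using that $\pi_1(\Sigma)$ is non-trivial and freely indecomposable, exactly one prime summand remains, so $\widehat{M}$ is prime. A prime, orientable, closed $3$--manifold is either irreducible or $\sphere\times\crcle$ (see \cite{MR0415619} 3.12); the latter is ruled out because $\pi_1(\sphere\times\crcle)\cong\mathbb{Z}$ is not a closed-surface group. Hence $\widehat{M}$ is irreducible.

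Next I would show $\widehat{M}$ is aspherical. By Remark \ref{irreducibleremark}, $\pi_2(\widehat{M})\cong 1$, and the universal cover $\widetilde{M}$ is again irreducible with $\pi_2(\widetilde{M})\cong\pi_2(\widehat{M})\cong 1$. Because $\pi_1(\widehat{M})\cong\pi_1(\Sigma)$ is infinite, $\widetilde{M}$ is a non-compact $3$--manifold, so $H_3(\widetilde{M};\mathbb{Z})\cong 1$; being simply connected with trivial $\pi_2$ it also has $H_1(\widetilde{M};\mathbb{Z})\cong H_2(\widetilde{M};\mathbb{Z})\cong 1$, so $\widetilde{M}$ has trivial reduced homology in all degrees. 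By the Hurewicz and Whitehead theorems $\widetilde{M}$ is contractible, so $\widehat{M}$ is a $K(\pi_1(\Sigma),1)$ and is therefore homotopy equivalent to $\Sigma$. Comparing top homology, $H_3(\widehat{M};\mathbb{Z})\cong H_3(\Sigma;\mathbb{Z})\cong 1$ because $\Sigma$ is $2$--dimensional, contradicting the fact that $H_3(\widehat{M};\mathbb{Z})\cong\mathbb{Z}$ for a closed, connected, orientable $3$--manifold.

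The main obstacle is the reduction to the irreducible case: one must combine the prime decomposition, the free indecomposability (one-endedness) of closed-surface groups, and the Poincar\'e conjecture to be allowed to assume $\widehat{M}$ is irreducible. After that, the asphericity argument and the resulting mismatch between $H_3$ of a closed orientable $3$--manifold and $H_3$ of a surface are routine.
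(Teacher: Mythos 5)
The paper does not reprove this statement; it cites Hempel (\cite{MR0415619} Theorem 10.6), which is stated there for closed $P^2$--irreducible $3$--manifolds. Your proof is correct and follows essentially the same homological/asphericity argument as Hempel's: pass to the finite cover $\widehat{M}$ with $\pi_1(\widehat{M})\cong\pi_1(\Sigma)$, show it is aspherical (universal cover is a non-compact, simply connected, irreducible $3$--manifold, hence contractible), deduce $\widehat{M}\simeq\Sigma$, and contradict this by comparing $H_3$. The extra work you do --- reducing to the irreducible case via prime decomposition, Grushko/free-indecomposability of infinite closed-surface groups, and the Poincar\'e conjecture, then ruling out $\sphere\times\crcle$ because $\pi_1\ncong\mathbb{Z}$ --- is exactly what is needed to bridge the gap between Hempel's $P^2$--irreducible hypothesis and the ``closed, orientable'' hypothesis stated here; once $\widehat{M}$ is orientable and irreducible it is automatically $P^2$--irreducible, and the rest is Hempel's argument (Hempel uses $\mathbb{Z}_2$ coefficients to cover the non-orientable case, while your $\mathbb{Z}$--coefficient count is fine in the orientable setting). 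No gaps.
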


\begin{theorem}[\cite{MR0415619} Theorem 11.1]\label{infindexthm}
Let $M'$ be a closed, orientable $3$--manifold. Suppose $G$ is a finitely generated normal subgroup of $\pi_1(M')$ of infinite index, with $G\ncong\mathbb{Z}$. Then there is a closed surface $R$ in $M'$ such that $G$ is a finite index subgroup of $\pi_1(R)$ and either $M'$ is an $R$--bundle over $\crcle$ or $M'$ is the union of two twisted $[0,1]$--bundles whose intersection, $R$, is the $\{0,1\}$--bundle of each.
\end{theorem}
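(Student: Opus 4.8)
The plan is to pass to the covering $\widehat{M}\to M'$ corresponding to $G\le\pi_1(M')$, with deck group $Q=\pi_1(M')/G$ --- finitely generated, since $\pi_1(M')$ is, and infinite, since $G$ has infinite index --- and to run a Stallings-type fibration argument on $\widehat{M}$; the extra work compared with the classical case $Q\cong\mathbb{Z}$ goes into showing that $Q$ is two-ended.

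First I would dispose of the degenerate cases. If $\pi_1(M')$ is finite there is nothing to prove. If $M'$ is reducible, then either $M'\cong\sphere\times\crcle$, so that $\pi_1(M')\cong\mathbb{Z}$ and the hypotheses on $G$ are vacuous, or $\pi_1(M')$ is a non-trivial free product, and then Stallings' structure theory for groups with infinitely many ends forces a finitely generated normal subgroup of infinite index to be trivial or infinite cyclic --- both excluded. So assume $M'$ is irreducible with infinite fundamental group, hence aspherical; then $\widehat{M}$ is an open, irreducible, aspherical $3$--manifold with $\pi_1(\widehat{M})\cong G$ finitely generated, on which $Q$ acts freely and cocompactly (because $\widehat{M}/Q=M'$ is compact). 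By Scott's compact core theorem $\widehat{M}$ has a compact core $C$, and, after applying the loop theorem and using Lemma \ref{homotopicsphereslemma} to discard any sphere components of the frontier of $C$, I may take $\partial C$ to be a closed incompressible surface all of whose components have positive genus.

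The crux is a trichotomy on the number of ends of $\widehat{M}$, which equals the number of ends of $Q$. A product-region argument in the spirit of Proposition \ref{productregionprop}, applied to an end of $\widehat{M}$ lying beyond a frontier surface $R_0$ of $C$, forces that end to be a product $R_0\times[0,\infty)$, so that $\widehat{M}\cong\operatorname{int}(C)$. If $\widehat{M}$ has one end then $\partial C$ is connected, so $G=\pi_1(C)$ is the fundamental group of a compact aspherical $3$--manifold with connected incompressible boundary; a cohomological analysis of $1\to G\to\pi_1(M')\to Q\to1$ via the Lyndon--Hochschild--Serre spectral sequence and Poincar\'e duality for $\pi_1(M')$, together with Strebel's theorem, rules this out unless $G\cong\mathbb{Z}$, which is excluded. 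If $\widehat{M}$ has infinitely many ends then, by Stallings' ends theorem, $Q$ splits over a finite subgroup, pulling back to a graph-of-groups decomposition of $\pi_1(M')$ all of whose vertex and edge groups contain $G$ with finite index; realising a corresponding essential surface forces $G$ to be virtually a surface group, but then $\widehat{M}$ is homeomorphic to $R'\times\mathbb{R}$ and so two-ended, contradicting that it has infinitely many ends. Hence $\widehat{M}$, and so $Q$, is two-ended, $C$ has exactly two frontier surfaces, $\widehat{M}\cong R\times\mathbb{R}$ for a closed surface $R$, and $G\cong\pi_1(R)$.

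It remains to descend the product structure on $\widehat{M}$ to $M'$. The translation subgroup $Q_0\cong\mathbb{Z}$ of finite index in $Q$ acts on $\widehat{M}\cong R\times\mathbb{R}$ preserving the height function up to sign; by Lemma \ref{swapsideslemma} a deck transformation isotopic in $M'$ to the identity cannot interchange the two sides of a fibre, so the $Q_0$--translates of $R\times\{0\}$ are linearly ordered and give a genuine fibration of the $Q_0$--cover of $M'$ over $\crcle$ with fibre $R$. If $Q_0=Q$ this cover is $M'$. Otherwise $[Q:Q_0]=2$, the non-trivial coset acts by a flip, and cutting $M'$ along the two exceptional fibres of the underlying $1$--orbifold exhibits $M'$ as a union of two twisted $[0,1]$--bundles meeting along a copy of $R$, which is the connected boundary double cover of each base surface. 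In both cases $G$ is a finite-index subgroup of $\pi_1(R)$, as required. The main obstacle is the trichotomy: both the product-region identification of the ends of $\widehat{M}$ (the heart of Stallings' fibration theorem) and, especially, the exclusion of the infinitely-many-ends case, where one must show that a splitting of $\pi_1(M')$ over subgroups commensurable with the normal subgroup $G$ can only occur for a surface (semi)bundle. By comparison, the final descent --- using Lemma \ref{swapsideslemma} to prevent a pathological permutation of the fibres --- is routine.
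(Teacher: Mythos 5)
The paper does not prove this theorem; it cites it as Hempel Theorem 11.1, so there is no internal argument to compare against. Your approach---pass to the cover $\widehat{M}$ corresponding to $G$, take a Scott compact core, run an ends trichotomy on $Q=\pi_1(M')/G$, and descend a product structure---is the standard modern route to this result (in the spirit of Stallings and Scott), and the final descent using Lemma \ref{swapsideslemma} and the index-two flip is sound.

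Both exclusion arguments, however, are the real content and as written each has a gap. In the infinitely-many-ends case, Stallings gives $Q=Q_1*_F Q_2$ (or an HNN extension) over a finite $F$; pulling back, the edge group of the induced splitting of $\pi_1(M')$ does contain $G$ with finite index, but the vertex groups are the preimages of $Q_1$ and $Q_2$ and contain $G$ with \emph{infinite} index whenever $Q_i$ is infinite, which is entirely possible (e.g.\ $Q=\mathbb{Z}*\mathbb{Z}_2$). So ``all of whose vertex and edge groups contain $G$ with finite index'' is false, and the deduction that $G$ is virtually a surface group and $\widehat{M}\cong R'\times\mathbb{R}$ no longer follows; you need a different route (for instance, establishing directly that a non-cyclic finitely generated normal subgroup of a $\mathrm{PD}(3)$ group has two-ended quotient). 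In the one-end case, Strebel's theorem gives only $\mathrm{cd}(G)\leq 2$, which is equally true when $G=\pi_1(C)$ for a compact aspherical $3$--manifold $C$ with non-empty incompressible boundary---i.e.\ precisely the situation your compact core produces---so no contradiction has been reached; the spectral-sequence argument would need to be carried far enough to show $H^q(G;\mathbb{Z}G)$ is concentrated in degree $2$ and free, making $G$ a $\mathrm{PD}(2)$ group and $Q$ a $\mathrm{PD}(1)$ group, and this is asserted rather than done. Finally, identifying each end with $R_0\times[0,\infty)$ is the geometric crux: Proposition \ref{productregionprop} is stated for a compact Haken manifold and does not by itself apply to the open manifold $\widehat{M}$; you must use the cocompact $Q$--action to produce disjoint parallel translates of a frontier surface cofinal in the end and then invoke a product-region theorem between consecutive translates. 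That periodicity is mentioned in passing but not actually deployed where it is needed.
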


\begin{proposition}\label{torusprop}
Suppose $M$ is a closed, orientable, irreducible $3$--manifold containing an embedded, incompressible torus $R$.
Then one of the following holds.
\begin{itemize}
\item $\ker(\Phi)\cong 1$ and there is an exact sequence
\[1\to\pi_1(M,p)\to\Mod(M_p)\to \Mod(M)\to 1.\]
\item $M$ is finitely covered by an $R$--bundle over $\crcle$.
\item $M$ is Seifert fibred and $\ker(\Phi)\cong\mathbb{Z}$ is generated by a power of a regular fibre.
\end{itemize}
\end{proposition}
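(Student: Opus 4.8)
The plan is to run the hypotheses through Lemma~\ref{hakenlemma} and then dissect the subgroup of $\pi_1(R)\cong\mathbb{Z}^2$ that remains. The torus $R$ is a closed, orientable, connected, incompressible surface of positive genus in $M$, so Lemma~\ref{hakenlemma} applies and gives one of two outcomes: either $M$ is finitely covered by an $R$--bundle over $\crcle$, which is the second alternative of the proposition, or $\ker(\Phi_M)\leq\pi_1(R,p)$. Assume we are in the second outcome. Being a subgroup of $\pi_1(R,p)\cong\mathbb{Z}^2$, the group $\ker(\Phi_M)$ is isomorphic to $1$, to $\mathbb{Z}$, or to $\mathbb{Z}^2$; and, being the kernel of the homomorphism $\Phi_M$, it is normal in $\pi_1(M,p)$.

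If $\ker(\Phi_M)\cong 1$, then Proposition~\ref{sequenceprop} yields the exact sequence of the first alternative. If $\ker(\Phi_M)\cong\mathbb{Z}$, I would apply Proposition~\ref{cyclicsubgroupprop}: $M$ is compact, orientable, and Haken (it is irreducible and contains the incompressible surface $R$), and is not a solid torus as it is closed; $R$ is orientable, incompressible, and vacuously $\partial$--incompressible; and $G=\ker(\Phi_M)$ is an infinite cyclic normal subgroup of $\pi_1(M,p)$ with $G\leq\pi_1(R,p)$. Proposition~\ref{cyclicsubgroupprop} then forces $M$ to be Seifert fibred with $\ker(\Phi_M)$ a subgroup of the infinite cyclic group generated by a regular fibre, hence generated by a power of a regular fibre; this is the third alternative.

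It remains to treat $\ker(\Phi_M)\cong\mathbb{Z}^2$, which I expect to be the crux of the argument. Here $\ker(\Phi_M)$ has finite index in $\pi_1(R,p)$. It cannot have finite index in $\pi_1(M,p)$, for then $\pi_1(M,p)$ would contain a finite-index subgroup isomorphic to $\pi_1(\sphere\times\crcle)$'s torus, i.e.\ to the fundamental group of a torus, contradicting Theorem~\ref{finiteindexthm}. Hence $\ker(\Phi_M)$ is a finitely generated normal subgroup of $\pi_1(M,p)$ of infinite index that is not isomorphic to $\mathbb{Z}$, and Theorem~\ref{infindexthm} supplies a closed surface $R'\subseteq M$ with $\ker(\Phi_M)$ of finite index in $\pi_1(R')$ such that either $M$ is an $R'$--bundle over $\crcle$, or $M$ is a union of two twisted $[0,1]$--bundles meeting along $R'$. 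Since $\pi_1(R')$ contains $\mathbb{Z}^2$ with finite index, $R'$ is a torus or a Klein bottle, and in the twisted-bundle case $R'$ is two-sided in the orientable manifold $M$, hence orientable, hence a torus.

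To conclude I must check that in each of these configurations $M$ is finitely covered by a torus bundle over $\crcle$, and this is where the real work lies. A torus bundle needs nothing further; a Klein-bottle bundle over $\crcle$ with orientable total space is doubly covered by a torus bundle, using the cover associated to the (characteristic) orientation subgroup of the fibre's $\pi_1$ extended over the base; and a union $W_1\cup_{R'}W_2$ of two orientable twisted $[0,1]$--bundles over Klein bottles, glued along the torus $R'$, is doubly covered by a torus bundle via the homomorphism $\pi_1(M)\to\mathbb{Z}/2$ whose restriction to each $\pi_1(W_i)\cong\pi_1(K)$ is the orientation character of the Klein bottle (this is well defined because its restriction to the amalgamated subgroup $\pi_1(R')$ is trivial, and the resulting double cover restricts over each $W_i$ to a product $\sphere\!\times\![0,1]$-type $[0,1]$--bundle over a torus). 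In every case $M$ is finitely covered by an $R$--bundle over $\crcle$, so the second alternative holds and the proof is complete.
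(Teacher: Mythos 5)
Your proof is correct and follows the same structure as the paper's: Lemma~\ref{hakenlemma} to reduce to $\ker(\Phi)\leq\pi_1(R)$, then the trichotomy $1$, $\mathbb{Z}$, $\mathbb{Z}^2$, with Proposition~\ref{cyclicsubgroupprop} for the $\mathbb{Z}$ case and Theorems~\ref{finiteindexthm} and~\ref{infindexthm} for the $\mathbb{Z}^2$ case. The only substantive deviation is the last step: where the paper cites Jaco VI.15c to conclude that the twisted-bundle configuration is itself a torus bundle, you construct the double cover explicitly via the orientation characters; this is a fine, self-contained replacement, though your Klein-bottle-bundle subcase is vacuous (a fibre of a surface bundle is $2$--sided, so in an orientable total space it is orientable and cannot be a Klein bottle).
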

\begin{proof}
Suppose $M$ is not finitely covered by an $R$--bundle over $\crcle$.
Then, by Lemma \ref{hakenlemma}, $\ker(\Phi)\trianglelefteq\pi_1(R)\cong \mathbb{Z}^2$.
In particular, $\ker(\Phi)$ is finitely generated.
Assume $\ker(\Phi)\ncong 1$.

If $\ker(\Phi)\cong\mathbb{Z}$ then, by Proposition \ref{cyclicsubgroupprop}, $M$ is Seifert fibred and $\ker(\Phi)$ is a subgroup of the infinite cyclic group generated by a regular fibre.

Now assume that $\ker(\Phi)\cong\mathbb{Z}^2$.
Suppose $\ker(\Phi)$ has finite index in $\pi_1(M)$.
Then $\pi_1(R)$ also has finite index in $\pi_1(M)$.
By Theorem \ref{finiteindexthm}, this cannot be the case. 
Hence $\ker(\Phi)$ has infinite index in $\pi_1(M)$.
Therefore Theorem \ref{infindexthm} applies. Let $R'$ be the surface given by Theorem \ref{infindexthm}.
As $R'$ is $2$--sided in $M$, it must be orientable. Given that $\ker(\Phi)$ is a finite index subgroup of $\pi_1(M)$, we see that $R'$ is a torus.
We have assumed that $M$ is not fibred with fibre a torus, so $M$ is formed from two twisted $[0,1]$--bundles over a third surface $R''$. 
Again using that $M$ is orientable, we find that $R''$ is non-orientable. Since it is double-covered by $R'$, this implies that $R''$ is a Klein bottle. Then $M$ is also fibred over $\crcle$ with fibre a torus (see \cite{MR565450} VI.15c).
\end{proof}

\begin{theorem}[\cite{MR0377891} Theorem 1]\label{coversfmfldthm}
A closed, orientable, irreducible Seifert fibred $3$--manifold with infinite fundametal group is finitely covered by an $\crcle$--bundle over a closed, orientable surface with genus at least $1$.
\end{theorem}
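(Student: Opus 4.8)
This is quoted from \cite{MR0377891}; here is an approach one could take. The plan is to build a finite cover of $M$ that at once destroys the exceptional fibres and replaces the base orbifold by a genuine closed orientable surface, having first pinned down which base orbifolds can occur.

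Write $M\to B$ for the Seifert fibration, where $B$ is a closed $2$--orbifold whose only singular points are cone points, and let $h\in\pi_1(M)$ be the class of a regular fibre. The first task is to show $\chi^{\mathrm{orb}}(B)\leq 0$. Suppose instead $\chi^{\mathrm{orb}}(B)>0$, so that $\pi_1^{\mathrm{orb}}(B)$ is finite, and recall the exact sequence $\langle h\rangle\to\pi_1(M)\to\pi_1^{\mathrm{orb}}(B)\to 1$. If the Euler number of the Seifert fibration is non-zero then $h$ has finite order and $\pi_1(M)$ is finite, contrary to hypothesis. If the Euler number is zero then $M$ is finitely covered by $\sphere\times\crcle$; by Theorem \ref{quotientthm} this forces $M$ to be one of $\sphere\times\crcle$, $\sphere\widetilde{\times}\crcle$, $\mathbb{R}P^2\times\crcle$ or $\mathbb{R}P^3\#\mathbb{R}P^3$, none of which is irreducible (each has non-trivial $\pi_2$; cf.\ Remark \ref{irreducibleremark}). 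Hence $\chi^{\mathrm{orb}}(B)\leq 0$. The same computation with the Seifert invariants shows that, in this range, $h$ has infinite order, so $\langle h\rangle\cong\mathbb{Z}$ is an infinite cyclic central subgroup and we obtain a central extension $1\to\mathbb{Z}\to\pi_1(M)\to\pi_1^{\mathrm{orb}}(B)\to 1$.

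Next I would use the fact that a $2$--orbifold with $\chi^{\mathrm{orb}}\leq 0$ is finitely covered by a closed surface: for Euclidean $B$ this is the usual torus cover, and for hyperbolic $B$ it follows from residual finiteness of the Fuchsian group $\pi_1^{\mathrm{orb}}(B)$ (Selberg's lemma), the point being that the elements one must separate from the identity are exactly the finite-order cone-point rotations. Passing deeper and to an orientation cover if necessary, take a finite-index torsion-free subgroup $Q\leq\pi_1^{\mathrm{orb}}(B)$ with $Q\cong\pi_1(\widetilde{B})$ for a \emph{closed orientable} surface $\widetilde{B}$. Pulling the central extension back along $Q\hookrightarrow\pi_1^{\mathrm{orb}}(B)$ yields a finite-index subgroup $\Gamma\leq\pi_1(M)$ with $1\to\mathbb{Z}\to\Gamma\to Q\to 1$; let $M'\to M$ be the corresponding finite cover. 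Then $M'$ carries the Seifert fibration over $\widetilde{B}$ pulled back from $M\to B$, and since $\widetilde{B}$ has no cone points this fibration has no exceptional fibres, so $M'$ is a $\crcle$--bundle over the closed orientable surface $\widetilde{B}$.

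It remains to see that $\widetilde{B}$ has genus at least $1$, i.e.\ that $\widetilde{B}\neq\sphere$; but $\widetilde{B}$ is a degree-$d$ orbifold cover of $B$, so $\chi(\widetilde{B})=d\,\chi^{\mathrm{orb}}(B)\leq 0<2$, which rules out the sphere (and the projective plane). I expect the main obstacle to be the orbifold bookkeeping in the second paragraph --- correctly excluding the spherical and bad base orbifolds, which is precisely where both hypotheses (irreducible and infinite $\pi_1$) are used --- together with checking that the finite orbifold cover of $B$ genuinely lifts to a finite cover of $M$ compatible with the fibration; the latter is cleanest when phrased, as above, in terms of pulling back the central extension $1\to\mathbb{Z}\to\pi_1(M)\to\pi_1^{\mathrm{orb}}(B)\to 1$.
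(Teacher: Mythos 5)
The paper cites Theorem~\ref{coversfmfldthm} from \cite{MR0377891} without giving a proof, so there is no in-paper argument to compare against; I will therefore evaluate your sketch on its own terms. The outline is essentially the standard argument and the reduction is correct: rule out $\chi^{\mathrm{orb}}(B)>0$ via the dichotomy on the Euler number (Euler number $\neq 0$ gives finite $\pi_1$; Euler number $=0$ with $\chi^{\mathrm{orb}}>0$ gives a quotient of $\sphere\times\crcle$, and Theorem~\ref{quotientthm} together with irreducibility and infinite $\pi_1$ rules these out); then, with $\chi^{\mathrm{orb}}(B)\leq 0$, use residual finiteness of Fuchsian/crystallographic groups to find a finite-index torsion-free $Q\leq\pi_1^{\mathrm{orb}}(B)$, pass to an orientation cover so that $Q$ is an orientable surface group, and take the preimage $\Gamma\leq\pi_1(M)$ of $Q$; the resulting cover $M'\to M$ is the pullback Seifert fibration over the smooth surface $\widetilde B$, hence an $\crcle$--bundle, and $\chi(\widetilde B)\leq 0$ gives genus $\geq 1$.

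One slip worth flagging: the extension $1\to\langle h\rangle\to\pi_1(M)\to\pi_1^{\mathrm{orb}}(B)\to 1$ is \emph{not} central in general. Since $M$ is orientable, an element of $\pi_1(M)$ projecting to an orientation-reversing loop in $B$ conjugates $h$ to $h^{-1}$, so $\langle h\rangle$ is central precisely when the underlying surface of $B$ is orientable. This does not break your argument, because the step you actually perform --- taking the preimage $\Gamma$ of a finite-index subgroup $Q$ under $\pi_1(M)\twoheadrightarrow\pi_1^{\mathrm{orb}}(B)$ --- works for an arbitrary extension, and once you have passed to the orientation cover of $B$ the extension really is central. But you should not assert centrality of the original extension; either drop the word ``central'' there, or pass to the orientation double cover of $B$ (and the corresponding double cover of $M$) before making the claim.
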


\begin{lemma}\label{threefibreslemma}
Suppose that $M$ is an orientable Seifert fibred $3$--manifold with base space a $2$--sphere and three exceptional fibres.
Then either $\pi_1(M)$ is finite or $M$ is finitely covered by a manifold satisfying Proposition \ref{torusprop}.
\end{lemma}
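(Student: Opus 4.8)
The plan is to dispose of the case where $\pi_1(M)$ is finite at once, and when $\pi_1(M)$ is infinite to feed $M$ into Theorem \ref{coversfmfldthm} and then verify that the cover it produces satisfies the hypotheses of Proposition \ref{torusprop}.

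If $\pi_1(M)$ is finite there is nothing to prove, so suppose $\pi_1(M)$ is infinite. The first thing I would establish is that $M$ is irreducible. One way is to invoke the classification of closed orientable Seifert fibred spaces: the only reducible ones are $\sphere\times\crcle$ and $\mathbb{R}P^3\#\mathbb{R}P^3$, and neither admits a Seifert fibration over $\sphere$ with three exceptional fibres. Alternatively, one can argue that $M$ is none of the four manifolds listed in Theorem \ref{quotientthm} (here the orientability of $M$ and the three exceptional fibres are used), so $\sphere\times\crcle$ does not cover $M$; since $\pi_1(M)$ is infinite this forces the universal cover of $M$ to be $\mathbb{R}^3$, whence $\pi_2(M)\cong 1$ and $M$ is irreducible by Remark \ref{irreducibleremark}. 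This verification of irreducibility — in particular pinning down a clean reference or argument that excludes the reducible and finite-$\pi_1$ Seifert fibred possibilities — is the step I expect to require the most care; everything else is routine.

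With $M$ now known to be closed, orientable, irreducible, Seifert fibred and with infinite fundamental group, Theorem \ref{coversfmfldthm} supplies a finite cover $\widehat{M}$ of $M$ that is an $\crcle$--bundle $q\colon\widehat{M}\to\Sigma$ over a closed orientable surface $\Sigma$ of genus at least $1$. I would then check that $\widehat{M}$ satisfies the hypotheses of Proposition \ref{torusprop}. It is closed, and orientable because it covers the orientable $M$; by Remark \ref{irreducibleremark} it is irreducible because $M$ is. To produce an embedded incompressible torus, pick an essential simple closed curve $\gamma$ in $\Sigma$ (possible since $\Sigma$ has positive genus) and set $T=q^{-1}(\gamma)$. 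As $\gamma$ is $2$--sided in $\Sigma$, $T$ is $2$--sided in $\widehat{M}$, hence orientable; being an $\crcle$--bundle over the circle $\gamma$, it is a torus, embedded in $\widehat{M}$.

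Finally I would show $T$ is incompressible, which by Lemma \ref{pi1injectivelemma} amounts to showing $\pi_1(T)\to\pi_1(\widehat{M})$ is injective. The bundle $q$ gives a short exact sequence $1\to\mathbb{Z}\to\pi_1(\widehat{M})\to\pi_1(\Sigma)\to 1$ with the $\mathbb{Z}$ generated by a fibre class $f$, central because $\Sigma$ is aspherical. Writing $\pi_1(T)\cong\mathbb{Z}^2$ as generated by $f$ and a class $\widetilde{\gamma}$ that $q$ maps to $\gamma$, its image in $\pi_1(\widehat{M})$ is the abelian subgroup $\langle f,\widetilde{\gamma}\rangle$, which surjects onto $\langle\gamma\rangle\leq\pi_1(\Sigma)$ with kernel exactly $\langle f\rangle$. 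Since $\gamma$ is essential and surface groups are torsion free, $\langle\gamma\rangle$ is infinite cyclic, so $\langle f,\widetilde{\gamma}\rangle\cong\mathbb{Z}^2$; a surjection $\mathbb{Z}^2\to\mathbb{Z}^2$ is an isomorphism, so $\pi_1(T)\to\pi_1(\widehat{M})$ is injective and $T$ is incompressible. Thus $\widehat{M}$ — a finite cover of $M$ — satisfies Proposition \ref{torusprop}, as required.
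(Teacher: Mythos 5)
Your proof is correct and follows the same route as the paper: establish irreducibility, apply Theorem \ref{coversfmfldthm} to obtain a finite cover $\widehat{M}$ that is an $\crcle$--bundle over a surface of positive genus, and then exhibit the preimage of an essential simple closed curve as an incompressible torus so that Proposition \ref{torusprop} applies. The paper is terse about irreducibility (citing Jaco VI.7) and about why the vertical torus is incompressible; your first argument for irreducibility (the classification of reducible closed orientable Seifert fibred spaces) and your $\pi_1$--injectivity computation are exactly the missing details, and both are sound.
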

\begin{proof}
Assume that $\pi_1(M)$ is not finite. Note that $M$ is irreducible (see for example \cite{MR565450} VI.7).
By Theorem \ref{coversfmfldthm}, $M$ is finitely covered by a manifold $\widehat{M}$ that is an $\crcle$--bundle over a closed, orientable surface $N$ with genus at least $1$. That is, $\widehat{M}$ is Seifert fibred over $N$ with no exceptional fibres.
Taking the $\crcle$--bundle over an essential simple closed curve in $N$ gives an incompressible torus in $\widehat{M}$. 
In addition, $\widehat{M}$ is closed, orientable and irreducible.
Thus it satisfies Proposition \ref{torusprop}.
\end{proof}

\section{Conclusions}\label{proofsection}

\begin{theorem}[\cite{MR1189862} Corollaries 8.3, 8.6]\label{torusthm}
Let $M'$ be a compact, orientable, irreducible $3$--manifold with infinite fundamental group.
Then $M$ is Seifert fibred if and only if $\pi_1(M')$ contains a cyclic normal subgroup.
If there exists $G\leq\pi_1(M')$ such that $G\cong\mathbb{Z}^2$,
then $M'$ either is Seifert fibred or contains an embedded, $\pi_1$--injective torus.
\end{theorem}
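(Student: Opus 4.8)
This is one of the deep structural theorems of $3$--manifold topology, and a genuinely self-contained proof is out of reach; the plan is to reduce it, in a fairly standard way, to the \emph{Seifert Fibre Space Conjecture} and the \emph{Torus Theorem}, treating those as the substantial external input and checking the remaining bookkeeping by hand.

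Consider first the cyclic-normal-subgroup characterisation. The forward implication is elementary: if $M'$ is Seifert fibred with $\pi_1(M')$ infinite, let $C\leq\pi_1(M')$ be the subgroup generated by a regular fibre. Any loop conjugates a regular fibre to a regular fibre, so $C$ is normal, with $\pi_1(M')/C$ the orbifold fundamental group of the base acting on $C$ by $\pm 1$; and since $M'$ is irreducible with infinite fundamental group the regular fibre has infinite order, so $C\cong\mathbb{Z}$. For the converse, suppose $C\trianglelefteq\pi_1(M')$ is cyclic. Since $M'$ is orientable and irreducible with $\pi_1(M')$ infinite, $\pi_1(M')$ is torsion-free (an irreducible orientable $3$--manifold with torsion in $\pi_1$ is a spherical space form), so $C\cong\mathbb{Z}$. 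If $M'$ is Haken, one produces an incompressible, $\partial$--incompressible surface $R$ carrying a finite-index subgroup of $C$ and applies Proposition \ref{cyclicsubgroupprop} to conclude that $M'$ is Seifert fibred. If $M'$ is not Haken, $\pi_1(M')$ acts on the universal cover, which is irreducible by Remark \ref{irreducibleremark} and, in this situation, homeomorphic to $\mathbb{R}^3$; the normal $\mathbb{Z}$ acts as translations along an axis and the quotient group $\pi_1(M')/C$ acts on the circle at infinity of the transverse $\mathbb{R}^2$ as a convergence group, so by the theorem of Tukia, Gabai and Casson--Jungreis it is virtually Fuchsian, and hence $M'$ is Seifert fibred.

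For the second assertion, suppose $G\leq\pi_1(M')$ with $G\cong\mathbb{Z}^2$. Choosing a map $T^2\to M'$ that induces the inclusion $\mathbb{Z}^2=\pi_1(T^2)\hookrightarrow\pi_1(M')$, the Torus Theorem---via Feustel and Jaco--Shalen in the Haken case, and in Scott's general form otherwise---gives that either $M'$ contains an embedded $\pi_1$--injective torus or $M'$ is Seifert fibred (possibly a small Seifert fibred space with no embedded incompressible torus at all); the two cases together yield exactly the stated dichotomy. I expect the main obstacle to be, in both halves, precisely the content that has been packaged into these cited theorems: extracting a Seifert fibring from a purely algebraic hypothesis when $M'$ is non-Haken (where the convergence-group machinery seems unavoidable), and, for the torus part, replacing a singular $\pi_1$--injective torus by an embedded one or else recognising the Seifert structure when no such replacement exists. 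Everything outside of these inputs is routine, of the kind already exercised in Remark \ref{irreducibleremark} and Proposition \ref{cyclicsubgroupprop}.
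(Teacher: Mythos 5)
The paper does not prove this theorem; it is imported wholesale from Gabai's convergence-groups paper (\cite{MR1189862}, Corollaries 8.3 and 8.6), and the citation is the entire justification offered. There is therefore no in-paper proof to compare against. Taken on its own terms, your sketch is a reasonable account of how these corollaries are established in the literature: the forward direction of the cyclic-normal-subgroup characterisation is elementary, the converse is the Seifert Fibre Space Theorem (with the non-Haken case resolved via the convergence-group theorem of Tukia, Casson--Jungreis and Gabai, and the Haken case going back to Waldhausen, Gordon--Heil, Jaco--Shalen, Scott), and the second clause is the Torus Theorem.

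Two caveats worth flagging. First, in the Haken case, producing an incompressible and $\partial$-incompressible surface carrying (a finite-index subgroup of) the cyclic normal subgroup so that Proposition~\ref{cyclicsubgroupprop} applies is itself one of the main technical steps, not a given; your sketch states it as if it were routine. Second, the hypothesis ``contains a cyclic normal subgroup'' should be read as ``nontrivial'' (equivalently, by torsion-freeness, ``infinite''), since the trivial subgroup is always a cyclic normal subgroup; you implicitly make this reading, which is the intended one. Neither is a genuine error in a sketch that openly defers to the cited theorems, and at the paper's level of use---where the theorem is treated as a black box---your treatment is pitched appropriately.
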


\kernelthm
\begin{proof}
We will show that $M$ is finitely covered by a manifold $M''$ for which $\ker(\Phi_{M''})$ is isomorphic to a subgroup of $\mathbb{Z}^3$. 
Then Theorem \ref{kernelthm} will follow from Lemma \ref{coverlemma}.
With this aim, we may take finite covers of $M$ as necessary. In particular, we may assume that $M$ is orientable.

If $M$ has a boundary component that is not a torus then Proposition \ref{nottorusprop} shows that $\ker(\Phi_M)\cong 1$.
Suppose that every component of $\partial M$ is a torus.
If $M$ is not prime, it contains a separating $2$--sphere. Then Proposition \ref{nonprimeprop} and Lemma \ref{rp3lemma} together imply that $\ker(\Phi_M)\cong 1$.
If $M$ is prime but not irreducible then $M\cong\sphere\times\crcle$ (see, for example, \cite{MR0415619} Lemma 3.3), and Corollary \ref{productscor} gives that $\ker(\Phi_M)=\pi_1(M,p)\cong\mathbb{Z}$.
Therefore we may now assume that $M$ is irreducible.
If $M$ has compressible boundary, then $M$ is a solid torus, as noted in Remark \ref{solidtorusremark}. Corollary \ref{productscor} again says that in this case $\ker(\Phi_M)=\pi_1(M,p)\cong\mathbb{Z}$.
Hence we may also assume that $M$ is $\partial$--irreducible.

Suppose first that $M$ is not fibred over $\crcle$.
If $\partial M\neq\emptyset$  and $\ker(\Phi_M)\ncong 1$ then, by Proposition \ref{alltoriprop}, $M$ is Seifert fibred and $\ker(\Phi_M)\cong\mathbb{Z}$.
Suppose instead that $M$ is closed.
If $\pi_1(M,p)$ is finite, by the Elliptization Theorem, $M$ is a spherical manifold and is finitely covered by $\Sphere$ (see \cite{2012arXiv1205.0202A} Theorem 1.12). Note that if $M\cong\Sphere$ then $\ker(\Phi_M)=\pi_1(M,p)\cong 1$.
Further suppose, therefore, that $\pi_1(M,p)$ is infinite.

If $M$ contains an incompressible torus and $\ker(\Phi_M)\cong 1$ then, by Proposition \ref{torusprop}, either $M$ is Seifert fibred with $\ker(\Phi_M)\cong \mathbb{Z}$, or $M$ has a finite cover that is a torus bundle over $\crcle$. 

Suppose $M$ does not contain an embedded incompressible torus, but there exists a map $f\colon\crcle\times\crcle\to M$ such that the induced map $f_*\colon\pi_1(\crcle\times\crcle)\to\pi_1(M)$ is injective.
By Theorem \ref{torusthm}, $M$ is then Seifert fibred. 
From the absence of any embedded incompressible tori in $M$, it follows that $M$ is Seifert fibred over a sphere with at most three exceptional fibres or over $\mathbb{R}P^2$ with at most one exceptional fibre (see, for example, \cite{MR565450} VI.7).
On the other hand, the existence of such a map $f$ implies that the surface is a sphere and there must be at least three exceptional fibres.
Since we have assumed that $\pi_1(M,p)$ is infinite, Lemma \ref{threefibreslemma} and Proposition \ref{torusprop} together give that $M$ is finitely covered by a manifold $M'$ such that either $\ker(\Phi_{M'})\leq \mathbb{Z}$ or $M'$ is a torus bundle over $\crcle$.

Hence we can further assume that $M$ is atoroidal.
The Hyperbolization Theorem says that $M$ is hyperbolic with finite volume (see \cite{2012arXiv1205.0202A} Theorem 1.13).  
By the Virtual Fibering Conjecture (\cite{2012arXiv1204.2810A} Theorem 9.2; see also \cite{2012arXiv1210.4799F}), $M$ is therefore finitely covered by a manifold fibred over $\crcle$.

It only remains to consider the case that $M$ is fibred over $\crcle$.
Denote by $N$ the base space, and $\omega\colon (N,\partial N)\to (N,\partial N)$ the monodromy.
If $\omega^m$ is not isotopic to the identity on $N$ for any $m\in\mathbb{Z}\setminus\{0\}$ then, by Corollary \ref{notperiodiccor}, $\ker(\Phi_M)$ is isomorphic to a subgroup of $\mathbb{Z}^2$ (in fact $\ker(\Phi_M)\cong 1$ unless $N$ is a torus).
Suppose instead that there exists $m\in\mathbb{Z}\setminus\{0\}$ such that $\omega^m$ is isotopic to the identity on $N$. 
Then $M$ has a $|m|$--fold cover that is fibred over $N$ with monodromy $\omega^m$.
We may therefore assume that in fact $\omega$ is isotopic to the identity.
By Proposition \ref{productbundleprop}, it then follows that $\ker(\Phi_M)\cong\ker(\Phi_N)\times\mathbb{Z}$.
If $\chi(N)\neq 0$ then Proposition \ref{surfacesprop} shows that $\ker(\Phi_N)\cong 1$.
Corollary \ref{chi0cor} gives that if $\chi(N)=0$ then either $\ker(\Phi_N)\cong\mathbb{Z}$ or $\ker(\Phi_N)\cong\mathbb{Z}^2$.
\end{proof}

\begin{theorem}[\cite{MR0415619} Corollary 9.9]\label{torsionfreethm}
Let $M'$ be a prime, orientable $3$--manifold with $\pi_1(M')$ infinite. 
Then $\pi_1(M')$ contains no finite order elements.
\end{theorem}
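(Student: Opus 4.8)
The plan is to reduce to the irreducible case, show that an orientable irreducible $3$--manifold with infinite fundamental group is aspherical, and then rule out torsion by comparing the (bounded) dimension of such a manifold with the (infinite) cohomological dimension of a non-trivial finite cyclic group.

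First I would invoke the standard splitting of prime manifolds: a prime orientable $3$--manifold is either irreducible or homeomorphic to $\sphere\times\crcle$ (the same fact from \cite{MR0415619} used earlier in the paper). In the second case $\pi_1\cong\mathbb{Z}$ is torsion-free and there is nothing to prove, so from now on I assume $M'$ is irreducible.

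The key intermediate step is asphericity. Let $\widetilde{M'}$ be the universal cover of $M'$. Since $\pi_1(M')$ is infinite, $\widetilde{M'}$ is non-compact, and by Remark \ref{irreducibleremark} it is again irreducible, so $\pi_2(\widetilde{M'})\cong\pi_2(M')\cong 1$ and $\widetilde{M'}$ is $2$--connected. Being a non-compact $3$--manifold, $\widetilde{M'}$ has $H_k(\widetilde{M'};\mathbb{Z})\cong 1$ for every $k\geq 3$ (degree $3$ because it is an open $3$--manifold, the higher degrees for dimension reasons). Applying the Hurewicz theorem repeatedly then forces every homotopy group of $\widetilde{M'}$ to vanish, so $\widetilde{M'}$ is weakly contractible, hence contractible, and $M'$ is a $K(\pi_1(M'),1)$.

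Finally, I would argue by contradiction. Suppose $\pi_1(M')$ has an element of finite order greater than $1$; raising it to a suitable power, we may assume it generates a subgroup $C\cong\mathbb{Z}_p$ for some prime $p$. Let $\widehat{M}\to M'$ be the covering corresponding to $C$. Its universal cover is again $\widetilde{M'}$, which is contractible, so $\widehat{M}$ is a $K(\mathbb{Z}_p,1)$ and $H_k(\widehat{M};\mathbb{Z})\cong H_k(\mathbb{Z}_p;\mathbb{Z})$ for all $k$. But $\widehat{M}$ is a $3$--manifold, so $H_5(\widehat{M};\mathbb{Z})\cong 1$, whereas $H_5(\mathbb{Z}_p;\mathbb{Z})\cong\mathbb{Z}_p\neq 1$ — a contradiction. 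Hence $\pi_1(M')$ is torsion-free. The proof is essentially an assembly of standard facts rather than one hard step; the only non-elementary input is the Sphere Theorem (used, as in Remark \ref{irreducibleremark}, to pass from irreducibility to $\pi_2\cong 1$), and the real crux is the last line, namely that a finite cyclic group has non-vanishing integral homology in arbitrarily high degree, which is incompatible with being the fundamental group of an aspherical $3$--manifold.
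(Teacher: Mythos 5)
The paper does not prove this result---it imports it directly from Hempel (\cite{MR0415619} Corollary 9.9)---so there is no in-paper argument to compare against. Your proof is correct and is essentially the standard (and Hempel's) one: reduce to the irreducible case (the $\sphere\times\crcle$ case being trivial), establish asphericity of the universal cover via the Sphere Theorem, Hurewicz, and vanishing of $H_k$ for $k\geq 3$ on a non-compact $3$--manifold, and then rule out a subgroup $\mathbb{Z}_p$ by comparing $H_5(\mathbb{Z}_p;\mathbb{Z})\cong\mathbb{Z}_p$ with the vanishing of $H_5$ for any $3$--manifold.
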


\begin{theorem}[\cite{MR565450} VI.29]\label{sfcoverthm}
Let $M$ be a compact, connected, orientable, Haken $3$--manifold, and let $\widehat{M}$ be a finite cover of $M$. Then $M$ is a Seifert fibred manifold if and only if $\widehat{M}$ is.
\end{theorem}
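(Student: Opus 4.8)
The plan is to handle the two implications separately. The forward direction — that a finite cover of a Seifert fibred manifold is Seifert fibred — is routine: if $\pi\colon\widehat{M}\to M$ is the covering map and $M$ carries a Seifert fibration, then the preimage under $\pi$ of each fibre is a disjoint union of circles, each covering the fibre it maps onto, and the preimage of a fibred solid-torus (or fibred solid-Klein-bottle) neighbourhood of a fibre is a disjoint union of fibred solid tori; so the circle foliation of $M$ pulls back to a Seifert fibration of $\widehat{M}$. This step uses nothing beyond the definition of a Seifert fibration and does not need $M$ to be Haken.

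For the converse I would first reduce to the case that $\widehat{M}\to M$ is a regular covering: replacing $\widehat{M}$ by the cover of $M$ corresponding to the normal core of $\pi_*\pi_1(\widehat{M})$ in $\pi_1(M)$ produces a finite regular cover of $M$ which is still Seifert fibred, by the forward direction. So assume $H:=\pi_*\pi_1(\widehat{M})\trianglelefteq G:=\pi_1(M)$ with $G/H$ finite. Since $M$ is Haken it is prime with infinite fundamental group, so Theorem \ref{torsionfreethm} shows $\pi_1(M)$, and hence also its finite-index subgroup $\pi_1(\widehat{M})$, is torsion-free. As $\widehat{M}$ is Seifert fibred with infinite fundamental group, the fibre subgroup $C$ of its Seifert fibration — the kernel of the map from $\pi_1(\widehat{M})$ onto the orbifold fundamental group of the base — is a nontrivial normal subgroup of $H$, and it is infinite cyclic because the fibre is $\pi_1$--injective and $\pi_1(\widehat{M})$ is torsion-free. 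By Theorem \ref{torusthm} applied to $M$, it now suffices to produce a nontrivial infinite cyclic normal subgroup of $G$.

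The key point is that $C$ descends to $G$. When the base orbifold group $H/C$ is non-elementary — the generic case, which occurs precisely when the base orbifold is hyperbolic — $C$ is the unique maximal infinite cyclic normal subgroup of $H$: any infinite cyclic normal subgroup of $H$ maps to a cyclic normal subgroup of $H/C$, and a non-elementary $2$--orbifold group contains no nontrivial cyclic normal subgroup, so that image is trivial. Being characterised intrinsically inside $H$, this $C$ is preserved by every automorphism of $H$, and since $H\trianglelefteq G$ the conjugation action of $G$ consists of automorphisms of $H$; hence $C\trianglelefteq G$, and Theorem \ref{torusthm} finishes this case. The remaining possibilities are those in which the base orbifold is not hyperbolic; then $H/C$ is virtually cyclic or finite, so $\pi_1(\widehat{M})$, and therefore its finite extension $\pi_1(M)$, is finite or virtually nilpotent. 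The finite case is excluded because $M$ is Haken; in the remaining case $M$ is a solid torus or carries a Euclidean or Nil geometry, and is Seifert fibred in each of these.

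I expect the genuine difficulty to sit in these exceptional non-hyperbolic-base cases, where the uniqueness argument for $C$ breaks down and one must instead invoke the classification of $3$--manifolds with virtually nilpotent fundamental group (or reproduce Jaco's more hands-on treatment in \cite{MR565450}); sorting out exactly which Seifert fibred Haken manifolds have elementary base orbifold group and checking each against the hypotheses is the part requiring care. The pullback of fibrations, the passage to a regular cover, and the reformulation through the normal-cyclic-subgroup criterion of Theorem \ref{torusthm} are all formal.
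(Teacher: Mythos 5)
The paper does not prove this theorem; it is cited wholesale from Jaco's monograph (\cite{MR565450} VI.29), where it arises as a corollary of the characteristic-submanifold machinery developed in that chapter. Jaco's own argument (circa 1980) necessarily works within that framework: roughly, the characteristic Seifert piece of $\widehat M$ is a covering-invariant, so if it equals all of $\widehat M$ then the corresponding piece of $M$ is all of $M$. Your route is genuinely different: you invoke Theorem~\ref{torusthm}, i.e.\ the Seifert Fibre Space Theorem, which postdates Jaco's book. Within this paper's toolkit that is legitimate, and for the ``generic'' case your argument is clean and correct: passing to the normal core reduces to a regular cover; the fibre subgroup $C\le H=\pi_1(\widehat M)$ is an infinite cyclic normal subgroup; when $H/C$ is a non-elementary Fuchsian group, $C$ is the unique maximal infinite cyclic normal subgroup of $H$, hence characteristic, hence $C\trianglelefteq\pi_1(M)$, and Theorem~\ref{torusthm} applies.

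The genuine gap is exactly where you flag it, and it is more than a loose end. When the base orbifold group $H/C$ is elementary (finite or virtually cyclic), $C$ is no longer characterised as the unique maximal infinite cyclic normal subgroup --- $H$ is then virtually abelian and has many such subgroups --- so the descent of $C$ to $\pi_1(M)$ fails, and with it the entire mechanism of the proof. Your fallback, ``$\pi_1(M)$ is virtually nilpotent, hence $M$ is a solid torus or carries Euclidean or Nil geometry, hence is Seifert fibred,'' quietly appeals to the classification of compact orientable irreducible $3$--manifolds with virtually nilpotent fundamental group (closed and with boundary), which is a substantive theorem in its own right and is not among the results quoted in the paper. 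It also needs case-by-case care: the spherical-base case must be excluded via irreducibility of $\widehat M$ (pulled back from $M$ by the equivariant sphere theorem) rather than just finiteness of $\pi_1$; the solid-torus case needs $\partial M$ compressible, which interacts with what one means by ``Haken''; and the Euclidean-base case still needs an argument that the conclusion is a Seifert fibration of $M$ itself and not merely of some cover. None of this is intractable, but as written these cases are asserted rather than proved, and they are precisely the cases the theorem is nontrivial for once the generic argument is in place. A complete proof would either work these out explicitly (invoking geometrization or the torus theorem where needed) or, as Jaco does, avoid the dichotomy altogether by showing directly that the characteristic Seifert submanifold is a full covering-space invariant for Haken manifolds.
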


\hypcor
\begin{proof}
Assume that the given exact sequence does not hold. That is, assume that $\ker(\Phi_M)\ncong 1$.
From the proof of Theorem \ref{kernelthm}, we know that $M$ is prime, all boundary components of $M$ are tori, and one of the following holds.
\begin{itemize}
\item[(a)] $M\cong\sphere\times\crcle$.
\item[(b)]  $M$ is a solid torus.
\item[(c)]  $M$ is Seifert fibred with non-empty boundary.
\item[(d)]  $M$ is a closed, spherical manifold.
\item[(e)]  $M$ is closed, contains an incompressible torus, and is Seifert fibred.
\item[(f)] $M$ is irreducible and closed, contains an incompressible torus, and is finitely covered by a torus bundle over $\crcle$.
\item[(g)]  $M$ is Seifert fibred over a sphere with three exceptional fibres.
\item[(h)]  $M$ is closed, hyperbolic, and finitely covered by a surface bundle over $\crcle$.
\item[(i)]  $M$ is a torus bundle over $\crcle$ with non-periodic monodromy.
\item[(j)]  $M$ is a surface bundle over $\crcle$ with periodic monodromy.
\end{itemize}
Spherical manifolds are Seifert fibred (see \cite{2012arXiv1205.0202A} p13 or \cite{MR0426001} Theorem 5). 
Therefore, if we further assume that $M$ is not Seifert fibred then we can rule out possibilities (a)--(e) and (g).

Suppose that case (h) holds. Since $M$ is hyperbolic with finite volume, every cover of $M$ is also hyperbolic with finite volume, and therefore is not Seifert fibred (see, for example, \cite{MR1435975} Theorems 4.7.8 and 4.7.10) and does not contain an essential torus. 
Thus $M$ is finitely covered by an $N$--bundle $\widehat{M}$ for some closed surface $N$ other than a torus. If the monodromy is not periodic then Corollary \ref{notperiodiccor} shows that $\ker(\Phi_{\widehat{M}})\cong 1$, and so Lemma \ref{coverlemma} says that $\ker(\Phi_M)$ is finite.
However, by Theorem \ref{torsionfreethm} there are no finite non-trivial subgroups of $\pi_1(M)$.
Thus $\ker(\Phi_M)\cong 1$.
On the other hand, if the monodromy is periodic then $M$ is also finitely covered by $N\times\crcle$. This contradicts that no cover of $M$ is Seifert fibred, so this cannot occur.

Suppose next that case (j) holds.
Suppose that $M$ is an $N$--bundle for a compact surface $N$, with monodromy $\omega\colon (N,\partial N)\to (N,\partial N)$, and that $\omega^m$ is isotopic to the identity on $N$ for some $m\neq 0$.
Then $\pi_1(M)$ is an HNN extension of $\pi_1(N)$. By direct calculation we find that the $m$th power of the stable letter generates an infinite cyclic normal subgroup of $\pi_1(M)$. Theorem \ref{torusthm} shows that, again, $M$ is Seifert fibred.

Finally, suppose that case (f) holds. Let $\widehat{M}$ be a finite cover of $M$ that is a torus bundle over $\crcle$. 
From Corollary \ref{torusbundlecor} we see that either $\ker(\Phi_{\widehat{M}})\cong 1$, $\ker(\Phi_{\widehat{M}})\cong \mathbb{Z}$ or $\widehat{M}$ is the $3$--torus.
In the first case Lemma \ref{coverlemma} and Theorem \ref{torsionfreethm} again combine to show that $\ker(\Phi_M)\cong 1$, contradicting our opening assumption.
In the third case $\pi_1(\widehat{M})\cong\mathbb{Z}^3$.
Using Theorem \ref{torusthm}, we therefore find that $\widehat{M}$ is Seifert fibred. Theorem \ref{sfcoverthm} gives that $M$ is also Seifert fibred.
\end{proof}


\bibliography{pointpushreferences}
\bibliographystyle{hplain}


\bigskip
\noindent
CIRGET, D\'{e}partement de math\'{e}matiques

\noindent
UQAM

\noindent
Case postale 8888, Centre-ville

\noindent
Montr\'eal, H3C 3P8

\noindent
Quebec, Canada

\smallskip
\noindent
\textit{jessica.banks[at]lmh.oxon.org}

\end{document}